\documentclass[12pt,reqno]{amsart}

\usepackage{amssymb,amsmath,amsthm,mathtools,wasysym,calc,verbatim,enumitem,tikz,pgfplots,hyperref,url,mathrsfs,cite,fullpage,bbm,comment}
\usepackage{comment}
\mathtoolsset{showonlyrefs}
\pgfplotsset{compat=1.18}
\addtolength{\footskip}{\baselineskip/2}

\newtheorem{theorem}{Theorem}

\newtheorem{lemma}[theorem]{Lemma}
\newtheorem{claim}[theorem]{Claim}
\newtheorem{corollary}[theorem]{Corollary}

\newtheorem{conj}[theorem]{Conjecture}

\newtheorem{observation}[theorem]{Observation}

\newtheorem*{claim*}{Claim}

\theoremstyle{remark}
\newtheorem*{remark*}{Remark}

\numberwithin{theorem}{section}

\renewcommand{\phi}{\varphi}

\renewcommand{\leq}{\le}
\renewcommand{\geq}{\ge}

\newcommand{\eps}{\varepsilon}

\newcommand{\cF}{\mathcal F}
\newcommand{\cE}{\mathcal E}

\newcommand{\cX}{\mathcal X}
\newcommand{\cY}{\mathcal Y}
\newcommand{\cA}{\mathcal A}
\newcommand{\cB}{\mathcal B}
\newcommand{\cD}{\mathcal{D}}
\newcommand{\cC}{\mathcal C}
\newcommand{\cL}{\mathcal L}

\newcommand{\cH}{\mathcal H}

\newcommand{\cT}{\mathcal T}

\newcommand{\cI}{\mathcal I}
\newcommand{\E}{\mathbb{E}}

\newcommand{\Var}{\operatorname{Var}}
\def\1{\mathbbm{1}}

\def\g{{\gamma}}

\renewcommand{\le}{\leqslant}
\renewcommand{\ge}{\geqslant}
\renewcommand{\P}{\mathbb{P}}

\newcommand{\EE}{\mathbb E}

\newcommand{\PP}{\mathbb P}

\title
{A new lower bound for the Ramsey numbers $R(3,k)$} 
\author{Marcelo Campos, Matthew Jenssen, Marcus Michelen, Julian Sahasrabudhe}

\address{Instituto de Matem\'{a}tica Pura e Aplicada (IMPA).}
\email{marcelo.campos@impa.br}

\address{King's College London, Department of Mathematics.}
\email{matthew.jenssen@kcl.ac.uk}
\address{University of Illinois, Chicago. Dept of Mathematics, Statistics and Computer science.}
\email{michelen.math@gmail.com}
\address{University of Cambridge. Department of Pure Mathematics and Mathematical Statistics.} 
\email{jdrs2@cam.ac.uk}

\pagestyle{plain}

\begin{document}

\begin{abstract}
We prove a new lower bound for the off-diagonal Ramsey numbers,
\[ R(3,k) \geq \bigg( \frac{1}{3}+ o(1) \bigg) \frac{k^2}{\log k }\, , \]
thereby narrowing the gap between the upper and lower bounds to a factor of $3+o(1)$. This improves the best known lower bound of $(1/4+o(1))k^2/\log k$ due, independently, to Bohman and Keevash, and Fiz Pontiveros, Griffiths and Morris, resulting from their celebrated analysis of the triangle-free process. As a consequence, we disprove a conjecture of Fiz Pontiveros, Griffiths and Morris that the constant $1/4$ is sharp.
\end{abstract}

\maketitle

\vspace{-2em}

\section{Introduction} 
The Ramsey number $R(\ell,k)$, is defined to be the minimum $n$ such that every red/blue colouring of the edges of the complete graph $K_n$ contains either a blue $K_{\ell}$ or a red $K_{k}$. These numbers were first proved to exist by Ramsey~\cite{Ramsey1930} in 1930 and the first reasonable upper bounds were given by Erd\H{o}s and Szekeres~\cite{erdos1935combinatorial} in 1935, who showed that $R(\ell,k) \leq \binom{\ell+k-2}{\ell-1}$. In the years since, it has become a major research direction to understand the asymptotic growth of these numbers. Perhaps the best known are the \emph{diagonal} Ramsey numbers $R(k)= R(k,k)$. Here the best bounds are 
\[  2^{k/2 + o(k)} \leq R(k) \leq (4-c)^{k}, \] for some $c>0$. The lower bound was proved by Erd\H{o}s~\cite{erdos1947some} in 1947 and the upper bound is due to a recent work of Campos, Griffiths, Morris and Sahasrabudhe~\cite{campos2023exponential}, giving the first exponential improvement since the seminal work of Erd\H{o}s and Szekeres~\cite{erdos1935combinatorial}. This recently has been improved further to $c \approx 0.2$ in \cite{gupta2024optimizing}.

Another line of research has focused on the extreme off-diagonal Ramsey numbers $R(3,k)$. The study of $R(3,k)$ has a rich history (which we detail below in Section~\ref{sec:history}) and has inspired the development of a number of important combinatorial tools. Today the Ramsey numbers $R(3,k)$ represent one of the successes of the field, and have been determined up to a factor of $4+o(1)$, the best known bounds being
\begin{equation}\label{eq:R3k-bounds} \hspace{-1.5em} \bigg( \frac{1}{4} +o(1) \bigg) \frac{k^2}{\log k } \leq R(3,k) \leq \big( 1+o(1)\big) \frac{k^2}{\log k}. \end{equation}
Here the upper bound is due to Shearer~\cite{shearer1983note}, from 1983, who built on the seminal papers of Ajtai, Koml\'os and Szemer\'{e}di~\cite{ajtai1980note,ajtai1981dense}, which introduced the extremely influential ``semi-random method''. The lower bound is due, independently, to the celebrated works of Bohman and Keevash~\cite{bohman2021dynamic} and Fiz Pontiveros, Griffiths and Morris~\cite{FGM}. In both of these papers the authors establish this lower bound by studying the \emph{triangle-free process}, a random process introduced by Bollob\'{a}s and Erd\H{o}s (see \cite{FGM}) for generating a random triangle-free graph. 

In their paper \cite{FGM}, Fiz Pontiveros, Griffiths and Morris conjectured that their lower bound \eqref{eq:R3k-bounds} is sharp. In this paper we obtain a new lower bound on $R(3,k)$, disproving their conjecture, and narrowing the gap between the upper and lower bound to a factor of $3+o(1)$.
\begin{theorem}\label{thm:r3k} 
$$R(3,k)\geq \bigg( \frac{1}{3} +o(1)\bigg)\frac{k^2}{\log k}\, .$$
\end{theorem}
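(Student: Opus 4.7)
The plan is to modify the analysis of the triangle-free process so as to suppress large independent sets by a factor of $\sqrt{3}/2$ beyond the current state of the art. I would proceed in three steps.

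\emph{Reduction to an independence-number bound.} It suffices to exhibit, for every sufficiently large $n$, a triangle-free graph $G$ on $n$ vertices with
\[
\alpha(G) \le (1+o(1))\sqrt{\tfrac{3}{2}\, n \log n}.
\]
Indeed, setting $k = \alpha(G)$ and using $\log n = (2+o(1))\log k$ then yields $R(3,k) > n = (1/3+o(1))\, k^2/\log k$. The analyses of the triangle-free process (TFP) by Bohman--Keevash~\cite{bohman2021dynamic} and Fiz Pontiveros--Griffiths--Morris~\cite{FGM} give only $\alpha(G) \le (\sqrt{2} + o(1))\sqrt{n \log n}$, which is exactly what produces the constant $1/4$. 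The task is therefore to shave a factor of $\sqrt{3}/2$ off the independence number of a typical triangle-free process graph, which cannot be achieved by the vanilla TFP.

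\emph{Strategy: a biased continuation of the TFP.} I would first run the standard triangle-free process on $[n]$ up to some time $t_0 \asymp \sqrt{n\log n}$ chosen just short of natural termination, producing a graph $G_{t_0}$ whose degrees, codegrees, and short-path counts are tightly concentrated around the trajectories given by the differential equation method. Crucially, the structure of $G_{t_0}$ forces every independent set $I$ of size close to $\sqrt{2n\log n}$ to be, in a quantifiable sense, quasirandom with respect to the remaining non-edges. I would then replace the remainder of the TFP by a \emph{biased} triangle-free edge-addition phase in which, at each step, one samples a non-edge weighted by a local functional designed to cut many of the currently surviving near-extremal independent sets, while preserving triangle-freeness. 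Running this biased process until termination should yield a triangle-free graph with no independent set of size $(1+o(1))\sqrt{(3/2)\, n\log n}$.

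\emph{Main ingredients and obstacle.} Three ingredients are required: (i) an extension of the FGM tracking estimates to the biased continuation, showing that degree and codegree statistics remain close to their standard-TFP trajectories, so that $G$ is still quantitatively quasirandom; (ii) a structural lemma showing that any independent set of size $(1+o(1))\sqrt{(3/2)n\log n}$ at time $t_0$ admits many non-edges that can be added triangle-free-ly to destroy it; and (iii) a martingale concentration argument, combined with a union bound over independent sets of the critical size, showing that with positive probability none of them survives the biased phase. The main obstacle is that the entropy/union bound over independent sets is tight precisely at the TFP threshold $\sqrt{2n\log n}$, so any improvement must come from a genuine per-step gain of the biased process that beats the $\binom{n}{k}$-style count. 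Quantifying this gain---and showing it delivers exactly the factor $\sqrt{3}/2$ rather than some worse constant---is the technical heart of the argument, and requires a delicate optimization of how strongly one can bias the process before the tracking estimates in (i) break down.
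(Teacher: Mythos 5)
Your proposal takes a genuinely different route from the paper, and it contains a gap that you yourself flag as ``the technical heart of the argument'' but do not close.

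The paper's actual construction is not a biased continuation of the TFP. Instead it begins with a \emph{seed step}: sample $G(n/r,p_0)$ on $n/r$ vertices with $r=(\log n)^2$ and $p_0 = \alpha_0\sqrt{\log n/n}$, pass to a maximal triangle-free subgraph, and $r$-blow it up. Only then does a (regularized, nibble-style) triangle-free process run on top. The point of the seed is that it buys a constant fraction of the final density while closing almost no pairs, because the blow-up means a pair is closed by the seed with probability only $1-(1-p_0^2)^{n/r}=o(1)$ rather than $1-(1-p_0^2)^n$. This is the structural mechanism that makes the improved constant possible, and the value $\sqrt{3/2}$ emerges from an explicit optimization over the seed/process density split $(\alpha_0,\alpha_1)$ subject to concrete permissibility and first-moment constraints.

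Your proposal has no analogue of this mechanism, and this is where it breaks down. You correctly observe that the union bound over $k$-sets is saturated at $k\approx\sqrt{2n\log n}$ for the plain TFP, and that any improvement must come from a ``per-step gain'' of a biased process. But you do not exhibit a biasing rule, you do not show that any local biasing can maintain both triangle-freeness and the degree/codegree tracking estimates, and you do not derive any candidate value for the constant. A local bias toward ``cutting surviving near-extremal independent sets'' faces an intrinsic obstruction: at the critical size the surviving independent sets are, as you note, essentially quasirandom and there are exponentially many of them, so a single biased edge addition cannot meaningfully down-weight the ensemble relative to the $\binom{n}{k}$ entropy. There is no reason a priori that the gain would come out to $\sqrt{3}/2$ rather than $1-o(1)$. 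In contrast, the seed construction sidesteps the per-step tension entirely: it changes the \emph{initial condition} (and hence the entire trajectory), giving a cleanly optimizable family of constructions rather than a perturbation of an already-saturated process. So while your reduction to $\alpha(G)\le(1+o(1))\sqrt{(3/2)n\log n}$ is correct, the strategy for achieving that bound is missing its central ingredient.
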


\vspace{2mm}

One particularly interesting consequence of Theorem~\ref{thm:r3k} is that it strongly suggests that the natural triangle-free process is not optimal for Ramsey (the authors of \cite{bohman2021dynamic,FGM} only prove an \emph{upper bound} on the independence number for the triangle-free process, however this bound is almost certainly tight) and rather suggests that the optimal graphs mix randomness with some underlying structure. 

The main idea behind Theorem~\ref{thm:r3k} is that there exist triangle-free graphs that are denser than the triangle-free process but still sufficiently pseudo-random to keep the independence number small. Our construction is easy to describe: we first sample a random graph on $n/(\log n)^2$ vertices at density $p_0 = \alpha_0 (\log n/n)^{1/2}$, clean out triangles, blow it up and then run a \emph{variant} of the triangle-free process with the resulting graph. For reasons that we will sketch in Section~\ref{sec:heuristic} this results in a  denser graph, with  smaller independence number. 

The reason we work with a variant of the triangle-free process is that the triangle-free process exhibits a complicated dynamics that requires an extremely subtle analysis. For example, in both \cite{bohman2021dynamic} and \cite{FGM}, the authors need to show the highly non-trivial interaction and ``self correction'' properties of certain martingales associated with the process. By contrast, in our process, we are able to avoid these
subtleties entirely, by manually ``steering'' the process to ensure it follows a simpler trajectory. This results in a greatly simplified analysis and more straightforward proof. In particular, if we apply our version of the triangle-free process without our initial ``seed'' step where we take a blow up of a random graph, this paper gives a much simpler proof of the lower bound~\eqref{eq:R3k-bounds} on $R(3,k)$ proved in  \cite{bohman2021dynamic,FGM}.

Interestingly, for $\ell \geq 4$ our understanding of $R(\ell,k)$ drops off considerably. In a recent breakthrough, Mattheus and Verstra\"{e}te \cite{mattheus2024asymptotics} gave an ingenious construction, combining both algebraic and probabilistic ideas, to determine 
$R(4,k)$ up to poly-logarithmic factors. For $R(\ell,k)$, where $\ell \geq 5$ is fixed and $k \rightarrow \infty$, the best known lower bounds are due to the celebrated work of Bohman and Keevash \cite{bohman2010early} on the $K_{\ell}$-free process. There they gave a poly-logarithmic improvement over the local lemma argument from the 1970s due to Spencer~\cite{spencer1977asymptotic}. However, in these cases there is still a $k^{\ell/2+o(1)}$ factor gap between the upper and lower bounds. 

In the remainder of this section, we describe the history of $R(3,k)$.

\subsection{A history of the problem} \label{sec:history}
The Ramsey numbers were introduced in 1930 in the pioneering work of Ramsey \cite{Ramsey1930}, who defined and showed the existence of the diagonal Ramsey numbers $R(k)$. A few years later the off-diagonal Ramsey numbers $R(\ell,k)$ were introduced by Erd\H{o}s and Szekeres~\cite{erdos1935combinatorial}, who went on to show that $R(\ell,k) \leq \binom{\ell+k-2}{\ell-1}$. In particular, when $\ell =3$, this implies that $R(3,k) \leq k^2$.

In 1968 this was improved by Graver and Yackel~\cite{graver1968some} to $O(k^2 \log \log k /\log k)$ and then, in 1980, by Ajtai, Koml\'os and Szemer\'{e}di~\cite{ajtai1980note,ajtai1981dense}, to 
\[ R(3,k) \leq  \frac{ck^2}{\log k},\] for some $c>0$, in two highly influential papers that introduced the ``semi-random method''. Soon after, this was improved by Shearer~\cite{shearer1983note} to 
\[ \hspace{2em} R(3,k) \leq \big( 1+o(1) \big)\frac{k^2}{\log k},\] which still stands as the best known upper bound for $R(3,k)$.

The first progress on the \emph{lower} bound is due to Erd\H{o}s in the 1950s who showed $R(3,k) = \Omega(k^{1+\eps})$, for some $\eps >0$, by giving an explicit construction~\cite{erdos1957remarks}. In 1959, Erd\H{o}s proved that $R(3,k) = \Omega(k^{3/2})$ by using a probabilistic argument before proving, in 1961, that 
\[R(3,k) \geq \frac{ck^2}{(\log k)^2},\]
for some $c>0$, by ingeniously modifying a sample of the binomial random graph~\cite{erdos1961graph}. Different proofs of this result were then given by Spencer~\cite{spencer1977asymptotic}, Bollob\'as~\cite{bollobas2001random}, Erd\H{o}s, Suen and Winkler~\cite{erdos1995size}, and Krivelevich~\cite{krivelevich1995bounding}
before the breakthrough work of Kim~\cite{kim1992ramsey}, in 1995, established
\begin{equation}\label{eq:kim} R(3,k)\geq \frac{ck^2}{\log k},\end{equation}
for some $c> 0$, thereby determining $R(3,k)$ up to constant factors. 

Kim constructed the desired graph by iteratively ``sprinkling'' random edges into an empty graph in such a way that the edges don't form triangles with edges from previous rounds. He then used martingale concentration to show that the various parameters of the graph evolved as expected, according to a certain differential equation.

In 2008, a different and influential proof of this result was given by Bohman \cite{bohman2009triangle}, who showed that the graph produced by the ``triangle-free process'' could also be used to prove Kim's lower bound on $R(3,k)$. The triangle-free process is a random graph process first defined by Bollob\'as and Erd\H{o}s in 1990 (see \cite{FGM}). To define the triangle-free process on $[n] = \{1,\ldots ,n\}$, one starts with the empty graph $G_0$ on vertex set $[n]$ and then inductively defines $G_{i+1}$ to be $G_i + e_{i+1}$ where $e_{i+1}$ is chosen uniformly among all edges $e$ for which $G_i + e$ is triangle free, until there is no such edge, in which case the process stops. 

In the difficult and influential papers \cite{bohman2021dynamic, FGM}, Bohman and Keevash and, independently, Fiz Pontiveros, Griffiths and Morris, studied the trajectory of this process all the way to its (asymptotic) end, showing that the number of edges in the terminating graph $G_{\tau}$ is 
\begin{equation}\label{eq:density-of-tfp} e(G_{\tau}) = \bigg( \frac{1}{2\sqrt{2}} + o(1) \bigg)n^{3/2}\sqrt{\log n}\, , \end{equation}
with probability $1-o(1)$. They also proved the independence number of the terminating graph satisfies
\begin{equation}\label{eq:alpha-tfp} \hspace{-2em} \alpha(G_{\tau}) \leq (\sqrt{2} +o(1) )\sqrt{n\log n} \end{equation}
(one actually expects equality in this case) thereby implying the bound 
\[\hspace{-3em}  R(3,k) \geq \bigg( \frac{1}{4} + o(1) \bigg) \frac{k^2}{\log k }\, . \]
We remark that while a large part of the motivation for studying the triangle-free process, and other related processes, is derived from the Ramsey numbers, it is far from the only motivation. 
Indeed, these subtle processes are fascinating objects of study in their own right and have been the subject of intense study in recent years~\cite{erdos1995size, bollobas2000constrained, osthus2001random, bohman2009triangle, bohman2010early, warnke2011dense, warnke2014cycle, bennett2016note, FGM, bohman2021dynamic}. More generally, the $H$-free process was studied by Bohman and Keevash \cite{bohman2010early}, who obtained lower bounds on the number of edges in the $H$-free process for all strictly $2$-balanced graphs $H$. Corresponding upper bounds, up to constants, were given by Warnke~\cite{warnke2014cycle} in the case of the $C_{\ell}$-free process.

The triangle-removal process is another natural random process that produces a triangle-free graph. Here one starts with the complete graph and then at each step, one \emph{removes} a triangle uniformly at random, until the remaining graph is triangle free. This problem was introduced by Bollob\'{a}s and Erd\H{o}s \cite{Bollobs1998} who conjectured that the remaining graph has $\Theta(n^{3/2})$ edges. The best known result towards this conjecture is due to Bohman, Frieze and Lubetzky \cite{bohman2015379}, who proved the triangle-free graph that results from the triangle-removal process has $n^{3/2+o(1)}$ edges. This has recently been extended to a much more general class of graphs and hypergraphs by Joos and  K{\"u}hn~\cite{joos2024}.

There are two main contributions of the present paper. The first is to show that one can improve the lower bounds on $R(3,k)$ by running a triangle-free-like process from a carefully chosen ``seed graph''. Second, is
to give a much simpler analysis of a modified triangle-free process, all the way to the end of its trajectory. In fact our process is similar to the one in the original work of Kim \cite{kim1992ramsey} and the subsequent work of Warnke and Guo~\cite{guo2020packing}. However, unlike these papers, we are able to follow our process (and track the independent sets) all the way to the (asymptotic) end.

\subsection{The asymptotic value of \texorpdfstring{$R(3,k)$}{R(3,k)}}

It is interesting to speculate about the asymptotic value of $R(3,k)$.  We strongly believe that $R(3,k) \geq (1/2+o(1))k^2/\log k$ and tentatively conjecture that, in fact, we have equality.

\begin{conj}\label{conj:bold}
\[ R(3,k) = \bigg(\frac{1}{2}+o(1)\bigg)\frac{k^2}{\log k} .\]
\end{conj}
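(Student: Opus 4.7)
Proving Conjecture~\ref{conj:bold} requires simultaneously sharpening the lower bound from $1/3$ to $1/2$ and the upper bound from $1$ (Shearer) to $1/2$. This is manifestly a research program rather than a single argument, and the paper does not attempt it; I sketch speculative strategies for each direction and flag the principal obstacles.

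For the \emph{lower bound}, the natural plan is to iterate and refine the seed-and-steer construction of Theorem~\ref{thm:r3k}. Instead of taking a single random graph at density $p_0 \asymp (\log n / n)^{1/2}$, blowing it up, and running the steered triangle-free process on top, one would use a \emph{hierarchical seed}: sample a random graph at a very sparse scale, blow it up and clean triangles, then sample a denser random graph on the resulting pseudorandom blow-up, blow up again, and so on, concluding with the steered triangle-free process at the finest scale. Each blow-up stage should amortise the cost to the independence number, since a large independent set in the final graph must align with the pseudorandom structure at every intermediate scale. The main obstacle is a multi-scale union bound: one has to control independent sets that combine information from all layers, and at each layer the quantitative pseudorandomness must be compatible both with the next blow-up factor and with the independence budget inherited from finer scales. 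Optimising the sequence of densities leads to a variational problem whose optimum would conjecturally hit $1/2$; matching this calculation rigorously appears to require a genuinely new concentration or self-correction tool at each scale, precisely the kind of subtlety that the steered process was designed to avoid at a single scale.

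For the \emph{upper bound}, the target is to halve the Ajtai--Koml\'os--Szemer\'edi/Shearer bound, which guarantees that any triangle-free graph on $n$ vertices with average degree $d$ contains an independent set of size at least $(1+o(1))\, n \log d / d$. Two plausible routes suggest themselves. First, one could try to \emph{iterate} Shearer: show that after removing a Shearer-sized independent set from a would-be Ramsey graph, the residue still admits a near-optimal Shearer-type bound, gaining an extra factor of $2$ overall. Second, one could study a weighted partition function $Z(\lambda) = \sum_{I \text{ indep.}} \lambda^{|I|}$ and prove a lower bound on $\log Z$ that exceeds twice Shearer's bound at an appropriate value of $\lambda$, then extract a single large independent set via a concentration/first-moment argument. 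The hard part on this side is that Shearer's bound is widely believed to be asymptotically tight for the typical output of the triangle-free process itself, so any improvement must exploit structure that is forced by the combined absence of triangles \emph{and} of large independent sets---a ``Shearer stability'' phenomenon that is not presently available and which I regard as the decisive obstacle to Conjecture~\ref{conj:bold}.
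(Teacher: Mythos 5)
The statement you were asked about is labelled a \emph{conjecture} in the paper, and the paper supplies no proof of it; the authors only prove the lower bound $R(3,k)\ge (1/3+o(1))k^2/\log k$ (Theorem~\ref{thm:r3k}) and explicitly say they ``tentatively conjecture'' equality at $1/2$. You correctly recognise that no proof is expected here and treat the statement as open; that is the right reading, and there is nothing in the paper to compare a proof against.

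On the substance of your speculations: for the lower bound, the paper itself floats two possibilities, namely (i) finding a better ``seed step'' within the present framework (which the authors say they were unable to do) and (ii) an entirely different construction, to appear in a companion paper, based on the Cayley sum graph generated by the sum-free process on $\mathbb{F}_2^d$. Your hierarchical multi-scale seed is a natural refinement of (i), though you should be aware that the heuristic analysis in Section~2 of the paper actually optimises the single-seed construction and lands at $\sqrt{3/2}$, with the obstruction coming from the ``dangerous'' independent sets containing a seed neighbourhood (equations~\eqref{eq:funky-Is}--\eqref{eq:Jellk-constraint}); any iterated seed would have to confront a proliferating family of such dangerous sets at every scale, and it is not clear the variational optimum improves. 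For the upper bound, your two routes (iterating Shearer, or lower-bounding a partition function $\sum_I\lambda^{|I|}$) are plausible but run directly into the obstruction the paper emphasises: the shattering/algorithmic-hardness picture for independent sets in sparse random graphs suggests that Shearer's $(1+o(1))(n/d)\log d$ is a genuine barrier, and the authors regard the ``missing'' factor of two there as a complete mystery. Nothing in your sketch addresses that barrier, so, as you say yourself, this remains a research program rather than a proof.
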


In a forthcoming companion paper we support this conjecture by proposing a very different construction,
that would conjecturally imply the lower bound in Conjecture~\ref{conj:bold}, along an infinite sequence of values of $k$. The construction is based on the Cayley sum graph generated by the sum-free process on $\mathbb{F}_2^d$. It also seems possible that one could provide a better ``seed step'' in our process from this paper to obtain a constant of $1/2+o(1)$, although we were unable to find such a seed.  

As mentioned above, the best known \emph{upper bound} on $R(3,k)$ was obtained by Shearer~\cite{shearer1983note} who proved the following more general result: any triangle-free graph on $n$ vertices with average degree $d$ has an independent set of size at least  \begin{align}\label{eq:shearer}
\big( 1+o (1)\big)\frac{n \log d }{d}, \end{align}
as $d\rightarrow \infty$. This is seen to be sharp, up to a factor of $2+o(1)$, by considering a random graph of average degree $d$, which is triangle-free with probability at least $c(d)>0$, when $d$ is constant. It is a notoriously difficult open problem to improve Shearer's bound by \emph{any} constant factor and it remains a complete mystery as to what the best possible constant in \eqref{eq:shearer} is.

The challenge of understanding this factor of $2$ also reflects itself in the study of algorithms. For example, it is a major open problem to determine if there is an efficient algorithm that can find an independent set of size $\geq (1+\eps)(n/d)\log d$ in the random graph on $n$ vertices of average degree $d$, despite the fact that we know there are \emph{many} such independent sets. In fact it has been shown \cite{gamarnik2014limits,rahman2017local} that no ``local algorithm'' can find such an independent set (see \cite{wein2022optimal} for an extension to a hardness result for a broader class of algorithms). 

Work inspired by phase transitions in statistical physics \cite{achlioptas2008algorithmic,coja2015independent} gives us another perspective on why \eqref{eq:shearer} should be hard to improve upon. In the random graph of constant average degree $d$, the ``geometry'' of the space of independent sets of size $\lambda n \log d/d$ becomes significantly more complex as $\lambda$ transitions from $1-\eps$ to $1+\eps$. In particular, when $\lambda \leq 1-\eps$ the set of independent sets of this size are ``connected'' meaning that one can move between any two independent sets $I$, $J$ via a path of independent sets $I = I_1,I_2,\ldots ,I_{t} = J$, where $I_i$ have the same size and $I_i$, $I_{i+1}$ have symmetric difference $O(1)$. On the other hand, when $\lambda = 1+\eps$ the space of independent sets ``shatters'' into exponentially many components. 

To see how the bound on $R(3,k)$ follows from \eqref{eq:shearer}, note that any triangle-free graph of average degree $d$ contains an independent set of size $d$, since in a triangle-free graph neighbourhoods are independent sets. Optimizing this bound against \eqref{eq:shearer}, we deduce that any triangle-free graph on $n$ vertices has an independent set of size at least $(1/\sqrt{2} + o(1))\sqrt{n \log n }$, which implies $R(3,k) \leq (1 + o(1)) k^2 / \log k$.

The authors of \cite{FGM} speculated that the gap of $4$ between the upper and lower bounds on $R(3,k)$ in \eqref{eq:R3k-bounds} has two separate ``sources'', each of which accounts for a factor of $2$. The first comes from the gap in Shearer's bound \eqref{eq:shearer}, which can conjecturally be improved by a factor of $2$. The second, they speculate, is that for optimal constructions for $R(3,k)$, the independence number is $2 + o(1)$ times larger than the maximum degree.  
We highlight that the independence number of our construction is only $3/2+o(1)$ times larger than the maximum degree. Moreover, we believe that in an optimal construction (witnessing the bound of Conjecture~\ref{conj:bold}) the independence number will be asymptotically equal to the maximum degree. Thus Conjecture~\ref{conj:bold} reflects our belief that the only factor of two to be gained in the upper bound comes from the ``missing'' factor of two in Shearer's bound. 

\section{Heuristic discussion of the construction}

At a heuristic level, there are two properties we desire for the triangle-free graph that we seek. One is density: having many edges in our graph makes it easier to force edges into potential independent sets. Of course, this is not enough; the densest graph with no triangle, the complete bipartite graph, contains very large independent sets. The other natural property we want our graph to have is quasi-randomness. We want the edges that we do have to be well distributed throughout the graph. 

Thus, one is naturally led to study the triangle-free process, which is (in a sense) the most random-like graph containing no triangle, and study its behavior all the way to the end, when it is the densest. As we 
highlighted above, this has been a very successful program which has resulted in beautiful mathematics and the previous best
lower bound for $R(3,k)$. 

The key observation behind this paper is that the triangle-free process is actually ``too random'' for an optimal lower bound on $R(3,k)$. Instead, it is better to trade off some of the randomness of the triangle-free process to get a denser graph, which is still random enough to ensure that the independent sets behave like those in a random graph $G(n,p)$.

\subsection{A heuristic justification for the previously best known bound}\label{subsec:heuristic}

Before explaining our improvement to $R(3,k)$ we lay out a heuristic way of thinking about the previously best known bound $(1/4 +o(1))k^2/\log k$ that arises from the triangle-free process.

In what follows, we make one large (and non-rigorous) assumption: that the triangle-free process, at density $p$, ``behaves as $G(n,p)$, apart from the fact that it has no triangles''. Let $G_p$ be the triangle-free process when it has density $p$ and let $\PP_p$ denote the corresponding probability measure.
We first ask: how large could we reasonably expect $p$ to be? For this, let us say a pair $e\in K_n\setminus G_p$ is\footnote{Here and throughout the paper we identify a graph with its edge set. So a graph is a set of unordered pairs.} \emph{open} if we can add it to $G_p$ without forming a triangle. Now note that if 
\begin{equation}\label{eq:expected-number-of-open} \EE_p\, \big| \big\{  e \in K_n : e \text{ is open} \big\} \big| \ll p\binom{n}{2},  \end{equation} then the number of edges that one could possibly add after this point (since we can only add open edges) is dwarfed by the number of edges that have already been added, and therefore is negligible.

To calculate the probability a pair $e = xy$ is open (and thus the expectation in \eqref{eq:expected-number-of-open}), we note that if $e$ is open then for all $v \in [n]\setminus \{x,y\}$, either $xv$ or $yv$ is not an edge of $G_p$. Using this and our hypothesis that $G_p$ behaves like $G(n,p)$, we have
\begin{equation} \label{eq:tfp-density-heuristic} \PP_p\big(  xy \text{ is open} \big) \approx \prod_{v } \PP_p\big( \{ xv , yv \} \not\subset  G_p \big) = \big( 1 - p^2 \big)^{n-2} = e^{-(1+o(1))p^2 n} . \end{equation}
 Therefore, noting that  
\[ e^{-p^2n} = p \qquad \Longrightarrow \qquad p = (1+o(1))\sqrt{\frac{\log n}{2n}}, \]
  this gives the correct heuristic for the maximum density of the triangle-free process (cf. \eqref{eq:density-of-tfp}).
 
 So let us fix $p$ to be this maximum density and turn to derive a heuristic for the largest independent set in the triangle-free process. 
 Since we are assuming that $G_p$ behaves as $G(n,p)$,
 it is natural to guess that $\alpha(G_p) = (1+o(1))\alpha(G(n,p)) = (1+o(1))(\log n)/p$. 
 While this heuristic is in fact correct, we are brushing up against a spot where the heuristic starts to falls apart: in the triangle-free graph $G_p$ neighbourhoods are also independent sets. So a better heuristic might be
 \begin{equation}\label{eq:heur-alpha-tfp} \alpha(G_p) = (1+o(1))\max\big\{ pn , (\log n)/p \} = (\sqrt{2} + o(1))\sqrt{n\log n} \, . \end{equation}
While the maximum degree condition is actually superfluous for this density $p$, we make note of it because it will be important later,  when we move to our new process (see \eqref{eq:k-max-deg}).  

 Thus, setting this latter term to $k$ and solving for $n$ gives a heuristic for the bound $R(3,k) \geq (1/4+o(1))k^2/\log k$. Of course, \emph{proving} that this heuristic conforms to reality is an extremely difficult task and is the content of~\cite{FGM, bohman2021dynamic}. 
 
 We now turn to heuristically justify why we can do better than the triangle-free process.

\subsection{Heuristic justification for our construction} \label{sec:heuristic}

In our construction, we first sample a random graph on $n/r$ vertices, at density $p_0 = \alpha_0 (\log n/n)^{1/2}$, pass to a maximal triangle-free subgraph and then blow it up\footnote{That is, replace each vertex $i$ of the graph with a set $V_i$ of size $r$, and replace each edge $ij$ with a complete bipartite graph between $V_i, V_j$.} by a factor of $r$, where $r=(\log n)^2$ and $\alpha_0>0$ is a constant (to be specified later). We shall refer to this as the \emph{seed step} in the process. We then run a variant of the triangle-free process within the open edges of the resulting graph. 

The key idea behind this seed step is that the seed contributes a positive proportion of the final density of our graph while leaving almost all pairs open. (For the optimal case, the seed will account for half of the density of the final graph.) That is, after the initial seed step, almost any edge can be added to the seed without forming a triangle. To see this, note that if $e$ is a pair between parts of the blow up, we have
\[ \PP\big( e \text{ open after seed step} \big) \approx (1-p_0^2)^{n/r} = 1-o(1),\] since in our case, we have $p_0 = \alpha_0\sqrt{\log n /n}$ and $r = (\log n)^2$. It is worth comparing this to the probability that $e$ is open after we run the triangle-free process to the same density, which is $(1-p_0^2)^n = n^{-\alpha_0^2 +o(1)}$ (see \eqref{eq:tfp-density-heuristic}). 

We now let $p_1 = \alpha_1 (\log n/n)^{1/2}$ be the density of the graph produced in all steps following the seed step, so that $p_0 + p_1$ is the density of the final graph. We call this graph $G_{p_0,p_1}$ and call $\PP_{p_0,p_1}$ the corresponding probability measure.

To guess the independence number of $G_{p_0,p_1}$, we first identify a good ``model'' in which to argue heuristically about $G_{p_0,p_1}$. In particular, we define the random graph  
\[ G = G_0 \cup G_1, \] where $G_0$ is an $r$-blow up of $G(n/r,p_0)$ on vertex set $[n]$ and $G_1$ is an independent copy of $G(n,p_1)$ on the same vertex set. In analogy with our reasoning for the triangle-free process, we would like to think of the random graph $G_{p_0,p_1}$ as akin to the random graph $G_0 \cup G_1$, ``apart from the fact that the graph $G_{p_0,p_1}$ has no triangles''.

As in our discussion of the triangle-free process, we analyse the permissible densities $(p_0,p_1)$ by understanding the probability a pair $e$ becomes closed in $G_{p_0,p_1}$. 
Indeed if $(p_0,p_1)$ is such that $\PP_{p_0,p_1}\big(  e \text{ is open} \big) \ll p_0+p_1 $ then we have essentially reached a maximal pair of densities, since any further edges we could hope to add would have to come from open pairs. Therefore, the maximal pairs $(p_0,p_1)$ for which we could reasonably expect  $G_{p_0,p_1}$ to exist are those for which 
\[ \PP_{p_0,p_1}\big(  e \text{ is open} \big) \approx p_0+p_1.\] 

To calculate this probability, let $e$ be a pair with end-points in different parts of the partition defined by the blow up. (We ignore pairs contained within 
the parts, as there are extremely few such pairs). There are three ways such a pair can be closed. It can be closed by two edges from $G_0$, which has probability $1-(1-p_0^2)^{n/r}$; it can be closed by two edges in $G_1$, which has probability $1-(1-p_1^2)^n$; and it can become closed by one edge from $G_0$ and one edge from $G_1$, which has probability approximately $1-(1-2p_0p_1)^n$. Thus, in analogy with \eqref{eq:tfp-density-heuristic}, we have
\begin{equation*}
\PP_{p_0,p_1}\big(  e \text{ is open} \big) \approx (1-p_0^2)^{n/r}(1-2p_0p_1)^{n}(1-p_1^2)^{n} = \exp\hspace{-0.6mm}\big(\hspace{-0.5mm}-\hspace{-0.5mm}(1+o(1)) ( 2p_0p_1 + p_1^2 ) n \big).  \end{equation*}
 Recalling that $p_i = \alpha_i (\log n/n)^{1/2}$, for $i=0,1$, it follows that 
\begin{equation}\label{eq:constraint} \PP_{p_0,p_1}\big(  e \text{ is open} \big) = p_0+p_1 = n^{-1/2+o(1)}  \quad \Rightarrow \quad  (\alpha_0+\alpha_1)^2 -\alpha_0^2 = 1/2+o(1). \end{equation}
We thus obtain our first important constraint on the densities $(p_0,p_1)$. 

We now turn to understand what is most important for us, the independence number of this random graph. We shall argue heuristically that we should have 
\begin{equation}\label{eq:alpha-sqrt3/2} \alpha(G_{p_0,p_1}) \le \big( \sqrt{3/2} + o(1) \big)\sqrt{n\log n}\, ,\end{equation} for the permissible pair $(p_0,p_1)$ defined by 
\begin{equation}\label{eq:heuristic-choice-of-alpha_i} \alpha_0 = \alpha_1 = \frac{1}{\sqrt{6}} + o(1)  .\end{equation}
For this, it will be useful to fix the notation
\[ k = \kappa \sqrt{n\log n} \qquad \text{ where } \qquad \kappa = \sqrt{3/2}+o(1)\, . \]
To bound the independence number, we
let $I_k$ be the number of independent $k$-sets in $G$. Our goal is to show $\EE\, I_k = o(1)$ as $n\rightarrow \infty$. 

The subtlety in our construction comes from the fact that $I$ can interact with the partition in different ways. Indeed, if $I$ intersects many different parts of the partition then $G$ ``looks like'' a random graph of significantly larger density, from the point of view of $I$. On the other hand there are significantly fewer sets $I$ that intersect few parts and thus we have fewer sets to sum over in the expectation. Here we will show that with our choice of parameters, the sets that intersect $k$ different parts dominate the expectation $\EE\, I_k $.

To elaborate on this, we define $I_{k,\ell}$ to be the number of independent $k$-sets in $G$ that intersect $\ell$ different parts of the partition. We may bound 
\begin{equation}\label{eq:Ikell} \EE\, I_{k,\ell} \leq n^{o(k)} \binom{n}{\ell}\big(1-p_0\big)^{\binom{\ell}{2}}\big(1-p_1\big)^{\binom{k}{2}} , \end{equation} since there are $\binom{n/r}{\ell}$ ways of choosing the $\ell$ parts of the partition that $I$ intersects and at most $\binom{r\ell}{k} \leq (er)^{k} =n^{o(k)}$ ways of choosing $I$ in those parts.

To determine the maximum of $\EE\, I_{k,\ell}$ over $\ell$, we note the (approximate) recurrence
\begin{equation}\label{eq:Ikell-recurrance}  \EE\, I_{k,\ell+1} \approx n^{1/2}(1-p_0)^{\ell}  \cdot \EE\, I_{k,\ell} \end{equation}
when $\ell$ is order $k$, and observe that this sequence is increasing in $\ell$, while $p_0\ell \leq (1/2)\log n$. Thus if 
\begin{equation}\label{eq:ell_is_k_is_max_cond} \alpha_0 \kappa  \leq 1/2,  \end{equation} 
we ensure that $\EE\, I_{k,\ell}$ is maximized at $\ell = k$. Note that this holds, with equality, for our choice of $\alpha_0$ and $\kappa$.

This means that it is enough to show $\EE\, I_{k,k} = o(1)$. For this, we observe that when $\ell = k$ the right hand side of \eqref{eq:Ikell} is actually a familiar quantity (up to the unimportant $n^{o(k)}$ factor): it is the expected number of independent $k$-sets in the binomial random graph $G(n,p_0+p_1)$. Thus a standard calculation tells us that (see, e.g., \cite{bollobas2001random})
\[  k \geq (1+o(1))\frac{\log n}{p_0+p_1}    \quad \Longrightarrow  \quad \EE\, I_{k,k} = o(1) , \] where we used that $p_0+p_1 = n^{-1/2+o(1)}$. Thus, we are led to guess that if $\alpha_0$ and $\alpha_1$ satisfy
\begin{equation}\label{eq:heur-ell=k} (\alpha_0+\alpha_1) \kappa \geq 1+o(1),\end{equation} 
and \eqref{eq:ell_is_k_is_max_cond} then  $\alpha(G_{p_1,p_2}) \leq \kappa \sqrt{n\log n}$. Note that \eqref{eq:heur-ell=k} holds for our choice of $\alpha_0,\alpha_1$ and $\kappa$. 

As we saw in our heuristic for the triangle-free process, we also need to take into account the fact that neighbourhoods are independent sets in $G_{p_0,p_1}$, which gives us the further constraint  \begin{equation}\label{eq:k-max-deg} \kappa \geq (1+o(1))(\alpha_0+\alpha_1), \end{equation}
which also easily holds for our choice of $\kappa,\alpha_0,\alpha_1$. This gives some justification for our claim at \eqref{eq:alpha-sqrt3/2} and solving for $n$ gives 
\[ R(3,k) \geq \bigg( \frac{1}{3}+o(1)\bigg)\frac{k^2}{\log k} .\]
In fact, given the above constraints, one can see that this is the best possible; use \eqref{eq:heur-ell=k} $(\alpha_0+\alpha_1)^{-1} \leq \kappa$ and \eqref{eq:ell_is_k_is_max_cond} $\alpha_0 \leq 1/(2\kappa)$ in our permissibility constraint \eqref{eq:constraint} to see
\[ 1/2  \geq (\alpha_0+\alpha_1)^2- \alpha_0^2 \geq \kappa^{-2}(3/4),\]
which implies $\kappa \geq \sqrt{3/2}$. 

 However, if we reflect on our discussion above, we notice the constraint \eqref{eq:ell_is_k_is_max_cond} is actually not an essential one, designed rather to make our choices more intuitive. Thus one could easily drop this condition and try to optimize without it. If one follows this line, one will be led to obtain an optimal value of $\kappa \approx 1.07$, where $1.07$ is notably less than $\sqrt{3/2}$. 

 Interestingly, this does not represent the truth and reveals a hole in our heuristic as detailed above. The reason for this is that there is another type of independent set that is relevant for us. These are the $k$-sets $I$ that contain a neighbourhood of the seed step:
\[ I \subset [n], \quad |I| = k  \qquad \text{ such that } \qquad I \supset N_{G_0}(x). \]
Intuitively, these sets are dangerous because the edges within the neighbourhood $N_{G_0}(x)$ are excluded from the second round of the process, by the triangle-free condition. Also adding to their danger is the fact that the edges in the seed graph are much ``cheaper'' than edges added in the triangle-free process, since we get them in bunches from the blow up. 

To check that independent sets of this form don't spoil our choice of $\kappa,\alpha_0,\alpha_1$, we turn to understand the expected number of independent sets of this special form. We notice 
\begin{equation}\label{eq:funky-Is} |N_{G_0}(x)| = (1+o(1)) p_0n\end{equation}
and that $I$ intersects at most $k - p_0n + o(k)$ different parts of the underlying partition. So if we let $J_{k,\ell}$ be the number of independent sets of this form that intersect $\ell$ different parts, we have 
\begin{equation}\label{eq:sketch-Jkell} \EE\, J_{k,\ell} \leq n^{o(k)}\binom{n}{\ell}\big(1-p_0\big)^{\binom{\ell}{2}}\big(1-p_1\big)^{\binom{k}{2} - \binom{p_0n}{2}},  \end{equation}
where the exponent $\binom{k}{2} - \binom{p_0n}{2}$ reflects the fact that no edges inside of $N_{G_0}(x)$ can be included in $G_1$ without forming a triangle. 

We now argue that these special sets don't dominate the count of independent sets. In other words, we would like to show 
$\EE\, J_{k,\ell} \ll \EE\, I_{k,k}$ for all $\ell \leq k-p_0n$. For this we observe, by comparing \eqref{eq:Ikell} with \eqref{eq:sketch-Jkell}, that
\[ \EE\, J_{k,\ell} \leq  (1-p_1)^{-\binom{p_0n}{2}} \EE\, I_{k,\ell}. \]
We then iteratively apply the recurrence \eqref{eq:Ikell-recurrance}, to see that 
\begin{equation}\label{eq:EJkellvsEIkk} \EE\, J_{k,\ell} \leq n^{-(k-\ell)/2}(1-p_0)^{\binom{\ell}{2}- \binom{k}{2}} (1-p_1)^{-\binom{p_0n}{2}} \EE\, I_{k,k}.  \end{equation}
We now claim that the right hand side of \eqref{eq:EJkellvsEIkk} is $\ll \EE\, I_{k,k}$ for all $\ell \leq k-p_0n$ when 
\begin{equation}\label{eq:Jellk-constraint} 2\alpha_0\kappa - \alpha_0^2 + \alpha_0\alpha_1\leq 1 + o(1),\end{equation}
thus giving us our final constraint. Note also that this equation is satisfied and saturated for our selection at \eqref{eq:heuristic-choice-of-alpha_i}. To see why \eqref{eq:Jellk-constraint} implies that the right hand side of \eqref{eq:EJkellvsEIkk} is $\ll \EE I_{k,k}$, we use that 
 $k-\ell \geq  p_0n$, from \eqref{eq:funky-Is}, which implies $k+\ell \leq 2k - p_0n $ and therefore
\[ p_0\binom{k}{2}-p_0\binom{\ell}{2} = p_0(k+\ell)(k-\ell)/2 + O(\log n) \leq (1+o(1))(2p_0k-p_0^2n)(k-\ell)/2 . \]
Also, using that $p_0n \leq k-\ell$, we have
\[ p_1 \binom{p_0n}{2} \leq p_1p_0n(k-\ell)/2 = \alpha_0\alpha_1(\log n)(k-\ell)/2.\]
Then, factoring out a $(\log n)(k-\ell)/2$ from the exponent of each of the first three terms on the right hand side of \eqref{eq:EJkellvsEIkk}, we have
\[ \exp\big(\big(-1 + 2\alpha_0\kappa - \alpha_0^2 + \alpha_0\alpha_1 + o(1) \big)(\log n)(k-\ell)/2 \big) \cdot \EE\, I_{k,k},\] which is $\ll \EE I_{k,k} $  when \eqref{eq:Jellk-constraint} is satisfied.

What we have seen above is, of course, only a heuristic discussion and making this rigorous is a challenging task and the main technical content of this paper.

\section{The triangle-free nibble}\label{sec:definition-of-process}
In this section we formally define our modified triangle-free nibble and define many of the notations that we use throughout the paper. Our process takes as input a density $p_0 \in [0,1]$ which we express as
\begin{equation}\label{eq:def-p00} p_0 = \alpha_0 \bigg(\frac{\log n}{n}\bigg)^{1/2},  \end{equation}
for some constant $\alpha_0\geq 0$. As we saw in Section~\ref{sec:heuristic}, this corresponds to the initial step of the process.

\subsection{Definition of the process} 
Given $n$, we define (somewhat arbitrarily) the parameter 
\begin{equation}\label{eq:def-gamma} \g = (\log n)^{-10}\, , \end{equation}
and describe a random process which defines a random triangle-free graph $G$. At stage $i$ of the process we will have produced graphs $G_0,\ldots, G_i$ so that 
\[ G_{\leq i}  = G_0 \cup G_1 \cup \cdots \cup G_{i}, \]
is triangle free. We think of $G_{\leq i}$ as our desired triangle-free graph ``so far.'' We will also maintain a graph of \emph{open pairs} at stage $i$, which we denote $O_i$. Crucially $O_i$ satisfies
\[ O_i \subset \big\{ e \in K_n \setminus G_{\leq i } : G_{\leq i } + e \not\supset K_3 \big\}. \] We also define auxillary graphs $G_0',\ldots,G'_i$ on $[n]$ for which $G_i \subset G_i'$ and set  
\begin{equation}\label{eq:def-G'} G'_{\leq i} =G'_0\cup G_1' \cup \cdots \cup G'_i.
\end{equation} We will say that a pair $e \in K_n$ is \emph{closed} if $e \not\in O_i$. In fact, we maintain the stronger property 
\begin{equation} \label{eq:O_i-first} O_i \subset \big\{ e \in K_n \setminus G'_{\leq i } : e \text{ does not form a triangle with } G_{\leq i}' \big\}. \end{equation}
Our final graph $G$ is defined as $G = G_{\leq \tau}$, where the process stops at time $\tau$.  In practice, we will only run the process for a specified number of steps $T$, which will be $< \tau$ with high probability (see \eqref{eq:T_0}). 

As mentioned above, the zeroth step of this process is peculiar and will require special treatment throughout the paper. 

\subsection{Construction of the seed}\label{subsec:seed}
We now describe the zeroth step of the process which produces graphs $(G_0,G'_0,O_0)$. Set $r = (\log n)^2$, sample a random graph
\begin{equation}\label{eq:def-G_{ast}'} G_{\ast}' \sim G(n/r,p_0) , \end{equation} and then define ${G}_\ast \subset G_{\ast}'$ to be a maximal triangle-free subgraph. Here $p_0$ is the input parameter defined at \eqref{eq:def-p00} and we identify the vertex set of $G_{\ast}'$, $G$ with $[n/r]$. We define $G'_0$ and $G_0$ as $r$-blow-ups of ${G}_\ast'$ and ${G}_\ast$ respectively. 
Now define 
\begin{equation}\label{eq:def-O'_0} O'_0 = \big\{ e \in K_n \setminus G'_0 :  e \text{ does not form a triangle with } G_{0}' \big\} \end{equation} and let $[n] = V_1 \cup \cdots \cup V_{n/r}$ be the partition corresponding to the blown up vertices. Note that $O'_0$ is also a blow-up with respect to the same partition. 

We now define $O_0 \subset O'_0$ by choosing an edge $e_{i,j} \in O_0'[V_i,V_j]$ uniformly at random for each pair $i<j$ such that $O_0'[V_i,V_j]$ is non-empty\footnote{If $G$ is a graph and $U,V \subset V(G)$ we define $G[U] = \{ xy \in G : x,y \in U \}$ to be the graph \emph{induced} on $U$ and define $G[U,V] = \{ xy \in G : x \in U, y \in V\}$.}. We then define 
\[  O_0 = \big\{ e_{i,j} \,: \, O_0'[V_i,V_j] \not= \emptyset \big\}.\]
The reason for this step is somewhat technical; it will allow us to control codegrees in the graph $O_i$ more easily (see \eqref{eq:intersections}). 

\subsection{A step in the process}\label{sec:a-step-in-process} We now define the process for steps $i \geq 1$. Given graphs $G_0',\ldots, G_i'$ , $G_0, \ldots , G_i$ and $O_0, \ldots ,O_i$, we define $G'_{i+1}, G_{i+1},O_{i+1}$ as follows.
\begin{enumerate}[label=\arabic*., ref=\arabic*] 
 \item \label{step:sprinkle} (Nibble step) We first sample $G_{i+1}'$ randomly from the open pairs $O_i$. Write 
\begin{equation}\label{eq:def-theta_i} e(O_i) = \theta_i \binom{n}{2} \hspace{3em} \text{ define } \hspace{3em}  p_{i+1} =   \frac{\g}{\theta_i \sqrt{n}}, \end{equation} and then sample \[  G'_{i+1} \sim G(O_i,p_{i+1}),\] if $p_{i+1} \leq \g^3 $. Otherwise, stop and set $\tau=i$. 

\vspace{2mm}

\item \label{step:maximial-triangle-free} (Cleaning step) Let $G_{i+1}\subset G_{i+1}'$ be maximal such that  $G_{\leq i} \cup G_{i+1}$ is triangle free. 

\vspace{2mm}

\item (Regularization step) \label{step:reg-step} To define $O_{i+1}$, we first define, 
\begin{equation}\label{eq:def-Oi+1} O_{i+1}' = \big\{ e \in  (K_n  \setminus G_{\leq i+1}') \cap O_{i}: e \text{ does not form a triangle with } G_{\leq i+1}' \big\}.\end{equation}
Then for each $e \in O_i$, we define the quantity
\begin{align}\label{eq:qedef}
    q_e  =  \frac{\min_{f\in O_i}\PP\big( f \in O_{i+1}' \big)}{ \PP\big( e \in O_{i+1}' \big) } \in [0,1]\, ,
\end{align}  
where the above probabilities are with respect to the randomness of $G'_{i+1} \sim G(O_i,p_{i+1})$.
We now define $Q_{i+1} \subset O_{i}$ to be a random subset where each pair $e 
\in O_i$ is independently included with probability $q_e$. Finally we define 
\begin{equation}\label{eq:reg-step} O_{i+1} = Q_{i+1} \cap O_{i+1}'. \end{equation} This completes a step in the process. 
\end{enumerate}

The main feature of our regularization step is that the ``survival probability'' $\P(e \in O_{i+1} \,|\,e \in O_i)$ is the same for all $e$ (see \eqref{eq:prob-e-open-constant}). While this ``dampening'' seems to be a small detail, it actually has a large impact and greatly simplifies our analysis.

\subsection{Selection of parameters and main technical result}

Let $ 0< \delta < 1/8$ be fixed and arbitrarily small throughout the paper and let $\alpha_0,\alpha_1 \geq 0$ be such that 
\begin{equation}\label{eq:alpha_0alpha_1-setup} (\alpha_0 +\alpha_1)^2-\alpha_0^2 \leq 1/2 - 2\delta . \end{equation}
Then define the quantities 
\begin{equation}\label{eq:def-p0} p_0 = \alpha_0 \bigg(\frac{\log n}{n}\bigg)^{1/2} \qquad \text{ and } \qquad  p_1 = \alpha_1 \bigg(\frac{\log n}{n}\bigg)^{1/2}. \end{equation}
We will show that, with high probability, we can run the above process until time 
\begin{equation}\label{eq:T_0} T = \alpha_1 \gamma^{-1} \sqrt{\log n}, \end{equation} to obtain the graph $G_{\leq T}$ of density $p_0+p_1$. Most importantly, we show that if we set
\begin{equation}\label{eq:alpha_i-k-selection} \alpha_0 = \alpha_1 = (1-3\delta)/\sqrt{6} \qquad \text{ and } \qquad  k = (1+\delta)\sqrt{(3n/2) \log n }\end{equation} then $\alpha(G_{\leq T})< k$, with high probability. This is our main technical theorem of the paper and quickly implies Theorem~\ref{thm:r3k}.

\begin{theorem}\label{thm:main-theorem-ind-set} 
We have
\begin{equation}\label{eq:ind-set-bound}  \PP\big( \alpha( G_{\leq T} ) <  k \big) = 1- o(1).\end{equation}
\end{theorem}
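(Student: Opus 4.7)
The plan is to prove Theorem~\ref{thm:main-theorem-ind-set} by a first-moment argument, making the heuristic of Section~\ref{sec:heuristic} rigorous. The goal is to bound the expected number of independent $k$-sets in $G_{\leq T}$ and apply Markov's inequality. I would split the argument into three phases: tracking the process to establish a ``good'' event $\cE$ of probability $1-o(1)$; enumerating candidate independent $k$-sets by how they interact with the blow-up partition and the seed neighbourhoods; and estimating the per-set probability of remaining independent in $G_{\leq T}$.

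In the first phase, introduce deterministic trajectories $\widehat{\theta}_i$ for the open-pair density, along with companion trajectories for codegrees and neighborhood sizes in $O_i$. The good event $\cE$ demands $\theta_i = (1+o(1))\widehat{\theta}_i$ for all $i \leq T$, pseudo-random properties for the seed $G_\ast$, and control of codegrees in $O_i$. The regularization step~\eqref{eq:qedef}--\eqref{eq:reg-step} is central: by design it makes $\PP(e \in O_{i+1} \mid e \in O_i, \cF_i)$ a constant independent of $e$, so $|O_{i+1}|$ is conditionally a sum of i.i.d.\ Bernoullis and concentrates tightly via Chernoff. Inducting on $i$ and union-bounding over the $T$ steps should give $\PP(\cE) = 1-o(1)$.

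In the second phase, for a candidate $k$-set $I$, let $\ell$ denote the number of blow-up parts $V_j$ that $I$ meets. Partition candidates into \emph{generic} sets (contributing to $I_{k,\ell}$) and \emph{dangerous} sets (contributing to $J_{k,\ell}$) that contain a full seed-neighborhood $N_{G_0}(x)$; by~\eqref{eq:funky-Is} the latter satisfy $\ell \leq k - (1-o(1))p_0 n$. In the third phase, work on $\cE$ and fix $I$. The seed contribution is $\PP(G_0[I] = \emptyset) \leq (1-p_0)^{\binom{\ell}{2} + o(\ell^2)}$ by independence of the underlying $G(n/r,p_0)$ edges (the cleaning step removes only an $o(1)$ fraction). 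For the nibble phase, the key quantity is $q_i := |O_i \cap \binom{I}{2}|$: on $\cE$, the uniform survival property lets one show $q_i = (1+o(1))(\widehat{\theta}_i/\widehat{\theta}_0)q_0$, and the probability no pair of $I$ enters $G_{i+1}$ is $(1-p_{i+1})^{q_i}$. Telescoping using $\sum_i p_{i+1}\widehat{\theta}_i/\widehat{\theta}_0 = (1+o(1))p_1$ yields a nibble factor $(1-p_1)^{\binom{k}{2}+o(k^2)}$ for generic $I$ and the tighter factor $(1-p_1)^{\binom{k}{2} - \binom{p_0 n}{2}}$ of~\eqref{eq:sketch-Jkell} for dangerous $I$, since pairs inside $N_{G_0}(x)$ start closed. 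Summing over $\ell$ as in Section~\ref{sec:heuristic} at the choice~\eqref{eq:alpha_i-k-selection} -- which saturates \eqref{eq:ell_is_k_is_max_cond}, \eqref{eq:heur-ell=k}, \eqref{eq:k-max-deg} and \eqref{eq:Jellk-constraint} -- gives total $o(1)$.

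The main obstacle is phase three, specifically justifying the tracking of $q_i$ for essentially \emph{every} candidate $I$: the events ``pair $e$ is still open at step $i$'' are correlated across $e \in \binom{I}{2}$, so a naive union bound over $I$ combined with a global trajectory statement is too weak. I expect this to be handled by a per-$I$ supermartingale argument showing $q_i$ concentrates around its deterministic trajectory, leveraging the uniformity from the regularization step to make the per-pair evolutions nearly independent under the conditional measure. Dangerous sets add a further wrinkle: the conditioning $I \supset N_{G_0}(x)$ disturbs exchangeability with respect to the seed, so I would first expose $G_\ast$, verify its pseudo-random properties, and only then analyse the nibble phase -- essentially a two-round exposure matching the two-stage construction of the process.
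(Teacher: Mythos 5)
Your overall strategy — a first-moment bound, a good event establishing quasi-randomness, splitting candidate sets by $\ell=|\pi(I)|$ and by whether they swallow a seed neighbourhood, and a per-set probability estimate matching \eqref{eq:Ikell} and \eqref{eq:sketch-Jkell} — is exactly the strategy of Sections~\ref{sec:density-of-process}--\ref{sec:proof-of-main-thm}, and phase one corresponds closely to the event $\cA$ of Lemma~\ref{lem:cA}. However, the ``main obstacle'' you flag in phase three is a genuine gap, and the fix you sketch does not work as stated.

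The problem is with the proposed per-$I$ supermartingale for $q_i=|O_i\cap\binom{I}{2}|$. The regularization step makes the \emph{marginal} survival probability $s_{i+1}$ uniform over open pairs, but it does nothing to control the \emph{joint} fluctuations of the $\binom{I}{2}$ indicators, and these are the obstruction. If a vertex $x$ has degree $d$ into $I$ in $G'_{\leq i}$, then a single new edge $xv$ with $v\in I$ closes up to $d$ pairs of $I$ at once (all $vw$ with $w\in N'_{\leq i}(x)\cap I$). Exposing the process vertex-by-vertex, the martingale increment at $x$ is therefore of order $d\cdot|N^\circ_i(x)\cap I|$, and for a dangerous $I\supset N_{G_0}(y)$ the seed vertex $y$ has $d=\Theta(p_0n)=\Theta(\sqrt{n\log n})$. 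The resulting quadratic variation is far too large to push the deviation below the required $\Theta(\theta_i k^2)$, so a naive Freedman/Azuma bound fails for exactly the sets you most need to control. Conditioning on $G_\ast$ first, as you suggest, does not cure this: the closing of pairs in $\binom{I}{2}$ still happens during the nibble, and it is the nibble-step increments that blow up.

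The paper resolves this with three ingredients that are absent from the proposal. First, it does not track $|O_i\cap\binom{I}{2}|$ at all; it tracks $|C_i(I)\cap O_i|$ where $C_i(I)$ is the ``core'' \eqref{eq:core-def}, i.e.\ $\binom{I}{2}$ with the pairs covered by the high-degree set $L(I)$ deleted, while the contribution of the deleted pairs is controlled separately via Lemma~\ref{lem:moderate-degrees}. Second, because $L(I)$ and hence the core depend on the final graph $G'_{\leq T}$, it ``derandomizes'' by union-bounding over candidate cores $C\in\Lambda(I)$ and candidate sets $L$ (Section~\ref{sec:open-edges-in-I-redux}, Observation~\ref{obs:bound-on-Lambda}) before any concentration argument is run; without this the martingale statement would not even be measurable with respect to a fixed step's filtration. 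Third, even within a fixed core, the increments are controlled only after passing to the ``capped measure'' $\widehat{\PP}_{I,L}$ of Section~\ref{sec:open-edges-in-I-martingale}, which deterministically truncates each external vertex's degree into $I$ at $n^{2\beta}$; this is what yields the uniform increment bound $|\xi_j|\leq 5n^{4\beta}$ (Lemma~\ref{obs:max-xi}) that Freedman's inequality needs. Your proposal neither removes the offending pairs nor caps the degrees, and ``near-independence from regularization'' cannot substitute for either.
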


In what follows we develop some of the basic terminology for working with the process and  sketch a heuristic understanding of how the process runs to time $T$.

\subsection{Central definitions and heuristic discussion of the evolution of the process}\label{sec:evo-of-process-sketch} 
For $i\geq 0$, we let $\P_{i+1},\E_{i+1}$ denote probability and expectation taken with respect to the randomness at step $i+1$ of our process; the selection of $Q_{i+1}, G'_{i+1}\subset O_{i}$, given $G_{\leq i}$ and $O_{i}$.

Throughout, we let $\theta_i$ be the random variable defined by
\begin{equation}\label{eq:def-theta_i-psi_i} e(O_i) = \theta_i \binom{n}{2}\, ,
\end{equation}
that is, $\theta_i$ is the density of open pairs at step $i$. It will be useful to keep the following heuristic in mind: $O_i$ and $G_{\leq i}'$ resemble two independent samples of a binomial random graph at the appropriate densities. 

Before discussing the evolution of $\theta_i$, we note that the density of $G_{\leq i}'$ is simple to understand. Indeed, recall that $G_0'$ is a blowup of the random graph $G(n/r,p_0)$ and that for $i\geq 0$, we defined $G'_{i+1}\sim G(O_i, p_{i+1})$, where $p_{i+1}\theta_i=\gamma/\sqrt{n}$. Thus, by standard concentration estimates we will maintain that  
\[ e(G'_{i}) =  (1+o(1))(\gamma/\sqrt{n}) \binom{n}{2}\] for each $i\geq 1$ and that the density of $G'_{0}$ is $(1+o(1))p_0$. Thus, for $i\geq 0$,
\begin{align}\label{eq:Psi-Approx}
 e(G'_{\leq i}) = (1+o(1))(i\g/\sqrt{n} + p_0 )\binom{n}{2}\, .
 \end{align}
Assuming that we can run the process to step $T = \alpha_1\g^{-1}\sqrt{\log n}$, the density of the graph $G_{\leq T}'$ is then $(1+o(1))(p_0 + p_1)$
which is in line with the heuristics in Section~\ref{sec:heuristic}.

We now turn to understand the trajectory of $\theta_i$. Here we will maintain that 
\begin{equation} \label{eq:sketch-theta_i} \theta_i \geq (1-o(1)) \exp\hspace{-0.5mm}\big(\hspace{-0.5mm} - \hspace{-0.5mm}\g^2 i^2-2\gamma i\alpha_0\sqrt{\log n}\big) \cdot \theta_0, \end{equation}
with high probability, where $\theta_0$ comes from the seed step and satisfies $\theta_0 = (1-o(1))r^{-2} = n^{-o(1)}$. (In fact, we expect \eqref{eq:sketch-theta_i} to hold with equality, however we only need the one-sided bound and so limit ourselves to proving this.)

It is a little harder to see why \eqref{eq:sketch-theta_i} holds, and for this we first make note of an important feature of our random process which we mentioned above (and the reason for the regularization step): the probability that a pair $e \in O_i$ remains open in the next round (that is, $e\in O_{i+1}$) is uniform over all $e \in O_{i}$. While this seems like a small detail, it dramatically simplifies the process and allows for a much easier analysis. Indeed, for $e \in O_i$, we have that 
\begin{equation}\label{eq:prob-e-open-constant} \PP_{i+1}\big( e \in O_{i+1} \big) = \PP_{Q_{i+1}}\big( e\in Q_{i+1}\big)\PP_{G_{i+1}'}\big( e \in O_{i+1}' \big) = \min_{f \in O_i }\, \PP_{i+1} \big( f \in O_{i+1}' \big).\end{equation}
Therefore it makes sense to define the ``survival probability'',
\begin{equation}\label{eq:def-si+1} s_{i+1} = \PP_{i+1}(e \in O_{i+1}) \qquad \text{and thus } \qquad  \EE_{i+1} \theta_{i+1} = s_{i+1} \theta_i,\end{equation} where $e$ is any $e \in O_i$. Thus we have a recurrence for the expectation of $\theta_{i+1}$ in terms of $s_{i+1}$ and so we turn to find an expression for this probability. Here we need two definitions, corresponding to the two ways that an edge could close a triangle at step $i$. 

We say that edges $e,f,g$ form an \emph{open} triangle at step $i$ if $e,f,g$ form a triangle and $e,f,g \in O_i$. For $e \in O_i$
we define the graph on open pairs
\begin{equation}\label{eq:def-Y} \cX_i(e) = \big\{ \{f,g\} \in O_i^{(2)} : e, f, g \text{ form an open triangle in } O_i  \big\}. \end{equation}
Similarly, we say that $e,f,g$ form a \emph{clopen triangle} if $e,f \in O_i$ and $g \in G'_{\leq i}$ (or in some other permutation). For each $e \in O_i$ we define 
\begin{equation}\label{eq:def-X} \cY_i(e) = \big\{ f \in O_i : e, f \text{ span a clopen triangle} \big\} \cup \{ e \},  \end{equation}
where it makes sense to add $e$ to this set since an edge $e$ is trivially closed if we add it to $G_{\leq i+1}'$ (see the definition at \eqref{eq:def-Oi+1}). It will be useful throughout the paper to define the quantities
\begin{equation}  X_i(e) = |\cX_i(e)| \qquad \text{ and } \qquad Y_i(e) = |\cY_i(e)| .  \end{equation}
Thus, using \eqref{eq:prob-e-open-constant}, we may express $s_{i+1}$ as
\begin{equation}\label{eq:si+1-formula} s_{i+1} = \min_{ e \in O_i }\, \PP_{i+1} \big( e \in O_{i+1}' \big) = \min_{e \in O_i}\, (1-p_{i+1}^2)^{X_i(e)} (1-p_{i+1})^{Y_i(e)}. \end{equation}
With \eqref{eq:si+1-formula} in view, we are naturally led to study $X_i(e)$ and $Y_i(e)$. For this we define 
\[ N_{\leq i}'(x) = N_{G_{\leq i}'}(x) \qquad \text{ and } \qquad N^{\circ}_{i}(x) = N_{O_i}(x).  \]
We now recall our heuristic above: we expect that the two graphs $G_{\leq i}'$ and $O_i'$ should look somewhat like independent copies of the binomial random graph of the appropriate densities. With this in mind, we maintain that throughout the process we have
\begin{equation}\label{eq:intersections}  |N^{\circ}_i(x) \cap N^{\circ}_i(y)| \leq (1+o(1)) \theta_i^2 n \qquad \text{ and } \qquad |N'_{\leq i}(x) \cap N^{\circ}_{i}(y)| \leq (1+o(1)) \theta_i\psi_i n, \end{equation}
for all pairs of distinct vertices $x,y\in [n]$ where $\psi_i=i\g/\sqrt{n}+p_0$. Again, we expect that these inequalities are equalities, but we only need the upper bounds. These bounds at \eqref{eq:intersections} are then easily seen to imply 
\begin{equation}\label{eq:disc-XiYi-bounds} X_i(e)\leq (1+o(1)) \theta_i^2 n \qquad \text{and} \qquad Y_i(e)\leq (1+o(1)) 2\theta_{i}\psi_i n   \, .\end{equation}
Which, using \eqref{eq:si+1-formula}, \eqref{eq:disc-XiYi-bounds} and recalling that $p_{i+1} = \g/(\theta_i\sqrt{n})$, implies
\[s_{i+1}  \geq (1-o(1))\exp\hspace{-0.5mm}\big(\hspace{-0.5mm}-\hspace{-0.5mm}(2i+1)\g^2-2\gamma\alpha_0\sqrt{\log n}\big). \]
Thus inductively applying $\EE_{i+1} \theta_{i+1}  = s_{i+1}\theta_i$ gives \eqref{eq:sketch-theta_i}, on average. We shall then need to complement these average results with concentration results to show that they also hold with high probability.  

We now turn to sketch the proof that our process runs until time $T = \alpha_1\g^{-1} \sqrt{\log n}$. In fact, this is fairly easy given the above. From 
$\eqref{eq:sketch-theta_i}$ we see that at time $T$, we have 
\[ \theta_T \geq (1-o(1)) \exp\hspace{-0.5mm}\big(\hspace{-0.5mm} - \hspace{-0.5mm}\g^2 T^2-2\gamma T\alpha_0\sqrt{\log n}\big)\cdot \theta_0 = n^{-\alpha_1^2 -2\alpha_0\alpha_1 + o(1) } \geq n^{-1/2+\delta }, \] since $\theta_0 = n^{-o(1)}$ and since we chose  $\alpha_0,\alpha_1$ to satisfy $\alpha_1^2+2\alpha_1\alpha_0 \leq 1/2 - 2\delta$ at \eqref{eq:alpha_0alpha_1-setup}. Since the process only halts when $\theta_i \leq \g^{-3}/\sqrt{n}$ (see Section~\ref{sec:a-step-in-process}, Step~\ref{step:sprinkle}) we can conclude that the process runs to step $T$. 

The reader may find it odd that in the above discussion we track the density of $G'_{\leq i}$ rather than the density of $G_{\leq i}$, since the latter is the part of the graph that we ultimately output. However, it is a (perhaps surprising) feature of our proof that we can almost entirely focus on the properties of the graph $G'_{\leq i}$. The only place where it is crucial we work with $G_{\leq i}$ rather than $G'_{\leq i}$ comes in the independent set calculation (see Lemma~\ref{lem:prob-ind}) since an independent set in $G_{\leq i}$ may fail to be independent in $G'_{\leq i}$. Other than this, one should think of $G_{\leq i}$ and $G'_{\leq i}$ as very similar: we just delete a small proportion of edges of $G'_{\leq i}$ to obtain $G_{\leq i}$ to preserve triangle-freeness.

\subsection{Tracking degrees and codegrees} \label{ss:tracking-degs-and-codegs}

As we saw above, controlling our process boils down to controlling the maximum degree and codegree of $G_{\leq i}'$ and $O_i$ in addition to the maximum ``cross-degree'' \eqref{eq:disc-XiYi-bounds}. This is in stark contrast to \cite{bohman2021dynamic} and \cite{FGM}, where the authors are forced to track a large ensemble of (interacting) random variables. 
Perhaps surprisingly, we control these degrees and codegrees by a fairly simple moment argument.

The key input into this moment argument is the following statement on the probability a small subgraph ``survives'' to the next round of the process. Assume we are at step $i$ of the process and let $K$, $H \subset O_i$ be ``small'' fixed subgraphs (of size  $n^{o(1)}$ say). Then
\begin{equation}\label{eq:sketch-small-subgraphs} \PP_{i+1}\big( H \subset O_{i+1} \text{ and } K \subset G_{i+1}' \big) \leq (1+n^{-c})^{e(H)}s_{i+1}^{e(H)}p_{i+1}^{e(K)},\end{equation} where $c>0$ is an absolute constant. Note that this is consistent with our heuristic that $G_{\leq i+1}'$ and $O_{i+1}$ behave like independent samples of binomial random graphs at the appropriate densities.

To prove \eqref{eq:sketch-small-subgraphs}, we split the two events in the conjunction using Harris's inequality: one is an increasing event in the randomness of $G_{i+1}'$ and the other is a decreasing event for each fixed choice of $Q_{i+1}$. We then bound the quantity $\PP_{i+1}( H \subset O_{i+1} )$ using Janson's inequality (see Theorem~\ref{thm:Janson}). The only work here is in checking that the quasi-randomness properties of the graphs $G_{\leq i}'$, $O_{i}$ give the correct accounting for our application of Janson. (See Section~\ref{sec:deg-codeg-2} for the precise statements and more detail.)

Interestingly, \eqref{eq:sketch-small-subgraphs} is enough to get the concentration we need. To give the reader a feel for this, let us sketch how we maintain control on the maximum degree of $O_i$ throughout the process. In particular, we sketch why we have
\begin{equation}\label{eq:sketch-Delta-Oi} \Delta(O_i) \leq (1+\eps_i) \theta_i n ,\end{equation}
for all $i\leq T$, with probability $1-o(1)$. Here $\eps_i = i\g^{3}$ is our error term for step $i$. To prove this, assume that we have maintained the inequality \eqref{eq:sketch-Delta-Oi} up to step $i$. We now investigate the probability that it fails at step $i+1$. For this, we first observe that for all vertices $x$, we have (recalling \eqref{eq:def-si+1})
\[ \EE_{i+1}\, |N^{\circ}_{i+1}(x)| = s_{i+1}|N_{i}^{\circ}(x)| \leq (1+\eps_i)s_{i+1}\theta_i n =: \mu , \] where the inequality holds by the assumption \eqref{eq:sketch-Delta-Oi} holds at step $i$. We will then show that 
\begin{equation}\label{eq:sketch-nbh-bound} \PP_{i+1}\big( |N_{i+1}^{\circ}(x)| \leq (1 + \gamma^4 )\mu  \big) \geq 1-n^{-\omega(1)},\end{equation} for all vertices $x \in [n]$. Union bounding over all $x$ gives 
\[ \Delta(O_{i+1}) \leq (1+\g^4)\mu  = (1+\g^4)(1+\eps_i)s_{i+1}\theta_in,\] 
with high probability. We will then prove the random variable $\theta_i$ is close to its mean $\EE_{i+1} \theta_{i+1} = s_{i+1}\theta_i$. That is,
\begin{equation}\label{eq:sketch-theta-bound} \PP_{i+1}\big( \theta_{i+1}  \geq (1-\g^{4})s_{i+1}\theta_i \big) \geq 1- n^{-c},\end{equation}
which allows us to conclude that that 
\[ \Delta(O_{i+1})\leq (1+\gamma^4)^2(1+\eps_i)\theta_{i+1} n \leq (1+\eps_{i+1})\theta_{i+1} n, \] with probability $1-n^{-c}$, as desired. 

The proofs of both \eqref{eq:sketch-nbh-bound} and \eqref{eq:sketch-theta-bound} use moment-type arguments. The proof of \eqref{eq:sketch-theta-bound} is the most direct.
Write $Z = e(O_{i+1}) = \theta_{i+1} \binom{n}{2}$. We have 
\[ Z = \sum_{e \in O_i} \1(e \in O_{i+1}) \]
and so we can express the second moment as 
\begin{equation}\label{eq:sketch-moment} \EE\, Z^2 =  \sum_{e, \hspace{0.3mm} f} \PP\big( e, f \in O_{i+1} \big) \leq  (1+n^{-c})^2\sum_{e \hspace{0.5mm} \not= f}\PP\big( e \in O_{i+1} \big)\PP\big( f \in O_{i+1} \big) + \EE\, Z ,  \end{equation} where the inequality holds by applying \eqref{eq:sketch-small-subgraphs} to $K = \emptyset$ and $H = \{ e, f\}$, when $e \not= f$. We now bound the sum on the right hand side of \eqref{eq:sketch-moment} by $\EE\, Z^2$ and apply Chebyshev's inequality to obtain concentration (see Lemma~\ref{lem:theta-conc} for more detail).

For \eqref{eq:sketch-nbh-bound} we need a slightly stronger concentration estimate since we need to sum over all $x\in [n]$. Luckily here we only need to control the upper tails and so we can use a slightly different method. In particular, if we write $Z_x(t)$ be the number of stars $K_{1,t}$, $t = n^{o(1)}$, contained in $O_{i+1}$ with center $x$, we can bound
\[ \EE_{i+1} Z_x(t) \geq \PP_{i+1}\big(  |N_{i+1}^{\circ}(x)| \geq (1+\eps)\mu  \big) \cdot \binom{ (1+\eps)\mu}{t} . \]
We then realise $Z_{x}(t)$ as a sum over all stars $S$ with center $x$ in $O_i$ and write  
\[ \EE_{i+1}\, Z_{x}(t) \leq \sum_S\hspace{0.5mm} \PP( S \subset O_{i+1}) \leq (1+n^{-c})^t s_{i+1}^t \cdot \binom{|N_i^{\circ}(x)|}{t} , \] comparing the upper and lower bounds completes the proof (see Lemma~\ref{lem:cA-open-deg} for details). Thus we can prove \eqref{eq:sketch-nbh-bound} and complete our analysis of the maximum degree.

Maintaining similar bounds on the  codegrees and cross-degrees in \eqref{eq:intersections} is not much harder and follows the same strategy. We point the reader to Section~\ref{sec:deg-codegs-3} for details. 

We remark that it is an extremely important feature of these bounds that they are \emph{independent} of the value of $\theta_i$, thus any errors that accumulate in our understanding of $\theta_i$ don't ``feed back'' into this part of the process. In particular, the degrees, codegrees and cross-degrees form a ``closed system of inequalities'' that we neatly ``chain'' as the process runs (as we saw above). For these parameters we maintain multiplicative errors of the size $1+\eps_i$ at step $i$ (e.g. see \eqref{eq:sketch-Delta-Oi}). From these inequalities we \emph{derive} that 
\[\theta_{i} \geq  (1-i\eps_{i}) \exp\hspace{-0.5mm}\big(\hspace{-0.5mm}-\hspace{-0.5mm}\g^2 i^2-2\gamma i\alpha_0\sqrt{\log n}\big)\theta_0  \]
where the error term here is notably larger than $\eps_i$. This is acceptable since none of our bounds above rely on this error term. This is in contrast to the papers \cite{bohman2009triangle},\cite{bohman2021dynamic} and \cite{FGM} where the errors from fluctuation in the density of open edges feed back into the rest of the process, resulting in a very subtle dynamics. 

Actually our basic analysis of the process is not much more than what we have sketched above and is the content of (the short) sections: Sections~\ref{sec:density-of-process}, ~\ref{sec:deg-codeg-2}, and \ref{sec:deg-codegs-3}. In the remaining sections we turn to prove the upper bound on the independence number of $G_{\leq T}$. In Section~\ref{sec:moderate-degrees} we control the number of vertices that have large degree to an independent set $I\in[n]^{(k)}$. In Section~\ref{sec:open-edges-in-I-redux} we define the important notation of the ``core'' of an independent set $I$ and set up our main lemma which says that independent sets contain the ``correct'' number of open edges. In Section~\ref{sec:open-edges-in-I-martingale} we use a martingale argument to finish the proof that independent sets contain the correct number of open edges. (We highlight that our martingale and analysis is significantly different from that in \cite{bohman2021dynamic}, \cite{FGM}). In Section~\ref{sec:prob-k-set-independent} we make the heuristic in Section~\ref{sec:heuristic} rigorous and prove our bound on the probability a $k$-set $I$ is independent in the final graph $G_{\leq T}$. Then, finally, in Section~\ref{sec:proof-of-main-thm} we put these pieces together to finish the proof of our main theorem.

\subsection{Universal notation}

As we will work quite a bit with the graphs $O_i,G_{\leq i}, G_i, G'_{\leq i}, G'_{ i}$ we reserve some special notation for them. As we defined above, we let $N^{\circ}_i(x)$ denote the neighbours of $x$ in the graph $O_i$. We let $N_{i}(x)$ denote the neighbourhood in the graph $G_{i}$, $N_{\leq i}(x)$ denote the neighbourhood in the graph $G_{\leq i}$ and 
$N'_{i}(x), N'_{\leq i}(x)$ denote the neighbourhoods in the graphs $G'_{i}$ and $G'_{\leq i}$, respectively. 

As we saw above, $0 < \delta < 1/8$ is a given small constant. We will need a constant $\beta>0$ for which $\beta< \delta/2^7$ (see \eqref{eq:L-def}). At \eqref{eq:def-gamma} we defined $\gamma$ universally throughout the paper, we also define an ``error function'' for the $i$th round of this process $\eps_i$. So we record
\[ 0 < \beta < \delta/2^{7}, \qquad \gamma = (\log n)^{-10},  \qquad  r = (\log n)^2  \qquad \text{ and }  \qquad \eps_i = (i+1)\g^{3}  ,\]
for $i \geq 0$. Throughout the paper we also think of $k$ as fixed and universal
\[ k = (1+\delta)\sqrt{(3n/2) \log n } . \]

Given the partition $[n] = V_1 \cup \cdots \cup V_{n/r}$ defined by the blow-up in the seed step in Section~\ref{subsec:seed} (which we can think of as fixed throughout the paper), define the ``projection'' map $\pi : [n] \rightarrow [n/r]$,
by 
\begin{equation}\label{eq:def-pi} \pi(x) = j \qquad \text{ where } \qquad x \in V_j .\end{equation}
It will also be useful to extend this definition to graphs. Indeed if \[ H \subset [n]^{(2)} \qquad \text{ we define } \qquad \pi(H) = \{ \pi(x)\pi(y) : xy \in H \}.\] 

Throughout the paper we assume that $n$ is sufficiently large so that all our statements and inequalities hold.

\section{Degrees and co-degrees I: statement of results and consequences }\label{sec:density-of-process}

In this short section we define $\cA$, a ``quasi-randomness'' event which says, roughly speaking, that the degrees and codegrees of the graphs $G'_{\leq i}, O_i$ stay on-track as the process runs. We then state that this event holds with high probability and derive a few consequencs of $\cA$ that will be useful throughout the paper. In the following two sections, we go on to prove that $\cA$ holds with high probability.

\subsection{Definition of the event \texorpdfstring{$\cA$}{A}} We define $\cA$ by defining the events $\cA_i = \cA_i(O_{i-1},G_{\leq i-1}')$ which only depend on the randomness in the $i$th step. We then define the ``global'' event 
\[\cA = \bigcap_{0\leq i\leq T} \cA_i \, .\] Recall that $\theta_i$ is the density of the graph $O_i$ at step $i$. We define $\cA_i$ to be the intersection of the following properties of $O_i, G'_{\leq i}$. First, we specify that on $\cA_0$ we have
\begin{equation}\label{eq:theta-0} \theta_0 = (1-o(1))r^{-2}.\end{equation} While we don't specify the density of $\theta_i$, we do require it is close to its mean. For all $i\geq 0$ we require that on the event $\cA_i$ we have
\begin{align}\label{eq:theta-conc-Ai}
\big| \theta_i-\EE_i\, \theta_i \big|\leq \g^4 \cdot \EE_{i}\, \theta_i \, .
\end{align} Recall $\EE_i\, \theta_i = s_i\theta_{i-1}$ from \eqref{eq:def-si+1}. 
We then include the degree and codegree conditions into $\cA_i$
\begin{equation}\label{eq:def-cE_i}  
\Delta(O_i) \leq (1 + \eps_i) \theta_i n  \qquad \text{ and } \qquad \Delta_2(O_i) \leq (1 + \eps_i)\theta_i^2 n, \end{equation}
where $\Delta(G)$ denotes the maximum degree of a graph $G$ and $\Delta_2(G)$ denotes the maximum codegree $\Delta_2(G) = \max_{x\not=y}|N_G(x)\cap N_G(y)|$. We also require that on $\cA_i$ we have 
the events \begin{equation}\label{eq:def-cF_i} 
\Delta(G'_{i})\leq (1+\eps_i) \gamma \sqrt{n}\, ,\end{equation}
for all $i \geq 1$. Defining $\psi_i= i\g/\sqrt{n} +p_0$, we also maintain, for all $i\geq 0$,
\begin{equation}\label{eq:def-cF_i2}  
\Delta(G'_{\leq i})\leq (1+\eps_i)\psi_i n \quad \, \text{ and } \, \quad \, |N_{\leq i}'(x)\cap N_{\leq i}'(y)| \leq (\log n)^4 + i(\log n)^2\, ,
\end{equation}
whenever $\pi(x) \neq \pi(y)$. Recall from~\eqref{eq:Psi-Approx} that $\psi_i$ approximates the density of $G'_{\leq i}$ and $\pi$ identifies the part of the partition a vertex is in \eqref{eq:def-pi}. Finally, we require the cross-degree condition: for all $x,y\in [n]$ we have 
    \begin{equation}\label{eq:cross-degrees}
    |N'_{\leq i }(x) \cap N^{\circ}_{i}(y)| \leq  (1+\eps_i)\theta_i\psi_i n,
\end{equation} on the event $\cA_i$

This concludes the definition of the event $\cA_i$. In the next two sections we will show that $\cA_i$ holds, at each step, with high probability.

\begin{lemma}\label{lem:cA} We have
\[
\P(\cA)=1-o(1)\, .
\]
\end{lemma}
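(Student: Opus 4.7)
My plan is to show $\P(\cA_i^c \mid \cA_0 \cap \cdots \cap \cA_{i-1}) = n^{-\omega(1)}$ for each $0 \le i \le T$, so that a union bound over $T = O(\gamma^{-1}\sqrt{\log n})$ steps yields $\P(\cA) = 1 - o(1)$. The seed step $i=0$ is handled separately by direct calculation on $G_\ast' \sim G(n/r, p_0)$: standard Chernoff bounds give concentration of the degree sequence and edge count; the expected triangle count in $G_\ast'$ is $(\log n)^{3/2+o(1)}$, so the cleaning that produces $G_\ast$ perturbs $G_\ast'$ negligibly; for $\theta_0$, observe that the probability a bipartite block $V_i V_j$ contributes a (unique) representative to $O_0$ is $\approx (1-p_0^2)^{n/r} = 1-o(1)$, hence $\theta_0 = (1-o(1)) r^{-2}$ with exponentially small failure probability, and the blow-up structure gives the degree/codegree bounds exactly.

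For the inductive step, the central tool is the small-subgraph estimate from~\eqref{eq:sketch-small-subgraphs}: for $H \subset O_{i-1}$ and $K \subset K_n$ with $e(H), e(K) \le n^{o(1)}$, I would prove
\[
\P_i\bigl(H \subset O_i,\; K \subset G'_i\bigr) \le (1 + n^{-c})^{e(H)}\, s_i^{\, e(H)}\, p_i^{\, e(K)}.
\]
After conditioning on the regularization set $Q_i$, the event ``$H \subset O_i$'' is decreasing and ``$K \subset G'_i$'' is increasing in the random set $G'_i \sim G(O_{i-1}, p_i)$, so Harris's inequality decouples them. The $K$-event contributes $p_i^{e(K)}$ verbatim, while the $H$-event is bounded via Janson's inequality; the $(1+n^{-c})^{e(H)}$ slack comes from the Janson correction term, which enumerates pairs of closing witnesses (triangles or clopen triangles) sharing an edge and is controlled precisely by the codegree and cross-degree bounds built into $\cA_{i-1}$.

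Given the small-subgraph estimate, each ingredient of $\cA_i$ is a subgraph count and follows from a suitable moment argument. Write $Z = e(O_i) = \sum_e \mathbf{1}(e \in O_i)$ and apply the estimate with $H = \{e, f\}$ to get $\Var(Z) \le n^{-c}(\E Z)^2 + \E Z$; Chebyshev then gives~\eqref{eq:theta-conc-Ai} with failure probability $n^{-c}$. For $\Delta(O_i)$, I use the star-moment method sketched in the excerpt: if $|N^\circ_i(x)| > (1+\gamma^4)\mu$ where $\mu = s_i |N^\circ_{i-1}(x)|$, then $O_i$ contains at least $\binom{(1+\gamma^4)\mu}{t}$ stars of size $t = (\log n)^2$ centered at $x$, while the small-subgraph bound with $H$ a star gives expected count at most $(1+n^{-c})^t s_i^t \binom{|N^\circ_{i-1}(x)|}{t}$; the ratio is $n^{-\omega(1)}$ by the inductive control on $|N^\circ_{i-1}(x)|$, enough to union bound over $x \in [n]$. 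The same star method handles $\Delta_2(O_i)$ (stars inside the common neighborhood of $x,y$) and the cross-degree~\eqref{eq:cross-degrees} (stars with one leg in $G'_{\le i}$ and one in $O_i$, splitting $G'_{\le i}$ into seed and nibble pieces). The bounds on $\Delta(G'_i)$ and on the $G'_{\le i}$ codegrees in~\eqref{eq:def-cF_i2} are easier, since $G'_i \sim G(O_{i-1}, p_i)$ is a product measure on edges and a direct Chernoff plus a union bound over pairs suffice.

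The main obstacle will be establishing~\eqref{eq:sketch-small-subgraphs} with the sharp $(1+n^{-c})^{e(H)}$ factor uniformly over small $H$. The Janson correction term is a double sum over pairs of edges of $H$ weighted by the number of common closing witnesses through the pair, and checking that the quasi-randomness encoded in $\cA_{i-1}$ suppresses this term by an $n^{-c}$ factor per edge-pair requires a careful case analysis over how edges of $H$ share vertices, with $\Delta_2(O_{i-1}) \le (1+\eps_{i-1})\theta_{i-1}^2 n$ and the cross-degree bound~\eqref{eq:cross-degrees} doing the actual work. Once this tool is in place, the remaining bookkeeping is to verify that the per-step slack $(1+\gamma^4)(1+n^{-c})$ accumulates into the allowed error $\eps_i = (i+1)\gamma^3$ over the $T = O(\gamma^{-1}\sqrt{\log n})$ rounds, which is immediate since $T\gamma^4 \ll \gamma^3$.
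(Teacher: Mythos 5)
Your approach is essentially the same as the paper's: you propose the same three-step decomposition (seed step by direct calculation, inductive step via a Harris + Janson small-subgraph lemma, then second-moment/Chebyshev for $\theta_i$ and the star-moment method for degrees, codegrees, and cross-degrees, with Chernoff handling the $G'_i$ quantities). This is exactly how the paper organizes Lemmas~\ref{lem:Ai-Aiplus1}, \ref{lem:cA-for-seed}, \ref{lem:probability-open'}, \ref{lem:theta-conc}, \ref{lem:cA-open-deg}, \ref{lem:cA-open-codegs}, and \ref{lem:mixed-deg}.

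A few inaccuracies worth flagging, none of which sinks the argument. First, your opening claim of per-step failure probability $n^{-\omega(1)}$ is not what you actually obtain: the Chebyshev bound for $\theta_i$-concentration only yields $n^{-c}$ for a fixed small $c$ (this is also what the paper gets, $O(n^{-\delta/8})$), and that is fine because $T = n^{o(1)}$, but it is inconsistent with your stated plan. Second, the bookkeeping remark ``$T\gamma^4 \ll \gamma^3$'' is false, since $T\gamma^4 = \alpha_1\sqrt{\log n}\cdot\gamma^3 \gg \gamma^3$; what you actually need is the \emph{per-step} increment bound $\gamma^4 \ll \gamma^3$, i.e.\ the slack accumulated in passing from $\cA_i$ to $\cA_{i+1}$ is $O(\gamma^4)$, which fits inside $\eps_{i+1}-\eps_i = \gamma^3$. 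Third, your discussion of the triangle count in $G_\ast'$ is both miscalculated (it is $\Theta(n^{3/2}(\log n)^{-9/2})$, not $(\log n)^{3/2+o(1)}$) and, more importantly, irrelevant: the event $\cA_0$ is entirely a statement about $G'_0$ and $O_0$, which are defined from the uncleaned blow-up $G_\ast'$; the maximal triangle-free subgraph $G_\ast$ (and hence $G_0$) never enters into $\cA_0$, so there is nothing to control there. Finally, the small-subgraph lemma as you state it is missing the hypothesis $\Delta(G'_{\leq i}[V(H)]) \leq n^{\delta/4}$, which the paper needs in Observation~\ref{lem:combi-open-lemma-1} to control the Janson correction term coming from edge pairs $e,f$ with $e\cap f = \emptyset$; in applications this is supplied by the quasi-randomness of $G'_{\leq i}$, but it should be stated as a hypothesis rather than absorbed into ``$e(H) \leq n^{o(1)}$''.
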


In the remainder of this section, we limit ourselves to deriving a few useful consequences of $\cA$ that will be helpful throughout the paper.

\subsection{Consequences of the events \texorpdfstring{$\cA_i$}{Ai}}

As we mentioned in Section~\ref{sec:evo-of-process-sketch} control on the degrees and codegrees immediately imply control of $X_i(e)$ and $Y_i(e)$.

\begin{observation}\label{lem:X(e)-Y(e)-good} If $\cA_i$ holds, then for all $e \in O_{i}$ we have
\begin{equation}\label{eq:def-cG_i}
X_i(e)\leq (1+\eps_i) \theta_i^2 n  \qquad \text{and} \qquad Y_i(e)\leq (1+\eps_i) 2\theta_{i}\psi_i n \,  \end{equation}
and in particular
\[p^2_{i+1}X_i(e) + p_{i+1}Y_i(e)  \leq (1+\eps_{i}) \big((2i+1)\gamma^2 +2\gamma \alpha_0 \sqrt{\log n}\big) = O\big(\gamma\sqrt{\log n}\big).\]
\end{observation}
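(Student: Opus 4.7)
The plan is to unpack the definitions of $X_i(e)$ and $Y_i(e)$ and reduce them directly to the degree/codegree quantities already controlled by the event $\cA_i$. Fix $e = xy \in O_i$; note that since $O_i \subset O_0$ and $O_0$ is supported on inter-part pairs of the blow-up partition, we have $\pi(x) \neq \pi(y)$ (though this is not essential since \eqref{eq:def-cE_i} and \eqref{eq:cross-degrees} are stated for all $x,y$).

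First, I would argue that $X_i(e) = |N^{\circ}_i(x) \cap N^{\circ}_i(y)|$: an unordered pair $\{f,g\}$ forming an open triangle with $e$ is exactly a pair of the form $\{xv,yv\}$ with both $xv, yv \in O_i$, indexed by $v \in N^{\circ}_i(x) \cap N^{\circ}_i(y)$. The bound $X_i(e) \leq (1+\eps_i)\theta_i^2 n$ is then immediate from the codegree hypothesis $\Delta_2(O_i) \leq (1+\eps_i)\theta_i^2 n$ in \eqref{eq:def-cE_i}.

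Next, I would unpack the clopen condition: an edge $f \in \cY_i(e)\setminus\{e\}$ must share a vertex $v$ with $e$ and satisfy either ($f = xv \in O_i$ with $yv \in G'_{\leq i}$) or ($f = yv \in O_i$ with $xv \in G'_{\leq i}$). These two cases produce disjoint collections of edges (the sets $O_i$ and $G'_{\leq i}$ are disjoint by \eqref{eq:O_i-first}), so
\[ Y_i(e) = 1 + |N^{\circ}_i(x) \cap N'_{\leq i}(y)| + |N^{\circ}_i(y) \cap N'_{\leq i}(x)|\, . \]
Each of the two cross-degrees is at most $(1+\eps_i)\theta_i\psi_i n$ by \eqref{eq:cross-degrees}, and the artificial $+1$ (from $\cup\{e\}$ in \eqref{eq:def-X}) is easily absorbed into the slack $\eps_i\theta_i\psi_i n$, which tends to $\infty$ since $\theta_i \geq n^{-1/2+\delta}$ and $\psi_i \geq p_0 = \Theta(\sqrt{\log n / n})$. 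This yields $Y_i(e) \leq (1+\eps_i)\cdot 2\theta_i\psi_i n$.

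For the ``in particular'' consequence, I would substitute $p_{i+1} = \gamma/(\theta_i\sqrt{n})$ to get $p_{i+1}^2 \theta_i^2 n = \gamma^2$ and $p_{i+1}\cdot 2\theta_i\psi_i n = 2\gamma\psi_i\sqrt{n}$, and then use $\psi_i\sqrt{n} = i\gamma + \alpha_0\sqrt{\log n}$, which follows from the definitions $\psi_i = i\gamma/\sqrt{n} + p_0$ and $p_0 = \alpha_0\sqrt{\log n/n}$. Summing the two terms gives exactly $(1+\eps_i)\bigl((2i+1)\gamma^2 + 2\gamma\alpha_0\sqrt{\log n}\bigr)$. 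The $O(\gamma\sqrt{\log n})$ conclusion then follows from $i \leq T = \alpha_1 \gamma^{-1}\sqrt{\log n}$, which forces $i\gamma^2 = O(\gamma\sqrt{\log n})$.

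There is no substantive obstacle: the observation is a bookkeeping step that cashes in the codegree and cross-degree bounds already packaged into $\cA_i$ and applies the definitions of $p_{i+1}$, $\psi_i$. The only place where a small amount of care is needed is the decomposition of $Y_i(e)$, to avoid double-counting between the two orientations of the clopen triangle and to correctly handle the artificial ``$+\{e\}$'' term.
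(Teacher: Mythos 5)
Your proof is correct and follows the same route as the paper's (even terser) argument: identify $X_i(e)$ with the codegree in $O_i$, decompose $Y_i(e)$ into the two cross-degree terms, and substitute $p_{i+1}\theta_i = \gamma/\sqrt{n}$ and $\psi_i\sqrt{n} = i\gamma + \alpha_0\sqrt{\log n}$. One pedantic quibble: since the cross-degree hypothesis already uses up the full $(1+\eps_i)$ slack, the additive $+1$ from $\cup\{e\}$ does not literally fit inside $(1+\eps_i)\cdot 2\theta_i\psi_i n$ — but $\theta_i\psi_i n \geq n^{\delta}\sqrt{\log n}$ makes this a relative error of $o(\eps_i)$, which is immaterial everywhere the observation is invoked.
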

\begin{proof} For the first two inequalities, simply use the degree and codegree conditions afforded by $\cA_i$. For the third, use that $p_{i+1}\theta_i=\gamma/\sqrt{n}$ and $p_0=\alpha_0\sqrt{\log n/n}$. For the final equality, recall from \eqref{eq:T_0} that $i\leq T=\alpha_1\gamma^{-1}\sqrt{\log n}$.
\end{proof}

As we saw in Section~\ref{sec:evo-of-process-sketch} at \eqref{eq:si+1-formula}, control on the $X_i(e),Y_i(e)$ in turn provides us with control on $s_{i+1}$.

\begin{observation}\label{obs:si-bd}
If $\cA_i$ holds, then
    \[ s_{i+1}  \geq (1-\eps_{i})\exp\hspace{-0.5mm}\big(\hspace{-0.5mm}-\hspace{-0.5mm}(2i+1)\g^2-2\gamma\alpha_0\sqrt{\log n}\big) .\]
\end{observation}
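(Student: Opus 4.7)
The plan is to extract the lower bound directly from the explicit formula \eqref{eq:si+1-formula} for $s_{i+1}$, combined with the uniform estimates on $X_i(e)$ and $Y_i(e)$ afforded by Observation~\ref{lem:X(e)-Y(e)-good}. First I would fix $e^* \in O_i$ attaining the minimum in \eqref{eq:si+1-formula}, so that
\[ s_{i+1} = (1-p_{i+1}^2)^{X_i(e^*)}(1-p_{i+1})^{Y_i(e^*)}, \]
and take logarithms using the elementary estimate $\log(1-x) \geq -x - x^2$, valid for $x \in [0,1/2]$. Both $p_{i+1}$ and $p_{i+1}^2$ sit safely inside this range because the process only runs while $p_{i+1} \leq \gamma^3$.

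Next I would invoke Observation~\ref{lem:X(e)-Y(e)-good} to bound the first-order term by
\[ p_{i+1}^2 X_i(e^*) + p_{i+1} Y_i(e^*) \leq (1+\eps_i) A, \qquad A := (2i+1)\gamma^2 + 2\gamma\alpha_0\sqrt{\log n}. \]
The leftover second-order contribution from $\log(1-x) \geq -x-x^2$ is $p_{i+1}^4 X_i(e^*) + p_{i+1}^2 Y_i(e^*) \leq p_{i+1}^2(1+\eps_i)A = O(\gamma^4 \sqrt{\log n})$. Since $\eps_i = (i+1)\gamma^3$ and $\gamma = (\log n)^{-10}$, this is $o(\eps_i)$.

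Combining gives $\log s_{i+1} \geq -A - \eps_i A - o(\eps_i)$. Because $A = O(\gamma \sqrt{\log n}) = o(1)$, the term $\eps_i A$ is itself $o(\eps_i)$, so $\log s_{i+1} \geq -A - o(\eps_i)$. Exponentiating and using $e^{-o(\eps_i)} \geq 1 - \eps_i$ for $n$ large yields the claimed bound $s_{i+1} \geq (1-\eps_i) \exp\bigl(-(2i+1)\gamma^2 - 2\gamma\alpha_0\sqrt{\log n}\bigr)$.

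There is no real obstacle here: the argument is a direct computation. The only point requiring care is the bookkeeping to confirm that each of the three error contributions — the quadratic correction in $\log(1-x)$, the $(1+\eps_i)$ factor from Observation~\ref{lem:X(e)-Y(e)-good}, and the final conversion $e^{-o(\eps_i)} \geq 1-\eps_i$ — is absorbed into the single factor $(1-\eps_i)$. Each reduces to an $o(\eps_i) = o((\log n)^{-30})$ estimate that is immediate from the explicit sizes of $\gamma$ and $\eps_i$.
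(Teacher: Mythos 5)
Your argument is correct and mirrors the paper's proof: the paper applies $(1-x)\geq e^{-x(1+x)}$ directly to \eqref{eq:si+1-formula}, which is the exponentiated form of your $\log(1-x)\geq -x-x^2$, and then invokes Observation~\ref{lem:X(e)-Y(e)-good} together with $p_{i+1}\leq\eps_i$ to collapse all error factors into the single $(1-\eps_i)$, exactly as you do. One small slip: the second-order term satisfies $p_{i+1}^4 X_i(e^*)+p_{i+1}^2 Y_i(e^*)\leq p_{i+1}(1+\eps_i)A$, not $p_{i+1}^2(1+\eps_i)A$; your stated magnitude $O(\gamma^4\sqrt{\log n})$ is the one consistent with the corrected bound and is still $o(\eps_i)$, so the conclusion is unaffected.
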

\begin{proof}
From \eqref{eq:si+1-formula} we have
\begin{align}
s_{i+1} &= \min_{e \in O_i} \, (1-p_{i+1}^2)^{ X_i(e) }(1-p_{i+1})^{Y_i(e)}\\
&\geq \min_{e\in O_i} \exp\hspace{-0.5mm}\big(-(1+p_{i+1}) (p_{i+1}^2X_i(e) + p_{i+1}Y_i(e))\big),\label{eq:si+1-formula-bd}
\end{align}
using the inequality $(1-x)\geq e^{-x(1+x)}$, which holds for $x\in[0,1/2]$.
We now apply Observations~\ref{lem:X(e)-Y(e)-good} to bound \begin{align*}s_{i+1} &\geq \exp\left(-(1 + p_{i+1})(1 + \eps_i)\big((2i+1)\g^2+2\gamma\alpha_0\sqrt{\log n}\big)\right) \\
&\geq (1 - \eps_i)\exp\left(-(2i+1)\g^2-2\gamma\alpha_0\sqrt{\log n}\right)
\end{align*} where we used that $((2i+1)\g^2-2\gamma\alpha_0\sqrt{\log n}) = o(1)$ and $p_{i+1} \leq \eps_i$.
\end{proof}

We can now chain these inequalities together to obtain a lower bound on the density of $O_i$

\begin{observation}\label{obs:mean-of-theta} If $\cA_0 \wedge \cdots \wedge \cA_i$ holds, then 
\[\theta_{i+1} \geq  (1-(i+1)\eps_{i}) \exp\big( - \g^2 (i+1)^2-2\gamma (i+1)\alpha_0\sqrt{\log n}\big)\theta_0  \]
\end{observation}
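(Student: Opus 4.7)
\medskip

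The plan is a straightforward induction on $i$, chaining together the one-step concentration estimate \eqref{eq:theta-conc-Ai} (which is part of the definition of $\cA_j$) with the lower bound on the survival probability $s_{j+1}$ from Observation~\ref{obs:si-bd}. The base of the induction is immediate: on $\cA_0$ we have $\theta_0 = (1-o(1))r^{-2}$ by \eqref{eq:theta-0}, so the inequality holds trivially for $i = -1$ (reading $\theta_0 \geq \theta_0$).

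For the inductive step, I use the recurrence $\EE_j\,\theta_j = s_j\theta_{j-1}$ from \eqref{eq:def-si+1}. On $\cA_j$ the concentration bound \eqref{eq:theta-conc-Ai} gives $\theta_j \geq (1-\gamma^4)\EE_j\,\theta_j = (1-\gamma^4)s_j\theta_{j-1}$, while Observation~\ref{obs:si-bd} applied on $\cA_{j-1}$ yields
\[
s_j \geq (1-\eps_{j-1})\exp\!\big(-(2j-1)\gamma^2 - 2\gamma\alpha_0\sqrt{\log n}\big).
\]
Iterating the combined inequality from $j=1$ up to $j=i+1$ and using the telescoping identity $\sum_{j=1}^{i+1}(2j-1) = (i+1)^2$ gives
\[
\theta_{i+1} \;\geq\; \prod_{j=1}^{i+1}\!\big((1-\gamma^4)(1-\eps_{j-1})\big)\cdot\exp\!\big(-(i+1)^2\gamma^2 - 2(i+1)\gamma\alpha_0\sqrt{\log n}\big)\cdot\theta_0,
\]
which already has the correct exponential factor and the correct dependence on $\theta_0$.

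The only remaining task is to absorb the multiplicative errors into the single prefactor $1-(i+1)\eps_i$. Since $\eps_j = (j+1)\gamma^3$ is increasing and $\gamma^4 \leq \gamma^3$, one has
\[
\sum_{j=1}^{i+1}\big(\gamma^4+\eps_{j-1}\big) \;\leq\; (i+1)\gamma^4 + \tfrac{1}{2}(i+1)(i+2)\gamma^3 \;\leq\; (i+1)^2\gamma^3 \;=\; (i+1)\eps_i,
\]
so by the elementary inequality $\prod_k(1-a_k)\geq 1-\sum_k a_k$ the leading product is at least $1-(i+1)\eps_i$, as required. The argument is purely bookkeeping; there is no real obstacle beyond checking this last absorption, which is comfortable because the choice $\gamma = (\log n)^{-10}$ makes $\gamma^4$ negligible next to $\eps_i \geq \gamma^3$ for every $i\geq 0$.
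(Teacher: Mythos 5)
Your proof is correct and follows essentially the same route as the paper: chain the one-step concentration bound from $\cA_j$ with the lower bound on $s_j$ from Observation~\ref{obs:si-bd}, and use the telescoping identity $\sum_{j=1}^{i+1}(2j-1)=(i+1)^2$ to collect the exponential factors. The only cosmetic difference is that you unroll the recursion explicitly from $j=1$ to $j=i+1$ whereas the paper phrases it as a one-step induction; the error-absorption step (and even the tiny $O(\gamma^4)$ slack at $i=0$) match the paper's.
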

\begin{proof} We prove the result by induction. The result holds trivially at $i=-1$. So assume holds for $i$ and we prove it for $i+1$. From \eqref{eq:def-si+1}, we have $\EE_{i+1} \theta_{i+1} = s_{i+1}\theta_i$. So by Observation~\ref{obs:si-bd} and the induction hypothesis (since $\cA_0 \wedge \cdots \wedge \cA_i$ holds) we have
\[  s_{i+1}\theta_i \geq  (1-\eps_i)(1-i\eps_{i-1})\exp\big(-(2i+1)\g^2-2\gamma\alpha_0\sqrt{\log n}\big) \exp\big( - \g^2 i^2-2\gamma i\alpha_0\sqrt{\log n}\big)\theta_0 . \]
We now use  \eqref{eq:theta-conc-Ai} along with the inequality $(1 - \eps_i)(1 - i \eps_{i-1}) - \gamma^4 \geq 1 - (i+1) \eps_i$ to finish. \end{proof}

We also record the following useful consequence of this Lemma. 

\begin{observation}\label{obs:thetai-is-prod-si} For $i < T$, on the event $\cA_0 \wedge \cdots \wedge \cA_i$ we have
\[ \theta_{i+1} = (1+O((i+1)\gamma^4))s_{i+1} \cdots s_1 \theta_0 . \]
\end{observation}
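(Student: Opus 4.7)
The plan is to obtain the claim by iterating the one-step concentration estimate \eqref{eq:theta-conc-Ai} that is guaranteed by the event $\cA_j$ for each $j \leq i+1$.

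Concretely, I would first observe that for any $1 \leq j \leq i+1$, the event $\cA_j$ ensures
\[
\theta_j = (1 + \eta_j)\,\EE_j\,\theta_j
\]
for some $\eta_j$ with $|\eta_j| \leq \gamma^4$, by the definition \eqref{eq:theta-conc-Ai}. Combined with the identity $\EE_j\,\theta_j = s_j\,\theta_{j-1}$ from \eqref{eq:def-si+1}, this gives the one-step recursion
\[
\theta_j = (1+\eta_j)\,s_j\,\theta_{j-1}.
\]
Unfolding this recursion from $j=i+1$ down to $j=1$ yields
\[
\theta_{i+1} = \Bigl(\prod_{j=1}^{i+1}(1+\eta_j)\Bigr) s_{i+1}\cdots s_1\,\theta_0.
\]

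The only remaining task is to estimate the multiplicative error factor. Since each $|\eta_j| \leq \gamma^4$ and there are $i+1 \leq T+1$ factors, and since $T\gamma^4 = \alpha_1 \gamma^3 \sqrt{\log n} = o(1)$, a standard estimate gives
\[
\prod_{j=1}^{i+1}(1+\eta_j) = 1 + O\bigl((i+1)\gamma^4\bigr),
\]
for instance by writing the product as $\exp\bigl(\sum_j \log(1+\eta_j)\bigr)$ and using $|\log(1+\eta_j)| \leq 2\gamma^4$ together with $e^x = 1+O(x)$ for $x = o(1)$. Plugging this back in yields exactly the claimed identity.

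The argument is essentially bookkeeping; there is no real obstacle. The only thing to be careful about is that we are using $\cA_j$ for every $j \leq i+1$ (not just $\cA_i$), which is available to us by assumption, and that the accumulated error $(i+1)\gamma^4$ remains $o(1)$ throughout the range $i < T$, which is immediate from the choice \eqref{eq:T_0} of $T$ and the definition \eqref{eq:def-gamma} of $\gamma$.
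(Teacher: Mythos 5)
Your proposal is correct and takes essentially the same route as the paper: both combine the one-step concentration $|\theta_j - \E_j\theta_j| \le \gamma^4\,\E_j\theta_j$ from $\cA_j$ with the identity $\E_j\theta_j = s_j\theta_{j-1}$ and iterate, the paper phrasing it as a formal induction and you unfolding the recursion directly. One small remark: you say $\cA_{i+1}$ is available ``by assumption,'' whereas the hypothesis only grants $\cA_0\wedge\cdots\wedge\cA_i$ — but the paper's own proof has the identical off-by-one (it invokes \eqref{eq:theta-conc-Ai} for $\theta_{i+1}$, which lives in $\cA_{i+1}$), so this is a latent indexing slip in the source rather than a defect in your argument.
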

\begin{proof}
We prove the statement by induction. It is an identity for $i=-1$. So assume it holds for $i \geq -1$. By \eqref{eq:def-si+1} and induction we have  
\[ \EE_{i+1} \theta_{i+1} = s_{i+1}\theta_i = (1-O(i\g^4))s_{i+1} \cdots s_1 \theta_0. \]
By \eqref{eq:theta-conc-Ai}, on the event $\cA_i$ we have $\theta_{i+1} = (1+O(\gamma^4))\EE_{i+1} \theta_{i+1} $. The statement follows. 
\end{proof}

From the above, we note the following useful bounds on $p_{i+1}$ and $\theta_i$.
\begin{observation}\label{lem:pi-UB}
    If $i< T$ and $\cA_0 \wedge \cdots \wedge \cA_i$ holds, then
    $p_{i+1} \leq n^{-\delta}$ and $\theta_i \geq n^{-1/2+\delta}$.  
\end{observation}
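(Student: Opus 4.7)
The plan is to deduce both inequalities directly from Observation~\ref{obs:mean-of-theta} together with the setup inequality \eqref{eq:alpha_0alpha_1-setup}, the condition $i < T$, and the initial density $\theta_0 = n^{-o(1)}$ guaranteed by $\cA_0$ via \eqref{eq:theta-0}.

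First I would apply Observation~\ref{obs:mean-of-theta} on the event $\cA_0 \wedge \cdots \wedge \cA_{i-1}$ (or the trivial bound $\theta_0 = (1-o(1))r^{-2}$ if $i=0$) to get
\[ \theta_i \geq (1-i\eps_{i-1})\exp\bigl(-\gamma^2 i^2 - 2\gamma i\alpha_0\sqrt{\log n}\bigr)\theta_0. \]
Using $i \leq T = \alpha_1\gamma^{-1}\sqrt{\log n}$ we have $\gamma i \leq \alpha_1\sqrt{\log n}$, so
\[ \gamma^2 i^2 + 2\gamma i\alpha_0\sqrt{\log n} \leq (\alpha_1^2 + 2\alpha_0\alpha_1)\log n. \]
By \eqref{eq:alpha_0alpha_1-setup}, $\alpha_1^2 + 2\alpha_0\alpha_1 = (\alpha_0+\alpha_1)^2 - \alpha_0^2 \leq 1/2 - 2\delta$, hence
\[ \exp\bigl(-\gamma^2 i^2 - 2\gamma i\alpha_0\sqrt{\log n}\bigr) \geq n^{-1/2 + 2\delta}. \]

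Next, since $\theta_0 = (1-o(1))r^{-2} = n^{-o(1)}$ and $i\eps_{i-1} = o(1)$ (as $i \leq T = \gamma^{-1}\sqrt{\log n} \cdot \alpha_1$ and $\eps_i = (i+1)\gamma^3$ make $i\eps_{i-1} \leq T^2\gamma^3 = o(1)$), combining these factors gives $\theta_i \geq n^{-1/2 + 2\delta - o(1)} \geq n^{-1/2+\delta}$ for $n$ sufficiently large, which is the second claimed bound.

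Finally, substituting into the definition $p_{i+1} = \gamma/(\theta_i\sqrt{n})$ from \eqref{eq:def-theta_i} yields
\[ p_{i+1} \leq \frac{\gamma}{n^{-1/2+\delta}\sqrt{n}} = \gamma\, n^{-\delta} \leq n^{-\delta}, \]
since $\gamma = (\log n)^{-10} \leq 1$. There is no real obstacle here; the proof is essentially a bookkeeping exercise chaining the already-established Observation~\ref{obs:mean-of-theta} with the parameter constraint \eqref{eq:alpha_0alpha_1-setup}. The only thing to check carefully is that the cumulative multiplicative error $(1-i\eps_{i-1})$ and the $n^{-o(1)}$ loss from $\theta_0$ can be absorbed into the slack $\delta$ in the exponent, which follows because $i\eps_{i-1} = O(T^2\gamma^3) = O(\gamma\log n) = o(1)$.
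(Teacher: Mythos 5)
Your proof is correct and takes essentially the same route as the paper: invoke Observation~\ref{obs:mean-of-theta}, bound the exponent via $i\leq T$ and \eqref{eq:alpha_0alpha_1-setup}, absorb the $(1-i\eps_{i-1})$ and $\theta_0=n^{-o(1)}$ losses into the $\delta$-slack, and then read off the bound on $p_{i+1}$ from $p_{i+1}=\gamma/(\theta_i\sqrt{n})$. The bookkeeping of the error terms is also done correctly.
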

\begin{proof}
From Observation~\ref{obs:mean-of-theta}, we know that on the event $\cA_0 \wedge \cdots \wedge \cA_i$ we have 
\[ \theta_i \geq (1 - o(1) )  \exp\big(\hspace{-0.5mm} -\hspace{-0.5mm}\g^2 i^2-2\gamma i\alpha_0\sqrt{\log n}\hspace{0.5mm}\big)\cdot \theta_0 \geq (1-i\eps_i) \exp\big(\hspace{-0.5mm}-\big( \alpha_1^2 + 2\alpha_0\alpha_1\big)\log n\big)\cdot \theta_0.\] 
For the second inequality, we used that $i\leq T = \alpha_1 \gamma^{-1} \sqrt{\log n}$ and the definition of $\eps_i = (i+1)\gamma^3$. Now, using that $2\alpha_0\alpha_1 + \alpha_1^2 \leq 1/2 -2\delta$ by \eqref{eq:alpha_0alpha_1-setup} and $\theta_0=(1-o(1))r^{-2}=n^{-o(1)}$ by \eqref{eq:theta-0}, we see 
$\theta_i \geq n^{-1/2+\delta}$ , as desired. The bound on $p_{i+1}$ holds since $p_{i+1} =  \gamma/(\theta_in^{1/2}) \leq n^{-\delta}$.
\end{proof}

\section{Degrees and co-degrees II: the probability of small subgraphs} \label{sec:deg-codeg-2}

The main goal of this short section is to prove the following useful lemma on the appearance of ``small'' subgraphs in $O_i \cup G_{\leq i}'$. In addition to being useful later when we bound the independence number of $G_{\leq T}$, we will apply it multiple times in our proof of Lemma~\ref{lem:cA}.

\begin{lemma}\label{lem:probability-open'} Let $\cA_i$ hold and  let $H,K \subset O_i$ be such that 
\[ \Delta(H),\, \Delta(K)\leq n^{\delta/4} \qquad \text{ and } \qquad \Delta(G'_{\leq i }[V(H)]) \leq n^{\delta/4}\, .\] Then \begin{equation}\label{eq:prob-H-survies}
\PP_{i+1}\big( H\subset O_{i+1} \, \wedge \, K \subset G_{i+1}' \, \big) \leq \big(1+n^{-\delta/4}\big)^{e(H)}s_{i+1}^{e(H)}p_{i+1}^{e(K)}\, . \end{equation}
\end{lemma}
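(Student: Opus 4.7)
My plan is to split the conjunction via Harris and then attack $\PP(H \subset O'_{i+1})$ with Janson's inequality. Since $Q_{i+1}$ and $G'_{i+1}$ are independent (given the state at step $i$) and $O_{i+1} = Q_{i+1} \cap O'_{i+1}$, I would factor
\[
\PP_{i+1}\big(H \subset O_{i+1} \wedge K \subset G'_{i+1}\big) = \Bigg(\prod_{e \in H} q_e\Bigg) \cdot \PP\big(H \subset O'_{i+1} \wedge K \subset G'_{i+1}\big).
\]
Viewing $G'_{i+1}$ as a random subset of $O_i$ at density $p_{i+1}$, the event $\{H \subset O'_{i+1}\}$ is decreasing in $G'_{i+1}$ (more sprinkled edges can only close more pairs) while $\{K \subset G'_{i+1}\}$ is increasing, so Harris's inequality for oppositely-monotone events bounds the second factor above by $\PP(H \subset O'_{i+1}) \cdot p_{i+1}^{e(K)}$. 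Combined with the defining identity $q_e \cdot \PP(e \in O'_{i+1}) = s_{i+1}$, it suffices to prove
\[
\PP(H \subset O'_{i+1}) \le (1 + n^{-\delta/4})^{e(H)} \prod_{e \in H} \PP(e \in O'_{i+1}).
\]

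To prove this inequality, I would enumerate the ``bad patterns'' in $G'_{i+1}$ that would close some $e \in H$: for each $e \in H$, the singleton $\{e\}$, the singletons $\{f\}$ with $f \in \cY_i(e) \setminus \{e\}$ (closing $e$ via a clopen triangle), and the pairs $\{f,g\} \in \cX_i(e)$ (closing $e$ via an open triangle). Writing $A_P = \{P \subset G'_{i+1}\}$, the event $\{H \subset O'_{i+1}\}$ is precisely $\bigcap_P \bar A_P$, so Janson's inequality applies. For a single $e \in H$, the bad patterns sit on pairwise disjoint edges of $O_i$ (this is exactly the factorization used in \eqref{eq:si+1-formula}), so the baseline Janson product $\prod_P (1 - \PP(A_P))$ over \emph{distinct} patterns is bounded by $\prod_{e \in H} \PP(e \in O'_{i+1})$ up to a ``repetition factor'' accounting for patterns shared between different $e, e' \in H$.

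It remains to show that both the repetition factor and the Janson dependency sum $\Lambda = \sum_{P \neq P',\, P \cap P' \neq \emptyset} \PP(A_P \cap A_{P'})$ are at most $O(e(H)\, n^{-\delta/4})$. Repetitions and overlaps both correspond to small configurations in $O_i \cup G'_{\leq i}$ that join two distinct edges $e, e' \in H$ through a shared ``bridge'' edge. I would count these by fixing $e \in H$, using $\Delta(H) \le n^{\delta/4}$ to choose a second $e'$ meeting $e$, and then using the codegree bound $\Delta_2(O_i) \le (1+\eps_i)\theta_i^2 n$ from $\cA_i$ together with the degree bounds $\Delta(G'_{\leq i}) \le (1+\eps_i)\psi_i n$ from $\cA_i$ and $\Delta(G'_{\leq i}[V(H)]) \le n^{\delta/4}$ from the hypothesis. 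Every such configuration carries a weight $p_{i+1}^r$ with $r \ge 1$, and invoking $p_{i+1} \le n^{-\delta}$ (Observation~\ref{lem:pi-UB}) gives one $p_{i+1}$-factor of slack that dominates the counting factors and yields the desired bound. The main obstacle is the bookkeeping for the three overlap types (singleton-singleton, singleton-pair, pair-pair) and the analogous case split for the repetition factor; the degree and codegree hypotheses are precisely designed to make this counting go through uniformly.
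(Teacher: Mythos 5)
Your proposal is correct and follows essentially the same route as the paper's proof: factor out the $Q_{i+1}$ probabilities using independence, split off the $K$-event with Harris, and then bound $\PP(H \subset O'_{i+1})$ with Janson's inequality applied to exactly the hypergraph of ``bad patterns'' the paper calls $\cH(H)$. Your observation that the patterns associated to a single $e \in H$ are pairwise disjoint (so that the product over them recovers $(1-p_{i+1})^{Y_i(e)}(1-p_{i+1}^2)^{X_i(e)}$, matching \eqref{eq:si+1-formula}), and that the residual cost is an overlap count controlled by $\Delta(H)$, $\Delta_2(O_i)$, $\Delta(G'_{\leq i}[V(H)])$, and one spare factor of $p_{i+1} \le n^{-\delta}$, is precisely the content of the paper's Observations~\ref{lem:combi-open-lemma-1} and \ref{lem:combi-open-lemma-2}.
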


To prove this lemma, we will appeal to Janson's inequality, proved by Janson, Łuczak, and Ruciński \cite{janson1990exponential}  (see \cite{alon2016probabilistic}).

\begin{theorem}[Janson's inequality]\label{thm:Janson} For $p\in [0,1]$, let $\cH$ be a hypergraph on vertex set $V$ and let $X \subset V$ be a $p$-random set. Then
\[ \PP( X \not\supset \sigma, \text{ for all } \sigma\in \cH) \leq e^{-\mu(\cH) + \Delta(\cH) },  \]
where \[ \mu(\cH) = \sum_{\sigma\in \cH } \PP( X \supset \sigma ) \quad 
\text{ and } \quad \Delta(\cH) = \sum_{\sigma,\tau \in \cH: \sigma\cap \tau \neq \emptyset } \PP( X \supset \sigma \cup \tau ).\]

\end{theorem}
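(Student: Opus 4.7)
The plan is to decompose the joint event using independence of the two sources of randomness at step $i+1$ together with Harris's inequality, and then estimate the remaining probability via Janson's inequality, with the Janson parameters controlled by the quasi-randomness in $\cA_i$ and the two degree hypotheses.

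First, the step-$(i+1)$ randomness splits into two independent ingredients: the Bernoulli trials defining $Q_{i+1}$ (each $e\in O_i$ kept independently with probability $q_e$) and $G_{i+1}'\sim G(O_i,p_{i+1})$. Since $O_{i+1}=Q_{i+1}\cap O_{i+1}'$, we factor
\[
\PP_{i+1}\bigl(H\subset O_{i+1}\,\wedge\, K\subset G_{i+1}'\bigr) = \PP\bigl(H\subset Q_{i+1}\bigr)\cdot \PP\bigl(H\subset O_{i+1}'\,\wedge\, K\subset G_{i+1}'\bigr),
\]
and the first factor equals $\prod_{e\in H}q_e = s_{i+1}^{e(H)}/\prod_{e\in H}\PP(e\in O_{i+1}')$ by the definition \eqref{eq:qedef} of $q_e$. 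Within the $G_{i+1}'$ randomness, the event $\{K\subset G_{i+1}'\}$ is increasing while $\{H\subset O_{i+1}'\}$ is decreasing (adding edges to $G_{i+1}'$ can only close more pairs), so Harris's inequality gives $\PP(H\subset O_{i+1}'\wedge K\subset G_{i+1}')\le \PP(H\subset O_{i+1}')\cdot p_{i+1}^{e(K)}$. It therefore suffices to show
\[
\PP\bigl(H\subset O_{i+1}'\bigr) \le (1+n^{-\delta/4})^{e(H)}\prod_{e\in H}\PP(e\in O_{i+1}').
\]
For a single $e\in O_i$, the discussion around \eqref{eq:si+1-formula} gives the exact identity $\PP(e\in O_{i+1}')=(1-p_{i+1})^{Y_i(e)}(1-p_{i+1}^2)^{X_i(e)}$, because for each vertex $z$ at most one configuration in $\cY_i(e)\cup\cX_i(e)$ is associated to $z$ and these configurations use disjoint edges across different $z$.

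For the joint bound, apply Janson's inequality (Theorem~\ref{thm:Janson}) to the hypergraph $\cH$ on ground set $O_i$ whose hyperedges are the singletons $\{f\}$ with $f\in\bigcup_{e\in H}\cY_i(e)$ and the pairs $\{f,g\}$ with $\{f,g\}\in\bigcup_{e\in H}\cX_i(e)$: the event $\{H\subset O_{i+1}'\}$ is precisely that $G_{i+1}'$ contains none of these $\sigma\in\cH$, so Janson yields $\PP(H\subset O_{i+1}')\le \exp(-\mu(\cH)+\Delta(\cH))$. To compare this to $\prod_e\PP(e\in O_{i+1}')$, let $\mu_H':=\sum_{e\in H}(Y_i(e)p_{i+1}+X_i(e)p_{i+1}^2)$ denote the multi-counted version of $\mu(\cH)$. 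A Taylor expansion of $\log(1-p)$ yields
\[
\log\frac{e^{-\mu(\cH)}}{\prod_{e\in H}\PP(e\in O_{i+1}')} \le \bigl(\mu_H'-\mu(\cH)\bigr) + O\!\Bigl(\sum_{e\in H}(Y_i(e)p_{i+1}^2+X_i(e)p_{i+1}^4)\Bigr),
\]
and the Taylor remainder is $O(e(H)\cdot p_{i+1}\cdot\gamma\sqrt{\log n})=e(H)\cdot n^{-\delta+o(1)}$ by Observations~\ref{lem:X(e)-Y(e)-good} and \ref{lem:pi-UB}.

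The overcount $\mu_H'-\mu(\cH)$ and the Janson cross-term $\Delta(\cH)$ are both controlled by sums over interacting pairs $\{e_1,e_2\}\subset H$. A key structural fact is that $O_i$ inherits from the seed step the property that distinct edges of $O_i$ lie between distinct pairs of blow-up parts $V_j$; hence if $e_1,e_2\in H$ share a vertex then their other endpoints $y,y'$ satisfy $\pi(y)\neq\pi(y')$ and the codegree bound \eqref{eq:def-cF_i2} applies. Shared-vertex pairs number at most $\Delta(H)\cdot e(H)\le n^{\delta/4}e(H)$, each contributing $O((\log n)^4)$; disjoint-vertex interactions are counted via triples $(e_1,e_2,f)$ and bounded using $\Delta(G_{\leq i}'[V(H)])\le n^{\delta/4}$, yielding at most $e(H)\cdot n^{\delta/2}$ such triples. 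After multiplying by the appropriate powers of $p_{i+1}\le n^{-\delta}$, the total slack is $e(H)\cdot n^{-\delta/2+o(1)}$, comfortably within the $e(H)\cdot n^{-\delta/4}$ budget required. The main obstacle is precisely this bookkeeping step: carefully partitioning the overlap contributions into shared-vertex and disjoint-vertex cases and checking that the two degree hypotheses combine with the codegree bound from $\cA_i$, invoked via the blow-up structure of $O_i$, to suppress every slack term below the allowed $(1+n^{-\delta/4})^{e(H)}$ factor.
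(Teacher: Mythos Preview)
Your proposal does not address the stated theorem. The statement given is Janson's inequality itself, Theorem~\ref{thm:Janson}, which the paper quotes as a classical result of Janson, {\L}uczak and Ruci\'nski and does not prove. There is therefore no ``paper's own proof'' of this statement to compare against, and your write-up is certainly not a proof of Janson's inequality.

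What you have written is instead a proof sketch for Lemma~\ref{lem:probability-open'}, the lemma that \emph{applies} Janson's inequality. If that was your intended target, your approach is essentially the same as the paper's: both use Harris's inequality to separate the increasing event $\{K\subset G_{i+1}'\}$ from the decreasing one, then apply Janson to the hypergraph $\cH$ built from the $\cY_i(e)$-singletons and $\cX_i(e)$-pairs, and finally bound the overcount in $\mu$ and the cross-term $\Delta$ via the combinatorial Observations~\ref{lem:combi-open-lemma-1} and~\ref{lem:combi-open-lemma-2}. Your organization differs only cosmetically: you factor out the $Q_{i+1}$-randomness first (writing $\prod_e q_e = s_{i+1}^{e(H)}/\prod_e\PP(e\in O_{i+1}')$) and then reduce to a near-independence statement for $\PP(H\subset O_{i+1}')$, whereas the paper applies Harris to the full event $\{H\subset O_{i+1}\}$ and carries the $q_e$ through the Janson computation. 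Both routes arrive at the same error accounting, and your identification of the key structural point---that distinct edges of $O_i$ sit between distinct pairs of blow-up parts, so adjacent edges of $H$ have endpoints in different parts and the codegree bound \eqref{eq:def-cF_i2} applies---matches exactly the role that fact plays in the paper's proof of Observation~\ref{lem:combi-open-lemma-1}.
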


To see how Janson's inequality connects to the problem at hand, we note the following. Define the hypergraph $\cH = \cH(H)$ on vertex set $V=O_i$ and with edge set consisting of the following singletons and pairs
\begin{align}\label{eq:cH-def}
\cH_1 = \big\{\{f\}: f \in \cY_i(e) \text{ for some } e\in H \big\} \qquad \text{ and } \qquad \cH_2 = \bigcup_{e \in H} \cX_i(e) ,
\end{align} where we recall the definitions of $\cX_i,\cY_i$ from \eqref{eq:def-Y},\eqref{eq:def-X}.
We now recall that $G'_{i+1}$ is a $p_{i+1}$-random subset of $O_i$ and observe that
\[ \PP_{i+1}( H \subset O_{i+1} ) = \PP\big( G'_{i+1} \not\supset f \text{ for all } f \in \cH \big) \cdot \prod_{e\in H}q_e ,\]
where the $q_e$ are the quantities defined at \eqref{eq:qedef}, in the definition of the process.

Before turning to the proof of Lemma~\ref{lem:probability-open'}, we collect some combinatorial observations that are key to our application of Janson's inequality.
 
\begin{observation}
    \label{lem:combi-open-lemma-1} Let $\cA_i$ hold and let $H \subset O_i$ be such that  $\Delta(H) \leq n^{\delta/4}$ and $\Delta(G'_{\leq i }[V(H)]) \leq n^{\delta/4} $. If $e \in H$ then
\[ \sum_{f \in H, f \not= e} \big| \cY_i(e) \cap  \cY_i(f) \big|  \leq 8n^{\delta/2}\, . \]
\end{observation}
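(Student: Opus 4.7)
The plan is to fix $e=xy\in H$ and count triples $(f,h)$ with $f\in H\setminus\{e\}$ and $h\in \cY_i(e)\cap\cY_i(f)$ by splitting on how $h$ meets $e$ and $f$. The key structural input comes from the seed construction in Section~\ref{subsec:seed}: by selecting a single random edge per pair of parts $\{V_a,V_b\}$, we have $|O_0[V_a,V_b]|\le 1$ for every such pair, and $O_0$ contains no edges within a single part. Since $H\subseteq O_i\subseteq O_0$, the same properties descend to $H$; in particular, any two distinct edges of $H$ through a common vertex $x$ have their other endpoints in distinct parts, so if $xy,xz\in H$ are distinct then $\pi(y)\ne\pi(z)$.

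By the definition of $\cY_i$ at \eqref{eq:def-X}, each $h\in \cY_i(e)$ either equals $e$ or shares exactly one vertex with $e$ and completes a clopen triangle. I would first dispose of the boundary case $h=e$: here $f\in H\setminus\{e\}$ must share a vertex with $e$ and the third edge of the associated triangle must lie in $G'_{\le i}$, so $f=xz$ with $z\in N_{G'_{\le i}}(y)\cap V(H)$ or $f=yz$ with $z\in N_{G'_{\le i}}(x)\cap V(H)$; by the hypothesis $\Delta(G'_{\le i}[V(H)])\le n^{\delta/4}$ there are at most $2n^{\delta/4}$ such $f$. For the remaining $h\ne e$, by symmetry I may assume $h=xw$ with $w\ne y$, so that $yw\in G'_{\le i}$.

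I then split on how $f\in H\setminus\{e\}$ meets $h=xw$. In \textbf{Sub-case A}, $f=xz$ with $x\in f$: for each choice of $z\in N_H(x)$ (at most $\Delta(H)\le n^{\delta/4}$ options), either $w=z$ or the clopen condition forces $wz\in G'_{\le i}$, and in the latter case $w\in N_{G'_{\le i}}(y)\cap N_{G'_{\le i}}(z)$. Since $\pi(y)\ne\pi(z)$ by the structural observation, \eqref{eq:def-cF_i2} bounds this codegree by $(\log n)^4+i(\log n)^2 = n^{o(1)}$, and Sub-case A contributes $\le n^{\delta/4+o(1)}$. In \textbf{Sub-case B}, $f=wa$ with $w\in f$: since $w\in V(H)$ and $yw\in G'_{\le i}$ with $y\in V(H)$, the hypothesis $\Delta(G'_{\le i}[V(H)])\le n^{\delta/4}$ gives at most $n^{\delta/4}$ choices for $w$, and for each such $w$ there are at most $\Delta(H)\le n^{\delta/4}$ choices of $a\in N_H(w)$, yielding $\le n^{\delta/2}$ in total. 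Doubling for the symmetric case $h=yw$ and adding the boundary contribution gives a grand total of at most $8n^{\delta/2}$ once $n$ is large.

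The main obstacle is aligning the hypotheses with the incidence structure. The $G'_{\le i}$-codegree bound in \eqref{eq:def-cF_i2} is available only for $\pi$-distinct vertices, which is precisely what the one-edge-per-part-pair property of $O_0$ supplies in Sub-case A (applied to the pair $y,z$). Sub-case B sidesteps codegrees entirely: the hypothesis $\Delta(G'_{\le i}[V(H)])\le n^{\delta/4}$ restricts $w$ directly, which is exactly why this hypothesis appears in the statement of the lemma alongside the degree bound $\Delta(H)\le n^{\delta/4}$.
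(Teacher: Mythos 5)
Your proof is correct, and it reaches the same bound via a genuinely different case decomposition. The paper fixes $e$ and splits the outer sum over $f$ according to whether $f$ is disjoint from $e$ or shares a vertex with it: when $f\cap e=\emptyset$ it notes the crude bound $|\cY_i(e)\cap\cY_i(f)|\le 2$ and counts disjoint $f$ with non-empty intersection using $\Delta(G'_{\le i}[V(H)])\le n^{\delta/4}$ (giving $\le 2n^{\delta/2}$ choices, hence a contribution of $\le 4n^{\delta/2}$); when $f\cap e\ne\emptyset$ it uses $\Delta(H)$ to bound the number of such $f$ and the $G'_{\le i}$-codegree bound (applicable since $|O_i[V_a,V_b]|\le 1$ forces $\pi(y)\ne\pi(z)$) to bound each intersection by $\gamma^{-1}$. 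You instead view the left-hand side as a count of pairs $(f,h)$ with $h\in\cY_i(e)\cap\cY_i(f)$ and split on the incidence pattern of $h$: the degenerate $h=e$ case, then $h\ne e$ (so $h=xw$) further split by whether $f$ meets $h$ at the $e$-vertex $x$ (your Sub-case A, which parallels the paper's intersecting case and uses the same codegree input) or at the new vertex $w$ (your Sub-case B, which instead applies $\Delta(G'_{\le i}[V(H)])$ to bound $w$ and $\Delta(H)$ to bound $a$). The two arguments draw on the same four ingredients --- $\Delta(H)\le n^{\delta/4}$, $\Delta(G'_{\le i}[V(H)])\le n^{\delta/4}$, the $\pi$-distinct codegree bound \eqref{eq:def-cF_i2}, and $|O_i[V_a,V_b]|\le 1$ --- but package them differently; the paper's version is marginally more compact, while yours makes more transparent exactly where each hypothesis of the observation enters, in particular why $\Delta(G'_{\le i}[V(H)])\le n^{\delta/4}$ is needed alongside $\Delta(H)\le n^{\delta/4}$.
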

\begin{proof}
We can see this by considering two cases. First, consider pairs $f$ such that $f \cap e = \emptyset$. Note that in this case, we have $|\cY_i(e)\cap \cY_i(f)| \leq 2$. If we write $e=xy$ and $f=uv$, we note that we must have 
$u,v \in N'_{\leq i}(x)\cup N'_{\leq i}(y)$ and therefore there are at most $2n^{\delta/2}$  choices for $f$, by our assumption $\Delta(G'_{\leq i}[V(H)]) \leq n^{\delta/4}$, and thus  
\[ \sum_{f\in H :\, e\cap f = \emptyset} |\cY_i(e) \cap \cY_i(f)| \leq 4n^{\delta/2}. \]

Now consider $f$ such that $e \cap f \not= \emptyset$. In this case we claim that $|\cY_i(e) \cap \cY_i(f)| \leq \g^{-1}$, by the codegree condition at~\eqref{eq:def-cF_i}. Indeed,  if we write $e=xy$ and $f=xz$ then we must have $\pi(y) \neq \pi(z)$ since for each $a \neq b$ we have $|O_i[V_a, V_b] | \leq 1$.  Further,  any $h\in \cY_i(e) \cap \cY_i(f) \backslash \{e,f\}$ must be of the form $h=xw$ where $w\in N'_{\leq i}(y)\cap N'_{\leq i}(z)$ and so there are at most $\Delta_2(G'_{\leq i})+2\leq \gamma^{-1}$ choices for $h$. 
We also note that are at most $2n^{\delta/4}$ such choices of $f$, by the maximum degree condition on $H$. Thus
\[ \sum_{f\in H :\, e\cap f \not= \emptyset} |\cY_i(e) \cap \cY_i(f)| \leq 2n^{\delta/4}\g^{-1}. \qedhere \]
\end{proof}

\begin{observation}
    \label{lem:combi-open-lemma-2} Let $H \subset O_i$ be such that  $\Delta(H) \leq n^{\delta/4}$ and let $\cH=\cH(H)$ be defined as in~\eqref{eq:cH-def}.
Then
\[
\big|\big\{(\sigma,\tau)\in \cH_1 \times\cH_2: \sigma\cap \tau\neq \emptyset\big\}\big|\leq 2n^{\delta/4}|\cH_1|
\]
and 
\[ \hspace{1.9mm}
\big|\big\{(\sigma,\tau)\in \cH_2 \times\cH_2: \sigma\cap \tau\neq \emptyset\big\}\big|\leq 6n^{\delta/4}|\cH_2|\, .
\]
\end{observation}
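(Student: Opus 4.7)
The plan is to reduce both estimates to a single counting lemma: for any fixed open pair $f \in O_i$, the number of $\tau \in \cH_2$ with $f \in \tau$ is at most $2\Delta(H) \leq 2n^{\delta/4}$. Once this is established, both bounds follow by summation over the appropriate index set.

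To prove the counting lemma, I would fix $f = xy \in O_i$ and observe that any $\tau = \{f,g\} \in \cH_2$ must, by the definition of $\cH_2 = \bigcup_{e \in H} \cX_i(e)$, be such that $\{e, f, g\}$ forms an open triangle for some $e \in H$. Letting $z$ denote the third vertex of this triangle, $e$ is either $xz$ or $yz$, and then $g$ is uniquely determined as the remaining edge. Since $e \in H$ and $e$ is incident to $\{x,y\}$, the number of admissible $e$ is at most $d_H(x) + d_H(y) \leq 2\Delta(H) \leq 2n^{\delta/4}$, which proves the lemma.

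With this in hand, the first bound is immediate: $\sigma = \{f\} \in \cH_1$ intersects $\tau \in \cH_2$ precisely when $f \in \tau$, so summing the lemma over $\{f\} \in \cH_1$ yields at most $2n^{\delta/4}|\cH_1|$. For the second bound, any $\sigma = \{f_1, g_1\} \in \cH_2$ intersects $\tau \in \cH_2$ only if $\tau$ contains $f_1$ or $g_1$; applying the lemma to each of these two elements shows $|\{\tau \in \cH_2 : \tau \cap \sigma \neq \emptyset\}| \leq 4n^{\delta/4}$, and summing over $\sigma \in \cH_2$ gives $4n^{\delta/4}|\cH_2| \leq 6n^{\delta/4}|\cH_2|$.

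This is a straightforward combinatorial book-keeping argument and I do not anticipate any real obstacle; the slack in the stated constants (the factor $6$ where my argument would give $4$) presumably just leaves room for minor edge cases (such as the diagonal $\sigma = \tau$ or multiplicities in translating between triangles and unordered pairs) without the authors needing to tighten.
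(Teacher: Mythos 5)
Your proof is correct and follows essentially the same approach as the paper: both count pairs $(\sigma,\tau)$ by fixing $\sigma$ and observing that any $\tau\in\cX_i(e)$ meeting $\sigma$ forces $e\in H$ to be incident to a vertex of $\sigma$, with $\tau$ then determined by $e$ and the shared edge. Your packaging via a single reusable counting lemma is slightly cleaner and yields the sharper constant $4n^{\delta/4}$ in the second bound (the paper decomposes instead by the $\leq 3$ vertices of $\sigma$, allowing $\leq 2$ choices of $\tau$ per $e$, hence $6n^{\delta/4}$), but the underlying idea is identical.
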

\begin{proof}
Fix $\sigma=\{f\}\in\cH_1$ and suppose that $f$ is contained in some $\tau\in\cX_i(e)$ for some $e\in H$. For this to occur $e$ must be incident to $f$ and $\tau=\{f,g\}$ where $efg$ form an open triangle. There are therefore at most $2\Delta(H)\leq 2n^{\delta/4}$ choices for $e$ and hence also $\tau$. The first claim follows. 

Now fix $\sigma=\{f,g\}\in\cH_2$. If $\sigma \cap \tau\neq\emptyset$ for some $\tau\in \cX_i(e)$ where $e\in H$ then $e$ must be incident to $f$ or $g$. There are therefore at most $3\Delta(H)\leq 3n^{\delta/4}$ choices for $e$ and given $e$ there are at most $2$ choices for $\tau$ ($ef$ and $eg$ may both span an open triangle).
\end{proof}

We are now ready to prove Lemma~\ref{lem:probability-open'}.

\begin{proof}[Proof of Lemma~\ref{lem:probability-open'}] 
Since 
\[ O_{i+1} = Q_{i+1} \cap \big\{ e \in  (K_n  \setminus G_{\leq i+1}') \cap O_{i}: e \text{ does not form a triangle with } G_{\leq i+1}' \big\}, \]  increasing the set  $G_{i+1}'$ cannot increase the set $O_{i+1}$.  We thus have that $\{H \subset O_{i+1}\}$ is a decreasing event in $G_{i+1}'$.  We also have that $\{K \subset G_{i+1}'\}$ is an increasing event in $G_{i+1}'$ and so Harris's inequality allows us to split the probability
\[ \PP_{i+1}\big( H\subset O_{i+1} \, \wedge \, K \subset G_{i+1}' \, \big) \leq \PP_{i+1}\big( H\subset O_{i+1}\big)\PP_{i+1}\big( K \subset G_{i+1}' \big) = \PP_{i+1}\big( H\subset O_{i+1}\big)p_{i+1}^{e(K)}.  \]
Janson's inequality (Theorem~\ref{thm:Janson}) allows us to bound 
\[  \PP_{i+1}\big( H\subset O_{i+1}\big) =  \PP\big( G'_{i+1} \not\supset f \text{ for all } f \in \cH \big) \prod_{e \in H} q_e \leq \exp(-\mu + \Delta ) \prod_{e \in H} q_e  ,\]
where $\cH=\cH(H)$ is the hypergraph defined at~\eqref{eq:cH-def} and  $\mu=\mu(\cH), \Delta=\Delta(\cH)$.
We calculate
\[ \mu = p_{i+1}|\cH_1| + p_{i+1}^2|\cH_2| \geq \sum_{e\in H} (p_{i+1}^2X_i(e) + p_{i+1}Y_i(e)) -  8p_{i+1}n^{\delta/2}e(H), \]
where the last inequality holds since 
\[|\cH_1|= \big| \bigcup_{e \in H} \cY_i(e) \big|  \geq \sum_{e \in H} Y_i(e)  - \sum_{e\not= f} |\cY_i(e) \cap \cY_i(f)| \geq \sum_{e \in H} Y_i(e)  - 8n^{\delta/2} e(H)\, , \]
where the last inequality holds by Observation~\ref{lem:combi-open-lemma-1}. On the other hand, since two edges of an open triangle determine the third, we have 
\begin{align}\label{eq:cH2} |\cH_2|=\big|\bigcup_{e \in H} \cX_i(e)\big| = \sum_{e \in H} X_i(e)\, .
\end{align}
By \eqref{eq:qedef}, \eqref{eq:prob-e-open-constant} and \eqref{eq:si+1-formula} we have $$(1 - p_{i+1}^2)^{X_i(e)}(1 - p_{i+1})^{Y_i(e)} q_e = s_{i+1}$$
for all $e \in O_i$ and so, by Observation~\ref{lem:X(e)-Y(e)-good} and Observation~\ref{lem:pi-UB} we have 
\[ \exp(-p_{i+1}^2X_i(e) - p_{i+1}Y_i(e) +8p_{i+1}n^{\delta/2})q_e \leq \big(1+O(n^{-\delta/2})\big)s_{i+1}\, . \]
Thus
\[
\exp(-\mu)\prod_{e\in H}q_e \leq  \big(1+O(n^{-\delta/2})\big)^{e(H)} s^{e(H)}_{i+1}\, .
\]
It remains to bound $\Delta$. By Observation~\ref{lem:combi-open-lemma-2} we have
\[ \Delta \leq 2n^{\delta/4}|\cH_1|p^2_{i+1}+ 6n^{\delta/4}|\cH_2|p^3_{i+1} \leq p_{i+1}n^{\delta/4}e(H) \leq n^{-\delta/2} e(H) , 
\]
where the second inequality follows from~\eqref{eq:cH2}, the union bound $|\cH_1|\leq \sum_{e\in H}Y_i(e)$, and Observation~\ref{lem:X(e)-Y(e)-good}. The final inequality holds by Observation~\eqref{lem:pi-UB}. The result follows.
\end{proof}

\section{Degrees and codegrees III: Proof of Lemma \texorpdfstring{\ref{lem:cA}}{4.1}} \label{sec:deg-codegs-3}

In this section we prove Lemma~\ref{lem:cA}. The crux is the following lemma.

\begin{lemma}\label{lem:Ai-Aiplus1}
    Let $0\leq i < T$. If $\cA_1 \wedge \cdots \wedge \cA_i$ holds then 
    \[
    \P_{i+1}(\cA_{i+1})= 1-O(n^{-\delta/8}) \, .
    \]
\end{lemma}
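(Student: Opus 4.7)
The plan is to verify each of the events comprising $\cA_{i+1}$ separately with probability at least $1 - O(n^{-\delta/4})$, and then conclude by a union bound. All the arguments follow the same moment-method template with Lemma~\ref{lem:probability-open'} as the essential probabilistic input; the only exception is the degree of $G'_{i+1}$, which is handled directly by a Chernoff estimate since $G'_{i+1} \sim G(O_i, p_{i+1})$ is a binomial subgraph.

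First I would establish the density concentration~\eqref{eq:theta-conc-Ai} by a second moment computation on $Z := e(O_{i+1}) = \sum_{e \in O_i} \mathbf{1}(e \in O_{i+1})$: two applications of Lemma~\ref{lem:probability-open'} (to singletons and to pairs of edges) give
$$\EE_{i+1}\, Z^2 \leq (1+n^{-\delta/4})^2 s_{i+1}^2 e(O_i)^2 + \EE_{i+1}\, Z,$$
and since $\EE_{i+1} Z = s_{i+1} e(O_i) \gg n$ by Observations~\ref{obs:si-bd} and~\ref{lem:pi-UB}, Chebyshev's inequality yields fluctuations far below $\gamma^4 \EE_{i+1} Z$. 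For the degree condition in~\eqref{eq:def-cE_i}, I would fix $x \in [n]$, choose $t$ polylogarithmic in $n$, and let $Z_x$ count $K_{1,t}$-stars centered at $x$ inside $O_{i+1}$. Applying Lemma~\ref{lem:probability-open'} to each possible star gives
$$\EE_{i+1}\, Z_x \leq (1+n^{-\delta/4})^t\, s_{i+1}^t \binom{|N^\circ_i(x)|}{t},$$
while on the event $|N^\circ_{i+1}(x)| \geq (1 + \gamma^4)(1+\eps_i) s_{i+1} \theta_i n$ we have $Z_x \geq \binom{(1+\gamma^4)(1+\eps_i)s_{i+1}\theta_i n}{t}$. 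Plugging in $|N^\circ_i(x)| \leq (1+\eps_i)\theta_i n$ from $\cA_i$ together with Markov's inequality, the per-vertex failure probability becomes $\bigl((1+n^{-\delta/4})/(1+\gamma^4)\bigr)^t = n^{-\omega(1)}$; a union bound over $x$ combined with the just-established $\theta_{i+1}$-concentration (noting $\eps_{i+1} - \eps_i = \gamma^3 \gg \gamma^4$ absorbs the slack) yields $\Delta(O_{i+1}) \leq (1+\eps_{i+1})\theta_{i+1} n$. The codegree bound $\Delta_2(O_{i+1})$ is obtained identically with $K_{2,t}$ in place of $K_{1,t}$, using the codegree condition on $O_i$ from $\cA_i$ to bound the number of bi-stars.

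The $G'$-bounds are simpler. A Chernoff estimate on $\deg_{G'_{i+1}}(x) \sim \mathrm{Bin}(|N^\circ_i(x)|, p_{i+1})$, whose mean is at most $(1+\eps_i)\gamma\sqrt{n}$, delivers~\eqref{eq:def-cF_i}; summing with the inductive bound on $\Delta(G'_{\leq i})$ and using $\psi_{i+1} = \psi_i + \gamma/\sqrt{n}$ gives the degree half of~\eqref{eq:def-cF_i2}. For the codegree half of~\eqref{eq:def-cF_i2} and the cross-degree condition~\eqref{eq:cross-degrees}, I would decompose each common-neighborhood count according to whether each of the two defining edges is new (in $G'_{i+1}$) or inherited (in $G'_{\leq i}$ or $O_i$ at time $i$), then handle the resulting terms by either the star moment argument above, an application of Lemma~\ref{lem:probability-open'} with mixed $H$ and $K$, or a direct Chernoff bound — in each case the inductive bounds in $\cA_i$ control the relevant supply of third vertices. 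The main obstacle I anticipate lies precisely in these mixed arguments: verifying the hypothesis $\Delta(G'_{\leq i}[V(H)]) \leq n^{\delta/4}$ of Lemma~\ref{lem:probability-open'} for the $t$-star configurations requires $V(H)$ to sit inside an open neighborhood, after which the needed bound on induced $G'_{\leq i}$-degrees follows from the cross-degree condition in $\cA_i$ itself (and from the fact that open pairs are disjoint from $G'_{\leq i}$). A secondary subtlety is the choice of $t$: it must satisfy $(1+\gamma^4)^t \geq n^2$ (to survive the union bound over pairs when handling codegrees) while $t \leq n^{\delta/4}$ (so Lemma~\ref{lem:probability-open'} applies), which leaves comfortable room at $t = (\log n)^{C}$ for a large enough constant $C$.
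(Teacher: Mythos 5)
Your proof is correct and follows essentially the same route as the paper: second-moment/Chebyshev for $\theta_{i+1}$ (paper's Lemma~\ref{lem:theta-conc}), star- and bi-star-counting via Lemma~\ref{lem:probability-open'} for $\Delta(O_{i+1})$ and $\Delta_2(O_{i+1})$ with $t$ polylogarithmic and the slack absorbed by $\eps_{i+1}-\eps_i=\gamma^3$ (paper's Lemmas~\ref{lem:cA-open-deg}, \ref{lem:cA-open-codegs}), the split $N'_{\leq i+1}(x)\cap N^\circ_{i+1}(y)=(N'_{\leq i}(x)\cap N^\circ_{i+1}(y))\cup(N'_{i+1}(x)\cap N^\circ_{i+1}(y))$ for the cross-degrees (paper's Lemma~\ref{lem:mixed-deg}), and Chernoff for the $G'$-degree and $G'$-codegree conditions. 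One small remark: the hypothesis $\Delta(G'_{\leq i}[V(H)])\leq n^{\delta/4}$ that you flag as the main anticipated obstacle is actually automatic for the star configurations, since $|V(H)|=t+1=(\log n)^{O(1)}$ forces every induced degree to be at most $t\leq n^{\delta/4}$; no appeal to the cross-degree condition or the structure of open neighborhoods is needed there.
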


We prove Lemma~\ref{lem:Ai-Aiplus1} in several steps, addressing each property of $\cA$ in turn. But before we get to this, we note that the desired properties hold for the seed graph, with high probability.

\begin{lemma}\label{lem:cA-for-seed}We have $\PP( \cA_0 ) = 1-o(1)$.
\end{lemma}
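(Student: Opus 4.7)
The plan is to verify each clause of $\cA_0$ by reducing everything to standard concentration estimates for the random graph $G_\ast' \sim G(n/r, p_0)$ on $[n/r]$ and then lifting through the blow-up and random selection step. The key structural observation is that $O_0'[V_i, V_j]$ is either empty or equal to all of $V_i \times V_j$: letting $\widetilde O$ denote the graph on $[n/r]$ with $ij \in \widetilde O$ iff $ij \notin G_\ast'$ and no $k\in[n/r]$ satisfies $\{ik,jk\}\subset G_\ast'$, we have $O_0'[V_i,V_j] = V_i\times V_j$ precisely when $ij\in\widetilde O$. Thus $O_0$ is obtained by independently choosing one uniformly random pair $e_{i,j}\in V_i\times V_j$ for each $ij\in\widetilde O$, and all of its statistics can be analysed conditional on $\widetilde O$.

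First I would collect the needed properties of $G_\ast'$. Standard Chernoff bounds give $\Delta(G_\ast') = (1+\gamma^3)p_0 n/r$ with probability $1 - n^{-\omega(1)}$, since the mean $p_0 n/r = \alpha_0\sqrt n/(\log n)^{3/2}$ is far larger than any polylog. A union bound over pairs using the $\binom{n/r}{t}p_0^{2t}$ estimate gives $\Delta_2(G_\ast') \leq (\log n)^2$ (the expected codegree is $\alpha_0^2/\log n$, so even moderate $t$ suffices). These lift to the blow-up as $\Delta(G_0') = r\cdot \Delta(G_\ast') \leq (1+\eps_0)p_0 n = (1+\eps_0)\psi_0 n$ and, for $\pi(x)\neq \pi(y)$, $|N'_{\leq 0}(x)\cap N'_{\leq 0}(y)| = r\cdot |N_{G_\ast'}(\pi(x)) \cap N_{G_\ast'}(\pi(y))| \leq r(\log n)^2 = (\log n)^4$, which is exactly~\eqref{eq:def-cF_i2} at $i=0$.

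For the density of $O_0$, write $e(\widetilde O) = \binom{n/r}{2} - e(G_\ast') - Z$, where $Z$ counts pairs $ij\notin G_\ast'$ closed by a triangle in $G_\ast'$. Both $\E e(G_\ast')$ and $\E Z$ are $o(\binom{n/r}{2})$, which already yields $\theta_0 = (1-o(1))r^{-2}$ as required by~\eqref{eq:theta-0}. For the sharper concentration in~\eqref{eq:theta-conc-Ai} I would apply McDiarmid's bounded-differences inequality to the $\binom{n/r}{2}$ edge indicators of $G_\ast'$: after truncating on the $(1-o(1))$-probability event $\Delta(G_\ast')\leq 2 p_0 n/r$, flipping a single edge alters $Z$ by at most $O(p_0 n/r)$, so $\sum c_i^2 = O((n/r)^4 p_0^2) = O(n^3/(\log n)^7)$ and McDiarmid delivers $|e(\widetilde O)-\E_0 e(\widetilde O)|\ll (n/r)^2 (\log n)^{-40}$ with probability $1-n^{-\omega(1)}$. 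Conditional on $\widetilde O$, $\deg_{O_0}(v)$ is $\mathrm{Binomial}(\deg_{\widetilde O}(\pi(v)),1/r)$ with mean $\deg_{\widetilde O}(\pi(v))/r \approx n/r^2 \approx \theta_0 n$; Chernoff with deviation $\gamma^3$ (using $n/r^2\gg\log n$) plus a union bound over the $n$ vertices gives $\Delta(O_0)\leq(1+\eps_0)\theta_0 n$. The codegree $\Delta_2(O_0)$ and the cross-degree $|N'_{\leq 0}(x)\cap N^{\circ}_0(y)|$ are handled in the same way: conditional on $G_\ast'$ each is a sum of independent Bernoullis indexed by $j\in[n/r]$ (with the appropriate conjunction of the edge-event $j\sim\pi(x)$ or $\pi(y)j\in\widetilde O$ and the uniform selection touching the correct vertex) with the right expectations $(1+o(1))\theta_0^2 n$ and $(1+o(1))\theta_0\psi_0 n$, and Chernoff plus a union bound over $\binom{n}{2}$ pairs closes each (noting that pairs with $\pi(x)=\pi(y)$ or $x=y$ give trivial zero intersections).

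The main potential obstacle is the sharp concentration of $\theta_0$ down to relative error $\gamma^4 = (\log n)^{-40}$: a naive application of bounded differences fails because a single edge flip in $G_\ast'$ can alter $Z$ by $\Omega(n/r)$, but conditioning on the high-probability event that $G_\ast'$ has near-mean maximum degree tames this worst case to $O(p_0 n/r)$ and makes McDiarmid deliver the required bound. Everything else reduces to routine concentration of independent or near-independent Bernoulli sums.
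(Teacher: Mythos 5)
Your outline is in the right spirit — reducing everything to concentration in $G_\ast'$, exploiting the blow-up structure, and using the independence of the uniform selection $O_0 \subset O_0'$ — and your treatment of $\theta_0$ via McDiarmid with a truncated Lipschitz constant does work at the required precision $\g^4$. But there is a genuine gap in the degree, codegree, and cross-degree conditions for $O_0$, and it is the central nontrivial step of the lemma.

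The issue is in the sentence ``$\deg_{\widetilde O}(\pi(v))/r \approx n/r^2 \approx \theta_0 n$.'' The condition $\Delta(O_0) \leq (1+\eps_0)\theta_0 n$ with $\eps_0 = \g^3 = (\log n)^{-30}$ forces, after the Chernoff step over the uniform selection, the bound $\deg_{O_\ast}(x) \leq (1+\eps_0/2)\,s_\ast\, n/r$ for every $x \in [n/r]$, where $s_\ast = 1 - \Theta(1/\log n)$. The trivial bound $\deg_{O_\ast}(x) \leq n/r - 1$ is off by a factor of roughly $1/s_\ast = 1 + \Theta(1/\log n)$, which is far larger than the permitted slack $\eps_0 = (\log n)^{-30}$. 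So you genuinely need $\deg_{O_\ast}(x)$ to concentrate tightly about its mean, and $\deg_{O_\ast}(x)$ is not a sum of independent indicators — the events $\{xy \in O_\ast\}$ for varying $y$ are coupled through the triangle-closure. McDiarmid does not rescue you here either: for a single vertex degree, the sum of squared Lipschitz constants is of order $(n/r)^2$ (the roughly $(n/r)^2$ edges not incident to $x$ each have Lipschitz constant $\Theta(1)$), while the required deviation is only $\eps_0\, n/r \asymp n/(\log n)^{32}$, so the McDiarmid exponent is $\Theta\big((\log n)^{-60}\big)$, which does not even decay. The paper sidesteps this entirely by proving the seed analogue of Lemma~\ref{lem:probability-open'} (Lemma~\ref{lem:prob-open'-seed}, via Janson) and then repeating the ``star-count'' moment argument of Lemma~\ref{lem:cA-open-deg} inside $O_\ast$. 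Some such higher-moment or Bernstein-type argument (the function $\deg_{O_\ast}(x)$ is self-bounding, so one could alternatively appeal to a Bernstein-style bound with variance proxy $\E\,W_x \ll n/r$ rather than the crude $\sum c_e^2$) is genuinely required; plain McDiarmid and plain Chernoff on independent Bernoullis do not reach the needed precision. The same gap recurs for the codegree and cross-degree clauses, where the conditional expectation you write down again depends on the (unestablished) tight concentration of codegrees in $O_\ast$.
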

Since the proof of Lemma \ref{lem:cA-for-seed} is simply a (easy) statement about the binomial random graph, we postpone the proof to Appendix~\ref{sec:seed}. We now turn to show that Lemma~\ref{lem:cA} follows immediately from the above two lemmas. 

\begin{proof}[Proof of Lemma~\ref{lem:cA}]
By Lemmas~\ref{lem:Ai-Aiplus1} and~\ref{lem:cA-for-seed},  we have
\[\P(\cA)=\prod_{i=0}^T\PP(\cA_i \mid \cA_0,\ldots,\cA_{i-1})\geq (1-o(1))\big(1-O(n^{-\delta/8})\big)^T=1-o(1),
\]
where, for the final inequality, we recall $T = O(\g^{-1}\sqrt{\log n}) = O((\log n)^{11})$.
\end{proof}

Thus we turn to our main task for this section which is the proof of Lemma~\ref{lem:Ai-Aiplus1}. We begin with the harder part of the proof, by checking the various properties of $\cA_{i+1}$ involving $O_{i+1}$. The properties involving only $G_{\leq i+1}'$ are significantly easier. We start by showing \eqref{eq:theta-conc-Ai} that $\theta_{i+1}$ (the density of $O_{i+1}$) is concentrated about its mean. 

\begin{lemma}\label{lem:theta-conc}
If $\cA_i$ holds, then 
\[ \PP_{i+1} \big( |\theta_{i+1} - \EE_{i+1}\, \theta_{i+1} | \leq \gamma^4 \cdot \E_{i+1}\, \theta_{i+1}  \big) \geq 1- n^{-\delta/8}\, . \]
\end{lemma}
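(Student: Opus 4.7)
The plan is to prove this via the classical second moment / Chebyshev strategy sketched at~\eqref{eq:sketch-moment}, using Lemma~\ref{lem:probability-open'} as the essential input. Writing $Z=e(O_{i+1})=\theta_{i+1}\binom{n}{2}$ and $\mu=\EE_{i+1}Z$, the target inequality is just
\[
\PP_{i+1}\bigl(|Z-\mu|\ge \gamma^4\mu\bigr)\le \mathrm{Var}(Z)/(\gamma^4\mu)^2\le n^{-\delta/8},
\]
so the whole task is to bound $\mathrm{Var}(Z)$ in terms of $\mu$.

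First I would decompose $Z=\sum_{e\in O_i}\mathbf 1(e\in O_{i+1})$ and write
\[
\EE_{i+1}Z^2 = \mu + \sum_{e\ne f\in O_i}\PP_{i+1}(e,f\in O_{i+1}).
\]
To bound each summand, apply Lemma~\ref{lem:probability-open'} to $H=\{e,f\}$ and $K=\emptyset$. The hypotheses of that lemma are trivial here: $\Delta(H)\le 2\le n^{\delta/4}$, and $V(H)$ contains at most four vertices so $\Delta(G'_{\leq i}[V(H)])\le 3\le n^{\delta/4}$. This gives
\[
\PP_{i+1}(e,f\in O_{i+1})\le (1+n^{-\delta/4})^2 s_{i+1}^2,
\]
and summing, while using that $\PP_{i+1}(e\in O_{i+1})=s_{i+1}$ for every $e\in O_i$, yields
\[
\EE_{i+1}Z^2 \le \mu + (1+n^{-\delta/4})^2\mu^2,
\]
hence $\mathrm{Var}(Z)\le O(n^{-\delta/4})\mu^2+\mu$.

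The main (mild) obstacle is then to check that $\mu$ is sufficiently large for the Chebyshev bound to beat $n^{-\delta/8}$. Here is where I would use what $\cA_1\wedge\cdots\wedge\cA_i$ gives us: by Observation~\ref{obs:si-bd} we have $s_{i+1}=1-o(1)$ (since $T\gamma^2$ and $\gamma\alpha_0\sqrt{\log n}$ are both $o(1)$), and by Observation~\ref{lem:pi-UB} we have $\theta_i\ge n^{-1/2+\delta}$, so $\mu = s_{i+1}\theta_i\binom{n}{2}\ge n^{3/2+\delta/2}$ comfortably. Plugging into Chebyshev,
\[
\PP_{i+1}\bigl(|\theta_{i+1}-\EE_{i+1}\theta_{i+1}|\ge\gamma^4\EE_{i+1}\theta_{i+1}\bigr)\le \gamma^{-8}\bigl(O(n^{-\delta/4})+\mu^{-1}\bigr),
\]
and since $\gamma^{-8}=(\log n)^{80}$, this is $O(n^{-\delta/4+o(1)})\le n^{-\delta/8}$ for $n$ large enough, completing the proof.
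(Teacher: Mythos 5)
Your proof is correct and is essentially identical to the paper's: the same decomposition of $Z=e(O_{i+1})$, the same application of Lemma~\ref{lem:probability-open'} to $H=\{e,f\}$, $K=\emptyset$, the same variance bound, and the same Chebyshev calculation using that $\EE_{i+1}Z \geq n^{3/2}$ (via Observations~\ref{lem:pi-UB} and~\ref{obs:si-bd}).
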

\begin{proof}
We consider the random variable $X = e(O_{i+1})$ and use the second moment method. To calculate $\EE_{i+1} X^2$, we note that for any pair of distinct $e, f \in O_i$, we have 
\[ \PP_{i+1}\big( e \in O_{i+1} \wedge f \in O_{i+1} \big)  \leq (1+n^{-\delta/4})^2 \PP_{i+1}( e \in O_{i+1}) \PP_{i+1}( f \in O_{i+1}), \] by applying Lemma~\ref{lem:probability-open'} to the graph $H = \{e,f\}$. It follows that 
\[ \Var_{i+1}(X) \leq \EE_{i+1} X + O(n^{-\delta/4})(\EE_{i+1} X)^2.\] By Chebyshev's inequality, for $\eps>0$,
\[ \PP_{i+1}\big( |X - \EE_{i+1} X| \geq \eps \cdot \EE_{i+1} X \big) \leq \frac{\Var_{i+1}(X)}{\eps^2 (\EE_{i+1} X )^2 } \leq \frac{1}{\eps^2 \EE_{i+1} X}+ O\big(n^{-\delta/4}\eps^{-2}\big)\, .\] 
Setting $\eps = \gamma^4$ and noting that $\EE_{i+1} X=\binom{n}{2} s_{i+1}\theta_i\geq n^{3/2}$ (e.g.\ by Observations~\ref{lem:pi-UB} and~\ref{obs:si-bd})  yields the result.
\end{proof}

We now turn to deal with the degrees and codegrees of the graph of open edges.

\begin{lemma} \label{lem:cA-open-deg} If $\cA_i$ holds, then
\[ \PP_{i+1}\big( \Delta(O_{i+1}) \leq (1 + \eps_{i+1} ) \theta_{i+1} n  \big) \geq  1-2n^{-\delta/8} .\]
\end{lemma}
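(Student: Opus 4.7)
\medskip

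\noindent\textbf{Proof proposal.} The plan is to combine a ``high-moment'' star-counting argument for each vertex $x$ with the concentration of $\theta_{i+1}$ from Lemma~\ref{lem:theta-conc}. Fix $x\in[n]$ and set
\[
\mu := (1+\eps_i)s_{i+1}\theta_i n,
\]
which on the event $\cA_i$ upper bounds $\E_{i+1}|N^\circ_{i+1}(x)| = s_{i+1}|N^\circ_i(x)|$ (via the degree bound in \eqref{eq:def-cE_i} and \eqref{eq:def-si+1}). Choose an integer parameter $t$ with
\[
t = \lceil 4\g^{-4}\log n\rceil,
\]
so $t\le n^{\delta/4}$, and let $Z_x(t)$ denote the number of $t$-stars $K_{1,t}$ with center $x$ contained in $O_{i+1}$. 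The two inequalities we play against each other are
\[
\E_{i+1} Z_x(t) \geq \P_{i+1}\bigl(|N^\circ_{i+1}(x)|\geq (1+\g^4)\mu\bigr)\binom{(1+\g^4)\mu}{t}
\]
(from $Z_x(t)=\binom{|N^\circ_{i+1}(x)|}{t}$) and, writing $Z_x(t)=\sum_S \mathbf{1}(S\subset O_{i+1})$ where $S$ ranges over all $t$-stars in $O_i$ centered at $x$,
\[
\E_{i+1} Z_x(t) \leq \sum_S \P_{i+1}(S\subset O_{i+1}) \leq \bigl(1+n^{-\delta/4}\bigr)^{t} s_{i+1}^{t}\binom{|N^\circ_i(x)|}{t}.
\]
The inequality on the right is Lemma~\ref{lem:probability-open'} applied with $H=S$ and $K=\emptyset$; this is legitimate since $\Delta(S)=t\le n^{\delta/4}$ and, because $O_i$ and $G'_{\le i}$ are edge-disjoint by \eqref{eq:def-Oi+1}, one has $\Delta(G'_{\le i}[V(S)])\le t\le n^{\delta/4}$.

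Combining the two displays, and using $s_{i+1}|N^\circ_i(x)|\le \mu$ on $\cA_i$ together with the elementary inequality $\binom{a}{t}/\binom{b}{t}\le (a/b)^{t}$ for $a\le b$, I obtain
\[
\P_{i+1}\bigl(|N^\circ_{i+1}(x)|\geq (1+\g^4)\mu\bigr) \leq \left(\frac{1+n^{-\delta/4}}{1+\g^4}\right)^{t} \leq \exp\!\bigl(-\tfrac{1}{2}\g^4 t\bigr)\leq n^{-1-\delta/8}
\]
for $n$ large. Taking a union bound over $x\in[n]$ yields
\[
\P_{i+1}\bigl(\Delta(O_{i+1})\le (1+\g^4)\mu\bigr)\geq 1-n^{-\delta/8}.
\]

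Finally I invoke Lemma~\ref{lem:theta-conc} to get $\theta_{i+1}\geq (1-\g^4)s_{i+1}\theta_i$ with probability at least $1-n^{-\delta/8}$. On the intersection of these two events,
\[
\Delta(O_{i+1})\;\leq\;(1+\g^4)(1+\eps_i)s_{i+1}\theta_i n\;\leq\;\frac{(1+\g^4)(1+\eps_i)}{1-\g^4}\,\theta_{i+1} n\;\leq\;(1+\eps_{i+1})\theta_{i+1} n,
\]
since $\eps_j=(j+1)\g^3$ and thus $(1+3\g^4)(1+(i+1)\g^3)\le 1+(i+2)\g^3=1+\eps_{i+1}$ for large $n$. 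A union bound on the failure probabilities gives the claimed $2n^{-\delta/8}$ bound.

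The main obstacle I expect is verifying that Lemma~\ref{lem:probability-open'} applies uniformly to all the $t$-stars that appear in the sum, in particular the hypothesis on $\Delta(G'_{\le i}[V(S)])$; this is essentially automatic here because stars have $t+1$ vertices and $t$ is chosen far smaller than $n^{\delta/4}$, and it is exactly this mild restriction that dictates the choice of $t$ (and hence the strength $n^{-1-\delta/8}$ of the tail bound). Everything else is bookkeeping with the constants $\g$ and $\eps_i$ to convert the bound $(1+\g^4)\mu$ into the required $(1+\eps_{i+1})\theta_{i+1} n$.
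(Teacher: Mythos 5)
Your proposal is correct and follows essentially the same route as the paper's proof: a star-count comparison for each vertex via Lemma~\ref{lem:probability-open'}, a union bound over $x\in[n]$, and a final appeal to Lemma~\ref{lem:theta-conc} to convert $(1+\g^4)\mu$ into $(1+\eps_{i+1})\theta_{i+1}n$. The only differences are cosmetic — you take $t=\lceil 4\g^{-4}\log n\rceil$ rather than $t=\g^{-5}$ and you set $\mu$ directly to the upper bound $(1+\eps_i)s_{i+1}\theta_i n$ instead of $s_{i+1}\Delta(O_i)$; you also helpfully make explicit the trivial check that $\Delta(G'_{\leq i}[V(S)])\le t$ for a star $S$, which the paper leaves implicit.
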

\begin{proof}
We first consider a fixed vertex $x\in [n]$ and then union bound over all vertices.
For $t \leq n^{\delta/4}$, let $Z_x(t)$ be the number of copies of $K_{1,t}$ in $O_{i+1}$ where $x$ is the vertex of degree $t$ in the $K_{1,t}$. 

Now set $\mu = s_{i+1}\Delta(O_i)$ and note that for any $\eps \geq 0$, we have 
\begin{equation} \label{eq:LB-on-Zx} \EE_{i+1} Z_x(t) \geq \PP_{i+1}\big(  |N_{i+1}^{\circ}(x)| \geq (1+\eps)\mu  \big) \cdot \binom{ (1+\eps)\mu}{t}\, , \end{equation}
by Markov's inequality.
On the other hand, we have 
\begin{equation} \label{eq:UB-on-Zx} \EE_{i+1} Z_x(t) = \sum_{S \in (N_{i}^{\circ}(x))^{(t)} } \PP_{i+1}( xy \in O_{i+1} \text{ for all } y \in S ) \leq \big(1+ n^{-\delta/4}\big)^ts_{i+1}^t\binom{|N_{i}^{\circ}(x)|}{t} , \end{equation}
where the last inequality holds by Lemma~\ref{lem:probability-open'}, since $t \leq n^{\delta/4}.$ 
Now compare the upper and lower bounds for $\EE_{i+1} Z_x(t)$ with $\eps = \gamma^4$ and $t = 1/\gamma^5$ to get 
\[ \PP_{i+1}\big(  |N_{i+1}^{\circ}(x)| \geq (1+\eps) \mu  \big)  \leq  \bigg(\frac{1+ n^{-\delta/4} }{1+\eps}\bigg)^t \leq e^{-\eps t/2 } \leq n^{-2}\, . \]
Thus union bounding over all $x\in[n]$ yields $\PP_{i+1}\big(\Delta(O_{i+1}) \leq (1+\eps)\mu  \big) \geq 1-n^{-1}$.  To conclude the proof, we show that $(1+\eps)\mu\leq (1 + \eps_{i+1}) \theta_{i+1} n$ with probability at least $1-n^{-\delta/8}$. Indeed, this follows by noting that $\Delta(O_i)\leq (1+\eps_i)\theta_i n$ by our assumption that $\cA_i$ holds (see~\eqref{eq:def-cE_i}); and that $\theta_{i+1}\geq (1-\g^4)s_{i+1}\theta_i$ with probability at least $1-n^{-\delta/8}$ by Lemma~\ref{lem:theta-conc}.
\end{proof}

We deal with codegrees in $O_i$ in a very similar way. 

\begin{lemma}\label{lem:cA-open-codegs} For If $\cA_i$ holds, then
\[ \PP_{i+1}\big( \Delta_2(O_{i+1}) \leq (1 + \eps_{i+1} )\theta_{i+1}^2 n   \big) \geq  1-2n^{-\delta/8} .\]
\end{lemma}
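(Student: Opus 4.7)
The strategy is to mirror the proof of Lemma~\ref{lem:cA-open-deg}, counting $K_{2,t}$'s rather than stars $K_{1,t}$. Fix distinct vertices $x, y \in [n]$ and set $D = |N^{\circ}_i(x) \cap N^{\circ}_i(y)|$. Since the open codegree cannot increase from step $i$ to $i+1$, the desired bound holds trivially whenever $D \leq (1+\eps_{i+1})\theta_{i+1}^2 n$, so we may restrict attention to the case that $D$ (and hence $\mu := s_{i+1}^2 D$) is polynomially large in $n$, using the easy lower bound $s_{i+1} = 1 - o(1)$ from Observation~\ref{obs:si-bd} together with Observation~\ref{lem:pi-UB}.

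Set $t = \gamma^{-5}$ and let $Z_{xy}(t)$ denote the number of copies of $K_{2,t}$ in $O_{i+1}$ with $\{x,y\}$ as the part of size two. On the one hand, Markov's inequality gives
\[ \EE_{i+1} Z_{xy}(t) \geq \PP_{i+1}\big( |N^{\circ}_{i+1}(x) \cap N^{\circ}_{i+1}(y)| \geq (1+\gamma^4)\mu \big) \binom{(1+\gamma^4)\mu}{t}. \]
On the other, each potential $K_{2,t}$ corresponds to a $t$-subset $S$ of $N^{\circ}_i(x) \cap N^{\circ}_i(y)$ and is realized precisely when the $2t$ edges $\{xz, yz : z \in S\}$ all survive. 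Applying Lemma~\ref{lem:probability-open'} to $H = \{xz, yz : z \in S\}$ (which has $\Delta(H) = t$ and $|V(H)| = t+2$, both well below $n^{\delta/4}$) gives
\[ \EE_{i+1} Z_{xy}(t) \leq (1+n^{-\delta/4})^{2t}\, s_{i+1}^{2t}\, \binom{D}{t}. \]
Comparing the two bounds and using $\gamma^4 \mu \geq 2t$ to control $\binom{(1+\gamma^4)\mu}{t}/\binom{D}{t}$, the target probability is at most $\bigl((1+n^{-\delta/4})^2/(1+\gamma^4/2)\bigr)^t \leq e^{-\gamma^4 t/4} = n^{-\omega(1)}$.

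A union bound over all $\binom{n}{2}$ pairs then yields $|N^{\circ}_{i+1}(x) \cap N^{\circ}_{i+1}(y)| \leq (1+\gamma^4) s_{i+1}^2 D$ simultaneously for all $x \neq y$, with probability $1 - n^{-\omega(1)}$. Finally, Lemma~\ref{lem:theta-conc} gives $s_{i+1}^2 \theta_i^2 \leq (1+O(\gamma^4))\theta_{i+1}^2$ with probability at least $1 - n^{-\delta/8}$, and combining with the codegree bound $D \leq (1+\eps_i)\theta_i^2 n$ afforded by $\cA_i$ yields $(1+\gamma^4) s_{i+1}^2 D \leq (1+\eps_{i+1})\theta_{i+1}^2 n$, since $\eps_{i+1} = \eps_i + \gamma^3$ easily absorbs the $O(\gamma^4)$ errors. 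The only (very minor) obstacle is verifying that the hypotheses of Lemma~\ref{lem:probability-open'} hold for our graph $H$ and dispatching the small-codegree case separately; no new ideas beyond those in the proof of Lemma~\ref{lem:cA-open-deg} are required.
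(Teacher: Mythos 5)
Your proposal is correct and follows essentially the same approach as the paper: counting copies of $K_{2,t}$ with apex $\{x,y\}$, bounding the expectation via Lemma~\ref{lem:probability-open'}, comparing with the Markov lower bound, and then chaining through $\cA_i$ and Lemma~\ref{lem:theta-conc}. The only (cosmetic) difference is that you normalize with the pair-specific codegree $\mu = s_{i+1}^2|N_i^\circ(x)\cap N_i^\circ(y)|$ whereas the paper uses $\mu = s_{i+1}^2\Delta_2(O_i)$ uniformly over all pairs, which streamlines the union bound slightly; both land in the same place.
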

\begin{proof} 
Fix distinct $x,y\in [n]$ and for $t\leq n^{\delta/4}$ let $Z_{x,y}(t)$ be the number of copies of $K_{2,t}$ in $O_{i+1}$ where $x$ and $y$ are the vertices of degree $t$ in the $K_{2,t}$. As above, we bound $\E_{i+1}Z_{x,y}(t)$ in two ways.
For $\eps \geq 0$ and $\mu = s^2_{i+1}\Delta_2(O_i)$, we have that 
\[ \EE_{i+1} Z_{x,y}(t) \geq \PP_{i+1}\big(  |N_{i+1}^{\circ}(x)\cap N_{i+1}^{\circ}(y)| \geq (1+\eps)\mu  \big) \cdot \binom{ (1+\eps)\mu}{t}. \]
On the other hand, by Lemma~\ref{lem:probability-open'}, we have 
\[ \EE_{i+1} Z_{x,y}(t)  \leq \big(1+ n^{-\delta/4}\big)^{2t}\binom{|N_{i}^{\circ}(x)\cap N_{i}^{\circ}(y)|}{t}s_{i+1}^{2t} .\]
Comparing these bounds with $\eps = \gamma^4$ and $t = 1/\gamma^5$ yields 
\[ \PP_{i+1}\big(  |N_{i+1}^{\circ}(x)\cap N_{i+1}^{\circ}(y)|  \geq (1+\eps) \mu  \big)  \leq  \bigg(\frac{(1+ n^{-\delta/4})^2 }{1+\eps}\bigg)^t \leq e^{-\eps t/2 } \leq n^{-3}\, . \]
Thus union bounding over all $x,y\in[n]$ and applying Lemma~\ref{lem:theta-conc} and~\eqref{eq:def-cE_i} as above completes the proof.
\end{proof}

Skipping over the (considerably easier) conditions on the neighbourhoods of $G'_{\leq i}$ for the moment, we now deal with the conditions on the intersections $N_{\leq i+1}' \cap N^{\circ}_{i+1}(x)$ at \eqref{eq:cross-degrees}. Again we proceed in a manner similar to the above. 

\begin{lemma}\label{lem:mixed-deg}
If $\cA_{i}$ holds, then
\[ \PP_{i+1}\big( |N'_{\leq i+1 }(x) \cap N^{\circ}_{i+1}(y)| \leq  (1+\eps_{i+1} )\theta_{i+1}\psi_{i+1} n \text{\,  for all\,  }x,y\big) \geq 1-4n^{-\delta/8}.\]
\end{lemma}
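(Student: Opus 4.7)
The plan is to follow the moment-method template used in the proofs of Lemmas~\ref{lem:cA-open-deg} and~\ref{lem:cA-open-codegs}. Fix distinct $x,y\in[n]$ and set $D=|N'_{\leq i+1}(x)\cap N^{\circ}_{i+1}(y)|$. The subtlety is that a vertex $z$ can contribute to $D$ in two probabilistically distinct ways: either the edge $xz$ was already present in $G'_{\leq i}$, in which case we only need $yz\in O_{i+1}$; or $xz$ is added in the nibble step, in which case both $xz\in G'_{i+1}$ and $yz\in O_{i+1}$ must hold. Since $O_i\cap G'_{\leq i}=\emptyset$ by \eqref{eq:O_i-first}, these cases partition the potential contributors between $N'_{\leq i}(x)\cap N^{\circ}_i(y)$ and $N^{\circ}_i(x)\cap N^{\circ}_i(y)$.

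Taking $t=\gamma^{-5}$, the next step is to consider $Z_{x,y}(t)=\binom{D}{t}$ and bound $\E_{i+1}Z_{x,y}(t)$ from above by summing over $t$-subsets $S\subseteq[n]$, splitting $S=S_0\sqcup S_1$ according to the classification above. For each such split, Lemma~\ref{lem:probability-open'} applies with $H=\{yz:z\in S\}$ (a star at $y$) and $K=\{xz:z\in S_1\}$ (a star at $x$) --- the degree hypotheses hold trivially since $|V(H)|\leq t+1\leq n^{\delta/4}$ --- giving
\[
\PP_{i+1}\big(H\subseteq O_{i+1}\wedge K\subseteq G'_{i+1}\big)\leq(1+n^{-\delta/4})^t s_{i+1}^t p_{i+1}^{t-|S_0|}.
\]
Writing $a=|N'_{\leq i}(x)\cap N^{\circ}_i(y)|$, $b=|N^{\circ}_i(x)\cap N^{\circ}_i(y)|$ and $\mu=s_{i+1}(a+bp_{i+1})$, the bound $\binom{a}{j}\leq a^j/j!$ and the binomial theorem then yield $\E_{i+1}Z_{x,y}(t)\leq (1+n^{-\delta/4})^t\mu^t/t!$.

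To convert this into the desired bound, apply $\cA_i$ to obtain $a\leq(1+\eps_i)\theta_i\psi_i n$ (via \eqref{eq:cross-degrees}) and $b\leq(1+\eps_i)\theta_i^2 n$ (via \eqref{eq:def-cE_i}); together with $p_{i+1}\theta_i=\gamma/\sqrt n$ and $\psi_{i+1}=\psi_i+\gamma/\sqrt n$ this rearranges to $\mu\leq (1+\eps_i)s_{i+1}\theta_i\psi_{i+1}n$. Lemma~\ref{lem:theta-conc} then allows $s_{i+1}\theta_i$ to be replaced by $\theta_{i+1}(1+O(\gamma^4))$ outside an event of probability at most $n^{-\delta/8}$, so $(1+\gamma^4)\mu\leq(1+\eps_{i+1})\theta_{i+1}\psi_{i+1}n$. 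Combined with the lower bound $\E_{i+1}Z_{x,y}(t)\geq\PP_{i+1}\big(D\geq(1+\gamma^4)\mu\big)\binom{(1+\gamma^4)\mu}{t}$, and using that $\theta_{i+1}\psi_{i+1}n\geq n^{\delta/2}\gg t^2$ by Observations~\ref{obs:mean-of-theta} and~\ref{lem:pi-UB}, a direct comparison yields $\PP_{i+1}\big(D\geq(1+\eps_{i+1})\theta_{i+1}\psi_{i+1}n\big)\leq e^{-\gamma^{-1}/3}\leq n^{-3}$. Union bounding over the $\leq n^2$ ordered pairs $(x,y)$ completes the proof.

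The main obstacle is arranging the decomposition of $D$ so that Lemma~\ref{lem:probability-open'} applies to a single joint event with the correct powers of $s_{i+1}$ and $p_{i+1}$, and then recognising that the resulting mean $\mu=s_{i+1}(a+bp_{i+1})$ is exactly what rearranges (using $\psi_{i+1}=\psi_i+\gamma/\sqrt n$) into $s_{i+1}\theta_i\psi_{i+1}n\approx\theta_{i+1}\psi_{i+1}n$. Once this bookkeeping is set up, the calculation mirrors that of Lemmas~\ref{lem:cA-open-deg} and~\ref{lem:cA-open-codegs} almost verbatim.
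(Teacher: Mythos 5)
Your proof is correct and is a genuine, if modest, reorganisation of the paper's argument. The paper writes $N'_{\leq i+1}(x)\cap N^{\circ}_{i+1}(y)$ as the disjoint union of $N'_{\leq i}(x)\cap N^{\circ}_{i+1}(y)$ (old edges from $x$) and $N'_{i+1}(x)\cap N^{\circ}_{i+1}(y)$ (new edges from $x$), and applies the moment method of Lemmas~\ref{lem:cA-open-deg} and~\ref{lem:cA-open-codegs} \emph{separately} to each piece, getting targets $(1+\eps_{i+1})\psi_i\theta_{i+1}n$ and $(1+\eps_{i+1})(\gamma/\sqrt n)\theta_{i+1}n$ respectively, which are then added using $\psi_{i+1}=\psi_i+\gamma/\sqrt n$. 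You instead run the moment method \emph{once} on the full quantity $D$, pushing the same dichotomy into the choice of $t$-subset: splitting $S=S_0\sqcup S_1$ by whether the edge $xz$ is old or new, and applying Lemma~\ref{lem:probability-open'} with $H$ the full star at $y$ and $K$ the partial star at $x$ on $S_1$ only. The resulting binomial-theorem telescoping gives the single effective mean $\mu=s_{i+1}(a+bp_{i+1})$, which rearranges cleanly into $(1+\eps_i)s_{i+1}\theta_i\psi_{i+1}n$ via the same identity $\psi_{i+1}=\psi_i+\gamma/\sqrt n$. Both routes give the same bound; yours is marginally tighter on the error probability (one union bound instead of two), while the paper's has the pedagogical advantage that each piece is a verbatim re-run of the templates in Lemmas~\ref{lem:cA-open-deg} and~\ref{lem:cA-open-codegs}. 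One small presentational remark: as in the paper's own Lemmas~\ref{lem:cA-open-deg}--\ref{lem:mixed-deg}, the comparison $\PP(D\geq(1+\gamma^4)\mu)\leq (\frac{1+n^{-\delta/4}}{1+\gamma^4})^t$ via $\binom{(1+\gamma^4)\mu}{t}$ implicitly assumes $\mu\gg t/\gamma^4$; when $\mu$ is small one should instead compare $\E Z_{x,y}(t)$ directly against $\binom{M}{t}$ for the deterministic threshold $M=(1-\gamma^4)(1+\eps_{i+1})s_{i+1}\theta_i\psi_{i+1}n$, which is polynomially large by Observations~\ref{obs:mean-of-theta} and~\ref{lem:pi-UB} and gives an even stronger bound. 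This is exactly the same unstated case distinction present in the paper's argument, so it is not a new gap in your write-up, but it is worth keeping in mind if you formalise the ``direct comparison'' sentence.
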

\begin{proof} For $x,y\in[n]$, we can write our set of interest as a disjoint union
\begin{align} \label{eq:cross-deg-du}
N'_{\leq i+1 }(x) \cap N^{\circ}_{i+1}(y) = \big( N'_{\leq {i} }(x) \cap N^{\circ}_{i+1}(y) \big) \cup \big(  N_{i+1}'(x) \cap N_{i+1}^{\circ}(y) \big).\end{align}
Here we use are using the notation $N'_{i+1}(x) = N_{G_{i}'}(x)$. One can show, using a similar argument to that of Lemmas~\ref{lem:cA-open-deg} and \ref{lem:cA-open-codegs} and using the assumption that~\eqref{eq:cross-degrees} holds, that
\begin{align}\label{eq:cross-deg1}
\PP_{i+1}\big( | N'_{\leq {i} }(x) \cap N^{\circ}_{i+1}(y) | \leq (1+ \eps_{i+1} )\psi_i\theta_{i+1}n \text{\,  for all\,  }x,y \big) \geq 1-2n^{-\delta/8}.
\end{align}

Thus we focus on the second set in the disjoint union. Here a vertex $z$ is in this set if $z \in N_i^{\circ}(x) \cap N_{i}^{\circ}(y)$ and $xz \in G_{i+1}'$ and $zy \in O_{i+1}$. Let 
\[ Z = Z_{x,y}(t) = \big| \big\{ S \in (N_i^{\circ}(x) \cap N_{i}^{\circ}(y))^{(t)} :  zx \in G_{i+1}' \text{ and } zy \in O_{i+1} \text{ for all } z \in S \big\} \big| .\]

Setting $\mu = s_{i+1}p_{i+1}|N_i^{\circ}(x) \cap N_{i}^{\circ}(y)|$, we have
\[ \EE_{i+1} Z \geq \PP\big(  | N'_{i+1}(x) \cap N_{i+1}^{\circ}(y) | \geq (1+\eps)\mu \big) \cdot \binom{(1+\eps)\mu }{t}\,. \]
We will use Lemma~\ref{lem:probability-open'} to bound $\E_{i+1} Z$.  Writing $K_{x,S}$ for the star from $x$ to a set $S$ and defining $K_{y,S}$ similarly, Lemma~\ref{lem:probability-open'} implies 
\begin{equation*} \EE_{i+1} Z = \sum_{S}\P_{i+1}\left( K_{x,S} \subset G_{i+1}',K_{y,S} \subset O_{i+1} \right)  \leq  (1+n^{-\delta/4})^t \binom{|N_i^{\circ}(x) \cap N_{i}^{\circ}(y)|}{t}p_{i+1}^ts_{i+1}^t  
\end{equation*} 
where the sum is over $S \in (N_i^{\circ}(x) \cap N_{i}^{\circ}(y))^{(t)}$. 
Thus, setting $\eps = \g^4$ and $t = 1/\g^5$,
\[ \PP\big(  | N'_{i+1}(x) \cap N_{i+1}^{\circ}(y) | \geq (1+\eps)\mu \big) \leq   \bigg(\frac{(1+ n^{-\delta/4}) }{1+\eps}\bigg)^t \leq e^{-\eps t/2 } \leq n^{-3} .\]
Applying Lemma~\ref{lem:theta-conc} and \eqref{eq:def-cE_i} as above, and recalling that $p_{i+1}\theta_i=\gamma/\sqrt{n}$, it follows that $(1+\eps)\mu \leq (1+\eps_{i+1})(\g/\sqrt{n})\theta_{i+1}n$ with probability at least $1-n^{-\delta/8}$. We conclude via a union  bound that 
\[ \PP\big(  | N'_{i+1}(x) \cap N_{i+1}^{\circ}(y) | \leq (1+\eps_{i+1})(\g/\sqrt{n})\theta_{i+1}n \text{\,  for all\,  }x,y\big) \geq 1-2n^{-\delta/8} .\]
The result follows by combining this with~\eqref{eq:cross-deg1}, \eqref{eq:cross-deg-du} and recalling that $\psi_{i}=i\g/\sqrt{n}+p_0$.
\end{proof}

We now easily deduce Lemma~\ref{lem:Ai-Aiplus1}.
\begin{proof}[Proof of Lemma~\ref{lem:Ai-Aiplus1}]
For $i\geq 0$ assume that $\cA_0 \wedge \cdots \wedge \cA_i$ holds. Lemma~\ref{lem:theta-conc} tells us that \eqref{eq:theta-conc-Ai} holds. Lemma~\ref{lem:cA-open-deg} and Lemma~\ref{lem:cA-open-codegs} take care of \eqref{eq:def-cE_i}. We take care of \eqref{eq:def-cF_i} and \eqref{eq:def-cF_i2} by an easy application of Chernoff's inequality. Finally \eqref{eq:cross-degrees} is taken care of by Lemma~\ref{lem:mixed-deg}.
\end{proof}

Thus we have shown that our process runs as expected up to time $T$. In the remainder of the proof we turn to the more complicated task of bounding the independence number.

\section{Vertices with large degree into \texorpdfstring{$I$}{I}}\label{sec:moderate-degrees}

We now begin a series of steps towards bounding the independence number of our graph. In this section we control the number of vertices that have large degree to a potential independent set $I \in [n]^{(k)}$. 
To be more precise, let $I \in [n]^{(k)}$ and define
\begin{equation} \label{eq:L-def}
L(I)=\big\{  x \in [n] :\, |I \cap N'_{ \leq T }(x) |\geq n^{2\beta}\big\}, \end{equation}
where $\beta>0$ is chosen so that $\beta<\delta/2^7$. For $i\leq T$, we define the event 
\begin{align}\label{eq:cC-def}
\cC = \big\{ e(G_{\leq T}'[L(I),I]) < n^\beta k  \text{ for all } I \in [n]^{(k)} \big\}\, . 
\end{align}
The main objective of this section is to prove the following. 

\begin{lemma}\label{lem:moderate-degrees}
We have
\[ \PP\big( \cC \big) = 1-o(1).\]
\end{lemma}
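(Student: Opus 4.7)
The plan is to work on the high-probability event $\mathcal{A}$ of Lemma~\ref{lem:cA} and then use a moment/star-counting argument based on Lemma~\ref{lem:probability-open'} to control $|L(I)|$ uniformly in $I$.

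On $\mathcal{A}$ we have $\Delta(G_{\leq T}') \leq (1+\eps_T)\psi_T n$, and a short computation with our choice of parameters gives $\psi_T n \leq (2/3+o(1))k$. Hence $e(G_{\leq T}'[L(I), I]) \leq |L(I)| \cdot \Delta(G_{\leq T}')$, so it is enough to show $|L(I)| < (3/2)n^\beta$ for every $I \in [n]^{(k)}$ simultaneously. In fact I aim for the much stronger conclusion $L(I) = \emptyset$ (no vertex has $\geq n^{2\beta}$ neighbours in $I$) for all $I$, with probability $1-o(1)$.

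Writing $d_I(x) = |N_{\leq T}'(x) \cap I|$ and fixing $m = \lceil 1/\beta \rceil$, so that $n^{2\beta m} \geq n^2$, we have the deterministic bound $|L(I)| \cdot n^{2\beta m} \leq \sum_{x \in [n]} d_I(x)^m$. It therefore suffices to show that $\sum_x d_I(x)^m \leq n(\log n)^{2m}$ uniformly over $I$. Now $\sum_x d_I(x)^m = \sum_{(y_1,\ldots,y_m) \in I^m} |N_{\leq T}'(y_1)\cap \cdots \cap N_{\leq T}'(y_m)|$, and for a ``generic'' tuple $(y_1,\ldots,y_m)$ (distinct vertices in distinct parts of the blow-up) one can estimate the common neighbourhood in expectation by iterating Lemma~\ref{lem:probability-open'} across the rounds $G_1', \ldots, G_T'$ and handling the seed round $G_0'$ via the independent $G(n/r,p_0)$ structure of $G_\ast$. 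This yields, for each generic star $K_{1,m}$ centered at $x$ with leaves in $I$, the bound $\P(\text{star} \subset G_{\leq T}') \leq (1+o(1))^m(p_0+p_1)^m$, and summing gives $\E\sum_x d_I(x)^m = O(n(k(p_0+p_1))^m) = O(n(\log n)^m)$, using $k(p_0+p_1) = O(\log n)$.

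The concentration needed to beat the union bound $\binom{n}{k} = \exp(\Theta(k\log n))$ over $I$ requires more than the $m$-th moment. I would therefore compute the $q$-th moment $\E\bigl[\sum_x d_I(x)^m\bigr]^q$ for $q \asymp k\log n$, expanding it as a sum over configurations of $q$ stars of size $m$ with leaves in $I$. For generic configurations (which dominate the sum), the resulting subgraph has constant maximum degree $m$, so the degree hypotheses of Lemma~\ref{lem:probability-open'} are easily met and iterating the lemma across the rounds yields a bound of $(1+o(1))^{qm}(p_0+p_1)^{qm}$ per configuration. Summing gives $\E[\sum_x d_I(x)^m]^q \leq (Cn(\log n)^m)^q$ for some constant $C$, and a Markov bound on this high moment yields $\P\bigl(\sum_x d_I(x)^m > 2Cn(\log n)^m\bigr) \leq \exp(-\Omega(q)) = \exp(-\Omega(k\log n))$, which combined with the union bound over $I$ completes the proof.

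The main obstacle is the combinatorial bookkeeping in the $q$-th moment: once $qm > k$, many configurations must have vertex coincidences among star leaves, and one must ensure that the degenerate contributions are absorbed into the generic bound and that the degree hypothesis $\Delta \leq n^{\delta/4}$ in Lemma~\ref{lem:probability-open'} is still respected. The choice of $m$ as a fixed constant (depending only on $\beta$) keeps the per-star degrees $O(1)$ and makes these constraints tractable.
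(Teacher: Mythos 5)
Your plan hinges on the claim that $L(I)=\emptyset$ for all $I\in[n]^{(k)}$ with high probability, and this is simply false. On the event $\cA$, a typical vertex $x$ has $|N'_{\leq T}(x)|=\Theta(\sqrt{n\log n})\gg n^{2\beta}$, and with high probability $\Delta(G'_{\leq T})\leq (1+o(1))\psi_T n<k$, so one may always choose $I\in[n]^{(k)}$ with $I\supset N'_{\leq T}(x)$; for such an $I$ we have $x\in L(I)$. In fact, handling precisely these $I$'s (sets that engulf a neighbourhood) is the whole point of introducing $L(I)$ and the ``core'' machinery in Sections~\ref{sec:moderate-degrees}--\ref{sec:open-edges-in-I-redux}. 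Consequently the quantity $\sum_x d_I(x)^m$ is not uniformly $O(n(\log n)^{2m})$: for the $I$'s above one already has $\sum_x d_I(x)^m\geq (n\log n)^{m/2}$, which for your $m=\lceil 1/\beta\rceil\geq 3$ dwarfs $n(\log n)^{2m}$. The high-moment Markov bound cannot rescue this. For a \emph{fixed} $I$, the upper tail of $\sum_x d_I(x)^m$ is dominated by the event that a single vertex has degree $\gtrsim n^{1/m}\log n\approx n^\beta\log n$ into $I$; that event has probability roughly $\exp(-n^{\beta}\operatorname{polylog}(n))$, which since $\beta<\delta/2^7\ll 1/2$ is vastly larger than $\binom{n}{k}^{-1}=\exp(-\Theta(\sqrt{n}(\log n)^{3/2}))$. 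So the union bound over $I$ is hopeless for this statistic, no matter how many moments you take.

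The lemma is deliberately much weaker than ``$L(I)$ is small'': it only asserts a bound on the \emph{edge count} $e(G'_{\leq T}[L(I),I])$, which can be controlled even though $|L(I)|$ can be of order $n^{1/2+o(1)}$. The paper's proof exploits that if $e(G'_{\leq T}[L(I),I])\geq n^\beta k$, then since every $x\in L(I)$ has degree at least $n^{2\beta}$ into $I$, the bipartite graph $G'_{\leq T}[L(I),I]$ is dense; Lemma~\ref{lem:ind-subgraph} then extracts from it a nearly-regular ``fingerprint'' subgraph $H$ with $e(H)=\tilde\Omega(n^\beta)|V(H)|$ and $\Delta(H)=\tilde O(n^{3\beta/2})$, and Lemma~\ref{lem:prob-subgraph} (itself built on Lemma~\ref{lem:probability-open'}) shows any such $H$ appears with probability $n^{-(1+o(1))e(H)/2}$. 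The union bound is then taken over candidate subgraphs $H$, not over sets $I$, and the density/regularity constraints on $H$ make this union bound small. That change of bookkeeping---fingerprint subgraphs instead of degree sums---is exactly what your approach is missing.
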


We prove this lemma by showing that if $G_{\leq T}[I \cup L(I)]$ contains too many edges then it contains a suitably dense ``fingerprint'' graph $H \subset G_{\leq T}[I \cup L(I)]$. We then union bound over all such $H$ and use our ``small subgraphs lemma,'' Lemma~\ref{lem:probability-open'}, to show that the appearance of such a fingerprint is unlikely. 

\subsection{Identification of a dense subgraph \texorpdfstring{$H$}{H}}
To choose $H$ we use the following simple deterministic lemma.  It will essentially allow us to consider only approximately regular graphs $H$. We note that a similar idea is used in \cite[Section 7]{FGM}. 

\begin{lemma}\label{lem:ind-subgraph}
 For $d\geq (\log n)^4$, let $G$ be a graph on $n$ vertices with average degree $d$. Then $G$ has an induced subgraph with average degree at least $\frac{c\log d}{\log n}\cdot d$ and maximum degree at most $2d^{3/2}$, where $c>0$ is an absolute constant. 
\end{lemma}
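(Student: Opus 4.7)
I would use a dyadic partition of the vertex set by degree, followed by a pigeonhole and a sampling step. Define $V_j=\{v\in V(G):2^j\le \deg_G(v)<2^{j+1}\}$, let $n_j=|V_j|$, and set $J=\lfloor\tfrac32\log_2 d\rfloor$, so that $V_{\le J}\subseteq\{v:\deg_G(v)\le 2d^{3/2}\}$ and the induced subgraph $G[V_{\le J}]$ automatically satisfies the maximum-degree bound. From $\sum_v\deg_G(v)=dn$ we obtain
\[
\sum_j n_j\,2^j\;\in\;\bigl[dn/2,\,dn\bigr],
\]
and since at most $O(\log n)$ dyadic classes are non-empty, pigeonhole produces an index $j^\ast$ with $n_{j^\ast}2^{j^\ast}\ge c_1\,dn/\log n$.

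The proof then splits into two cases. In the first case, $j^\ast\le J$, so the dominant degree class already lies in the ``low-degree'' region. I would show that $G[V_{\le J}]$ itself has at least $c_2\,dn\log d/\log n$ edges by refining the pigeonhole: the bound $|V_{>J}|\le n/d^{1/2}$ (from $\sum_{v\in V_{>J}}\deg_G(v)\le dn$) keeps the edges leaving $V_{\le J}$ under control, and restricting the pigeonhole to the $J+1=O(\log d)$ classes with $j\le J$---in the scenario where these carry a positive fraction of the total degree---provides the extra factor of $\log d$. The result is an induced subgraph $G[V_{\le J}]$ with $\Delta\le 2d^{3/2}$ and average degree at least $c_3 d\log d/\log n$.

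The main work is the complementary case, $j^\ast>J$, where the dominant class is ``heavy'': every $v\in V_{j^\ast}$ has $\deg_G(v)\ge d^{3/2}$, which forces $|V_{j^\ast}|\le dn/2^{j^\ast}\le n/d^{1/2}$. Here I would build the induced subgraph by independent sampling. Include each $v\in V_{j^\ast}$ with probability $q_h:=\min(1,\,2d^{3/2}/|V_{j^\ast}|)$ and each $v\in V_{\le J}$ with probability $q_\ell:=2d^{3/2}/n$; let $S$ be the union. By construction the expected size of $S$ is $O(d^{3/2})$, and a direct computation shows that the expected degree into $S$ of any vertex is at most $2d^{3/2}$. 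The expected edge count satisfies
\[
\mathbb E\,e(G[S])\;\ge\;q_h\,q_\ell\cdot e(V_{j^\ast},V_{\le J})
\]
which, using the lower bound $\sum_{v\in V_{j^\ast}}\deg_G(v)\ge c_1 dn/\log n$, evaluates to $\Omega(d^{5/2}\log d/\log n)$ and therefore yields average degree $\Omega(d\log d/\log n)$ on a sample of size $O(d^{3/2})$. A standard Chernoff argument then produces a deterministic realization of $S$ meeting both bounds.

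The main obstacle is this second case: the argument above assumes that a positive proportion of the degree of $V_{j^\ast}$ is expended on edges to the low-degree set $V_{\le J}$. If instead most of this degree is internal to the heavy region $V_{>J}$, then $G[V_{>J}]$ itself is dense (on at most $n/d^{1/2}$ vertices), and one recovers the desired subgraph by independently sampling $V_{j^\ast}$ only, at rate $\approx 2d^{3/2}/|V_{j^\ast}|$, to trim its maximum degree down to $2d^{3/2}$ while preserving a comparable fraction of edges. Handling this dichotomy cleanly---and verifying that the sampling concentrations hold with the stated absolute constant---is where the analysis requires the most care.
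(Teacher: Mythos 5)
Your case split on $j^\ast$ versus $J$ does not actually cover all situations, and the first branch fails. Knowing $j^\ast\le J$ tells you that a low-degree class carries a large share of the total degree, but it says nothing about where those edges go: they may all run to $V_{>J}$. Take $G=K_{a,b}$ with $a\approx d/2$ and $b\approx n$ (with $d\ge(\log n)^4$). The $b$-side lies in $V_{\le J}$ and carries half the total degree, so the pigeonhole can return a $j^\ast\le J$; yet $G[V_{\le J}]$ is empty, since the $b$-side is independent. The bound $|V_{>J}|\le n/d^{1/2}$ does not control the boundary edges here: all $\approx dn/2$ edges cross the cut. So the claim that $G[V_{\le J}]$ has $\Omega(dn\log d/\log n)$ edges is false in this case. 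There is also a quantitative gap in the second branch: with $q_\ell=2d^{3/2}/n$ and $q_h\le 1$, even the optimistic bound $e(V_{j^\ast},V_{\le J})\approx dn/\log n$ gives $\mathbb{E}\,e(G[S])\le q_h q_\ell\cdot dn/\log n\le 2d^{5/2}/\log n$, i.e.\ average degree $O(d/\log n)$ on $|S|=\Theta(d^{3/2})$ vertices --- the $\log d$ factor is absent, not $\Omega(d^{5/2}\log d/\log n)$ as claimed, and the shortfall worsens when $|V_{j^\ast}|\gg d^{3/2}$ forces $q_h<1$.

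The missing $\log d$ is really a consequence of using too fine a degree decomposition. The paper's proof first passes to a \emph{minimal} induced subgraph $G'$ of average degree $\ge d/2$; minimality forces every induced subgraph of $G'$ to have average degree $<d/2$, which is exactly what controls how much degree can hide inside any single class. It then slices degrees by ratio $R=D^{1/4}$ (with $D$ the average degree of $G'$), so there are only $O(\log n/\log d)$ classes; the pigeonhole is applied to $e(G'[V_j])+e_{G'}(V_{<j},V_j)$, a quantity that sums exactly to $e(G')$, so no edges can disappear into an uncounted region, and it delivers the $\log d/\log n$ fraction directly. The sparsification rate is then $R^{-j}$, calibrated to the degree scale $R^jD$ of the selected class rather than a fixed $d^{3/2}/n$. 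To repair your argument you would need at least the minimality reduction and the wider slicing ratio.
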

\begin{proof}
Choose $G'$ to be an induced subgraph of $G$ which is minimal, subject to having average degree $\geq d/2$. Let $V = V(G')$, let $D\geq d/2$ be the average degree of $G'$ and set $R = D^{1/4}$. We note that by the minimality of $G'$ we have $D\leq d/2+1$.

We define a partition of $V$ by
\[ V_0 = \big\{ x \in V : \, d_{G'}(x) \leq RD \big\}  \quad \text{ and } \quad    V_i = \big\{ x \in V :  R^iD < d_{G'}(x) \leq R^{i+1}D \big\}, \]
for $i= 1, \ldots, \ell := (\log n)/(\log R)$. We also write $V_{<i} = V_1 \cup \cdots \cup V_{i-1}$. 

We focus on $G'$ across a bipartition $(V_{<j},V_j)$. For this, note there exists $j$ such that\footnote{For a graph $G$ and $A,B\subset V(G)$ we let $e_G(A,B)$ denote $e(G[A,B])$.}  
\begin{align}\label{eq:avg-Vj}
e_G(V_{<j},V_j) + e(G[V_j])  \geq e(G')/\ell, 
\end{align}
by averaging.
Now if $j=0$, $G[V_0]$ is the graph we are looking for. Indeed, 
\[ 2e(G[V_0])/|V_0| \geq 2e(G')/(\ell|V|) \geq D/\ell \quad \text{ and } \quad \Delta(G[V_0]) \leq RD \leq D^{3/2}.\] 
So assume $j > 0$. First note that $|V_j| \leq |V|/R^{j}$ by Markov's inequality and therefore \[ e(G[V_j]) \leq d|V_j|/4 \leq D|V|/(2R^j) \leq e(G')/(2\ell), \] 
where for the first inequality follows from the minimality of $G'$ and the last inequality uses that $R \geq 2\ell$ since $D\geq d/2\geq (\log n)^4/2$. Thus $e_G(V_{<j},V_j) \geq e(G')/(2\ell)$ by~\eqref{eq:avg-Vj}.

We now remove all the vertices that have high degree inside of $V_j$. Define 
\[ V'_j = \big\{x \in V_j : |N_{G}(x) \cap V_j |\leq R^2D \big\} .\] We have to ensure we don't lose too many edges when passing to $V'_j$. So note, by definition of $V'_j$ and minimality of $G'$ that we have 
\[ (R^2D/2)|V_j\setminus V'_j| \leq e(G[V_j]) \leq d|V_j|/4 \quad  \text{ and thus } \quad  |V_j\setminus V'_j| \leq |V_j|/R^2 .\]
Thus the number of edges incident to $V_j\setminus V_j'$ in $G'$ is at most 
\[ R^{j+1}D |V_j\setminus V_j'| \leq R^{j-1}D |V_j| \leq D|V|/R\] 
and so 
\[
e_G(V_{<j},V_j')\geq e_G(V_{<j},V_j)-D|V|/R \geq e(G')/(4\ell)
\]
where for the final inequality we recall that $e_G(V_{<j},V_j)\geq e(G')/(2\ell)$ and that $R\geq \log n\geq  8\ell$ since $D\geq (\log n)^4$.

Now define $V_{<j}' \subset V_{<j}$ to be a $p$-random subset, where $p = R^{-j}$. Then define our final graph to be $G[S]$ where
\[   S = (V'_{<j} \cup V'_{j})\setminus L ,\quad
\text{ and } \quad L = \big\{ x \in V'_{<j} \cup V'_{j} : |(V'_{<j} \cup V'_{j}) \cap N_G(x)| \geq 2R^2 D \big\}.\] Note $\Delta(G[S]) \leq 2R^2D = 2 D^{3/2}\leq 2d^{3/2}$ by construction; so it is enough to show that \begin{equation}\label{eq:expecation} \EE\, e(G[S]) - (cD/\ell)  \EE |S|  \geq 0 ,\end{equation}
for some absolute constant $c>0$. 
Now note that 
\[ e(G[S]) \geq e_G(V'_{<j},V'_j) - R^{j}D|L\cap V_{<j}| - R^{j+1}D|L\cap V_j|,   \quad \EE\, |S| \leq \EE\, |V'_{<j}| + |V'_{j}| \leq 2p|V|, \] and that 
\[ \EE\, e_G(V'_{<j},V'_j) = pe_G(V_{<j},V'_j) \geq pe(G')/(4\ell) = pD|V|/(8\ell) . \] We now bound $\EE |L|$. Note $x \in L$ only if $|N_G(x) \cap V'_{<j}| \geq R^2D $ and that $|N_G(x) \cap V'_{<j}|$ is a binomial random variable with mean $\EE |N_G(x) \cap V'_{<j}| \leq pR^{j+1}D = RD $. By the Chernoff bound, we have that $\PP( x \in L ) \leq p \exp(-RD)$. Thus 
\[ R^{j}D \cdot \EE |L \cap V_{<j}| \leq D|V|\exp(-RD) \quad \text{ and } \quad R^{j+1}D \cdot \EE |L \cap V_{<j}| \leq RD|V|\exp(-RD). \]
Noting that $R\exp(-RD) \leq p/(32\ell)$ since $D\geq (\log n)^4/2$ we conclude that $\E e(G[S])\geq p D|V|/(16 \ell)$. It follows that~\eqref{eq:expecation} holds with $c=1/32$. Therefore there exists $S$ with $e(G[S])-(cD/\ell)|S|\geq 0$, as desired.
\end{proof}

\subsection{The probability of containing a subgraph \texorpdfstring{$H$}{H}}

We now prove the following lemma about the probability a fixed subgraph is contained in our graph process. In fact this lemma is easy to prove with our Lemma~\ref{lem:probability-open'}. 

\begin{lemma}\label{lem:prob-subgraph}
Let $H\subset K_n$ satisfy $\Delta(H) \leq n^{\delta/4}$ and suppose that $\pi(x)\neq \pi(y)$ for all distinct $x,y\in V(H)$.
If we write $V(H) = V$ then 
$$\PP \big( H\subset G'_{\leq T} ~ \wedge ~ \Delta(G'_{\leq T}[V] \big)\leq n^{\delta/4} ~\wedge~ \cA \big)\leq n^{-(1+o(1))e(H)/2} \, .$$
\end{lemma}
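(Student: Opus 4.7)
The plan is to decompose $\{H \subset G'_{\leq T}\}$ according to which round $G'_j$ produces each edge of $H$, and then telescope the resulting probability with repeated applications of Lemma~\ref{lem:probability-open'}. Since the graphs $G'_0, G'_1, \ldots, G'_T$ are pairwise edge-disjoint (as each $G'_{i+1}\subset O_i \subset K_n\setminus G'_{\leq i}$), we have the disjoint union
\[
\{H\subset G'_{\leq T}\} = \bigcup_{(H_0,\ldots,H_T)}\bigcap_{j=0}^T \{H_j\subset G'_j\},
\]
ranging over edge-partitions $H = H_0\cup\cdots\cup H_T$, so it suffices to bound each summand and sum.

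Fix such a partition and write $H_{\geq j}:=H_j\cup H_{j+1}\cup\cdots$. Define $\bar E_i := \{H_j\subset G'_j \text{ for } j\leq i\}\cap\{H_{\geq i+1}\subset O_i\}$; using $G'_{i+1}\subset O_i$ and $O_{i+1}\subset O_i$, these events are decreasing in $i$, with $\bar E_T$ being the event that the partition is realized. On $\bar E_i \cap\cA_i\cap\{\Delta(G'_{\leq T}[V])\leq n^{\delta/4}\}$ all hypotheses of Lemma~\ref{lem:probability-open'} are met (note $G'_{\leq i}\subset G'_{\leq T}$), so applying the lemma at step $i+1$ with $H_{\geq i+2}$ and $H_{i+1}$ playing the roles of ``$H$'' and ``$K$'' gives
\[
\PP_{i+1}\bigl(H_{i+1}\subset G'_{i+1}\,\wedge\, H_{\geq i+2}\subset O_{i+1}\bigr) \leq (1+n^{-\delta/4})^{e(H_{\geq i+2})} s_{i+1}^{e(H_{\geq i+2})} p_{i+1}^{e(H_{i+1})}.
\]
Telescoping this from $i=T-1$ down to $i=0$---carrying the indicators $\mathbf{1}_{\cA_i}$ through so the lemma's hypotheses hold at each step, and using the base case $\PP(\bar E_0)\leq \PP(H_0\subset G'_0) = p_0^{e(H_0)}$ (which holds because $\pi$ is injective on $V(H)$, making the seed-edge events independent)---yields, for this partition,
\[
\PP\bigl(\bar E_T\cap\cA\cap\{\Delta(G'_{\leq T}[V])\leq n^{\delta/4}\}\bigr)\leq (1+o(1))^{e(H)}\, p_0^{e(H_0)}\,\prod_{j=1}^T\Bigl(p_j\prod_{k=1}^{j-1}s_k\Bigr)^{e(H_j)}.
\]

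The key observation is that on $\cA$, Observation~\ref{obs:thetai-is-prod-si} gives $\prod_{k=1}^{j-1}s_k = (1+o(1))\theta_{j-1}/\theta_0$, so combined with $p_j\theta_{j-1} = \gamma/\sqrt n$ and the lower bound $\theta_0\geq (1-o(1))r^{-2}$ from~\eqref{eq:theta-0}, we have $p_j\prod_{k=1}^{j-1}s_k \leq (1+o(1))\gamma r^2/\sqrt n$ uniformly in $j$. Summing over partitions factorizes over edges, since each edge of $H$ independently selects its index $j\in\{0,1,\ldots,T\}$, contributing at most
\[
p_0 + T\cdot(1+o(1))\gamma r^2/\sqrt n = \bigl(\alpha_0+(1+o(1))\alpha_1 r^2\bigr)\sqrt{\log n/n} = n^{-1/2+o(1)},
\]
using $T\gamma=\alpha_1\sqrt{\log n}$ and $r=(\log n)^2$. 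Raising to the $e(H)$-th power gives the claimed bound $n^{-(1+o(1))e(H)/2}$.

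The main obstacle is the careful bookkeeping of $\mathbf{1}_{\cA_i}$ and the degree indicator through the $T$-step iteration; this remains manageable because each $s_{i+1}$ and $p_{i+1}$ is measurable with respect to the state at step $i$, and because $\Delta(G'_{\leq T}[V])\leq n^{\delta/4}$ automatically implies the analogous bound on $G'_{\leq i}[V]$ for every $i\leq T$. The telescoping of the $s$-factors with the $p$-factors, reflected in the compact identity $p_{j+1}s_j\leq (1+O(\gamma^4))p_j$ on $\cA_j$ (a consequence of \eqref{eq:theta-conc-Ai}), provides an equivalent step-by-step route to the same simplification.
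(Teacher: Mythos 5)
Your proof is correct and follows essentially the same route as the paper: union bound over edge-partitions of $H$ across rounds, telescope with Lemma~\ref{lem:probability-open'} step by step, and close the arithmetic via Observation~\ref{obs:thetai-is-prod-si} together with $p_j\theta_{j-1}=\gamma/\sqrt n$. The only divergence is that you drop the constraint $\{H_{\geq 1}\subset O_0\}$ from your base-case bound $\PP(\bar E_0)\leq p_0^{e(H_0)}$ (the paper keeps it and gains a factor $\theta_0^{e(H_{>0})}$), which costs you a harmless $\theta_0^{-1}=(1+o(1))r^2$ per non-seed edge that you correctly absorb into the $n^{o(1)}$.
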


\begin{proof}
Define the event $\cE = \{ \Delta(G'_{\leq T}[V] \big)\leq n^{\delta/4}\} \wedge \cA$. We union bound over all partitions of the edges $H = H_0 \cup \cdots \cup H_T$ of $H$, noting there are only $(T+1)^{e(H)} = n^{o(e(H))}$ many such partitions.  We bound
\[ \PP \big( H\subset G'_{\leq T} \wedge \cE \big) \leq \sum_{H_{0},\ldots,H_T}  \PP\big(\, \forall\, 0 \leq i \leq T, \,  H_i \subset G'_{i} \, \wedge \, \cE \big) .\]
Now, if we let $H_{>i} = H_{i+1} \cup \cdots \cup H_T$, we bound $\PP\big( \forall i, \,  H_i \subset G'_{ i}  \wedge \cE \big) $ above by
\begin{equation}\label{eq:prob-subgraph-1}  \PP \big( H_0 \subset G'_0 \wedge H_{>0} \subset O_0 \big) \prod_{0 \leq i\leq T-1 } \max
\, \PP_{i+1}\big( H_{i+1} \subset G_{i+1}'  \wedge H_{>i+1} \subset O_{i+1} \wedge   \cE \big) ,\end{equation}
where the maximum is over all outcomes in steps $0,1,\ldots,i$ satisfying $\cA_0 \wedge \cdots \wedge \cA_i$. 

We bound each term in the product \eqref{eq:prob-subgraph-1} individually. For the zeroth term we use that 
\[ \PP\big( H_0 \subset G'_0 \wedge H_{>0} \subset O_0 \big) \leq p_0^{e(H_0)}(1/r^2)^{e(H_{>0})} = ((1+o(1))\theta_0)^{e(H_{>0})}n^{-(1+o(1))e(H_0)/2}, \]
where the first inequality holds by the property that each vertex of $H$ is in a different part of the blowup partition by assumption and thus the probability each edge is included in the respective graph is independent. The second inequality holds since $\theta_0 = (1-o(1))r^{-2}$ on the event $\cA$. 

For all the other terms in the product \eqref{eq:prob-subgraph-1}, we apply Lemma~\ref{lem:probability-open'} to obtain
\[\PP_{i+1}\big( H_{i+1} \subset G_{i+1}'  \wedge H_{>i+1} \subset O_{i+1} \wedge   \cE \big) \leq \big(1+n^{-\delta/4}\big)^{e(H_{i+1})}s_{i+1}^{e(H_{>i+1})}p_{i+1}^{e(H_{i+1})}. \]
Multiplying these together and using Observation~\ref{obs:thetai-is-prod-si} gives
\[ \theta_i=(1+o(1))s_i\cdots s_1\theta_0\, ,
\]
on the event $\cA$ and that
$p_{i+1}\theta_i = \g/n^{1/2}$, gives the desired result. 
\end{proof}

\subsection{Proof of Lemma~\ref{lem:moderate-degrees}}

We now prove Lemma~\ref{lem:moderate-degrees} by applying Lemma~\ref{lem:ind-subgraph} and Lemma~\ref{lem:prob-subgraph}.

\begin{proof}[Proof of Lemma~\ref{lem:moderate-degrees}] 
It is enough to show $\PP( \cC^c \cap \cA ) = o(1) $, by Lemma~\ref{lem:cA}. For this, let $I\in [n]^{(k)}$ and let $G' \in \cA$ be an instance of $G'_{\leq T}$ for which $e_{G'}(L(I),I) \geq n^\beta k$. Our aim is to find a subgraph $H(I) \subset G'[ L(I)\cup I]$ whose appearance is unlikely.

{Let $J\subseteq L(I) \cup I$ be a subset of  minimal size such that $e(G'[J]) \geq n^\beta |J|/2$. First we justify why such a $J$ exists. Indeed if $|L(I)|\leq k$, then $e(G'[ L(I)\cup I])\geq e_{G'}(L(I),I) \geq n^\beta k\geq n^\beta| L(I)\cup I|/2$. If instead $|L(I)|>k$, then let $L'\subseteq L(I)$ where $|L'|=k$ and note that each vertex of $L'$ has degree at least $n^{2\beta}$ in $G'[L'\cup I]$ (by the definition of $L(I)$) and so $e(G'[L'\cup I])\geq |L'|n^{2\beta}/2 \geq n^\beta |L' \cup I|/2$. We may therefore select $J$ as above and moreover $|J|\leq 2k$. Note that by the minimality of $J$ we have $e(G'[J])\leq n^\beta |J|/2+|J|\leq n^\beta |J|$. 

Now let $J_\ast \subset J$ be maximal such that $J$ intersects each part of the blowup partition in at most one vertex. Note that 
 \[
 r^{-2}n^{\beta}|J|/2\leq e(G'[J_\ast]) \leq n^{\beta}|J|
 \]
 and $|J|/r\leq |J_\ast|\leq |J|$ and so the average degree of $G'[J_\ast]$ is $\tilde\Theta(n^{\beta})$.}

 Now apply Lemma~\ref{lem:ind-subgraph} to the graph $G'[J_\ast]$, to find  an  induced subgraph $H=H(I)$ with vertex set $V\subset [n]$ such that
\begin{equation}\label{eq:H'-props} |V| \leq 2k, \qquad    e(H) =  \tilde\Omega(n^{\beta})|V|, \quad \text{ and } \quad \Delta(H) = \tilde O(n^{3\beta/2})\, . \end{equation} 
 By Lemma~\ref{lem:prob-subgraph}, recalling that $\beta<\delta/2^7$, we have 
\[ \PP\big( H = G_{\leq T}'[V] \big) \leq n^{-(1+o(1))e(H)/2}. \]
We now simply union bound over all such $H(I) \subset K_n$ satisfying the conditions at \eqref{eq:H'-props}. If we write $h = |V(H)|$ and $m = e(H)$, we see there are $\binom{n}{h}$ ways of choosing $V(H) \subset [n]$ and at most $\binom{h^2}{m}$
ways of choosing $H$ on this vertex set. Putting this together gives
\[ \PP( \cC^c \cap \cA ) \leq \sum_{h,m} n^h
\bigg( \frac{eh^2}{m}\bigg)^m n^{-(1+o(1))m/2} \leq  \sum_{h,m} n^{(-\beta/2 +o(1))m} = o(1), \]
since $h=\tilde O(n^{-\beta}m)$, $m=\tilde O(kn^{3\beta/2})= \tilde O(n^{1/2+3\beta/2})$ and so $h^2/m= \tilde O(mn^{-2\beta})= \tilde O(n^{1/2-\beta/2})$.
\end{proof}

\section{Open edges in \texorpdfstring{$I$}{I}: defining and derandomizing the core }\label{sec:open-edges-in-I-redux}
One way for a set $I \in [n]^{(k)}$ to be at risk of becoming independent is if $I^{(2)} \cap O_i$ becomes much smaller than it ``should be''. In the next two sections, we show that it is extremely unlikely for there to be such an $I$. The challenge here is that it is not obvious how large $I^{(2)} \cap O_i$ ``should'' be. Indeed if $I$ has large intersection with a neighbourhood $N_{\leq i}(x)$, all of the pairs $(N_{\leq i}(x))^{(2)}$ must be closed by the triangle-free property. Thus to correctly account for the number of open pairs that we expect in $I$, it makes sense to exclude the pairs that will be covered by some vertex whose neighbourhood has large intersection with $I$. 

This motivates the definition of the \emph{core} of $I$. If we recall the definition of $L(I)$ from \eqref{eq:L-def}, as the set of vertices $x$ with $|I \cap N_{\leq T}'(x)| \geq n^{2\beta}$, we define the \emph{core} of $I$, at stage $i$, to be set of pairs in $I^{(2)}$ that are not covered by the neighbourhoods of vertices in $L(I)$. That is,
 \begin{align}\label{eq:core-def}
 C_{i}(I) = I^{(2)} \setminus \bigcup_{ x \in L(I) } N_{\leq i}'(x)^{(2)}. 
 \end{align}
 Now define $\cB_i(I)$ to be the event that the number of open edges in the core is large,
\begin{equation}\label{eq:def-B_i} \cB_{i}(I) = \big\{ |C_i(I) \cap O_{i} | \geq (1-\eps_i) \theta_{i} |C_{i}(I)| \big\} . \end{equation}

One wrinkle in working with the event $\cB_i(I)$, is that we will not be able to ensure that $\cB_i(I)$ holds if $|C_i(I)|$ becomes very small, in particular if $|C_i(I)| \leq \gamma^2 k^{2}$, which is an event that we deal with in the context of its application (see Lemma~\ref{lem:prob-ind}).  Thus it makes sense to define the event
\[ \cD_i(I) = \big\{ |C_i(I)|\leq \gamma^2 k^{2} \big\},\]
so that we can work on $\cD_i^c$ in what follows. 
We also require that $|\pi(C_0(I) \cap O_0)|$ is not small and so we also define the analogous event $$\cB_0^\pi(I) = \big\{|\pi(C_0(I) \cap O_0)| \geq (1 - \eps_0) r^2 \theta_0 |\pi(C_0(I))| \big\}\,, $$
for the pre-blown up graph in the seed step. Now define the event
\[ \cF =  \bigwedge_{I \in [n]^{(k)}}  \bigwedge_{0 \leq i\leq T} \big( \cB_i(I) \vee \cD_i(I)\big) \wedge \cB_0^\pi(I)\,. \]

The main objective of this and the next section is to prove the following.
\begin{lemma}\label{lem:cB-failure} We have
\begin{equation}\label{eq:cB-failure} \PP\big( \cF \big)  = 1-o(1).\end{equation}
\end{lemma}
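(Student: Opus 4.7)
The plan is to split $\cF$ into the seed-step event $\cB_0^\pi(I)$ and the inductive events $\cB_i(I) \vee \cD_i(I)$ for $0 \leq i \leq T$, and then union bound over $I \in [n]^{(k)}$. A preliminary reduction is to work on the events $\cA$ and $\cC$, which hold with probability $1-o(1)$ by Lemmas~\ref{lem:cA} and~\ref{lem:moderate-degrees}. On $\cC$, counting edges between $L(I)$ and $I$ yields $|L(I)|\,n^{2\beta} \leq e_{G'_{\leq T}}(L(I),I) < n^\beta k$, hence $|L(I)| < n^{-\beta}k$. This lets us replace the random set $L(I)$ by a deterministic candidate $L \subseteq [n]$ of size at most $n^{-\beta}k$ and union bound over both $I$ and $L$; the total number of pairs $(I,L)$ is at most $\exp(O(k \log n))$, which fixes our target tail bound for each fixed pair.

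For the seed event $\cB_0^\pi(I)$, fix $I$ and a candidate $L$. The quantity $|\pi(C_0(I) \cap O_0)|$ is a function of $G_\ast' \sim G(n/r, p_0)$ together with the independent representative choices made in the seed step: an edge $ab \in \pi(I)^{(2)}$ contributes iff $O_0'[V_a,V_b]\ne\emptyset$ (equivalently, $ab$ is an open edge of the pre-blowup graph), and $ab$ is not covered by $\pi(N'_{\leq 0}(x))^{(2)}$ for any $x \in L$. A standard Chernoff lower-tail estimate for $G(n/r,p_0)$, combined with $r^2\theta_0 = 1-o(1)$, then delivers the required bound with failure probability $\exp(-\Omega(|\pi(C_0(I))|))$, which is trivially large enough on $\cD_0(I)^c$ to absorb the union bound.

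For the inductive events, fix $I$ and $L$ and set $C_i = I^{(2)} \setminus \bigcup_{x \in L} N'_{\leq i}(x)^{(2)}$ and $Z_i = |O_i \cap C_i|$. The decisive structural input is the uniform survival probability \eqref{eq:prob-e-open-constant}: conditional on the history at step $i$, each $e \in O_i$ lies in $O_{i+1}$ with the same probability $s_{i+1}$. Consequently, modulo the few pairs that leave the core at step $i+1$ (an effect controllable via the cross-degree condition \eqref{eq:cross-degrees}), $\E_{i+1} Z_{i+1}$ matches $s_{i+1} Z_i$; since $\E_{i+1}\theta_{i+1} = s_{i+1}\theta_i$, the ratio $Z_i/|C_i|$ should stay close to $\theta_i$ throughout the process. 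One sets up a Doob-type martingale tracking $Z_i - \theta_i|C_i|$ (or a closely related multiplicative form) and applies Freedman's inequality; the one-step differences are controlled using the maximum-degree and cross-degree conditions from $\cA$, which are exactly what govern the number of core pairs a single newly sprinkled edge can close.

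The main obstacle is the quantitative strength of this concentration: to survive the $\exp(O(k\log n))$ union bound, each $(I,L)$ must have failure probability $\exp(-\omega(k\log n))$. Off the event $\cD_i$ we have $|C_i| \geq \gamma^2 k^2$, and Observation~\ref{lem:pi-UB} gives $\theta_i \geq n^{-1/2+\delta}$, so the typical size $\theta_i |C_i|$ exceeds $k \cdot n^{\delta - o(1)}$ while the target deviation $\eps_i \theta_i |C_i|$ is smaller by only a polylogarithmic factor. A careful Freedman-type analysis (with predictable quadratic variation controlled by \eqref{eq:cross-degrees}) produces a tail of the form $\exp(-\Omega(\gamma^{6} n^{1/2+\delta}\log n))$, which comfortably beats $\exp(O(k\log n))$. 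The detailed martingale construction and the verification that the predictable variation is small enough to feed into Freedman is exactly what occupies the following section.
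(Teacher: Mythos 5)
Your opening moves match the paper: work on $\cA \wedge \cC$ (both of probability $1-o(1)$), observe that on $\cC$ one has $|L(I)| \leq 2kn^{-\beta}$, and union bound over deterministic candidates $L$ of this size. However, the remaining plan has a serious gap that would prevent the argument from closing.

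The decisive difficulty that you do not address is the control of the martingale increments. You claim that the one-step differences are controlled by the maximum-degree and cross-degree conditions of $\cA$, in particular \eqref{eq:cross-degrees}. But those conditions say nothing about $|N'_{\leq i}(v) \cap I|$, the degree of an arbitrary vertex $v$ \emph{into the candidate independent set $I$}. The number of core pairs that can become closed when a single vertex $v$ is processed is of order $|N'_{\leq i+1}(v) \cap I|^2$, and with no further restriction this can be as large as $k^2$, which is catastrophic for Freedman. The quantities $\cA$ controls ($\Delta(G'_{\leq i}) \lesssim \psi_i n$ and $|N'_{\leq i}(x) \cap N^\circ_i(y)| \lesssim \theta_i\psi_i n$) are not tailored to the fixed set $I$ and do not rule this out. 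This is exactly why the paper introduces the capped measure $\widehat{\PP}_{i,I,L}$ in Section~\ref{sec:open-edges-in-I-martingale}: the capping artificially restricts every vertex outside $L\cup I$ to have at most $n^{2\beta}$ new neighbours in $I$, delivering the deterministic increment bound $|\xi_j| \leq 5n^{4\beta}$ (Lemma~\ref{obs:max-xi}), and Lemma~\ref{lem:P-P-tilde-domination} shows the true process is dominated by the capped one on the event $\cL_{i+1}(I,L)$. Without this device the numerology you cite, $\exp(-\Omega(\gamma^6 n^{1/2+\delta}\log n))$, cannot be obtained.

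Two further, smaller, divergences from the paper are worth flagging. First, the paper derandomizes the core itself, union bounding over the family $\Lambda(I)$ of potential cores (at cost $2^{kn^{2\beta}}$) so that the single-step lemma \ref{lem:single-step-B-failure} is stated for a fixed set $C$; your proposal instead keeps $C_i$ adapted and must simultaneously track the shrinkage of $|C_i|$ inside the martingale, an extra complication you only wave at. Second, your treatment of the seed event via ``a standard Chernoff lower-tail estimate'' is not correct as stated: the events $\{e \in O_\ast\}$ for $e \in \pi(I)^{(2)}$ are correlated (they all depend on the same triangles in $G_\ast'$), and the paper deals with the seed step (Lemma~\ref{lem:single-step-B-failure-0}) with the same capped-measure martingale, adapted to the blow-up.
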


One challenge in proving Lemma \ref{lem:cB-failure} is that the definition of $C_i(I)$ depends on steps in the process later than $i$; in particular, $L(I)$ depends on the final graph $G_{\leq T}$. The purpose of this section is to reduce Lemma \ref{lem:cB-failure} to two statements that only depend on a fixed step of the process.  To do this we show that can remove this ``dependence on the future'' by union bounding over all possible ``candidates'' for $C_i(I)$ and $L(I)$.  

\subsection{Derandomizing \texorpdfstring{$C_i(I)$}{C(I)}}
For $I \in [n]^{(k)}$, note that on $\cC$ we have $|L(I)|\leq 2kn^{-\beta}$, since  
\begin{equation} \label{eq:use-of-C} G_{\leq T}' \in \cC \qquad \Longrightarrow \qquad  |L(I)|n^{2\beta}/2 \leq e(G_{\leq T}[I,L(I)]) \leq kn^{\beta}. \end{equation}
Thus we will union bound over all possible sets $L = L(I)$ for which $|L|\leq 2kn^{-\beta}$. We now define the collection of ``potential cores'' that we will union bound over;
\[ \Lambda(I) = \bigg\{ I^{(2)} - \bigcup_{i=1}^{\ell} S_i^{(2)} :  S_i \subset I,\, |S_i|\geq n^{2\beta} \,\text{ and } \, \sum_i |S_i| \leq 2kn^{\beta} \bigg\}. \] 
Using \eqref{eq:use-of-C}  we note  that $C_i(I) \in \Lambda(I)$, on the event $\cC$.

We now define two events that ensure that $L$ plays well with $C$ and $I$. Indeed, for $C \subset I^{(2)}$, and $L \subset [n]$ we define the event 
\begin{equation}\label{eq:def-T(C,I)} \cT_{i}(L,C) = \big\{  (N'_{\leq i}(x))^{(2)} \cap C = \emptyset \, , \, \forall x \in L \big\}.\end{equation}
Informally this says that $C$ ``looks like a core of $I$ with respect to the set $L$, at step $i$.'' Indeed, $\cT_{i}(L(I),C_{i}(I))$ always holds by definition. We also define the event
\[ \hspace{1.5em} \cL_{i}(I,L) = \big\{  \, | I \cap N'_{\leq i}(x)| \leq n^{2\beta} \, , \, \forall x \in [n]\setminus L  \big\}\] 
and note that $\cL_{i}(I,L(I))$ always holds. 

We are now ready to state our two main lemmas that (as we will see) imply Lemma \ref{lem:cB-failure}. Each of these statements give good bounds on the probability that the number of open edges in a set $C \in \Lambda(I)$ decreases more than is expected in a fixed round $i$.

\begin{lemma}\label{lem:single-step-B-failure} For $0 \leq i< T$ and $I\in [n]^{(k)}$, let $C \in \Lambda(I)$, $|C| \geq \gamma^2 k^{2}$, and $L \subset [n]$. If $\cA_i$ holds and $|C\cap O_i| \geq \theta_i \gamma^2 k^{2} $ then 
\begin{equation}\label{eq:cB-proof-single-step-statement} \PP_{i+1}\big( |C \cap O_{i+1}| < (1-\gamma^4)s_{i+1}|C \cap O_{i}| \wedge \cT_{i}(L,C)   \wedge \cL_{i}(I,L) \big) \leq \exp(-k^{1+\delta/4}).\end{equation}
\end{lemma}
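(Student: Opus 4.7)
The target is a lower-tail deviation estimate for $Z := |C \cap O_{i+1}|$, whose conditional mean is $\E_{i+1} Z = s_{i+1} u$ with $u := |C \cap O_i|$, using the uniform survival probability from \eqref{eq:prob-e-open-constant}. My plan is to transfer to the upper tail of the closed-edge count $W := u - Z$ and prove it via a high-moment/Markov argument. Setting $U = C \cap O_i$, I would aim for
\[
\E_{i+1}\binom{W}{t} \le (1+o(1))^t (1-s_{i+1})^t \binom{u}{t}
\]
for a suitably chosen $t$, then apply Markov.

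To obtain the moment bound, I would assign each closed edge $e$ a ``witness'' via the natural trichotomy: (A) $e \notin Q_{i+1}$, with factor $1-q_e \le 1-s_{i+1}$, independent of $G'_{i+1}$; (B) $e \in Q_{i+1} \cap G'_{i+1}$, with witness $\{e\} \subset G'_{i+1}$; or (C) $e \in Q_{i+1} \setminus G'_{i+1}$ closes a triangle via $f \in \cY_i(e)\setminus\{e\}$ or $\{f,g\} \in \cX_i(e)$, with $G'_{i+1}$-witness $\{f\}$ or $\{f,g\}$. Union-bounding over reason vectors $(r_e)_{e \in S}$ on a $t$-subset $S \subseteq U$, and using that $Q_{i+1}$ is independent of $G'_{i+1}$, yields
\[
\PP_{i+1}(S \subset U \setminus O_{i+1}) \le \sum_{(r_e)} \prod_{e \in A_r}(1-q_e) \prod_{e \notin A_r} q_e \cdot \PP_{i+1}(K_r \subset G'_{i+1}),
\]
with $A_r \subseteq S$ the Type-(A) set and $K_r$ the union of Type-(B)/(C) $G'_{i+1}$-witnesses. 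Summing over witness choices per edge using the bounds on $X_i(e), Y_i(e)$ in Observation~\ref{lem:X(e)-Y(e)-good}, together with the identity $p_{i+1}Y_i(e) + p_{i+1}^2 X_i(e) = O(\gamma\sqrt{\log n}) = O(1-s_{i+1})$ from Observation~\ref{obs:si-bd}, the per-edge contribution collapses to $(1+o(1))(1-s_{i+1})$, giving the desired estimate on $\E_{i+1}\binom{W}{t}$.

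The main obstacle is applying Lemma~\ref{lem:probability-open'} to control the $G'_{i+1}$-witness probability: that lemma requires $\Delta(G'_{\le i}[V(S)]) \le n^{\delta/4}$ on the vertex set of each $t$-subset. This is exactly where $\cT_i(L,C)$ and $\cL_i(I,L)$ enter. The event $\cL_i(I,L)$ guarantees $|N'_{\le i}(v)\cap I| \le n^{2\beta} \le n^{\delta/4}$ for $v \in I \setminus L$, controlling the degrees of $V(S)$-vertices outside $L$. The event $\cT_i(L,C)$ guarantees that for $x \in L$, no pair in $N'_{\le i}(x)$ lies in $C$, so no edge of $S \subset U \subset C$ is contained in $N'_{\le i}(x)$, constraining the interaction of $L$-vertices with the edges of $S$. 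I would restrict the enumeration to matching-like $t$-subsets $S$ (so $\Delta(S) = 1$) whose vertex set interacts appropriately with $L$, and bound the contribution of excluded configurations via the codegree bounds of $\cA_i$. Pushing this bookkeeping through cleanly is the most delicate technical step.

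With the moment estimate in hand, Markov's inequality yields
\[
\PP_{i+1}\bigl(W \ge (1-s_{i+1})u + \gamma^4 s_{i+1} u\bigr) \le (1+o(1))^t \Bigl(1 + \tfrac{\gamma^4 s_{i+1}}{1-s_{i+1}}\Bigr)^{-t}.
\]
Using $1-s_{i+1} = O(\gamma\sqrt{\log n})$ from Observation~\ref{obs:si-bd}, the relative deviation satisfies $\gamma^4 s_{i+1}/(1-s_{i+1}) = \Omega(\gamma^3/\sqrt{\log n})$, so taking $t$ of order $k^{1+\delta/4}(\log n)^{31}$ (which lies comfortably below $u \ge \gamma^2 \theta_i k^2 \ge n^{1/2+\delta}(\log n)^{-19}$ by Observation~\ref{lem:pi-UB}) produces the required bound $\exp(-k^{1+\delta/4})$, completing the argument.
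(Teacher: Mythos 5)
Your approach takes a genuinely different route from the paper: you propose a high-moment/Markov argument on the closed-edge count $W=|C\cap O_i|-|C\cap O_{i+1}|$, whereas the paper proves this lower-tail bound by coupling $\PP_{i+1}$ with a ``capped'' measure $\widehat\PP$ (Lemma~\ref{lem:P-P-tilde-domination}) and then applying Freedman's martingale inequality (Lemma~\ref{lem:martinglae-application}), with the cap precisely engineered so the martingale increments are bounded by $O(n^{4\beta})$. The two methods are not interchangeable here, and the moment method fails for a quantitative reason you did not check. Your per-edge union bound over closure reasons replaces the true closure probability $1-s_{i+1}=1-q_e\ell(e)$, where $\ell(e)=(1-p_{i+1})^{Y_i(e)}(1-p_{i+1}^2)^{X_i(e)}$, with $1-q_e(1-a(e))$, where $a(e)=p_{i+1}Y_i(e)+p_{i+1}^2X_i(e)$. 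Since $1-a(e)\le\ell(e)\le e^{-a(e)}\le 1-a(e)+a(e)^2/2$, the resulting multiplicative loss is $1+\Theta(a(e)^2/(1-s_{i+1}))$; at the minimizing edge (where $q_e=1$ and $a(e)\approx 1-s_{i+1}$) this is $1+\Theta(a_{\max})=1+\Theta(\gamma\sqrt{\log n})=1+\Theta((\log n)^{-9.5})$. By contrast, the deviation you are trying to detect is a $\gamma^4$-fraction of the mean $s_{i+1}u$, so the relevant relative deviation of $W$ from its mean $(1-s_{i+1})u$ is $\eps=\gamma^4 s_{i+1}/(1-s_{i+1})=\Theta(\gamma^3/\sqrt{\log n})=\Theta((\log n)^{-30.5})$. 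Your per-edge loss exceeds your per-edge Markov gain by a factor of roughly $(\log n)^{21}$, so $(1+o(1))^t(1+\eps)^{-t}$ diverges rather than decays, and the bound is vacuous. The ``$(1+o(1))$'' you assert is simply too large.

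Put differently, Lemma~\ref{lem:probability-open'} gives \emph{upper} bounds on survival probabilities $\PP_{i+1}(H\subset O_{i+1})$ with errors of size $(1+n^{-\delta/4})^{e(H)}$, which is exactly what the paper needs for \emph{upper}-tail moment arguments (Lemmas~\ref{lem:cA-open-deg}, \ref{lem:cA-open-codegs}, \ref{lem:mixed-deg}): there the per-edge deviation $\gamma^4$ is much larger than the per-edge error $n^{-\delta/4}$. For the \emph{lower} tail of $|C\cap O_{i+1}|$, however, there is no comparably clean lower bound on $\PP_{i+1}(H\subset O_{i+1})$, and the naive union bound on closure probabilities is far too lossy. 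This is precisely why the paper switches to a martingale, whose Freedman bound is $\exp(-t^2/(2(b+Rt)))$ and so depends on the \emph{squared} deviation $t^2=(\gamma^4s_{i+1}u)^2$: with the cap $R=O(n^{4\beta})$ and $b=O(n^{4\beta}u)$, the exponent is $\gamma^8 u\,n^{-4\beta}$, which exceeds $k^{1+\delta/4}$ since $u\ge\theta_i\gamma^2k^2\ge k^{1+\delta-o(1)}$ and $\beta<\delta/2^7$. That quadratic dependence on the deviation is exactly what a first-moment/Markov computation cannot reproduce here. The rest of your scaffolding (the role of $\cT_i$ and $\cL_i$ in controlling degrees, the choice of $t$ relative to $u$) is sensible, but the engine of the argument does not turn over.
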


The following lemma is the corresponding result for the ``seed step.'' While this is a simpler statement only about the binomial random graph, we deduce it from the same techniques we used to prove Lemma~\ref{lem:single-step-B-failure}, with a few minor adjustments.

\begin{lemma}\label{lem:single-step-B-failure-0} Let $I \in [n]^{(k)}$, $L \subset [n]$ and let $C \in \Lambda(I)$ satisfy $|C| \geq \gamma^2 k^2$. We have that
\begin{equation}\label{eq:single-step-B-failure-0}\PP\big(  |C \cap O_{0}| \leq (1-\gamma^4) \theta_0 |C| \, \wedge \cT_{0}(L,C) \wedge \cL_{0}(I,L) \big) \leq \exp( - k^{3/2 } ),\end{equation}
and 
\begin{equation}\label{eq:single-step-B-failure-01}
\PP\big(  |\pi(C \cap O_{0})| \leq (1-\gamma^4) r^2\theta_0 |\pi(C)| \, \wedge \cT_{0}(L,C) \wedge \cL_{0}(I,L) \big) \leq \exp( - k^{3/2 } ).
\end{equation}
\end{lemma}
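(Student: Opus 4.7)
The plan is to condition on the seed random graph $G'_*\sim G(N,p_0)$ with $N=n/r$: this determines $G'_0$, the set $\mathrm{Op}\subset[N]^{(2)}$ of open pairs, the density $\theta_0=|\mathrm{Op}|/\binom{n}{2}$, and the events $\cT_0(L,C),\cL_0(I,L)$. Conditional on $G'_*$, the edges $e_{ab}$ for $ab\in\mathrm{Op}$ are chosen uniformly and independently in $V_a\times V_b$; writing $c_{ab}=|C\cap(V_a\times V_b)|$, we have
\[ |C\cap O_0| \;=\; \sum_{ab\in\mathrm{Op}}\1(e_{ab}\in C), \]
a sum of independent Bernoullis with parameters $c_{ab}/r^2$ and conditional mean $\mu := r^{-2}\sum_{ab\in\mathrm{Op}}c_{ab}$. (The contribution to $|C|$ from pairs inside a single $V_i$ is at most $kr = o(\gamma^4|C|)$ and may be absorbed into the error.)

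The key step is to show that on $\cT_0\wedge\cL_0$, we have $\mu\geq(1-\gamma^4/2)\theta_0|C|$ deterministically. Writing $\theta_*:=r^2\theta_0(1+o(1))$, this reduces to bounding $\sum_{ab\text{ closed}}c_{ab}\leq((1-\theta_*)+\gamma^4\theta_*/2)|C|$. Split closed pairs into (a) $ab\in G'_*$ and (b) $ab$ having a common $G'_*$-neighbor $c$. For type (b) with witness $c$: if $V_c\cap L\neq\emptyset$, applying $\cT_0$ to any $x\in V_c\cap L$ (whose $G'_0$-neighborhood contains $V_a\cup V_b$) forces $c_{ab}=0$; otherwise $\cL_0$ applied to $x\in V_c\setminus L$ gives $f(c):=\sum_{d\in N_{G'_*}(c)}|I_d|\leq n^{2\beta}$, so $c_{ab}\leq|I_a||I_b|\leq n^{4\beta}$. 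Summing per witness rather than per pair gives
\[ \sum_{ab\in B_*}c_{ab} \;\leq\; \sum_{c\text{ good}}f(c)^2 \;\leq\; n^{2\beta}\sum_c f(c) \;\leq\; n^{2\beta}k\,\Delta(G'_*); \]
type (a) is handled symmetrically via $\cL_0$ applied to $x\in V_a\setminus L$, with exceptional parts bounded using $|L|\leq 2kn^{-\beta}$. A standard Chernoff bound gives $\Delta(G'_*)=O(Np_0+\log n)$ with high probability, and these estimates together with $\beta<\delta/2^7$ close the structural bound.

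Given the structural bound, Chernoff for independent Bernoullis yields
\[ \PP\big(|C\cap O_0|\leq(1-\gamma^4/4)\mu\;\big|\;G'_*\big) \;\leq\; \exp\big(-\gamma^8\mu/32\big), \]
and since $\mu\geq\Omega(\theta_0\gamma^2k^2)=\Omega(n/(\log n)^{24})\gg\gamma^{-8}k^{3/2}$, this is at most $\exp(-k^{3/2})$; chaining the two inequalities yields \eqref{eq:single-step-B-failure-0}. The inequality \eqref{eq:single-step-B-failure-01} is proved by the parallel argument, interpreting $|\pi(C\cap O_0)|$ through the same Bernoulli decomposition and comparing to the threshold $r^2\theta_0|\pi(C)|\approx\theta_*|\pi(C)|$ via the corresponding structural bound applied block by block. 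The main obstacle is the structural step: the per-pair estimate $c_{ab}\leq n^{4\beta}$ summed over all closed pairs is too lossy by polynomial factors, and the sharper witness-based accounting, combined with the degree and codegree control from $G(N,p_0)$, is what closes the gap between the per-pair bound and the $(1-\theta_*)|C|$ slack allowed by the Chernoff statement.
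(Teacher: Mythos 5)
Your proposal and the paper's proof share the first move --- decompose the randomness into (1) the choice of the seed graph $G_\ast'\sim G(n/r,p_0)$ and (2) the uniform, mutually independent choice of representatives $e_{ab}$ --- and both close stage (2) with Chernoff. They part ways completely at stage (1). The paper treats $|C\cap O_0'|$ (equivalently the weighted count over $O_\ast$) as a function of the \emph{random} seed graph, controls it with a vertex-exposure martingale in the capped measure $\widehat{\PP}$, and applies Freedman's inequality: the point of the degree cap is to bound the \emph{martingale increments} by $O(n^{4\beta})$ (Lemma~\ref{obs:max-xi}) while the quadratic variation is $O(n^{4\beta}|\pi(C)|)$ because the sum of $|\xi_j|$ is small (Lemma~\ref{obs:sum|x_i|}). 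You instead try to prove a \emph{deterministic} inequality $\mu\geq(1-\gamma^4/2)\theta_0|C|$ on the event $\cT_0\wedge\cL_0$. This is where the gap is.

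That deterministic bound is not established by your witness accounting, and I believe it is actually false. Writing out the numbers: the slack you may spend is $\big(1-(1-\gamma^4/2)r^2\theta_0\big)|C|$, and since $1-r^2\theta_0=\Theta(1/\log n)$ while $\gamma^4=(\log n)^{-40}$, the slack is $O(|C|/\log n)$, hence at most $O(n)$ even when $|C|$ is as large as $\binom{k}{2}=\Theta(n\log n)$. Your bound
\[
\sum_{ab\in B_\ast}c_{ab}\ \le\ \tfrac12\sum_c f(c)^2\ \le\ \tfrac12\,n^{2\beta}\sum_c f(c)\ \le\ \tfrac12\,n^{2\beta}\,k\,\Delta(G_\ast')
\]
evaluates, with $k=\Theta(\sqrt{n\log n})$ and $\Delta(G_\ast')=\Theta(\sqrt{n\log n}/r)$, to $\Theta(n^{1+2\beta}/\log n)$. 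This overshoots the allowed slack by a factor $\Omega(n^{2\beta}/\log n)$, which is polynomial in $n$; so the "sharper witness-based accounting" does not in fact close the gap. More fundamentally, $\cT_0\wedge\cL_0$ only caps $f(c)$ at $n^{2\beta}$, which is polynomially larger than the \emph{typical} value $\Theta(\log n)$, and the codegree constraint from $\cA_0$ allows codegrees up to $(\log n)^2$. This leaves an adversary choosing $G_\ast'$ (subject to all of $\cT_0,\cL_0,\cA_0$) enough room to pile the closed block-pairs onto the heavy blocks of $C$ and push the $C$-weighted closed fraction well above the global fraction $1-r^2\theta_0$ by more than $\gamma^4/2$ --- the quantity $\mu$ is \emph{not} deterministically close to $\theta_0|C|$; it is only \emph{concentrated} near it, which is exactly what the paper's martingale/Freedman argument establishes and what your argument would need but does not provide. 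If you keep your conditioning framework, the fix is to replace the deterministic structural step with the paper's concentration step: expose the edges of $G_\ast'$ vertex by vertex in the capped measure, bound the increments via $\cT_0,\cL_0$ as in Observations~\ref{obs:P(einO)-movement} and Lemmas~\ref{obs:max-xi}--\ref{obs:sum|x_i|}, and conclude with Freedman.
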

Recall that for a graph $H \subset K_n$, we defined \eqref{eq:def-pi} the map $\pi(H) = \{ \pi(x)\pi(y) : xy \in H \}$, where $\pi : [n] \rightarrow [n/r]$ is the map defined by $\pi(x) = j$, whenever $x \in V_j$.

\subsection{Proof of Lemma \ref{lem:cB-failure}}
Now to prove Lemma~\ref{lem:cB-failure}, assuming Lemma~\ref{lem:single-step-B-failure} and Lemma~\ref{lem:single-step-B-failure-0}, we first note that there are not many core-like sets; and thus our union bound is not too large. 
  
    \begin{observation}
        \label{obs:bound-on-Lambda}
    We have $|\Lambda(I)| \leq 2^{kn^{2\beta}} $.
    \end{observation}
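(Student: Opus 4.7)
The plan is to bound $|\Lambda(I)|$ by the number of ordered tuples $(S_1,\ldots,S_\ell)$ satisfying the constraints in the definition of $\Lambda(I)$, since each element of $\Lambda(I)$ is determined by (at least one) such tuple. First I would observe that because $|S_i|\geq n^{2\beta}$ and $\sum_i |S_i|\leq 2kn^\beta$, the number of sets in the tuple is $\ell\leq 2kn^{-\beta}$. Setting $\ell_{\max}=\lceil 2kn^{-\beta}\rceil$, one may pad any valid tuple with empty sets so that all tuples have a common length $\ell_{\max}$.

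Next, the key step is to use an efficient encoding: associate to each tuple $(S_1,\ldots,S_{\ell_{\max}})$ the subset $B\subset I\times [\ell_{\max}]$ given by $(v,i)\in B \iff v\in S_i$. This map is injective, and the total-size constraint gives $|B|=\sum_i |S_i|\leq m:=2kn^\beta$. Since $|I\times[\ell_{\max}]|\leq 2k^2/n^{\beta}$, the number of such $B$ is at most
\[
\sum_{j=0}^{m}\binom{2k^2/n^{\beta}}{j}\leq \left(\frac{2ek^2/n^{\beta}}{m}\right)^{m}=\left(\frac{ek}{n^{2\beta}}\right)^{2kn^{\beta}}.
\]
Taking $\log_2$ yields $2kn^{\beta}\log_2(ek/n^{2\beta})=O(kn^{\beta}\log n)$. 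Since $k=\Theta(\sqrt{n\log n})$ and $\beta>0$ is a fixed constant, we have $n^{\beta}\log n\ll n^{2\beta}$ for all sufficiently large $n$, so this is at most $kn^{2\beta}$, giving $|\Lambda(I)|\leq 2^{kn^{2\beta}}$.

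The only real obstacle is choosing a tight enough encoding: the naive bound $\binom{2^k}{\leq \ell_{\max}}$ loses a factor of roughly $2^{k}$ per set and is too weak. Exploiting the total-size budget $\sum|S_i|\leq 2kn^\beta$ through the incidence bipartite set $B$ (rather than enumerating each $S_i$ separately) is precisely what makes the count fit under $2^{kn^{2\beta}}$.
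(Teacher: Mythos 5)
Your proof is correct, and it takes a genuinely different route from the paper's. The paper's proof works by a dyadic decomposition of the set sizes: it observes that the total-size budget $\sum_i |S_i|\leq 2kn^\beta$ forces the number of sets $S_i$ with $|S_i|\in[2^j n^{2\beta},2^{j+1}n^{2\beta}]$ to be at most $2kn^{-\beta}2^{-j}$, and then multiplies binomial-coefficient bounds across the dyadic scales $j=0,\ldots,\log n$. Your argument instead encodes the entire (padded) tuple $(S_1,\ldots,S_{\ell_{\max}})$ as a single incidence set $B\subset I\times[\ell_{\max}]$ of size at most $2kn^\beta$ and counts subsets of that small ground set directly. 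Both exploit the same structural fact — the total-size constraint is what saves the count — but your encoding avoids the dyadic bookkeeping and lands on essentially the same exponent $O(kn^\beta\log n)\leq kn^{2\beta}$ in one step. One small point worth making explicit: the standard bound $\sum_{j\leq m}\binom{N}{j}\leq(eN/m)^m$ requires $m\leq N$, i.e.\ $2kn^\beta\leq 2k^2/n^\beta$, which holds because $n^{2\beta}\ll k$ for $\beta<\delta/2^7$; you implicitly rely on this and it is fine, but it deserves a word. Overall your argument is slightly cleaner than the paper's.
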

    \begin{proof} We bound $|\Lambda(I)|$ by counting the number of choices of tuples $\{S_1,\ldots,S_{\ell}\}$ that satisfy the conditions in the definition. Note that by the condition $|S_1| + \cdots + |S_{\ell}| \leq 2kn^{\beta}$, we have 
    \[ \big|\big\{ i : 2^{j}n^{2\beta} \leq |S_i| \leq 2^{j+1}n^{2\beta} \big\}\big| \leq 2k/(2^jn^{\beta}), \]
    for all $0\leq j \leq \log n$. Thus the number of choices is at most
    \[
            \prod_{j = 0}^{\log n} \binom{k}{2^{j+1}n^{2\beta}}^{2kn^{-\beta} 2^{-j}} \leq \exp(4 \log k \cdot \log n \cdot k n^{\beta}) \leq \exp\left(k n^{2\beta}\right) \,.\qedhere \]\end{proof}
     
\noindent We now prove Lemma~\ref{lem:cB-failure}. 

\begin{proof}[Proof of Lemma \ref{lem:cB-failure} (assuming Lemmas~\ref{lem:single-step-B-failure} and \ref{lem:single-step-B-failure-0})]
    We note that it is enough to show the two statements 
    \begin{equation}\label{eq:cBfailure-ub1} \PP\big( |C_{i}(I) \cap O_i| < (1-\eps_i)\theta_i|C_i(I)|\, \wedge\, |C_{i}(I)| \geq \gamma^2 k^2 \, \wedge \, \cA \wedge  \cC \big)  \leq e^{-kn^{3\beta}} , \end{equation}
    for all $0 \leq i < T$ and all $I \in [n]^{(k)}$; \emph{ and }  \begin{equation} \label{eq:cBpi-failure-ub}
        \PP\big( |\pi(C_{0}(I) \cap O_0)| < (1- \eps_0) r^2 \theta_0 |\pi(C_0(I))|\, \wedge\, |C_{0}(I)| \geq \gamma^2 k^2 \, \wedge\,  \cA \wedge \cC \big)  \leq e^{-kn^{3\beta}} \,,
    \end{equation} for all $I \in [n]^{(k)}$. This is because if \eqref{eq:cBfailure-ub1} and \eqref{eq:cBpi-failure-ub} hold, we can union bound over all $I \in [n]^{(k)}$, $0\leq i< T$, to obtain \eqref{eq:cB-failure}.  
    
    We first prove \eqref{eq:cBfailure-ub1}, which is the more complicated of the two, and then note that \eqref{eq:cBpi-failure-ub} is similar. Using \eqref{eq:use-of-C}, the probability on the left hand side at \eqref{eq:cBfailure-ub1} is 
    \begin{equation}
    \leq \sum_{L} \, \sum_{C}\, \PP\big( |C \cap O_i| < (1-\eps_i)\theta_i|C|  \wedge \cT_{i}(L,C)   \wedge \cL_{i}(I,L) \wedge \cA \big),  \label{eq:cBfailure-ub2} 
    \end{equation}
    where the first sum is over all $L$ with $|L| \leq 2kn^{-\beta}$ and the second sum is over all $C \in \Lambda(I)$ with $|C| \geq \gamma^2 k^2$.
    We now need to check that this union bound is not too large. There are at most $\binom{n}{kn^{-\beta}} \leq 2^{k}$ such choices for $L$, which is small relative to our final bound. Observation~\ref{obs:bound-on-Lambda} tells us that $|\Lambda(I)|$ is relatively small. Thus we may bound \eqref{eq:cBfailure-ub2} above by \begin{equation}\label{eq:cBfailure-ub3} \leq 2^{kn^{2\beta + o(1)}} \max_{L,C}\, \PP\big( |C \cap O_i| < (1-\eps_i)\theta_i|C|  \wedge \cT_{i}(L,C)   \wedge \cL_{i}(I,L) \wedge \cA \big),\end{equation} where the maximum is over $L$ with $|L| \leq k n^{-\beta}$ and $ C \in \Lambda(I)$ with $|C| \geq \gamma^2 k^2$.
    
    We now reduce to a single step of the process.  
    Indeed note that that if $|C \cap O_i| < (1-\eps_i)\theta_i|C|$
    then there exists $j$ with $1\leq j \leq i$ for which 
    \begin{equation}\label{eq:single-step-split} |C \cap O_j| < (1-\gamma^4) s_j |C \cap O_{j-1}| \qquad \text{ or } \qquad  |C \cap O_0| \leq (1- \gamma^4) \theta_0 |C|\, , \end{equation}
    where we recall that $\theta_i\geq (1-O(i\g^4))s_i\cdots s_1\theta_0$ on the event $\cA$, by Observation~\ref{obs:thetai-is-prod-si}.
    So we define 
    \[ A_{j+1} =  \max_{L,C}\,\PP_{j+1}\big( |C \cap O_{j+1}| < (1-\gamma^4)s_{j+1}|C \cap O_{j}| \wedge \cT_{j}(L,C)   \wedge \cL_{j}(I,L) \big), \] for $0\leq j< T$ and define
    \[ B = \max_{L, C}\, \PP\big( |C \cap O_{0}| < (1-\gamma^4)\theta_0|C| \wedge \cT_{0}(L,C)   \wedge \cL_{0}(I,L) \big), \] where the two maximums are over $L\subset [n]$, $|L| \leq kn^{-\beta}$ and $C \in \Lambda(I)$ with $|C|\geq \g^2 k^2$. Union bounding over $j$ for which \eqref{eq:single-step-split} holds, tells us that the right-hand-side of \eqref{eq:cBfailure-ub3} is 
    \[ \leq  2^{kn^{2\beta+o(1)}}( A_1 + \cdots + A_{T}  +B )  \leq \exp(-k^{1+\delta/4 +o(1)} ),\]
    where the inequality holds by Lemmas~\ref{lem:single-step-B-failure} and \ref{lem:single-step-B-failure-0} and since $\beta<\delta/2^7$. This proves \eqref{eq:cBfailure-ub1}. 

    To prove \eqref{eq:cBpi-failure-ub}, we again union bound over $L$ and $C$ and use \eqref{eq:single-step-B-failure-01} in  Lemma~\ref{lem:single-step-B-failure-0} to complete the proof.
\end{proof}

\section{Open edges in \texorpdfstring{$I$}{I}: Martingale concentration}\label{sec:open-edges-in-I-martingale}
In this section we prove 
Lemma~\ref{lem:single-step-B-failure} and Lemma~\ref{lem:single-step-B-failure-0}. For this, we first compare the iid measure on edges with an associated ``capped'' measure $\widehat{\PP}$, which ``caps'' the degrees of vertices into $I$. This allows us control the increments in an associated martingale which tracks the number of open edges in the core. This, in turn, allows us to control the large deviations of the number of open edges in the capped measure. This is the main technical use of the definition of the core: since we have removed the contribution of large degree---and safely controlled their effect in Section \ref{sec:moderate-degrees}---the increments of our associated martingale will now be sufficiently controlled to obtain the concentration we require.

\subsection{The capped measure \texorpdfstring{$\widehat{\PP}$}{P}} Given $I,L \subset [n]$, define the ``capped'' measure $\widehat{\PP} = \widehat{\PP}_{i,I,L} $ as follows. First randomly order the edges of $O_i$. Then, for each vertex $x \in [n] \setminus (L \cup I)$, explore pairs in $O_i$ incident with $x$, in order, including each into a graph $\widehat{G}_{i+1}$ independently with probability $p_{i+1}$ \emph{unless} the current pair is between $x$ and $I$ and the degree from $x$ into $I$ is $\geq  n^{2\beta} $, in which case we skip over this pair, not including it into $\widehat{G}_{i+1}$. Finally, we then run the regularization step as in the usual process: recall that we defined $Q_{i+1} \subset O_{i}$ to be a random subset where each pair $e 
\in O_i$ is independently included with probability $q_e$ (where $q_e$ is defined at~\eqref{eq:qedef}). We define 
\begin{align}\label{label:hat-O-def}
\widehat O_{i+1} = Q_{i+1} \cap \big\{ e \in K_n \setminus (G'_{\leq i} \cup \widehat G_{i+1})  : e \text{ does not form a triangle with }G'_{\leq i} \cup \widehat G_{i+1}\big\}. \end{align}

The point of defining this measure is we can control the lower tails of $|C\cap O_{i+1}|$,
on the event $\cT_{i+1}(L,C)  \wedge \cL_{i+1}(L,C)$, using $\widehat{\PP}$. In particular we show the following. 

\begin{lemma}\label{lem:P-P-tilde-domination} Let $I \in [n]^{(k)}$, $L \subset [n]$ and $C \subset I^{(2)}$. For all $t\geq 0$, 
\begin{equation}\label{eq:coupling} \PP_{i+1}\big(  |C \cap O_{i+1}| \leq t \, \wedge \cT_{i+1}(L,C) \wedge \cL_{i+1}(I,L) \big) \leq \widehat{\PP}_{I,L}\big(  |C \cap \widehat{O}_{i+1}| \leq t + k^{1+8\beta} \big)\, . \end{equation}
\end{lemma}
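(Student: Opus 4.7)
My plan is to construct an explicit coupling of $\PP_{i+1}$ and $\widehat\PP_{I,L}$ on a common probability space and show that under this coupling the event on the left-hand side of \eqref{eq:coupling} deterministically implies the event on the right. To build the coupling, introduce independent uniforms $\{U_e, V_e\}_{e \in O_i}$ on $[0,1]$ together with a uniformly random ordering $\sigma$ of $O_i$. Set $G'_{i+1} := \{e \in O_i : U_e \leq p_{i+1}\}$ and $Q_{i+1} := \{e \in O_i : V_e \leq q_e\}$, which realizes $\PP_{i+1}$. Simultaneously build $\widehat G_{i+1}$ by scanning $O_i$ in the order $\sigma$, accepting an edge $e$ whenever $U_e \leq p_{i+1}$ except when the capping rule fires: if $e = xy$ with $x \in [n]\setminus (L\cup I)$ and $y \in I$ and the current degree of $x$ into $I$ in $\widehat G_{i+1}$ has already reached $n^{2\beta}$, we skip $e$. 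The same $Q_{i+1}$ is then used in \eqref{label:hat-O-def} to define $\widehat O_{i+1}$. By construction capping only ever removes edges, so $\widehat G_{i+1} \subseteq G'_{i+1}$ deterministically.

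The containment $\widehat G_{i+1} \subseteq G'_{i+1}$ gives $G'_{\leq i} \cup \widehat G_{i+1} \subseteq G'_{\leq i+1}$, and hence for any $e \in C \cap O_{i+1}$ both defining conditions for $\widehat O_{i+1}$ (``$e$ is not in the ambient graph'' and ``$e$ does not close a triangle with the ambient graph'') only weaken when the ambient graph shrinks, so $e \in \widehat O_{i+1}$. Thus $O_{i+1} \cap C \subseteq \widehat O_{i+1} \cap C$ pointwise on the coupling, and \eqref{eq:coupling} reduces to the deterministic bound
\[
|C \cap (\widehat O_{i+1} \setminus O_{i+1})| \leq k^{1+8\beta}
\]
on the event $\cT_{i+1}(L,C) \cap \cL_{i+1}(I,L)$.

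Every pair in the excess must be ``caused'' by a mismatch edge in $G'_{i+1} \setminus \widehat G_{i+1}$, and by the capping rule every mismatch edge has the form $xz$ with $x \in [n]\setminus (L\cup I)$ and $z \in I$. No such edge lies in $C \subset I^{(2)}$, so the mismatch can affect $e = uv \in C$ only by providing one side of a closing triangle $\{u,v,x\}$, in which case $x \in [n]\setminus(L\cup I)$ and $u,v \in N'_{\leq i+1}(x) \cap I$. On $\cL_{i+1}(I,L)$ we have $|N'_{\leq i+1}(x) \cap I| \leq n^{2\beta}$ for every $x \notin L$, so each such $x$ contributes at most $\binom{n^{2\beta}}{2} \leq n^{4\beta}$ candidate pairs $uv$; combined with the degree control afforded by $\cA$ (which at time $i+1$ gives at most $\widetilde O(n^{1/2+O(\beta)})$ vertices $x$ supporting any capped edge) this yields an excess of at most $n^{1/2+O(\beta)} \ll k^{1+8\beta}$, as required.

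The main obstacle is the final combinatorial bookkeeping: precisely identifying how a mismatch edge can propagate to close a pair of $C$, and verifying that the pair of events $\cT_{i+1}(L, C)$ (excluding triangles through $L$) and $\cL_{i+1}(I,L)$ (degree control into $I$ outside $L$) together leave only a controllably small count of candidate triangles. The coupling itself is standard, and the generous deterministic slack $k^{1+8\beta}$ is ample room to absorb this count.
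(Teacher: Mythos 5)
Your coupling construction is fine in spirit, but you have silently changed the definition of the capped measure $\widehat\PP_{I,L}$, and this change is fatal. The paper's $\widehat G_{i+1}$ is built by exploring \emph{only} the pairs of $O_i$ incident to $[n]\setminus(L\cup I)$; pairs with both endpoints in $L\cup I$ are never even examined, hence never placed in $\widehat G_{i+1}$ (the paper says this explicitly after the lemma statement). Your construction instead ``scans $O_i$'' in its entirety and only omits edges when the capping rule fires, so your $\widehat G_{i+1}$ \emph{does} contain the $G'_{i+1}$-edges inside $L\cup I$. You are therefore proving an inequality about a different measure from the one in the lemma.

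This misidentification propagates: you claim every mismatch edge is a capped edge of the form $xz$ with $x\notin L\cup I$, $z\in I$, and then try to bound the number of such edges via $\cA$. But on the event $\cL_{i+1}(I,L)$ the capping never actually fires (every $x\notin L$ has $|N_{G'_{i+1}}(x)\cap I|\leq n^{2\beta}$, so the running degree counter stays strictly below the threshold), so under your redefined measure $\widehat G_{i+1}=G'_{i+1}$ and the excess is zero — the lemma would be trivially true for your measure, and your $n^{1/2+O(\beta)}$ accounting is both unjustified (you never explain where the bound comes from and $\cA$ is not part of the event) and beside the point. The genuine source of the excess, and hence of the $k^{1+8\beta}$ slack, is the unexposed edges inside $L\cup I$: pairs $uv\in C$ can be closed in $O_{i+1}$ via a triangle through some $w\in I$, or via $uv\in G'_{i+1}$ itself, without being closed in $\widehat O_{i+1}$. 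One then disposes of the $L$-part via $\cT_{i+1}(L,C)$ (no vertex of $L$ can close a pair in $C$) and of the $I\setminus L$-part via $\cL_{i+1}(I,L)$ (each such vertex has $\leq n^{2\beta}$ $G'_{\leq i+1}$-neighbours in $I$, so closes $\leq n^{4\beta}$ pairs), giving $k\cdot n^{4\beta}\leq k^{1+8\beta}$. Finally, your modification is not a harmless variant: Lemma~\ref{lem:martinglae-application} relies crucially on $\widehat\PP$ not exposing edges inside $L\cup I$, since otherwise a single exposed edge incident to $L$ could close $\Omega(k)$ pairs and the martingale increments (Lemma~\ref{obs:max-xi}) would no longer be bounded by $n^{O(\beta)}$.
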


Before we prove this a few remarks are in order. First note the measure $\widehat{\PP}$ does not expose any  edges between $L$ and $I$ or edges inside of $I$. Informally this is fine for us, since edges between $L$ and $I$ cannot witness the closure of any edges inside of the core, precisely by the definition of the core. Moreover, vertices that are in $I \setminus L(I)$ have low degree into $I$ and therefore cannot close many edges, deterministically. Indeed this accounts for the $k^{1+8\beta}$ term in the above.

For the remainder of this section, we fix $I,L$ and $C$ as in Lemma~\ref{lem:P-P-tilde-domination} and we let $\widehat{\EE}$ denote expectation with respect to the measure $\widehat{\P}$.

It is important that we can control the expectation of the one measure in terms of the other.

\begin{observation}\label{obs:P-hatP-means} 
We have
\[\widehat{\EE}\, |C \cap \widehat{O}_{i+1}| \geq \EE_{i+1} \, |C \cap O_{i+1} | = s_{i+1}|C \cap O_i| .\]
\end{observation}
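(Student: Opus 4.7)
My plan is to split the claim into the equality and the inequality and handle them separately. For the equality, I would simply apply linearity of expectation to write $\EE_{i+1}|C\cap O_{i+1}|=\sum_{e\in C\cap O_i}\PP_{i+1}(e\in O_{i+1})$ and then invoke \eqref{eq:prob-e-open-constant} together with the definition of $s_{i+1}$ in \eqref{eq:def-si+1}, which together give $\PP_{i+1}(e\in O_{i+1}) = s_{i+1}$ uniformly in $e\in O_i$. This is exactly the payoff of the regularization step.

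For the inequality I would construct an explicit monotone coupling between $\PP_{i+1}$ and $\widehat{\PP}$. Assign to each pair $f\in O_i$ an independent uniform random variable $U_f\in[0,1]$ and use a common realization of $Q_{i+1}$ on both sides. On the standard side, set $G'_{i+1}:=\{f\in O_i: U_f\leq p_{i+1}\}$, which reproduces the law of $G(O_i,p_{i+1})$. On the capped side, realize the sequential procedure defining $\widehat{G}_{i+1}$ by letting each $U_f$ serve as the coin deciding whether a considered pair $f$ is added. Since the capped procedure only ever places $f$ in $\widehat{G}_{i+1}$ when $f$ is \emph{considered} (not skipped) and $U_f\leq p_{i+1}$, this coupling automatically satisfies $\widehat{G}_{i+1}\subseteq G'_{i+1}$ pointwise.

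Next I would fix $e\in C\cap O_i\subseteq I^{(2)}$ and compare the two survival events under this coupling. The capped procedure never explores pairs inside $I$, so $e\notin\widehat{G}_{i+1}$ automatically and $\{e\in\widehat{O}_{i+1}\}=\{e\in Q_{i+1}\}\cap\{e\text{ spans no triangle with }G'_{\leq i}\cup\widehat{G}_{i+1}\}$. On the standard side $\{e\in O_{i+1}\}$ additionally requires $e\notin G'_{i+1}$ and demands no triangle with the \emph{larger} graph $G'_{\leq i+1}$. Both of these are strictly further restrictions (using $\widehat{G}_{i+1}\subseteq G'_{i+1}$ for the second), so $\{e\in O_{i+1}\}\subseteq\{e\in\widehat{O}_{i+1}\}$. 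Summing over $e\in C\cap O_i$ gives $|C\cap O_{i+1}|\leq|C\cap\widehat{O}_{i+1}|$ pointwise, and taking expectations against the coupling measure yields the desired inequality.

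I do not expect any real obstacle. The equality is immediate from the regularization step, and the monotone coupling for the inequality is essentially forced by the fact that $\widehat{\PP}$ was designed to \emph{suppress}, rather than add, edges incident to $I$; the only technical care needed is to check that the sequential capped procedure can be realized consistently against the common uniforms $U_f$, which is routine bookkeeping.
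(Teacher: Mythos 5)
Your proof is correct and takes essentially the same approach as the paper: the equality is linearity of expectation plus the uniformity of the survival probability $s_{i+1}$ from \eqref{eq:prob-e-open-constant}, and the inequality comes from the same monotone coupling (shared coin flips on each pair and a shared $Q_{i+1}$) that yields $\widehat{G}_{i+1}\subseteq G'_{i+1}$ and hence $O_{i+1}\subseteq\widehat{O}_{i+1}$ pointwise. Your use of explicit uniforms $U_f$ is just a more concrete rendering of the paper's ``same choices on each edge.''
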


We prove Lemma~\ref{lem:P-P-tilde-domination} and Observation~\ref{obs:P-hatP-means} using a natural coupling between the two measures. To describe this, first note that we can sample the iid measure by simply randomly ordering the edges of $O_i$ and then including each with probability $p_{i+1}$ into $G'_{i+1}$.

Thus, to couple the measures, simply use the same random order and the same choices on each edge, unless the degree constraint in the definition of $\widehat{G}_{i+1}$ is violated (in which case the edge is included in $G'_{i+1}$ but not in $\widehat{G}_{i+1}$). We also use the same choice of $Q_{i+1}$.  Thus we obtain a probability measure $\widetilde \PP$ on quadruples $(G'_{i+1},O_{i+1},\widehat{G}_{i+1},\widehat{O}_{i+1})$ whose marginals $(G'_{i+1},O_{i+1})$ and $(\widehat{G}_{i+1},\widehat{O}_{i+1})$ are $\PP_{i+1}$ and $\widehat{\PP}$, respectively. Moreover we have that
\begin{equation}\label{eq:P-Phat-dominance} \widehat{G}_{i+1} \subset G'_{i+1}\qquad \text{ and therefore } \qquad \widehat{O}_{i+1} \supset O_{i+1}, \end{equation}
which immediately implies Observation~\ref{obs:P-hatP-means}.
We are also in a position to prove Lemma~\ref{lem:P-P-tilde-domination}.

\begin{proof}[Proof of Lemma~\ref{lem:P-P-tilde-domination}]
We sample $(G'_{i+1},O_{i+1},\widehat{G}_{i+1},\widehat{O}_{i+1})$  according to the coupling $\widetilde  \P$ described above. We note that if $(G'_{i+1},O_{i+1})$ satisfies the event $\cL_{i+1}(I,L)$ then each vertex of $[n]\setminus ( I \cup L)$ has $\leq n^{2\beta}$ neighbours in $I$. Therefore the graphs $G'_{i+1},\widehat{G}_{i+1}$ are identical apart from possibly between $L$ and $I$
and within $I$. However no pair $e\in C$ can be closed by a vertex of $L$ without violating $\cT(L,C)$, thus we can disregard the edges between $L$ and $I$ in $G'_{i+1}$. Now note that each vertex $x \in I \setminus L$ can close at most $n^{4\beta}$ edges in $C$. Since there are at most $k$ such vertices we have 
\[ |C \cap \widehat{O}_{i+1}| \leq  |C \cap O_{i+1}| + k^{1+8\beta}.\]

Thus the indicator of the event on the left hand side of \eqref{eq:coupling} is at most the indicator of the event on the right hand side of \eqref{eq:coupling}. Applying the expectation $\widetilde{\EE}$ (i.e. with respect to the measure $\widetilde{\P}$) to both indicators completes the proof. 
\end{proof}

\subsection{Definition of the martingale} We can now prove the following concentration result about this capped measure $\widehat{\PP}$ using an appropriate martingale. It is worth mentioning that our martingale differs significantly from those in the works of Bohman~\cite{bohman2009triangle}, Bohman Keevash~\cite{bohman2010early,bohman2021dynamic} and Fiz Pontiveros, Griffiths,  Morris~\cite{FGM}.  

\begin{lemma}\label{lem:martinglae-application} If $\eps >0 $ and $\cA_i$ holds then 
\[ \widehat{\PP}\big(  |C \cap \widehat{O}_{i+1}| \leq (1-\eps) s_{i+1}|C \cap O_{i}|  \big) \leq \exp\big( - \eps^2|C \cap O_i | n^{-5\beta}  \big)\,.\]
\end{lemma}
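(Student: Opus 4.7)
The plan is to set up a Doob martingale by sequentially exposing the Bernoullis underlying $\widehat{\P}$ and then apply a martingale concentration inequality. The capped measure is driven by (i) independent Bernoullis $B_e \sim \mathrm{Bern}(p_{i+1})$, one for each edge $e \in O_i$ that has at least one endpoint in $[n]\setminus(L\cup I)$, together with the random processing order $\sigma$; and (ii) independent indicators $\1[e \in Q_{i+1}]$, one for each $e \in O_i$. Revealing these random variables sequentially yields a Doob martingale $M_0 = \widehat{\EE} Z, \dots, M_N = Z$, where $Z = |C \cap \widehat{O}_{i+1}|$.

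The critical step is bounding the martingale increments by $O(n^{2\beta})$. Only Bernoullis $B_e$ for edges $e = xu$ with $u \in I$ and $x \in [n]\setminus(L\cup I)$ can affect $Z$, since an edge disjoint from $I$ cannot participate in a triangle with any $f \in C \subset I^{(2)}$. For such an $e$, flipping $B_e$ alters the set of $x$-to-$I$ edges in $\widehat{G}_{i+1}$ by a symmetric difference of at most two: the $n^{2\beta}$ cap on $x$'s $I$-degree forces any downstream change to swap a single edge at most once before the cascade terminates. Any such swapped edge $xu' \in \widehat{G}_{i+1}$ with $u' \in I$ closes a pair $f = u' w \in C$ only when $xw \in G'_{\leq i} \cup \widehat{G}_{i+1}$ for some $w \in I$, and the number of such $w$ is at most $|N'_{\leq i}(x) \cap I| + |N_{\widehat{G}_{i+1}}(x) \cap I| \leq 2n^{2\beta}$. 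Here $|N'_{\leq i}(x) \cap I| < n^{2\beta}$ is the essential use of $x \notin L$ (by the definition of $L$ at \eqref{eq:L-def}), and $|N_{\widehat{G}_{i+1}}(x) \cap I| \leq n^{2\beta}$ by the cap. This yields an increment bound of at most $4n^{2\beta}$; the increments for $Q$-indicator steps are trivially at most $1$.

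With these bounded increments in hand I would apply Freedman's martingale inequality. The total conditional variance contributed by the Bernoulli steps is at most $p_{i+1}(4n^{2\beta})^2$ per edge, summed over the $O(k\theta_i n)$ edges of $O_i$ with an endpoint in $I$ (the count coming from the degree bounds in $\cA_i$), plus the $O(|C \cap O_i|)$ contribution from the $Q$-steps. Balancing $\widehat{\EE} Z \geq s_{i+1} |C \cap O_i|$ (Observation~\ref{obs:P-hatP-means}) against this variance sum, together with the sensitivity $O(n^{2\beta})$, then delivers the target tail probability $\exp(-\eps^2 |C \cap O_i| n^{-5\beta})$.

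The principal obstacle is the sensitivity analysis itself: the cascading-cap argument that restricts the effect of a single Bernoulli flip to two edges, and the combination of the $G'_{\leq i}$-degree bound (which requires $x \notin L$) with the $\widehat{G}_{i+1}$-degree cap. A secondary subtlety is the choice of concentration inequality: a direct Azuma--Hoeffding bound appears too weak, so one should exploit the small per-edge variance $p_{i+1} \cdot n^{4\beta}$ via Freedman's inequality to recover the sharp exponent $n^{-5\beta}$ on the right-hand side.
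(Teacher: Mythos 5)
Your general strategy is the same as the paper's (Doob martingale for the capped measure, then Freedman), and your cascade analysis of the cap is sound: a single Bernoulli flip changes $\widehat{G}_{i+1}[\{x\},I]$ by a symmetric difference of at most two edges, and each changed edge can open or close at most $|N'_{\leq i}(x)\cap I| + |N_{\widehat G_{i+1}}(x)\cap I|\leq 2n^{2\beta}$ pairs of $C$, using $x\notin L$ for the first term. That gives the $O(n^{2\beta})$ increment bound, which (because you reveal edge-by-edge rather than vertex-by-vertex) is actually a bit sharper than the paper's $5n^{4\beta}$.

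The gap is in the variance bound, and it is fatal to the stated conclusion. Bounding $\widehat{\E}[\xi_j^2\mid\cF_{j-1}]\leq p_{i+1}(4n^{2\beta})^2$ and summing over the at most $2k\theta_i n$ open edges between $I$ and $[n]\setminus(L\cup I)$ gives
\[
b \,\lesssim\, p_{i+1}\,n^{4\beta}\,k\,\theta_i n \,=\, n^{4\beta}\,k\,\gamma\sqrt{n}\,=\,\Theta\big(n^{1+4\beta}(\log n)^{-19/2}\big)\,,
\]
using $p_{i+1}\theta_i=\gamma/\sqrt n$ and $k=\Theta(\sqrt{n\log n})$. Feeding this into Freedman with $t=\eps s_{i+1}|C\cap O_i|$, one needs $b\lesssim |C\cap O_i|\,n^{5\beta}$ to recover the exponent $\eps^2|C\cap O_i|\,n^{-5\beta}$, i.e.\ one needs $|C\cap O_i|\gtrsim n^{1-\beta}(\log n)^{-19/2}$. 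But the lemma is later applied (via Lemma~\ref{lem:single-step-B-failure}) in the regime $|C\cap O_i|\geq \theta_i\gamma^2 k^2$, which can be as small as $n^{1/2+\delta+o(1)}$ since $\theta_i$ can be $n^{-1/2+\delta}$. As $1/2+\delta<1-\beta$, your bound collapses exactly where the lemma must do its work; the problem is the worst-case per-edge estimate treats every Bernoulli as if it had maximal influence $n^{2\beta}$, while almost all of them affect zero pairs of $C$.

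The paper circumvents this by never summing worst-case squared increments. Instead it establishes the \emph{total-variation} bound $\sum_j\widehat{\E}[|\xi_j|\mid\cF_{j-1}]\leq |C\cap O_i|$ (Lemma~\ref{obs:sum|x_i|}), proved via the martingale identity $\widehat\E[\xi_j^+\mid\cF_{j-1}]=\widehat\E[\xi_j^-\mid\cF_{j-1}]$ and the $X_i,Y_i$ controls from $\cA_i$ (which give $p_{i+1}^2X_i(e)+p_{i+1}Y_i(e)=O(\gamma\sqrt{\log n})$ per edge $e\in C\cap O_i$). Combined with the pointwise increment bound this yields $\sum_j\widehat\E[\xi_j^2\mid\cF_{j-1}]\leq 5n^{4\beta}\sum_j\widehat\E[|\xi_j|\mid\cF_{j-1}]\leq 6n^{4\beta}|C\cap O_i|$, which is the correct order. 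To repair your argument you should replace the worst-case per-edge variance sum with an analogous bound on $\sum_j\widehat\E[|\xi_j|\mid\cF_{j-1}]$ in your edge-by-edge filtration; the ingredients are present in your cascade analysis (they become the codegree and cross-degree counts behind $Y_i$ and $X_i$), but as written the estimate loses a polynomial factor the lemma cannot afford.
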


We now define our martingale. Enumerate the vertices $[n] \setminus (L\cup I)=\{v_1,v_2,\ldots, v_\ell\}$. Define $\cF_j$ to be the 
$\sigma$-algebra defined by the \emph{relative} ordering of all of the edges between $\{v_1,\ldots,v_j\}$ and $I$, along with the edges between $\{v_1,\ldots,v_j\}$ and $I$, that is $\widehat{G}_{i+1}[\{v_1,\ldots,v_j\},I]$. Let $Z =  |C \cap \widehat{O}_{i+1}| $ and define the martingale $M_j$ with respect to this filtration by
\begin{equation}\label{eq:def-martingale} M_j = \widehat{\EE}[ Z \, | \cF_j ] \qquad \text{ which has increments }\qquad  \xi_j = M_j - M_{j-1}. \end{equation}
Note that 
\[ \xi_j = \sum_{e \in C\cap O_i } \widehat{\PP}\big( e \in \widehat O_{i+1}\, |\, \cF_j \big) - \widehat{\PP}\big( e \in \widehat O_{i+1} \, | \, \cF_{j-1} \big)  = \sum_{e \in C\cap O_i } I_e .\]
We now make a few simple observations about these increments which will feed naturally into Freedman's inequality (see Theorem~\ref{thm:Freedman}), a standard inequality for the concentration of martingales.  

For what follows, it is natural to slightly extend the terminology from \eqref{eq:def-Y}, \eqref{eq:def-X} of open and clopen triangles. We say an edge $e$ and a vertex $v$ form a clopen triangle if $e,v$ defines a triangle with two edges in $O_i$ and one in $G'_{\leq i}$. Say that $e,v$ form an open triangle if they define a triangle with all edges in $O_i$. 

For an open pair $e = xy\in O_i$, where $x\in I$, $y \in [n] \setminus (L \cup I)$, we define the quantity
\begin{equation}\label{eq:def-p(e)} p(e) = \widehat{\PP}\big( e \in \widehat{G}_{i+1} \big) \quad \text{ and note that } \quad p(e) \leq \min\big\{ p_{i+1} , n^{2\beta}/{|N^{\circ}_{i}(y) \cap I|}\}. \end{equation}
The first part of the inequality holds since $\widehat{G}_{i+1} \subset G_{i+1}'$. The second holds since each open edge {incident to $y$} is included with equal probability and  the maximum degree of $y$ is at most $n^{2\beta}$.

To understand the increments we note the following. 

\begin{observation}\label{obs:P(einO)-movement-0} Let $e = xy\in C \cap O_i$ be such that $e,v_j$ forms a clopen triangle with $xv_j \in O_i$ then 
\[ \widehat{\PP}\big( e \in \widehat{O}_{i+1}\, |\, \cF_j \big) = \begin{cases}  (1-p(xv_j))^{-1} \PP\big( e \in \widehat{O}_{i+1} | \cF_{j-1} \big) \qquad \text{ if  } xv_j \notin  \widehat{G}_{i+1} \\   0 \hspace{15em} \text{ if } xv_j \in  \widehat{G}_{i+1}.\end{cases}\]
\end{observation}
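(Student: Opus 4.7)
\smallskip

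\noindent\emph{Proof proposal.} The plan is to verify the two cases separately and reduce both to the fact that the exploration procedure defining $\widehat{G}_{i+1}$ at the vertex $v_j$ is independent of the exploration at the earlier vertices $v_1,\ldots,v_{j-1}$.

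First I would handle the case $xv_j \in \widehat{G}_{i+1}$, which should be immediate from the definition of $\widehat{O}_{i+1}$. Since $e,v_j$ span a clopen triangle with $xv_j \in O_i$, the remaining edge $yv_j$ must lie in $G'_{\leq i}$. If additionally $xv_j\in \widehat{G}_{i+1}$, then $e=xy$ forms a triangle with $G'_{\leq i}\cup \widehat{G}_{i+1}$ (namely $x,y,v_j$), so by~\eqref{label:hat-O-def} we have $e\notin \widehat{O}_{i+1}$, giving the second case.

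For the first case $xv_j\notin \widehat{G}_{i+1}$, the plan is to show the two identities
\[
\widehat\PP\(xv_j \in \widehat{G}_{i+1}\,\big|\, \cF_{j-1}\) = p(xv_j)
\]
and
\[
\widehat\PP\(e\in \widehat O_{i+1}\,\big|\, \cF_j\) \, = \, \widehat\PP\(e\in \widehat O_{i+1}\,\big|\, \cF_{j-1},\,xv_j\notin \widehat G_{i+1}\)\,,
\]
from which the conclusion follows by the two-line computation
$$\widehat\PP\(e\in \widehat O_{i+1}\,\big|\, \cF_{j-1}\) = \bigl(1-p(xv_j)\bigr)\,\widehat\PP\(e\in \widehat O_{i+1}\,\big|\, \cF_{j-1},\,xv_j\notin \widehat G_{i+1}\),$$ where we used that the first case above gives a contribution of $0$ to the total probability. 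The first identity holds because the exploration at $v_j$ in the definition of $\widehat\PP$ (the random ordering of $v_j$'s open neighbours together with the independent $p_{i+1}$-coins and the capping at $I$) is performed independently of the exploration at $v_1,\ldots,v_{j-1}$, and so the marginal probability $\widehat{\PP}(xv_j\in \widehat G_{i+1})=p(xv_j)$ defined at~\eqref{eq:def-p(e)} is preserved under conditioning on $\cF_{j-1}$.

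The one point that requires a little care—and is the main (mild) obstacle—is the second identity. The additional information in $\cF_j$ beyond $\cF_{j-1}$ is the relative ordering of the edges between $v_j$ and $I$ together with the full set $\widehat G_{i+1}[\{v_j\},I]$. Here I would argue that, once we condition on $xv_j\notin \widehat G_{i+1}$, this extra information is irrelevant to the event $\{e\in \widehat O_{i+1}\}$: an edge $e'=xv_j'\in O_i$ with $v_j'\neq v_j$ can only close $e=xy$ via the triangle $x,y,v_j'$, which requires an edge $yv_j'$—none of the edges from $v_j$ to $I\setminus\{x\}$ have either endpoint at $y$, so they cannot close $e$, and their coin flips affect $e$ only through the (already-conditioned-on) capping that determines whether $xv_j$ was included. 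Once this independence is in place, the identity follows and the observation is proved.
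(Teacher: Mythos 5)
Your proposal is correct and follows the same route as the paper: the paper's proof is exactly the tower-property computation $(1-p(xv_j))z + p(xv_j)\cdot 0 = \widehat{\PP}(e \in \widehat{O}_{i+1}\mid\cF_{j-1})$ with $z$ defined as the conditional probability given $\cF_{j-1}$ and $xv_j \notin \widehat{G}_{i+1}$. The only difference is that you explicitly flag and justify the step the paper states without comment — namely that on the event $\{xv_j\notin\widehat{G}_{i+1}\}$ the conditional probability $\widehat{\PP}(e\in\widehat{O}_{i+1}\mid\cF_j)$ coincides with $\widehat{\PP}(e\in\widehat{O}_{i+1}\mid\cF_{j-1},\, xv_j\notin\widehat{G}_{i+1})$, which is a genuine (if easy) independence check and is worth spelling out as you do.
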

\begin{proof}
If $xv_j \in  \widehat{G}_{i+1}$ then clearly $e \not\in \widehat{O}_{i+1}$. Now let $\cE$ be the event $xv_j \not\in  \widehat{G}_{i+1}$ and let $z$ denote $\widehat{\PP}\big( e \in \widehat{O}_{i+1}\, |\, \cF_j \big)$ on this event, i.e.\ $z = \widehat{\P}(e \in \widehat{O}_{i+1} \,|\, \cF_{j-1}, \cE)$. By the tower property of conditional expectation 
\[(1-p(x v_j))z + p(xv_j) \cdot 0 = \widehat{\EE}\big[ \, \widehat{\PP}\big[ e\in \widehat{O}_{i+1} | \cF_j \big] \big| \cF_{j-1} \big] = \widehat{\PP}(e \in \widehat{O}_i | \cF_{j-1} ).\]
Solving for $z$ completes the proof.  
\end{proof}

Similarly for two open edges $e,f \in O_i$ we define 
\begin{equation}\label{eq:def-p(e,f)} p(e,f) = \widehat{\PP}\big( e,f \in \widehat{G}_{i+1} \big)\quad \text{ and note that } \quad p(e,f) \leq \min\big\{p_{i+1}^2 , n^{4\beta}/|N^{\circ}_i(x) \cap I|^2 \big\} . \end{equation}
Similar to the above, we now observe the following. 
\begin{observation}\label{obs:P(einO)-movement-01}
If $e,v_j$ forms an open triangle, where $e = xy\in C\cap O_i$ we have
\[ \widehat{\PP}\big( e \in {\widehat{O}}_{i+1}\, \big|\, \cF_j \big) = \begin{cases} (1-p(xv_j,yv_j))^{-1} \PP( e \in {\widehat{O}}_{i+1} | \cF_{j-1} ) \qquad \text{ if  } xv_j \text{ or }yv_j \notin \widehat G_{i+1} \\   0 \hspace{16.6em} \text{ if } xv_j \text{ and } yv_j \in \widehat G_{i+1}.\end{cases}\]
\end{observation}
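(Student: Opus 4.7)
My proof plan is to mirror the argument for the clopen-triangle case (Observation~\ref{obs:P(einO)-movement-0}), with the single edge $xv_j$ replaced by the pair $\{xv_j, yv_j\}$, since in an open triangle it is precisely the simultaneous inclusion of both $xv_j, yv_j$ in $\widehat{G}_{i+1}$ that closes the triangle $xyv_j$ on $e$.

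The first bullet is immediate: if both $xv_j, yv_j \in \widehat{G}_{i+1}$, then $e$ forms a triangle with two edges of $\widehat{G}_{i+1}\subset \widehat{G}_{i+1}\cup G'_{\leq i}$, so $e \notin \widehat{O}_{i+1}$ by \eqref{label:hat-O-def}; hence the conditional probability is $0$. For the other case, let $\cE_j$ denote the event that not both of $xv_j, yv_j$ lie in $\widehat{G}_{i+1}$. Because each vertex of $[n]\setminus(L\cup I)$ is explored using its own independent ordering and coin flips (the cap depends only on that vertex's own accumulated degree into $I$), the exploration of $v_j$ is independent of $\cF_{j-1}$, and so $\widehat{\PP}(\cE_j^{c}\mid \cF_{j-1}) = p(xv_j, yv_j)$ by the definition \eqref{eq:def-p(e,f)}. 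I would then argue that on $\cE_j$, the full conditional probability $\widehat{\PP}(e\in \widehat{O}_{i+1}\mid \cF_j)$ equals a $\cF_{j-1}$-measurable random variable $Y$; the tower property then gives
\[ \widehat{\PP}(e\in \widehat{O}_{i+1}\mid \cF_{j-1}) = (1-p(xv_j,yv_j))\cdot Y, \]
and solving for $Y$ yields the claimed formula on $\cE_j$.

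The technical step is justifying that $\widehat{\PP}(e\in \widehat{O}_{i+1}\mid \cF_j) = \1_{\cE_j}\cdot Y$ for some $\cF_{j-1}$-measurable $Y$, i.e.\ that the finer information $\cF_j$ carries beyond $\cF_{j-1}$ and $\1_{\cE_j}$ — namely the individual statuses of $xv_j$ and $yv_j$ and of the edges $v_j z$ for $z\in I\setminus\{x,y\}$ — is irrelevant to this event. For this I would write
\[ \{e\in \widehat{O}_{i+1}\} = \{e\in Q_{i+1}\}\cap \bigcap_{v\in[n]\setminus\{x,y\}} \{ \text{$xy,xv,yv$ is not closed}\}. \]
The $v=v_j$ factor is exactly $\cE_j$. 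For $v \neq v_j$ with $v\in L\cup I$ the factor is deterministic (since $\widehat{G}_{i+1}$ has no edges incident to $L\cup I$, only $G'_{\leq i}$-edges contribute); for $v = v_k$ with $k<j$ it is $\cF_{j-1}$-measurable; and for $v = v_k$ with $k>j$ it depends only on $v_k$'s own independent exploration, which is independent of $\cF_j$. Finally $Q_{i+1}$ is sampled independently of $\widehat{G}_{i+1}$. Therefore the conditional probability given $\cF_j$ factors as $\1_{\cE_j}$ times a quantity measurable with respect to $\cF_{j-1}$ and the (independent) future randomness, which is exactly what is needed. The main place one must be careful is this independence claim: the degree cap in the capped measure couples all $v_j$-incident edges, but it couples only those, not edges of any other vertex, which is precisely what the argument requires.
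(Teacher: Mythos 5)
Your proposal is correct and follows essentially the same argument the paper uses for the parallel clopen case, Observation~\ref{obs:P(einO)-movement-0} (the paper gives no separate proof for the open-triangle case, evidently regarding it as identical). You also supply a careful justification of the step the paper leaves implicit: that $\widehat{\PP}(e\in\widehat{O}_{i+1}\mid\cF_j)$ factors as $\1_{\cE_j}$ times an $\cF_{j-1}$-measurable quantity, which is what legitimizes writing down a single value $z$ on $\cE_j$ and then applying the tower property. The one imprecision is the phrase ``$\widehat{G}_{i+1}$ has no edges incident to $L\cup I$'' — in fact $\widehat{G}_{i+1}$ contains many edges with one endpoint in $I$; what you need (and what your argument actually uses) is that $\widehat{G}_{i+1}$ contains no edges with both endpoints in $L\cup I$, so for $v\in L\cup I$ the edges $xv,yv$ (both having two endpoints in $L\cup I$) are never in $\widehat{G}_{i+1}$ and the corresponding factor is determined by $G'_{\leq i}$ alone. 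With that phrasing corrected, the independence claim at the end — that the degree cap couples only $v_j$-incident edges, and that the relative orderings of the edge sets incident to distinct $v_k$ are mutually independent under a uniformly random global order — is exactly right and completes the argument.
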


We can now quickly deduce the following bounds on the increments $I_e$. 

\begin{observation}\label{obs:P(einO)-movement} For each $e = xy \in C\cap O_i$. If $ev_j$ forms a clopen triangle with $xv_j \in O_i$
\begin{equation}\label{eq:type1-increment} - \1(xv_j \in \widehat{G}_{i+1} ) \leq  I_e \leq {2p( xv_j )}. \end{equation}
If $ev_j$ forms an open triangle, we have 
\begin{equation}\label{eq:type2-increment} - \1(xv_j \text{ and } yv_j \in \widehat{G}_{i+1} ) \leq  I_e \leq {2p(xv_j,yv_j)}. \end{equation}
\end{observation}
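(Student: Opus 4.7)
The proof of Observation~\ref{obs:P(einO)-movement} is essentially a bookkeeping exercise that falls out immediately from Observations~\ref{obs:P(einO)-movement-0} and~\ref{obs:P(einO)-movement-01}; the plan is to simply read off the two cases and apply elementary estimates. Write $\phi_j = \widehat{\PP}(e \in \widehat O_{i+1}\mid \cF_j)$ so that $I_e = \phi_j - \phi_{j-1}$, and note that trivially $\phi_{j-1}, \phi_j \in [0,1]$.

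For the clopen case, Observation~\ref{obs:P(einO)-movement-0} gives a dichotomy. If $xv_j \in \widehat G_{i+1}$ then $\phi_j = 0$, so $I_e = -\phi_{j-1} \in [-1, 0]$, which yields both the advertised lower bound $-\1(xv_j \in \widehat G_{i+1})$ and an upper bound of $0 \le 2p(xv_j)$. If $xv_j \notin \widehat G_{i+1}$ then $\phi_j = (1-p(xv_j))^{-1}\phi_{j-1} \ge \phi_{j-1}$, so $I_e \ge 0$, which is again $\ge -\1(xv_j \in \widehat G_{i+1})$ vacuously, while
\[
I_e = \frac{p(xv_j)}{1-p(xv_j)}\,\phi_{j-1} \le \frac{p(xv_j)}{1-p(xv_j)} \le 2p(xv_j),
\]
using $\phi_{j-1} \le 1$ and the fact that $p(xv_j) \le p_{i+1} \le n^{-\delta} \le 1/2$ by Observation~\ref{lem:pi-UB}. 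This establishes~\eqref{eq:type1-increment}.

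For the open-triangle case, the argument is completely parallel, with Observation~\ref{obs:P(einO)-movement-01} replacing Observation~\ref{obs:P(einO)-movement-0} and $p(xv_j,yv_j)$ replacing $p(xv_j)$. When both $xv_j, yv_j \in \widehat G_{i+1}$ we get $\phi_j = 0$ and hence $I_e = -\phi_{j-1} \ge -1$; otherwise $\phi_j = (1 - p(xv_j,yv_j))^{-1} \phi_{j-1}$, so
\[
0 \le I_e = \frac{p(xv_j,yv_j)}{1-p(xv_j,yv_j)}\,\phi_{j-1} \le 2p(xv_j,yv_j),
\]
again using $p(xv_j,yv_j) \le p_{i+1}^2 \le 1/2$ from~\eqref{eq:def-p(e,f)} and Observation~\ref{lem:pi-UB}. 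This gives~\eqref{eq:type2-increment}.

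There is no genuine obstacle here; the only thing to be careful about is that the upper bound $\tfrac{p}{1-p} \le 2p$ needs $p \le 1/2$, which is exactly what Observation~\ref{lem:pi-UB} provides, and that the two cases in each of Observations~\ref{obs:P(einO)-movement-0} and~\ref{obs:P(einO)-movement-01} exhaust the possible outcomes of step $j$ affecting the edge $e$ (clopen triangles involve a single edge of $\widehat G_{i+1}$, open triangles involve a pair). Both pieces are already in place, so the proof reduces to the two short case-splits above.
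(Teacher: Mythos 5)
Your proposal is correct and follows essentially the same route as the paper: read off the two cases from Observations~\ref{obs:P(einO)-movement-0} and~\ref{obs:P(einO)-movement-01}, use $\phi_{j-1}\le 1$, and bound $p/(1-p)\le 2p$ since $p(xv_j)$, $p(xv_j,yv_j)\le p_{i+1}=o(1)$. (The paper collapses the case split into a single displayed identity with what appears to be a typo, $\phi_j$ where $\phi_{j-1}$ is meant; your explicit case analysis is cleaner.)
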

\begin{proof}
If $ev_j$ forms a clopen triangle with $xv_j$ then by Observation~\ref{obs:P(einO)-movement-0}
\[
I_e= \widehat{\PP}\big( e \in \widehat{O}_{i+1}\, |\, \cF_j \big)\left(\frac{1-\1(xv_j \in \widehat{G}_{i+1} )}{1-p(xv_j)} -1\right)\, .
\]
The bounds on $I_e$ now follow, where for the upper bound we note that $p(x v_j)=o(1)$ by~\eqref{eq:def-p(e)}.  The second statement follows in an identical way from Observation~\ref{obs:P(einO)-movement-01} and~\eqref{eq:def-p(e,f)}.
\end{proof}

We now make two observations that will allow us to bound the squares of the increments in Freedman's inequality.

\begin{lemma}\label{obs:max-xi} For all $j$, we have $ |\xi_j| \leq  5n^{4\beta}$. 
\end{lemma}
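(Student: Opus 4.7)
The plan is to expand $\xi_j=\sum_{e\in C\cap O_i} I_e$ and observe via Observation~\ref{obs:P(einO)-movement} that only those $e=xy\in C\cap O_i$ which together with $v_j$ span either a \emph{clopen} triangle (exactly one of $xv_j,yv_j$ lies in $O_i$, the other in $G'_{\leq i}$) or an \emph{open} triangle ($xv_j,yv_j\in O_i$) produce a nonzero $I_e$. I will set $a=|N^\circ_i(v_j)\cap I|$ and $b=|N'_{\leq i}(v_j)\cap I|$ and use two crucial inputs throughout: since $v_j\notin L$, we have $b\leq n^{2\beta}$; and the degree-cap built into $\widehat G_{i+1}$ forces $|N_{\widehat G_{i+1}}(v_j)\cap I|\leq n^{2\beta}$ deterministically, which in particular yields $\sum_{x\in N^\circ_i(v_j)\cap I}p(xv_j)=\widehat\E|N_{\widehat G_{i+1}}(v_j)\cap I|\leq n^{2\beta}$.

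For the upper bound on $\xi_j$ I will split by triangle type. For a clopen contribution with $xv_j\in O_i,yv_j\in G'_{\leq i}$, \eqref{eq:type1-increment} gives $I_e\leq 2p(xv_j)$; as $y$ ranges over at most $b$ vertices in $N'_{\leq i}(v_j)\cap I$, this part sums to at most $2b\sum_{x}p(xv_j)\leq 2n^{4\beta}$, and the symmetric clopen case contributes the same. For open triangles I will invoke \eqref{eq:type2-increment} together with the tighter estimate $p(xv_j,yv_j)\leq n^{4\beta}/a^2$ coming from \eqref{eq:def-p(e,f)} (applied with $v_j$ as the shared non-$I$ endpoint); since the pairs $\{x,y\}\subset N^\circ_i(v_j)\cap I$ number at most $\binom{a}{2}$, this sums to at most $n^{4\beta}$. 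In total, $\xi_j\leq 5n^{4\beta}$.

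For the matching lower bound I will use the negative sides of \eqref{eq:type1-increment} and \eqref{eq:type2-increment}. A clopen contribution of the first type is bounded below by $-\1(xv_j\in\widehat G_{i+1})$: at most $n^{2\beta}$ vertices $x$ satisfy this (by the cap), each paired with at most $b\leq n^{2\beta}$ admissible $y$'s, giving $\geq -n^{4\beta}$, and the symmetric case is identical. For open triangles, the term $-\1(xv_j,yv_j\in\widehat G_{i+1})$ is summed over pairs $\{x,y\}$ both in the $n^{2\beta}$-capped set $N_{\widehat G_{i+1}}(v_j)\cap I$, yielding at least $-\binom{n^{2\beta}}{2}\geq -n^{4\beta}/2$. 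Hence $\xi_j\geq -3n^{4\beta}$, and combining with the upper bound gives $|\xi_j|\leq 5n^{4\beta}$. The main (mild) obstacle is the bookkeeping needed to pair each factor of $p(\cdot)$ (or its $1/a^2$ capped counterpart) with the correct count of contributing edges; once the sum is organized according to which endpoint of $e$ sits in $I$ and which role $v_j$ plays, the inequality collapses to the short budget calculation above.
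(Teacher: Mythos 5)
Your decomposition into clopen and open contributions, using the increment bounds from Observation~\ref{obs:P(einO)-movement} together with the degree cap built into $\widehat\PP$, is the same strategy as the paper's proof. The minor reorganisation (bounding $\sum_x p(xv_j)=\widehat\E\,|N_{\widehat G_{i+1}}(v_j)\cap I|\leq n^{2\beta}$ rather than using $\max_x p(xv_j)\leq n^{2\beta}/a$ and multiplying by $a$) is equivalent, and the ``symmetric clopen case'' is a benign over-count, since each edge $e=xy$ spanning a clopen triangle with $v_j$ admits a canonical labelling with $x_ev_j\in O_i$.

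There is, however, a genuine gap in one of your two stated ``crucial inputs''. The claim that $v_j\notin L$ implies $b=|N'_{\leq i}(v_j)\cap I|\leq n^{2\beta}$ is not justified: that implication would require the event $\cL_i(I,L)$, which is \emph{not} among the hypotheses of Lemma~\ref{lem:martinglae-application} (and hence not available in the proof of Lemma~\ref{obs:max-xi}); only $\cA_i$ is assumed, and $\cA_i$ says nothing about degrees into $I$. The paper instead observes that the cap in the definition of $\widehat\PP$ is on the degree of $v_j$ into $I$ in the \emph{combined} graph $G'_{\leq i}\cup\widehat G_{i+1}$ — this is what makes $|\widehat N_{\leq i+1}(v_j)\cap I|\leq n^{2\beta}$ hold deterministically — and so if $b>n^{2\beta}$ then no pair from $v_j$ into $I$ is ever placed in $\widehat G_{i+1}$, i.e.\ $p(xv_j)=0$ for all $x\in I$ and the entire clopen contribution (both upper and lower) vanishes. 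Your write-up reads the cap as applying only to $N_{\widehat G_{i+1}}(v_j)\cap I$, which is too weak to dispose of the case $b>n^{2\beta}$ without the unproven bound on $b$. Once you read the cap as applying to the combined neighbourhood and treat $b>n^{2\beta}$ as the trivial case, your estimates go through and yield the stated bound $|\xi_j|\leq 5n^{4\beta}$.
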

\begin{proof}
Write $\xi_j = \xi_j^+ -  \xi_j^-$, where $\xi_j^+ = \max\{\xi_j, 0 \}$ and $\xi_j^- = \max\{-\xi_j, 0\}$. We prove the lemma in two parts. For each $e \in C \cap O_i$ for which either $ev_j$ forms an open or clopen triangle, write $e = x_ey_e$ and we will assume that $x_ev_j \in O_i$, without loss.

Using Observation~\ref{obs:P(einO)-movement}, we have 
\[ \xi_j^+ \leq  \sum_e {2p(x_ev_j)} \1\big( ev_j \text{ clopen}  \big)  + \sum_e {2p(x_ev_j, y_ev_j)}\1\big( ev_j \text{ open}  \big), \]  where each sum is over all $e \in C \cap O_i$.  Since $v_j$ is in at most $|N'_{\leq i}(v_j)\cap I ||N_{ i}^{\circ}(v_j)\cap I |$ clopen triangles with edges in $I^{(2)}$ and in at most $|N_{i}^{\circ}(v_j)\cap I |^2$ open triangles, we have that the above is at most
\[ \max_e\big\{  2p(x_ev_j)n^{2\beta}|N_{i}^{\circ}(v_j) \cap I | + 2p(x_ev_j, y_ev_j) |N_{i}^{\circ}(v_j) \cap I |^2 \big\} \leq 4n^{4\beta}\, ,  \]
where the max is over $e\in C \cap O_i$. For the inequality we used that $p(x_e v_j)=0$ if $|N'_{\leq i}(v_j)\cap I|>n^{2\beta}$ by definition of the measure $\widehat \P=\widehat{\PP}_{I,L}$. The final inequality follows from the bounds at \eqref{eq:def-p(e)} and \eqref{eq:def-p(e,f)}.

To bound $\xi_j^{-}$ we let $\widehat{N}_{\leq i+1}(v)$ denote the neighbourhood of $v$ in the graph $G'_{\leq i}\cup \widehat G_{i+1}$ and notice that
    \[ \xi_j^- \leq |\widehat{N}_{\leq i+1}(v_j) \cap I|^2 \leq n^{4\beta} ,\] 
where the second inequality holds by the definition of the measure $\widehat{\PP}$. \end{proof}

We now bound the sum of the $|\xi_i|$ using our bounds on $X_i(e)$ and $Y_i(e)$. 

\begin{lemma}\label{obs:sum|x_i|} On the event $\cA_i$, we have  
\begin{equation}\label{eq:sum|xi|} \sum_{j=1}^n \widehat{\EE}\big[ \, |\xi_j| \, | \, \cF_{j-1} \big]  \leq |C \cap O_i|.\end{equation}
\end{lemma}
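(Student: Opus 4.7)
The plan is to apply the triangle inequality $|\xi_j| \leq \sum_{e \in C \cap O_i} |I_e|$, interchange the order of summation over $j$ and $e$, and then convert the per-edge total into a quantity already controlled on $\cA_i$ via Observation~\ref{lem:X(e)-Y(e)-good}. Since $I_e=0$ for indices $j$ with $v_j \in L\cup I$ (no such $v_j$ is exposed under $\widehat\P$), the sum $\sum_{j=1}^n$ is effectively over $v_j \in [n]\setminus(L\cup I)$.

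Fix $e = xy \in C \cap O_i$. The increment $I_e$ is nonzero only when $v_j$ forms a clopen or open triangle with $e$. In the clopen case with $xv_j \in O_i$, Observation~\ref{obs:P(einO)-movement-0} exhibits $I_e$ as a two-valued random variable: it equals $-\widehat{\PP}(e \in \widehat{O}_{i+1} \mid \cF_{j-1})$ with probability $p(xv_j)$ and equals $\widehat{\PP}(e \in \widehat{O}_{i+1} \mid \cF_{j-1})\cdot p(xv_j)/(1-p(xv_j))$ with probability $1-p(xv_j)$. A direct computation (which is also consistent with $\widehat{\EE}[I_e\mid\cF_{j-1}]=0$) gives
\[
\widehat{\EE}\big[|I_e|\,\big|\,\cF_{j-1}\big] \,=\, 2\,p(xv_j)\,\widehat{\PP}\big(e \in \widehat{O}_{i+1}\,\big|\,\cF_{j-1}\big) \,\leq\, 2p_{i+1},
\]
using the bound $p(xv_j) \leq p_{i+1}$ from~\eqref{eq:def-p(e)}. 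In the open case, Observation~\ref{obs:P(einO)-movement-01} and~\eqref{eq:def-p(e,f)} give the analogous bound $\widehat{\EE}[|I_e|\mid\cF_{j-1}] \leq 2p_{i+1}^2$.

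Next I would sum over $j$ for this fixed $e$. Each clopen triangle containing $e$ corresponds bijectively to an edge of $\cY_i(e)\setminus\{e\}$ (the third vertex of the triangle is determined by whichever of $\{xv,yv\}$ lies in $O_i$), so the number of $v_j$ forming a clopen triangle with $e$ is at most $Y_i(e)$. Likewise, each open triangle containing $e$ corresponds bijectively to a pair of $\cX_i(e)$, so the number of $v_j$ forming an open triangle with $e$ is exactly $X_i(e)$. Therefore, on $\cA_i$,
\[
\sum_{j=1}^n \widehat{\EE}\big[|I_e|\,\big|\,\cF_{j-1}\big] \,\leq\, 2\big(p_{i+1} Y_i(e) + p_{i+1}^2 X_i(e)\big) \,=\, O\big(\gamma\sqrt{\log n}\big) \,=\, o(1),
\]
where the penultimate equality is Observation~\ref{lem:X(e)-Y(e)-good}.

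Interchanging the order of summation then finishes the proof:
\[
\sum_{j=1}^n \widehat{\EE}\big[|\xi_j|\,\big|\,\cF_{j-1}\big] \,\leq\, \sum_{e \in C\cap O_i} \sum_{j=1}^n \widehat{\EE}\big[|I_e|\,\big|\,\cF_{j-1}\big] \,\leq\, O\big(\gamma\sqrt{\log n}\big)\cdot |C\cap O_i| \,\leq\, |C\cap O_i|,
\]
since $\gamma = (\log n)^{-10}$. The argument is essentially bookkeeping built directly on the per-edge structure of the increments and on the bounds on $X_i(e), Y_i(e)$ already supplied by the event $\cA_i$, so I do not anticipate a genuine obstacle; the only care needed is the careful two-point computation of $\widehat{\EE}[|I_e|\mid\cF_{j-1}]$ (rather than the cruder pointwise bounds in Observation~\ref{obs:P(einO)-movement}), which recovers the factor of $p_{i+1}$ necessary to match the scale of Observation~\ref{lem:X(e)-Y(e)-good}.
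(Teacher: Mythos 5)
Your proposal is correct and follows essentially the same route as the paper: both arguments reduce the sum to $\sum_{e\in C\cap O_i} 2\big(p_{i+1}Y_i(e)+p_{i+1}^2X_i(e)\big)$ and conclude via Observation~\ref{lem:X(e)-Y(e)-good} that this is $O(\gamma\sqrt{\log n})\,|C\cap O_i|$. The only (minor) difference is in how the factor of $p$ is recovered per edge: the paper writes $|\xi_j|=\xi_j^+ + \xi_j^-$ and uses the martingale identity $\widehat{\EE}[\xi_j^+\mid\cF_{j-1}]=\widehat{\EE}[\xi_j^-\mid\cF_{j-1}]$ together with the crude pointwise bounds $I_e\le 2p(\cdot)$ from Observation~\ref{obs:P(einO)-movement}, whereas you compute $\widehat{\EE}[|I_e|\mid\cF_{j-1}]$ exactly from the two-point distribution of $I_e$ given by Observations~\ref{obs:P(einO)-movement-0} and~\ref{obs:P(einO)-movement-01}—both are valid and yield the same estimate up to a harmless constant.
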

\begin{proof} As above, write $\xi = \xi^+ - \xi^-$. It is enough to show $\widehat{\EE}[ \xi^+_j | \cF_{j-1} ] \leq  |C \cap O_i|/2 $, since $|\xi| = \xi^+ + \xi^-$ and, by the martingale property, we have 
$\widehat{\EE}[ \xi_j^+ | \cF_{j-1}] = \widehat{\EE}[ \xi_j^- | \cF_{j-1}]$. 

We simply bound the expectation by its maximum possible value.   Observation~\ref{obs:P(einO)-movement} along with the inequalities at \eqref{eq:def-p(e)} and \eqref{eq:def-p(e,f)} shows $$\sum_{j=1}^n\, \widehat{\EE}[ \xi_j^{+} | \cF_{j-1}] \leq {2p_{i+1}}\sum_{j}\sum_{e\in C\cap O_i} \1( ev_j \text{ clopen} ) +  {2p_{i+1}^2}\sum_{j}\sum_{e\in C\cap O_i} \1( ev_j \text{ open} )\,.$$ 
This is at most 
\[ {2\sum_{e\in C\cap O_i}} p_{i+1}^2X_i(e) + p_{i+1}Y_i(e)\, \leq O( |C \cap O_i| \cdot \gamma \sqrt{\log n}) \leq |C \cap O_i|/2  \]
where the first inequality is via Observation \ref{lem:X(e)-Y(e)-good} using that $\cA_i$ holds.
\end{proof}

We now state Freedman's inequality \cite{freedman1975tail}.

\begin{theorem}\label{thm:Freedman}
	Let $(M_j)_{j=1}^{\tau}$  be a martingale with respect to filtration $(\mathcal{F}_j)_{j}$ with increments $(\xi_j)_{j=1}^{\tau}$ which satisfy $|\xi_j| \leq R$ and $\E[ \xi_j^2 \,|\, \cF_{j-1}] \leq \sigma_j^2$. Then for all $t \geq 0$ we have 
	\[
	\P(| M_{\tau}-M_0| \geq t ) \leq 2\exp\left(-\frac{t^2}{2 (b + Rt)} \right)\, ,
	\] where $b = \sum_{j} \sigma_j^2$.
\end{theorem}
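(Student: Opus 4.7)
I would prove Theorem~\ref{thm:Freedman} by the standard Chernoff-style exponential-supermartingale argument adapted to martingales with bounded increments and controlled conditional variance. Concretely, for each $\lambda > 0$ I construct a one-parameter family of exponential supermartingales, apply Markov's inequality at the terminal time $\tau$, and optimize over $\lambda$.

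The first step is to bound the conditional moment generating function of each increment. Taylor-expanding $e^{\lambda \xi_j}$ and using the pointwise bound $|\xi_j|^k \leq R^{k-2}\xi_j^2$ (valid because $|\xi_j| \leq R$) yields the deterministic inequality $e^{\lambda \xi_j} \leq 1 + \lambda \xi_j + \xi_j^2 g(\lambda)$ with $g(\lambda) := (e^{\lambda R} - 1 - \lambda R)/R^2$. Taking conditional expectations, the martingale property kills the linear term and the variance hypothesis bounds the quadratic term, giving $\E[e^{\lambda \xi_j} \mid \cF_{j-1}] \leq \exp(\sigma_j^2 g(\lambda))$. Consequently the process $W_j := \exp(\lambda(M_j - M_0) - g(\lambda) V_j)$, where $V_j := \sum_{i \leq j}\sigma_i^2$, is a nonnegative supermartingale with $W_0 = 1$. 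Since $V_\tau = b$, Markov's inequality then gives $\P(M_\tau - M_0 \geq t) \leq \exp(g(\lambda) b - \lambda t)$ for every $\lambda > 0$.

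Finally, I would optimize: the minimizer is $\lambda_\ast = R^{-1}\log(1 + Rt/b)$, which produces the Bennett-type bound $\exp(-(b/R^2) h(Rt/b))$ with $h(u) := (1+u)\log(1+u) - u$. The elementary calculus inequality $h(u) \geq u^2/(2(1+u))$ (valid for $u \geq 0$) converts this directly into the desired $\exp(-t^2/(2(b+Rt)))$. Running the same argument on $(-M_j)$ handles the lower tail, and a union bound over the two one-sided events gives the factor of $2$.

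\textbf{Main obstacle.} The only genuinely delicate step is the final Bennett-to-Freedman conversion, which depends on the sharp inequality $h(u) \geq u^2/(2(1+u))$; with weaker elementary bounds one lands on $b + cRt$ in the denominator for some constant $c > 1$, not the sharp $b + Rt$ stated. A cleaner alternative that sidesteps this is to directly establish the MGF bound $\E[e^{\lambda \xi_j} \mid \cF_{j-1}] \leq \exp\bigl(\sigma_j^2 \lambda^2/(2(1 - \lambda R))\bigr)$ for $\lambda \in [0, 1/R)$ and then take $\lambda = t/(b + Rt)$, producing the Freedman form in a single clean optimization.
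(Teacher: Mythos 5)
Your proposal is correct, but there is nothing in the paper to compare it against: the paper imports Theorem~\ref{thm:Freedman} as a black box, citing Freedman~\cite{freedman1975tail}, and never proves it. So the task is really just to check your argument on its own terms.

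The exponential-supermartingale scheme you describe is the standard route and it goes through. The MGF bound $\E[e^{\lambda\xi_j}\mid\cF_{j-1}]\leq \exp(\sigma_j^2 g(\lambda))$ with $g(\lambda)=(e^{\lambda R}-1-\lambda R)/R^2$ follows exactly as you say from $|\xi_j|^k\leq R^{k-2}\xi_j^2$ and the martingale property; since the $\sigma_j^2$ are deterministic, $W_j=\exp(\lambda(M_j-M_0)-g(\lambda)\sum_{i\leq j}\sigma_i^2)$ is indeed a nonnegative supermartingale with $W_0=1$, and Markov at time $\tau$ gives the Bennett bound $\exp(-(b/R^2)h(Rt/b))$ with $h(u)=(1+u)\log(1+u)-u$. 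Your stated worry about the conversion $h(u)\geq u^2/(2(1+u))$ is unfounded: set $f(u)=h(u)-u^2/(2(1+u))$; then $f(0)=f'(0)=0$ and a short computation gives $f''(u)=\frac{1}{1+u}-\frac{1}{(1+u)^3}=\frac{u(2+u)}{(1+u)^3}\geq 0$, so $f\geq 0$ on $[0,\infty)$ and the Freedman denominator $b+Rt$ comes out exactly, no lossy constant. Your alternative route via $g(\lambda)\leq \lambda^2/(2(1-\lambda R))$ (which holds since $2/(m+2)!\leq 1$ termwise) with $\lambda=t/(b+Rt)$ also gives the bound directly and is, if anything, cleaner. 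Applying the same argument to $-M$ and union-bounding gives the two-sided statement with the factor $2$. In short: a correct, complete, and standard proof of a result the paper only cites.
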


We are now in a position to prove Lemma~\ref{lem:martinglae-application} on the concentration of the capped process. 

\begin{proof}[Proof of Lemma~\ref{lem:martinglae-application}]
We use the martingale defined at \eqref{eq:def-martingale}. However this martingale does not quite determine $Z = |C \cap \widehat{O}_{i+1}|$. For this we need to take the ``regularization step'' into account, i.e. we need to account for the set $Q_{i+1}$ in~\eqref{label:hat-O-def}. So after exposing $\bigcup_i \cF_i$, we order the edges of $C \cap O_i$, $e(1),e(2),\ldots $ and in a $j$th additional step we determine if  $e(j) \in Q_{i+1}$. This defines $\leq |C \cap O_{i}|$ additional steps in the martingale after which $Z$ is determined. We let $\xi_{e(j)}$ denote the increments of these steps and note we have $|\xi_{e(j)}| \leq 1$ and (trivially) $\EE[|\xi_{e(j)}|^2 | \cF_{e(j-1)} ]\leq 1$.

Thus, using this and Lemma~\ref{obs:max-xi} tells us that {$|\xi_j| \leq 5n^{4\beta}$} for \emph{all} steps $j$. By Lemma~\ref{obs:max-xi} and Lemma~\ref{obs:sum|x_i|} we can control the sum of the variances 
\[ \sum \widehat{\EE}\big[ \xi_j^2 | \cF_{j-1} \big]   + \sum \EE\big[ \xi_{e(j)}^2 | \cF_{e(j-1)} \big]  \leq 5n^{4\beta} \cdot \sum \widehat{\EE}\big[ |\xi_j| | \cF_{j-1} \big] + |C \cap O_i| \leq {6n^{4\beta}}|C \cap O_i | .\]
We now apply Freedman's inequality, Theorem~\ref{thm:Freedman}, to obtain
\[\widehat{\PP}\big(  |C \cap \widehat{O}_{i+1}|  \leq  \widehat{\EE}\, |C \cap \widehat{O}_{i+1}| - t \big)  \leq 2\exp\left(-\frac{t^2}{{12 n^{4\beta}}(|C \cap O_i| + t)} \right),\]
We now set $t =  \eps s_{i+1}|C \cap O_{i}|$ and use Observation~\ref{obs:P-hatP-means} to see
\[ \widehat{\PP}\big(  |C \cap \widehat{O}_{i+1}|  \leq (1-\eps)  s_{i+1}|C \cap O_{i}|\big) \leq \exp\big( - \eps^2|C \cap O_i| {n^{-5\beta }} \big),\]
as desired. \end{proof}

\vspace{2mm}

\subsection{Proof of Lemmas~\ref{lem:single-step-B-failure} and \ref{lem:single-step-B-failure-0}}

We first prove Lemma~\ref{lem:single-step-B-failure} which gives us concentration for open edges for a general step $1\leq i \leq T$. We then sketch the proof Lemma~\ref{lem:single-step-B-failure-0}, which is the statement for the first step of the process $i = 0$.

\begin{proof}[Proof of Lemma~\ref{lem:single-step-B-failure}] 
For $0\leq i\leq T-1$, we use Lemma~\ref{lem:P-P-tilde-domination} to bound the quantity
\[ \PP_{i+1}\big( |C \cap O_{i+1}| < (1-\gamma^4)s_{i+1}|C \cap O_{i}| \wedge \cT_{i}(L,C)   \wedge \cL_{i}(I,L) \big) \] above by
\[\leq \widehat{\PP}\big(  |C \cap \widehat{O}_{i+1}| \leq (1-\gamma^4)s_{i+1}|C \cap O_{i}| + k^{1+8\beta} \big) \leq \widehat{\PP}\big(  |C \cap \widehat{O}_{i+1}| \leq (1-\gamma^5 )s_{i+1}|C \cap O_{i}| \big),\]
where the second inequality holds by the assumption $|C \cap O_{i}| \geq \theta_i\gamma^2 k^2\geq k^{1+\delta/2}$ (recall Observation~\ref{lem:pi-UB} and the fact that $\beta<\delta/2^7$). Now apply Lemma~\ref{lem:martinglae-application} to the right hand side to obtain
\[ \widehat{\PP}\big(  |C \cap \widehat{O}_{i+1}| \leq (1-\gamma^5 )s_{i+1}|C \cap O_{i+1}| \big) \leq \exp(- |C \cap O_i| {n^{-5\beta+o(1)}} ) \leq \exp(- k^{1+\delta/4}),\]
where we again used $\beta<\delta/2^7$, and $|C \cap O_i|  \geq k^{1+\delta/2}$, when $i \leq T$.
\end{proof}

In the case $i=0$, that is for the seed graph, there is a very slight additional complication in that we are working with a blow up. 

\begin{proof}[Proof of Lemma~\ref{lem:single-step-B-failure-0}]
From Section~\ref{subsec:seed}, recall that $G_{\ast}' \sim G(n/r,p_0)$. We define 
\[ O_\ast = \{e \in K_{n/r}\setminus G_{\ast}' : e \text{ does not form a triangle with } G_\ast' \} \quad \text{ so that } \quad \pi(O_0) = \pi(O_0') = O_\ast,\] where $O_0,O_0'$ were defined at \eqref{eq:def-O'_0}. 
We first prove \eqref{eq:single-step-B-failure-01} in a very similar way to the above. Indeed, we bound 
\[ \PP\big( |\pi(C) \cap O_\ast| < (1-\gamma^4)r^2 \theta_0 |\pi(C)| \wedge \cT_{0}(L,C)   \wedge \cL_{0}(I,L) \big), \] above by
\[ \leq \widehat{\PP}\big(  |\pi(C) \cap \widehat{O}_\ast| \leq (1-\gamma^5 )r^2 \theta_0 |\pi(C)| \big)  \leq \exp\left(-  |\pi(C)| {n^{-5\beta + o(1)}} \right) \leq \exp(-k^{3/2}), \] where the first inequality holds by Lemma~\ref{lem:P-P-tilde-domination} and the second holds by Lemma~\ref{lem:martinglae-application} and the final inequality uses that $|\pi(C)|\geq r^{-2}|C| \geq r^{-2}\g^2 k^2$. This proves \eqref{eq:single-step-B-failure-01}. 

To prove \eqref{eq:single-step-B-failure-0}, we repeat this argument but we now weight the edges based on $C$. In particular, set $w(e) = |C[V_x,V_y]|$, for each $e \in \pi(I)^{(2)}$ and note that $0 \leq w(e)\leq r^2$. 

We now follow the proof of Lemma~\ref{lem:single-step-B-failure} while allowing for (bounded) weights on the edges: indeed we are looking to prove that the random variable 
\[ Z = |C \cap O_0'| = \sum_{e \in \pi(I) } w(e)\1( e \in O_\ast ), \] is concentrated. Here we proceed as above 
by passing to the capped measure 
\[ \widehat{\PP}\big(  |C \cap \widehat{O}_{0}| \leq (1-\gamma^4)r^2\theta_0|C| + k^{1+8\beta} \big) \leq \widehat{\PP}\big(  |C \cap \widehat{O}_{0}| \leq (1-\gamma^5)r^2\theta_0|C| \big), \]
and then applying the same analysis of the martingale, as in the proof of Lemma~\ref{lem:martinglae-application}, inside the graph $G_{\ast}'$. The only difference is in the analysis of the increments where we have
\[ \xi_j = \sum_{e \in \pi(I)} w(e) \bigg( \widehat{\PP}\big( e \in \widehat{O}_{\ast}\, |\, \cF_j \big) - \widehat{\PP}\big( e \in \widehat{O}_{\ast} \, | \, \cF_{j-1} \big) \bigg), \]
where $\widehat{O}_{\ast}$ is $O_{\ast}$ relative to the capped measure. We can now bound $|\xi_j| \leq r^2n^{2\beta} = n^{2\beta+o(1)}$ and can obtain concentration about the mean for the number of edges in $|C \cap O_0'|$ as in the proof of Lemma~\ref{lem:martinglae-application}.

There is one more step to pass from $O_0'$ to $O_0$ where we select a random edge between each open pair of parts $V_{i},V_{j}$. In particular, we condition on $O_0'$ which satisfies $|C \cap O_0'| \geq (1-\gamma^5) r^2\theta_0|C|$ and express {$$|C \cap O_0| = \sum_{e \in \pi(I) } \zeta(e)\1( e \in O_\ast ), $$
where $\zeta(e)$ are mutually independent Bernoulli random variables with $\E\, \zeta(e) = w(e)/r^2$}.  In particular, $\E[|C \cap O_0| \,|\,O_0'] = r^{-2} |C \cap O_0'|\,.$  By the Chernoff inequality, we have \begin{equation*}
    \P\Big( \big| |C \cap O_0| -  r^{-2} |C \cap O_0'| \big| \geq  \gamma^5\theta_0 |C|   \,\Big|\, O_0'\Big) \leq \exp\left( - n^{-o(1)}\theta_0 |C| \right) \leq \exp(-k^{3/2}),
\end{equation*}
where we used $\theta_0|C|\geq r^{-2}\gamma^2 k^2$. Chaining these two concentration inequalities together, completes the proof.
\end{proof}

\section{The Probability a \texorpdfstring{$k$}{k}-set is independent}\label{sec:prob-k-set-independent}
In this section, we fix $I \in [n]^{(k)}$ and bound the probability that $I$ is independent in the final graph $G_{\leq T}$, on a quasi-randomness event $\cA'$. Here we define $\cA'$ to be the event 
\begin{equation}\label{eq:def-A'} \cA' = \cA \wedge \cC \wedge \cF,\end{equation}
where $\cA, \cC,\cF$ are the events that we showed hold with high probability in Sections~\ref{sec:density-of-process}, \ref{sec:moderate-degrees} and \ref{sec:open-edges-in-I-redux}, respectively. The following is the main objective of this section.

\begin{theorem}\label{thm:prob-ind-final}
If $I \in [n]^{(k)}$ and $|\pi(I)|=\ell$, then 
$$\PP(I\in \cI(G_{\leq T}) \wedge \cA' )\leq \hspace{-0.5mm} \exp \hspace{-1mm} \left[- p_1\binom{k}{2}-p_0\binom{\ell}{2}+\frac{1}{2}(k-\ell)p_1\min\{k-\ell, p_0n\} +10\beta k \log n\right] \, .$$ 
\end{theorem}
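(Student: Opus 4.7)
My plan is to bound $\PP(I\in\cI(G_{\leq T})\wedge\cA')$ by exposing the process round by round, using the quasi-randomness provided by $\cA'=\cA\wedge\cC\wedge\cF$ to estimate each conditional factor. Since the core $C_i(I)$ depends on $L(I)$ and hence on the full trajectory $G_{\leq T}'$, I would first derandomize as in the proof of Lemma~\ref{lem:cB-failure}: union bound over candidate sets $L\subset[n]$ with $|L|\le 2kn^{-\beta}$ (valid on $\cC$ by~\eqref{eq:use-of-C}) and candidate cores $C\in\Lambda(I)$, working on the events $\cT_i(L,C)\wedge\cL_i(I,L)$ so that each round's bound depends only on the past. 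By Observation~\ref{obs:bound-on-Lambda}, the resulting union-bound cost is at most $\exp(O(kn^{2\beta}))$, which fits inside the $10\beta k\log n$ slack since $\beta<\delta/2^{7}$.

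For the seed round, $\{G_0\cap I^{(2)}=\emptyset\}$ reduces (by the blow-up structure) to $\{G_\ast\cap \pi(I)^{(2)}=\emptyset\}$, and on $\cB_0^\pi(I)\subset\cF$ one has $|\pi(C_0(I)\cap O_0)|\ge (1-\eps_0)r^2\theta_0|\pi(C_0(I))|$. Since edges of $G_\ast'\sim G(n/r,p_0)$ across distinct elements of $\pi(I)$ are i.i.d.\ Bernoulli$(p_0)$, this contributes a factor $\exp(-p_0|\pi(C_0(I))|(1+o(1)))$, yielding the $-p_0\binom{\ell}{2}$ term up to the deficit $\binom{\ell}{2}-|\pi(C_0(I))|$. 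For each later round $i+1$, given the past satisfying $\cA_i$, edges of $G_{i+1}'$ are i.i.d.\ Bernoulli$(p_{i+1})$ over $O_i$, so on $\cB_i(I)$ with $|C_i(I)|\ge\gamma^2 k^2$,
\[
\PP\bigl(G_{i+1}'\cap C_i(I)\cap O_i=\emptyset\bigm|\text{past}\bigr)=(1-p_{i+1})^{|C_i(I)\cap O_i|}\le \exp\bigl(-p_{i+1}(1-\eps_i)\theta_i|C_i(I)|\bigr).
\]
To upgrade this to a bound on $\PP(G_{i+1}\cap C_i(I)=\emptyset)$ I would estimate the cleaning correction via Observation~\ref{lem:X(e)-Y(e)-good}: $\PP(e\in G_{i+1}'\setminus G_{i+1}\mid e\in O_i)\le p_{i+1}(X_i(e)p_{i+1}^2+Y_i(e)p_{i+1})=O(p_{i+1}\gamma\sqrt{\log n})$, summing to a negligible correction over the $T$ rounds. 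Chaining with $p_{i+1}\theta_i=\gamma/\sqrt n$ and $T=\alpha_1\gamma^{-1}\sqrt{\log n}$ gives an overall post-seed contribution of $\exp(-p_1(1+o(1))\min_i|C_i(I)|)$.

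The main obstacle will be estimating the core deficit $\binom{k}{2}-\min_i|C_i(I)|$ in order to produce the exact correction $\tfrac{1}{2}(k-\ell)p_1\min\{k-\ell,p_0n\}$. The deficit consists of pairs in $I^{(2)}$ covered by some $N'_{\leq T}(x)^{(2)}$ with $x\in L(I)$. Paralleling the analysis of $J_{k,\ell}$ in Section~\ref{sec:heuristic}, the key combinatorial observation is that since $G_0$ is a blow-up, $k-\ell$ equals the number of ``extra'' vertices of $I$ beyond one per part, and this controls the contribution to $|N_{G_0}(x)\cap I|$ from parts $V_j$ with $|V_j\cap I|\ge 2$; meanwhile, the seed-neighborhood size $|N_{G_0}(x)|$ is at most $(1+o(1))p_0n$ by the degree bounds in $\cA$, giving the $\min\{k-\ell,p_0n\}$ bound. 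When $\cD_i(I)$ triggers (i.e.\ $|C_i(I)|<\gamma^2k^2$), $I$ is nearly contained in a single seed-neighborhood, forcing $k-\ell\gtrsim p_0n$ and making the correction bound tight. Multiplying the per-vertex deficit by $p_1$ and summing over $x\in L(I)$ (with $|L(I)|\le 2kn^{-\beta}$ on $\cC$), then absorbing the union-bound cost, cleaning corrections, and $\eps_i$ slacks into the $10\beta k\log n$ error term, yields the claimed bound.
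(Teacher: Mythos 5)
The central step of your plan — union bounding over candidate cores $C\in\Lambda(I)$ — cannot work here, and the claim that its cost ``fits inside the $10\beta k\log n$ slack'' is false. By Observation~\ref{obs:bound-on-Lambda}, $|\Lambda(I)|\leq 2^{kn^{2\beta}}$, so the union-bound cost is $\exp\big(\Theta(kn^{2\beta})\big)$. Since $\beta>0$ is a fixed constant, $n^{2\beta}$ grows polynomially in $n$, which overwhelms $\log n$; hence $kn^{2\beta}\gg 10\beta k\log n$ and the cost dwarfs the entire exponent $\Theta(k\log n)$ you are trying to exhibit. This union bound is affordable in Section~\ref{sec:open-edges-in-I-redux} only because the per-core failure probabilities there are $\exp(-k^{1+\delta/4})$, and $kn^{2\beta}\ll k^{1+\delta/4}$; it is not affordable against a target of size $\exp(-\Theta(k\log n))$. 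The paper handles this by \emph{not} enumerating cores at all. Instead it union bounds (Lemma~\ref{obs:ind-set-worst-case}) over the much smaller data $(L,L',H_0,\dots,H_T)$, where $H_i=G'_i[L'(I),I]$ are the edges from the \emph{very}-high-degree set $L'$ into $I$. This works for two reasons: (a) Lemma~\ref{lem:C_L-vs-L'} and Observation~\ref{obs:betai-bound} show the core sizes $|C_{i,L}(I)|$ are determined, up to $o(1)$ relative error, by the $H_j$'s alone; and (b) the union-bound cost $n^{4\beta R+o(k)}$ (with $R=f(H_0,\dots,H_T)$) is self-balancing, because each witnessed $H_i$-edge already carries a factor $p_{i+1}\leq n^{-\delta}$ or $p_0\approx n^{-1/2+o(1)}$ in the probability, which dominates $n^{4\beta}$ since $\beta<\delta/2^7$. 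This ``pay for your own entropy'' structure is the key idea of Section~\ref{sec:prob-k-set-independent} and is absent from your plan.

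There are two further issues worth flagging. First, your treatment of the cleaning step is unnecessary and not obviously tight: you propose a per-edge estimate $\PP(e\in G'_{i+1}\setminus G_{i+1})=O(p_{i+1}\gamma\sqrt{\log n})$ and hope to ``sum'' it, but the events $\{e\text{ cleaned}\}$ for distinct $e\in C_i(I)\cap O_i\cap G'_{i+1}$ are strongly correlated (all determined by the same $G'_{i+1}$ and by the ambiguous choice of maximal triangle-free subgraph), so the conjunction that \emph{all} surviving edges are cleaned is not controlled by a product of per-edge bounds without further work. The paper sidesteps this entirely with the deterministic Claim in Lemma~\ref{lem:prob-ind}: reveal $G'_{i+1}$ outside $I^{(2)}$ first to form $O_I$; then any edge of $G'_{i+1}\cap O_I$ that is removed in the cleaning step must (by maximality) form a triangle inside $I$, which already destroys independence — so cleaning contributes zero cost. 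Second, replacing the time-weighted sum $\sum_i\nu_i$ by a crude ``$\min_i|C_i(I)|$'' loses exactly the cancellation against the $-\sum_i e(H_i)\log(\theta_i\sqrt n)$ terms (Corollary~\ref{cor:prob-ind-bound-2} and the quadratic inequality~\eqref{eq:numerical-inequality}) that reduces the core-deficit correction to precisely the $\tfrac12(k-\ell)p_1\min\{k-\ell,p_0n\}$ term via $S(H_0)$ and Lemma~\ref{lem:new-opt}; with the cruder estimate the correction would swallow the main term $-p_1\binom k2$.
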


The reader should understand this bound in the context of our discussion in the heuristics section, Section~\ref{sec:heuristic}. In particular, Theorem~\ref{thm:prob-ind-final} parallels our bound at \eqref{eq:Ikell}. The third term in the exponent quantifies the loss from edges which are closed in the seed step of the process by a vertex of large degree to $I$, as we discussed at \eqref{eq:funky-Is}. The fourth term in the exponent should be seen as a negligible error term;  recall $\beta < \delta/2^7$.

Henceforth we fix $I \in [n]^{(k)}$, let  $\ell = |\pi(I)|$, where $k/r\leq \ell\leq k$, and turn to the proof of Theorem~\ref{thm:prob-ind-final}. Here, the main challenge is to understand how the cores $C_i(I)$ evolve over the course of the process. 
To this end we use an analogue of $L(I)$ for vertices with very high degree to $I$. Define $L'(I) \subset L(I)$ by 
\[ L'(I):= \big\{  x \in [n] :\, |N'_{ \leq T }(x)\cap I |\geq k n^{-2\beta}\big\}\, . \]
 We also define 
\[ H_i(I) =G'_{i}[L'(I),I], \] 
for all $i\in [0,T]$. We will see that the sets $H_i(I)$ determine the size of the core $C_i(I)$ (defined at~\eqref{eq:core-def}), up to lower order terms. While we are not able to union bound over all choices of core, as we did in Section~\ref{sec:open-edges-in-I-redux}, we remove much of the randomness by union bounding over the choice of $H_0(I),H_1(I),\ldots H_T(I)$ and, importantly, the choice of $L'(I),L(I)$. To this end, let us define the event 
\[ \cH(L,L',H_0,\ldots ,H_{T}) = \big\{ L(I) = L, L'(I)=L', \text{ and } H_i(I) = H_i \text{ for all } i   \big\}. \]

We record the following ``union bound'' over all possible choices of $L,L'$, and the $H_i$. Since $H_0$ is determined by the first step of our process we treat it differently to $H_1, \ldots, H_T$ in what follows. We let
\[
f(H_0,\ldots, H_T)=e(\pi(H_0))+e(H_1)+\ldots+e(H_T).
\]

\begin{lemma}\label{obs:ind-set-worst-case} 
If $I \in [n]^{(k)}$, then 
\begin{equation}\label{eq:fix-H_i}\PP\big(I\in \cI(G_{\leq T} ) \wedge \cA' \big)  \leq  \max_{L,L', R}\, n^{4\beta R +o(k)}\max_{H_i :\,  f((H_i)_i) = R } \PP\big( I \in \cI(G_{\leq T}) \wedge \cH(L,L', (H_i)_i) \, \wedge \cA' \, \big)  .\end{equation}
where the first maximum is over all sets $L'\subset L \subset [n]$, with $|L| \leq 2kn^{-\beta}$, $|L'|\leq 2n^{3\beta}$ and integers $0\leq R \leq kn^{\beta}$, and the second maximum is over all tuples $(H_0,\ldots,H_T)$ of mutually edge-disjoint graphs for which $f(H_0,\ldots,H_T) = R$.
\end{lemma}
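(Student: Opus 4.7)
The plan is a union bound over all possible values of the ``certificate'' $(L, L', R, (H_i)_{i=0}^T)$ corresponding to $(L(I), L'(I), f((H_i(I))_i), (H_i(I))_{i=0}^T)$. Step one is to verify that on $\cA'\subseteq \cC$ the size bounds $|L(I)|\leq 2kn^{-\beta}$, $|L'(I)|\leq 2n^{3\beta}$, and $R\leq n^\beta k$ all hold. Each follows by the same double counting: from $e(G'_{\leq T}[L(I),I])<n^\beta k$ (the event $\cC$) and the fact that each $x\in L(I)$ has $|N'_{\leq T}(x)\cap I|\geq n^{2\beta}$, we get $|L(I)|\leq n^\beta k/n^{2\beta}=kn^{-\beta}$. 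Since $L'(I)\subseteq L(I)$ (the defining threshold $kn^{-2\beta}$ exceeds $n^{2\beta}$ for our $k,\beta$), and each $x\in L'(I)$ has degree at least $kn^{-2\beta}$ to $I$, the same bound yields $|L'(I)|\leq n^{3\beta}$. Finally, $R=e(\pi(H_0))+\sum_{i\geq 1}e(H_i)\leq e(H_0)+\sum_{i\geq 1}e(H_i)=e(G'_{\leq T}[L'(I),I])\leq e(G'_{\leq T}[L(I),I])<n^\beta k$.

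Step two is the union bound itself. The events $\cH(L,L',(H_i)_i)$ for distinct data partition the probability of interest, so
\[
\PP\big(I\in \cI(G_{\leq T})\wedge \cA'\big)\leq \sum_{L,L',R,(H_i)_i}\PP\big(I\in \cI(G_{\leq T})\wedge \cH(L,L',(H_i)_i)\wedge \cA'\big),
\]
with the sum restricted to data obeying the size bounds above. Bounding this by the number of tuples times the maximum probability, and observing that the choices of $L$, $L'$, and $R$ each contribute only $n^{o(k)}$ (since $\binom{n}{\leq 2kn^{-\beta}}$ and $\binom{n}{\leq 2n^{3\beta}}$ are both of this form when $\beta<\delta/2^7$ and $k\sim\sqrt{n\log n}$, while $R$ has at most $n^\beta k+1$ admissible values), it remains to bound the number of $(H_0,\ldots,H_T)$ with $f((H_i)_i)=R$ by $n^{4\beta R+o(k)}$ and then replace $\sum_R$ by $\max_R$ at the cost of another $n^{o(k)}$ factor.

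For the $(H_i)$ count, I would encode $H_0$ by $\pi(H_0)\subseteq \pi(L')\times \pi(I)$ (an edge set of size $R_0:=e(\pi(H_0))$), which determines $H_0$ given $L',I$ since $G'_0$ is a blowup; this gives at most $\binom{|L'|\ell}{R_0}$ options. For $i\geq 1$, the $H_i$ are mutually edge-disjoint subgraphs of $L'\times I$ with $\sum_{i\geq 1}e(H_i)=R-R_0$, which I encode by choosing an underlying edge set of size $R-R_0$ together with a time label on each edge, giving at most $\binom{|L'|k}{R-R_0}T^{R-R_0}$ options; summing over $R_0\in\{0,\ldots,R\}$ loses only a polynomial factor in $R$. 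Plugging in $|L'|\leq 2n^{3\beta}$, $k\leq n^{1/2+o(1)}$, and $T=n^{o(1)}$, and applying $\binom{N}{R}\leq (eN/R)^R$, the per-edge cost is at most $(|L'|kT/R)$, which is $n^{4\beta+o(1)}$ when $R$ is of the largest admissible order (so the maximum $(|L'|kT/R)^R$ is $n^{4\beta R+o(k)}$), while for smaller $R$ the trivial bound $n^{O(R)}=n^{o(k)}$ is absorbed into the error. The main technical subtlety is verifying this counting estimate uniformly in $R$; once in hand, substituting into the union bound yields exactly the form claimed in the statement.
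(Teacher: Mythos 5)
Your proposal is correct and follows essentially the same route as the paper: derive the size bounds $|L(I)|\leq 2kn^{-\beta}$, $|L'(I)|\leq 2n^{3\beta}$, $R\leq kn^{\beta}$ from the event $\cC$ by double counting $e(G'_{\leq T}[L(I),I])$, and then union bound over the certificate $(L,L',R,(H_i)_i)$ while controlling the number of admissible tuples. The encoding of $(H_i)_i$ — choosing $\pi(H_0)\subset \pi(L')\times \pi(I)$, choosing the underlying edge set of $H_1\cup\cdots\cup H_T$ in $L'\times I$, and time-stamping each of those edges — is exactly what the paper does. The only imprecision is in your analysis of $\bigl(e|L'|kT/R\bigr)^R$: you assert the per-edge cost is $n^{4\beta+o(1)}$ "when $R$ is of the largest admissible order," but at $R\approx kn^{\beta}$ the per-edge cost is in fact only about $n^{2\beta+o(1)}$; the threshold where it hits $n^{4\beta}$ is rather $R\approx n^{1/2-\beta}$, which is precisely the critical point of $R\mapsto(e|L'|kT/R)^R n^{-4\beta R}$. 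Your two-regime argument ($n^{O(R)}=n^{o(k)}$ for $R=o(k)$; per-edge cost $\leq n^{4\beta}$ once $R\gtrsim n^{1/2-\beta}$) does still cover all $R\in[0,kn^{\beta}]$ and yields $n^{4\beta R+o(k)}$ uniformly, so the conclusion holds. The paper avoids this case analysis by factoring out $n^{3\beta R}$ from $\binom{2n^{3\beta}k}{R}$ and observing that the remaining factor $(2ek/R)^R\leq e^{2k}=n^{o(k)}$ uniformly in $R$, with $T^R\leq n^{\beta R}$ handled separately — a slightly cleaner bookkeeping, but the same idea.
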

\begin{proof} 
First note that since $\cC \supset \cA'$ holds, we have $e(G'_{\leq T}[I,L(I)]) \leq kn^{\beta}$. Since 
\[ |L(I)|n^{2\beta}\leq 2e(G'_{\leq T}[I,L(I)]) \qquad \text{ and } \qquad |L'(I)|kn^{-2\beta}\leq 2e(G'_{\leq T}[I,L(I)]) \] we have $|L(I)|\leq 2kn^{-\beta}$ and $|L'(I)|\leq 2n^{3\beta}$, on the event $\cA'$. Note also that \[ H_0(I)\cup \ldots \cup H_T(I)= G'_{\leq T}[L'(I),I] \] and so $f(H_0(I),\ldots,H_T(I))\leq kn^{\beta}$. We can therefore bound $\PP\big(I\in \cI(G_{\leq T} ) \wedge \cA' \big)$ above by
\begin{equation} \label{eq:obs-ind-set-worst}
\leq \sum_{L : |L|\leq 2kn^{-\beta}}\sum_{ L':|L'|\leq2n^{3\beta}}\sum_{R=0}^{kn^{\beta}} \sum_{H_i: f((H_i)_i)=R}  \PP\big( I \in \cI(G_{\leq T}) \wedge \cH(L,L', H_0,\ldots ,H_{T}) \, \wedge \cA' \, \big), \end{equation}
where the inner sum is over all  tuples $(H_0,\ldots,H_T)$ of mutually edge-disjoint graphs for which $f(H_0,\ldots,H_T) = R$.

We now count the number of terms in the sum. There are at most  $n^{2k n^{-\beta}+2n^{3\beta}} = n^{o(k)}$ choices for $L,L'$. We now fix $s\leq R\leq kn^{\beta}$ and and count tuples $(H_0, \ldots, H_T)$ such that $e(\pi(H_0))=s$ and $e(H)=R-s$ where $H=H_1\cup\ldots\cup H_T$. We first observe that $H_0$ is determined by $\pi(H_0)$. Since each edge of $\pi(H_0)$ has one endpoint in $\pi(I)$ and the other in $\pi(L')$, we conclude that there are  at most 
\[
    \binom{2n^{3\beta}k}{s}\leq \left(\frac{2en^{3\beta}k}{s} \right)^{s}\leq n^{3\beta s+o(k)}\, 
    \]
    choices for $\pi(H_0)$ where for the last inequality we used that $(2ek/x)^x$ is maximized at $x=2k$. Similarly the number of choices for $H=H_1\cup\ldots \cup H_T$ is at most
  \[
 \binom{k|L'|}{R-s} \leq \binom{2 n^{3\beta}k}{R-s} \leq n^{3 \beta (R-s)+o(k)}\, .
  \]
  We then note there are $T^{(R-s)} \leq n^{\beta R}$ ways to partition $H = H_0\cup \cdots \cup H_T$ (recall that $T=O(\gamma^{-1} \sqrt{\log n})$). Combining these bounds and summing over $s$ (for which there is at most $kn^{\beta}=n^{o(k)}$ choices) we conclude that there are at most $n^{4\beta R+o(k)}$ choices for $(H_0,\ldots, H_T)$ such that $f(H_0,\ldots, H_T)=R$.
  We now use this to bound the sum \eqref{eq:obs-ind-set-worst} and complete the proof. 
\end{proof}

In what follows, we fix $L,L',R$, and $H_0,\ldots, H_T$ satisfying the conditions of Lemma~\ref{obs:ind-set-worst-case} and let $\cH = \cH(L,L',H_0,\ldots,H_T)$. We now write the probability on the right-hand-side of Lemma~\ref{obs:ind-set-worst-case} 
in terms of the product of contributions that come from each step. For this, we need to introduce several useful definitions and notations. These definitions are similar to ones we have seen before and should be thought of as versions of these definitions where $H_i$, $L,L'$ are determined. 

In particular, we define the ``core with respect to $L$'',
\[
C_{i,L}(I) = I^{(2)} - \bigcup_{x \in L} (N_{\leq i}'(x))^{(2)}\, ,
\]
and the events
\[
 \cB_{i,L}(I) = \big\{ |C_{i,L}(I) \cap O_i| \geq (1-\eps_i) \theta_i |C_{i,L}(I)| \big\}
 \qquad \text{ and } \qquad
 \cD_{i,L}(I) = \big\{ |C_{i,L}(I)| \leq  \gamma^2 k^2 \big\}
 \] and the events
 \begin{equation}\label{eq:cCLL'-def}  \cC_{i,L,L'}(I)=\bigg\{e(G'_{\leq i}[L,I])\leq kn^\beta \text{ and }|N'_{ \leq i }(x)\cap I |\leq k n^{-2\beta} \text{ for all }x\notin L'\bigg\}\, .\end{equation} We also define 
\[
 \cB^\pi_{0,L}(I) = \big\{ |\pi(C_{0,L}(I) \cap O_0)| \geq (1-\gamma^4) r^2\theta_0 |\pi(C_{0,L}(I))| \big\}\, .
\]

We now define an amalgamation of these events: let
\[\cE_0 = \big\{ I\in \cI(G_{0})\big\}\wedge\cA_0 \wedge (\cB^\pi_{0,L}(I) \vee \cD_{0,L}(I) )\wedge \cC_{0,L,L'}(I) \wedge  \{ H_0 \subset G_0' \}\, ,  \]
and for $1\leq i \leq T$ define the event
\begin{equation}\label{eq:def-E_iL(H)}\cE_i = \big\{ I\in \cI(G_{\leq i})\big\} \wedge \cA_i\wedge (\cB_{i,L}(I) \vee \cD_{i,L}(I) ) \wedge \cC_{i,L,L'}(I) \wedge   \{ H_i \subset G_i' \}.  \end{equation}
Now, we break up the probability on the right hand side of Lemma~\ref{obs:ind-set-worst-case}
into a product over steps. 
\begin{lemma}\label{obs:ind-set-product-over-steps} We have 
\begin{equation}\label{eq:ind-set-prod} \PP\big( I \in \cI(G_{\leq T}) \wedge \, \cH \, \wedge \cA' \big) \leq \PP\big(\cE_{0}\big) \cdot \prod_{i=0}^{T-1} \max\, \PP_{i+1}\big(\cE_{i+1}\big)\, ,\end{equation}
where the maximum is over all realizations of $G_{\leq i}, G_{\leq i}', O_i$. 
\end{lemma}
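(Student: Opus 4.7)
The plan is to prove the inequality in two steps: first, I will show the containment of events
\[ \{I \in \cI(G_{\leq T})\} \wedge \cH \wedge \cA' \subset \bigcap_{i=0}^T \cE_i, \]
and second, I will decompose the probability of this intersection using the tower property.

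For the containment, note that $G_{\leq i} \subseteq G_{\leq T}$ implies that $I \in \cI(G_{\leq T})$ gives $I \in \cI(G_{\leq i})$ for every $i$. From $\cA' \subseteq \cA = \bigcap_i \cA_i$ we get $\cA_i$ at every step. From $\cA' \subseteq \cF$ we get $\cB_i(I) \vee \cD_i(I)$ for every $i$ (and $\cB_0^\pi(I)$). On the event $\cH$ we have $L(I) = L$, so that $C_i(I) = C_{i,L}(I)$; consequently $\cB_i(I) \vee \cD_i(I) = \cB_{i,L}(I) \vee \cD_{i,L}(I)$ (and likewise $\cB_0^\pi(I) = \cB_{0,L}^\pi(I)$). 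The event $\cH$ also fixes $H_i(I) = H_i$, which gives $H_i \subseteq G_i'$. Finally, the quasi-randomness event $\cA' \subseteq \cC$ yields $e(G_{\leq T}'[L,I]) \leq kn^\beta$, hence $e(G_{\leq i}'[L,I]) \leq kn^\beta$ as well; and the fact that $\cH$ fixes $L'(I) = L'$ means that for any $x \notin L'$ we have, by the definition of $L'(I)$, $|N'_{\leq i}(x) \cap I| \leq |N'_{\leq T}(x) \cap I| < kn^{-2\beta}$. Together these give $\cC_{i,L,L'}(I)$, completing the containment.

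Given the containment, we decompose the probability using the chain rule along the natural filtration generated by the first $i$ steps of the process, i.e.\ by $(G_{\leq i}, G_{\leq i}', O_i)$:
\[ \PP\Big(\bigcap_{i=0}^T \cE_i\Big) = \PP(\cE_0) \cdot \prod_{i=0}^{T-1} \PP\big(\cE_{i+1} \,\big|\, \cE_0 \wedge \cdots \wedge \cE_i\big). \]
Since $\cE_{i+1}$ depends only on the history $(G_{\leq i}, G_{\leq i}', O_i)$ together with the fresh randomness of step $i+1$, and since each $\cE_j$ for $j \leq i$ is measurable with respect to the history, we may bound
\[ \PP\big(\cE_{i+1} \,\big|\, \cE_0 \wedge \cdots \wedge \cE_i\big) \leq \max \, \PP_{i+1}(\cE_{i+1}), \]
where the maximum is taken over all realisations of $(G_{\leq i}, G_{\leq i}', O_i)$. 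Combining this with the containment of events from the first paragraph yields the claimed bound. The main subtlety here is purely bookkeeping: matching the random objects $C_i(I), L(I), L'(I), H_i(I)$ appearing in $\cA'$ with their deterministic counterparts in $\cE_i$, which is handled cleanly by conditioning on the event $\cH$.
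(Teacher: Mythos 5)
Your proof is correct and takes essentially the same approach as the paper: establish the event containment $\{I \in \cI(G_{\leq T})\} \wedge \cH \wedge \cA' \subset \bigcap_{i} \cE_i$ by matching the random objects $L(I), L'(I), C_i(I), H_i(I)$ to their fixed counterparts on $\cH$, then apply the chain rule along the process filtration and bound each conditional probability by the worst-case realization of the history. If anything, your treatment of the $\cC_{i,L,L'}(I)$ component (monotonicity of $e(G'_{\leq i}[L,I])$ in $i$ on $\cC$, and the monotonicity $N'_{\leq i}(x) \subseteq N'_{\leq T}(x)$ giving the degree cap outside $L'$) is spelled out slightly more explicitly than the paper, which handles it tersely.
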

\begin{proof} We first note the implications \[
I \in \cI(G_{\leq T})\, \Longrightarrow \, \bigwedge_{i\leq T} \{I \in \cI(G_{\leq i})\}\] and
\[\cH \wedge \cA'\, \Longrightarrow \, \big(\cB^\pi_{0,L}(I) \vee \cD_{0,L}(I)\big) \, \wedge \, \cC_{0,L,L'}(I)\, \wedge \, \bigwedge_{1\leq i\leq T}\big(\cB_{i,L}(I) \vee \cD_{i,L}(I)\big)\, \wedge\,  \cC_{i,L,L'}(I)\, .\] The first implication holds since $G_{\leq i}\subset G_{\leq T}$ for all $i$. The second implication holds since 1) $\cA'$ implies $(\cB^\pi_{0}(I) \vee \cD_{0}(I))\wedge \cC_{0}(I)$ and $(\cB_i(I) \vee \cD_i(I))\wedge \cC_i(I)$ for all $1\leq i\leq T$ and 2) $\cH$ implies that $L'(I)=L'$ and $L(I) = L$ so in particular $C_{i,L}(I) = C_i(I)$. Finally note that $\cH$ implies $H_i \subset G_i'$ for all $i\leq T$ and $\cA'$ implies $\cA_i$ for all $i\leq T$. 

Putting these implications together gives the inequality
\[ \PP\big( I \in \cI(G_{\leq T}) \wedge \, \cH \, \wedge \cA' \big) \leq 
\PP\big( \cE_0 \wedge \cdots \wedge \cE_T \big)\, .\]
Finally note that 
\[ \PP\big( \cE_0 \wedge \cdots \wedge \cE_T \big)= \prod_{i=0}^T \PP_i\big( \cE_i\, \big|\, \cE_1 \wedge \cdots \wedge \cE_{i-1} \big) \leq \P(\cE_0) \cdot \prod_{i=0}^{T-1} \max \PP_{i+1}\big(\cE_{i+1} \big),\]
as desired. 
\end{proof}

We now turn to bound the probabilities on the right hand side of \eqref{eq:ind-set-prod}. If we were to compute this probability \emph{directly} we would have to know $C_{i,L}(I)$. One could potentially union bound over ``possible'' such sets (as we did in Section~\ref{sec:open-edges-in-I-redux}) but in this case there are far too many such sets. As we prepare for the crucial lemma of this section, Lemma~\ref{lem:prob-ind}, we show that it is enough to \emph{approximate} $C_{i,L}(I)$ with $C_{i,L'}(I)$, which are determined by the $H_i$, which we are thinking of as fixed.

Indeed, the next simple observation says that $|C_{i,L}(I)|$ is essentially determined by $H_0,\ldots, H_T$. Define $\nu_0,\ldots, \nu_T\in [0,1]$ by
\begin{equation}\label{eq:def-nui} |\pi(C_{0,L'}(I))|=(1-\nu_0)\binom{\ell}{2} \qquad \text{ and } \qquad |C_{i,L'}(I)|=(1-\nu_i)\binom{k}{2}, \end{equation} for  $i\geq 1$. Here $C_{i,L'}(I) = I^{(2)} - \bigcup_{x \in L'} (N_{\leq i}'(x))^{(2)}$. Recall that on the event $\cH$, $H_i=G'_{\leq i}[L',I]$ and so $\nu_i$ is determined by the graphs $H_0,\ldots, H_i$. Also recall the definition of $\cC_{i,L,L'}(I)$  \eqref{eq:cCLL'-def}.

\begin{lemma}\label{lem:C_L-vs-L'}
On the event $\cC_{0,L,L'}(I)$, we have 
\[ |\pi(C_{0,L}(I))| \geq \big(1-\nu_0 - n^{-\beta/2}\big)\binom{\ell}{2} .
\]
For $1\leq i \leq T$, on the event $\cC_{i,L,L'}(I)$, we have 
\[ |C_{i,L}(I)| \geq \big(1-\nu_i - n^{-\beta/2}\big)\binom{k}{2}\, .
\]
\end{lemma}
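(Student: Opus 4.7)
The plan is to use the containment $C_{i,L}(I)\subseteq C_{i,L'}(I)$ (since $L'\subseteq L$) and then bound the symmetric difference by the contributions of the vertices in $L\setminus L'$, which have \emph{small} degree into $I$ by the definition of $L'$. The bound on $e(G'_{\leq i}[L,I])$ provided by the event $\cC_{i,L,L'}(I)$ will allow us to combine a ``max times sum'' type estimate.

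For $i\geq 1$, any pair $uv\in C_{i,L'}(I)\setminus C_{i,L}(I)$ must lie in $(N'_{\leq i}(x))^{(2)}$ for some $x\in L\setminus L'$, so
\[
|C_{i,L'}(I)\setminus C_{i,L}(I)|\,\leq\,\sum_{x\in L\setminus L'}\binom{|N'_{\leq i}(x)\cap I|}{2}\,\leq\, \tfrac{1}{2}\sum_{x\in L\setminus L'}|N'_{\leq i}(x)\cap I|^{2}.
\]
On $\cC_{i,L,L'}(I)$, each $x\notin L'$ satisfies $|N'_{\leq i}(x)\cap I|\leq kn^{-2\beta}$, and $\sum_{x\in L}|N'_{\leq i}(x)\cap I|=e(G'_{\leq i}[L,I])\leq kn^{\beta}$. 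Pulling out a maximum gives the bound $k^{2}n^{-\beta}/2$. Since $\binom{k}{2}=\Theta(n\log n)$ and $k^{2}n^{-\beta}/\binom{k}{2}=O(n^{-\beta})\leq n^{-\beta/2}$ for large $n$, the first claim follows from $|C_{i,L'}(I)|=(1-\nu_i)\binom{k}{2}$.

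The $i=0$, $\pi$-version is the subtler one and will require unpacking the blowup structure. Because $G'_{0}$ is a blowup of $G'_{\ast}$, membership of a representative pair $uv$ with $u\in I\cap V_{a}$, $v\in I\cap V_{b}$ (where $a,b\in\pi(I)$, $a\neq b$) in $(N'_{0}(x))^{(2)}$ depends only on $\pi(x),a,b$: it occurs iff $\pi(x)\notin\{a,b\}$ and $\pi(x)a,\pi(x)b\in G'_{\ast}$. Consequently, whether $ab\in\pi(C_{0,L'}(I))$ depends only on $a,b$ and not the choice of representatives, and the analogous statement holds for $\pi(C_{0,L}(I))$. Hence a pair $ab\in\pi(C_{0,L'}(I))\setminus\pi(C_{0,L}(I))$ must be ``closed'' by some $x\in L\setminus L'$, giving
\[
|\pi(C_{0,L'}(I))\setminus\pi(C_{0,L}(I))|\,\leq\,\sum_{x\in L\setminus L'}\binom{|N_{G'_{\ast}}(\pi(x))\cap\pi(I)|}{2}\,\leq\,\tfrac{1}{2}\sum_{x\in L\setminus L'}|N'_{0}(x)\cap I|^{2},
\]
where the last step uses that every $j\in N_{G'_{\ast}}(\pi(x))\cap\pi(I)$ contributes at least one vertex to $N'_{0}(x)\cap I$ via any element of $V_{j}\cap I$. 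The same max-times-sum estimate now yields the bound $k^{2}n^{-\beta}/2$.

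Finally, we compare with $\binom{\ell}{2}$. Since each part $V_{j}$ has size $r=(\log n)^{2}$, we have $\ell=|\pi(I)|\geq k/r$, so $\binom{\ell}{2}=\Omega(k^{2}/(\log n)^{4})$. The ratio is therefore $O(n^{-\beta}(\log n)^{4})$, which is $\leq n^{-\beta/2}$ for $n$ large, completing the argument. The only real obstacle is the bookkeeping in the blowup step, making sure that ``closure'' of a pair in $\pi(I)^{(2)}$ is well-defined independently of the representatives and then controlling $|N_{G'_{\ast}}(\pi(x))\cap \pi(I)|$ by $|N'_{0}(x)\cap I|$; everything else is a straightforward Cauchy–Schwarz-style maximum-times-sum bound using the two constraints afforded by $\cC_{i,L,L'}(I)$.
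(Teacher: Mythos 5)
Your proof is correct and follows essentially the same route as the paper: sandwich $|C_{i,L}(I)|$ against $|C_{i,L'}(I)|$, bound the difference by $\frac12\sum_{x\in L\setminus L'}|N'_{\le i}(x)\cap I|^2$, and then use the two bounds supplied by $\cC_{i,L,L'}(I)$ in a max-times-sum estimate. For the $i=0$ case the paper disposes of the $\pi$-projection in one line, using only that $\pi$ is a function (so $|\pi(S)|\le|S|$): one shows $\pi(C_{0,L'}(I))\setminus\pi(C_{0,L}(I))\subseteq\pi\big(C_{0,L'}(I)\setminus C_{0,L}(I)\big)$ and then bounds the unprojected difference exactly as for $i\ge 1$. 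Your blowup bookkeeping --- showing that closure of a pair in $\pi(I)^{(2)}$ is representative-independent and that $|N_{G_\ast'}(\pi(x))\cap\pi(I)|\le|N'_0(x)\cap I|$ --- reaches the same bound but is more work than necessary. One small inaccuracy: $\sum_{x\in L}|N'_{\le i}(x)\cap I|$ equals $e(G'_{\le i}[L,I])$ only when $L\cap I=\emptyset$; in general it can be as large as $2e(G'_{\le i}[L,I])$ (edges inside $L\cap I$ are counted twice). This costs only a factor of $2$ and does not affect the conclusion, but you should write $\le 2e(G'_{\le i}[L,I])$ as the paper does.
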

\begin{proof} We prove the lemma for $1\leq i\leq T$ and remark the case $i=0$ is very similar. For $i\geq 1$, first observe that we can sandwich $|C_{i,L}(I)|$ in terms of $|C_{i,L'}(I)|$, $$|C_{i,L'}(I)| \geq |C_{i,L}(I)| \geq |C_{i,L'}(I)| - \frac{1}{2}\sum_{x \in L \setminus L'} |N'_{\leq i}(x)\cap I|^2\,.$$
Now,  $|N'_{\leq i}(x) \cap I|\leq   kn^{-2\beta}$ for all $x\in L\backslash L'$, by definition of the event $\cC_{i,L,L'}(I)$. Moreover, on the event $\cC_{i,L,L'}(I)$, we have $\sum_{x\in L}|N'_{\leq i}(x) \cap I|\leq 2e(G'_{\leq i}[L,I]) \leq 2kn^\beta$ and so \begin{equation*}\sum_{x \in L\backslash L'} |N'_{\leq i}(x) \cap I|^2 \leq kn^{-2\beta} \sum_{x \in L \setminus L'} |N_{\leq i}'(x) \cap I| \leq k n^{-2\beta} \cdot 2 k n^\beta \leq 2k^2 n^{-\beta}\,. \end{equation*} 
    Combining the previous two displayed equations completes the proof in the case $i\geq 1$. For the case $i=0$ the proof is the same except we apply $\pi$ to all of the above sets in the proof (noting that $\pi$ can only reduce the size of a set). We also recall that $\ell\geq k/r$ so that $2k^2n^{-\beta}\leq n^{-\beta/2}\binom{\ell}{2}$.
\end{proof}

We now prove the following important lemma on the probability that the set $I$ remains independent in step $i+1$. Interestingly, this is actually the only place in the paper where we work directly with $G_{\leq i}$ rather than with $G_{\leq i}'$. For this we recall from Section~\ref{sec:definition-of-process} that we defined $G_{\leq i+1}$ by first sampling $G'_{i+1} \sim G(O_i,p_{i+1})$ and then choosing $G_{i+1} \subset G'_{i+1}$ to be a maximal subgraph such that $ G_{\leq i} \cup G_{i+1} $ is triangle-free. In particular, $G_{\leq i+1}$ has the property that $G_{\leq i+1} + e \supset K_3$ for any $e \in G_{i+1}'\setminus G_{i+1}$.

\begin{lemma}\label{lem:prob-ind} For $i\geq 0$ and any realization of $G_{\leq i}, O_i$, we have
\begin{equation}\label{eq:lem-prob-ind} \PP_{i+1}\big( \cE_{i+1}  \big)\leq p_{i+1}^{e(H_{i+1})}\exp\left(-(1-\nu_{i+1}-\gamma)\frac{\gamma}{\sqrt{n}}\binom{k}{2}\right)\, .\end{equation}
Moreover, 
\begin{equation}\label{eq:lem-prob-ind-0}\PP\big( \cE_{0}  \big)\leq p_{0}^{e(\pi(H_{0}))}\exp\left(-(1-\nu_{0}-\gamma)p_0r^2\theta_0\binom{\ell}{2}\right).\end{equation} 
\end{lemma}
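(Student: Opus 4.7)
The plan is to split the event $\cE_{i+1}$ along the disjunction $\cB_{i+1,L}(I)\vee \cD_{i+1,L}(I)$ and deal with the two cases separately.

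For the small-core case $\cD_{i+1,L}(I)$, I will use that $|C_{i+1,L}(I)|\leq \gamma^2 k^2$, which combined with $\cC_{i+1,L,L'}(I)\subset\cE_{i+1}$ and Lemma~\ref{lem:C_L-vs-L'} forces
\[1-\nu_{i+1} \leq \frac{\gamma^2 k^2}{\binom{k}{2}} + n^{-\beta/2} \leq \gamma.\]
In particular the exponential factor in the target is at least $1$, and it suffices to show $\PP_{i+1}(H_{i+1}\subset G'_{i+1})\leq p_{i+1}^{e(H_{i+1})}$, which is immediate from $G'_{i+1}\sim G(O_i,p_{i+1})$ and the fact that $\cE_{i+1}$ forces $H_{i+1}\subset O_i$.

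For the large-core case $\cB_{i+1,L}(I)$, the key observations are that $L'\subset L$ and $N'_{\leq i}\subset N'_{\leq i+1}$ together imply $C_{i+1,L}(I)\subset C_{i+1,L'}(I)\subset C_{i,L'}(I)$, and that $O_{i+1}\cap G'_{i+1}=\emptyset$. Combined with Lemma~\ref{lem:C_L-vs-L'}, the event exhibits a random subset $A$ of the deterministic set $C_{i,L'}(I)\cap O_i$ satisfying $A\subset O_{i+1}$ (hence $A\cap G'_{i+1}=\emptyset$) and
\[|A| \geq (1-\eps_{i+1})\theta_{i+1}\bigl(1-\nu_{i+1}-n^{-\beta/2}\bigr)\binom{k}{2}.\]
I will then apply Lemma~\ref{lem:probability-open'} to the pair $(A,H_{i+1})$ and union-bound over the identity of $A$ as a subset of $C_{i,L'}(I)\cap O_i$. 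This yields a bound of the form $\binom{|C_{i,L'}(I)\cap O_i|}{|A|}s_{i+1}^{|A|}p_{i+1}^{e(H_{i+1})}$, and the combinatorial, survival, and size factors balance via the relations $p_{i+1}\theta_i=\gamma/\sqrt{n}$ and $\theta_{i+1}\approx s_{i+1}\theta_i$ (from $\cA_i$) to match the target exponent $\exp\bigl(-(1-\nu_{i+1}-\gamma)(\gamma/\sqrt n)\binom{k}{2}\bigr)$.

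For the seed step $i=0$, the same template applies with $\cB^\pi_{0,L}(I)\vee \cD_{0,L}(I)$ in place of the disjunction above, working in the pre-blowup graph $G_\ast\sim G(n/r,p_0)$. The factor $p_0^{e(\pi(H_0))}$ comes from edge independence in $G_\ast$, and the core survival factor $\exp\bigl(-(1-\nu_0-\gamma)p_0 r^2\theta_0\binom{\ell}{2}\bigr)$ follows via Harris's inequality separating the increasing event $\{\pi(H_0)\subset G_\ast\}$ from the decreasing event that most pairs in $\pi(C_{0,L'}(I))$ remain in $O_\ast$, together with a direct binomial computation on $G_\ast$.

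The main obstacle I anticipate lies in the large-core case: carrying out the combinatorial union bound over $A$ together with Lemma~\ref{lem:probability-open'} so that the product of the enumeration cost and the survival factor matches the target exponent \emph{precisely}. This balance is delicate and will require the full strength of the quasi-randomness from $\cA_i$ (to control $s_{i+1}\approx \theta_{i+1}/\theta_i$) together with the near-equality $|C_{i+1,L}(I)|\approx (1-\nu_{i+1})\binom{k}{2}$ from Lemma~\ref{lem:C_L-vs-L'}.
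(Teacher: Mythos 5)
Your decomposition into the $\cD$- and $\cB$-cases is fine, and your treatment of the small-core case is correct (it matches the paper's argument, which phrases the same dichotomy deterministically as $\nu_{i+1}\geq 1-\gamma$ vs.\ $\nu_{i+1}<1-\gamma$). The seed step \eqref{eq:lem-prob-ind-0} inherits whatever is done in the generic step, so I will focus on the generic large-core case, where the proposal has a fatal flaw.

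The problem is that you never use the event $\{I\in\cI(G_{\leq i+1})\}$, which is the entire source of the exponential gain. Your plan is to bound $\PP_{i+1}(\cE_{i+1})$ by the probability that some large set $A\subset C_{i,L'}(I)\cap O_i$ survives into $O_{i+1}$ and $H_{i+1}\subset G'_{i+1}$, then union-bound over $A$. But $\cB_{i+1,L}(I)$ --- and a fortiori the existence of such an $A$ --- is a \emph{high}-probability event: on average roughly an $s_{i+1}$-fraction of $C_{i,L'}(I)\cap O_i$ survives, and you are only asking for an $|A|$ of roughly that typical size. Quantitatively, with $N=|C_{i,L'}(I)\cap O_i|$ and $m\approx s_{i+1}N$, your bound is $\binom{N}{m}s_{i+1}^m\,p_{i+1}^{e(H_{i+1})}$; but $\binom{N}{m}s_{i+1}^m=\exp\!\big(-N(1-s_{i+1})\log(1-s_{i+1})+o(N)\big)\geq 1$, so the union bound over $A$ washes out the survival factor entirely and you are left with nothing better than $p_{i+1}^{e(H_{i+1})}$. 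Indeed $\sum_{|A|=m}\PP(A\subset O_{i+1})$ is precisely the expected number of surviving $m$-subsets, which is $\gg 1$ whenever $m\leq s_{i+1}N$. No amount of delicate balancing can fix this; it is a wrong reduction, not a sharpness issue. (A symptom of this: your sketch never invokes any consequence of $I$ being independent, whereas the target bound must depend on that event.)

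The missing idea is that the independence of $I$ in $G_{\leq i+1}$ constrains which edges $G'_{i+1}$ may place \emph{inside} $I^{(2)}$. The paper reveals $G'_{i+1}$ in two rounds: first the edges $G'_{I^c}=G'_{i+1}\cap(O_i-I^{(2)})$ outside $I$, then $G'_I=G'_{i+1}\cap O_i\cap I^{(2)}$. Let $O_I$ be the pairs of $O_i\cap I^{(2)}$ not closed by the first round. Claim~\ref{cl:G cap O_i empty} is the crux: by maximality of the triangle-free graph $G_{\leq i+1}\subset G_{\leq i}\cup G'_{i+1}$, any $e\in G'_I\cap O_I$ would produce a triangle inside $I$, so $I\in\cI(G_{\leq i+1})$ forces the \emph{decreasing} event $G'_I\cap O_I=\emptyset$. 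Since $\cB_{i+1,L}(I)$ and Lemma~\ref{lem:C_L-vs-L'} give $|O_I|\geq|C_{i+1,L}(I)\cap O_{i+1}|\geq(1-\nu_{i+1}-\gamma)\theta_{i+1}\binom{k}{2}$, the second-round probability of avoiding all of $O_I$ while picking up $F_2=H_{i+1}\cap I^{(2)}$ is at most $p_{i+1}^{|F_2|}(1-p_{i+1})^{|O_I|}$, and the first round contributes $p_{i+1}^{|F_1|}$ for $F_1=H_{i+1}\setminus I^{(2)}$. This two-round conditioning, and the use of the maximality of $G_{i+1}$ (this is the single place in the paper where $G_{\leq i}$ rather than $G'_{\leq i}$ is essential), is what you need to replace your union bound with.
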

\begin{proof} We first focus on proving the inequality \eqref{eq:lem-prob-ind} and after we make a few remarks on how to adapt the proof to \eqref{eq:lem-prob-ind-0}. Note that if $\nu_{i+1} \geq 1-\g$, we only need to show that
\[\PP_{i+1}\big( \cE_{i+1}  \big)\leq p_{i+1}^{e(H_{i+1})}, \]
which holds since $\cE_{i+1} \subset \{ H_{i+1} \subset G_{i+1}' \}$. So we can assume $\nu_{i+1} < 1-\g$. This means that $\cD_{i+1,L}(I)$ fails and therefore we may assume $\cB_{i+1,L}(I)$ holds.

We now reveal the edges of $G_{i+1}'$ in two rounds; first we reveal the edges outside of $I$,
\[G_{I^c}' = G'_{i+1} \cap  ( O_i - I^{(2)} )  \qquad \text{ and then } \qquad G_I' = 
G'_{i+1}\cap O_i\cap I^{(2)} \, ,
\] the edges inside of $I$.  Let $O_I$ denote the set of all pairs in $O_i \cap I^{(2)}$ that are still `open' after the first round. More precisely 
  \[
 O_I:=\big\{e\in O_i \cap I^{(2)} : e \text{ does not form a triangle with } (G'_{i+1} \cup G_{\leq i})\setminus I^{(2)} \big\}\, .
 \]
 We first observe that we must have $G_{i+1}' \cap O_I = \emptyset$ if $I$ remains an independent set in $G_{\leq i+1}$.

\begin{claim}\label{cl:G cap O_i empty}
    If $G'_I\cap O_I\neq \emptyset$ then $I\not\in \cI(G_{\leq i+1})$.
\end{claim}
\noindent\textit{Proof of Claim \ref{cl:G cap O_i empty}}.  
    Let $e\in G'_{I}\cap O_I$. If $e \in G_{\leq i+1}$ we are done, so assume $e\in G'_{i+1}\backslash G_{\leq i+1}$. Since $G_{\leq i+1} \subset G_{\leq i} \cup G'_{i+1}$ is a maximal triangle-free, $G_{\leq i+1} + e \supset K_3$. Since $e \in G_I'\cap O_I$ this triangle cannot contain a vertex outside of $I$. Thus $G_{\leq i+1} + e$ must contain a triangle inside of $I$ and so $I \notin \cI(G_{\leq i+1})$. \hfill $\blacksquare$

\vspace{1em}

We next observe that $O_I$ must be large, otherwise there will not be enough open edges in the next round.

\begin{claim}\label{cl:O_i large}
    On the event $\cB_{i+1,L}(I)$ we have $|O_I| \geq (1 - \nu_{i+1}-\gamma)\theta_{i+1}\binom{k}{2}$.
\end{claim}
\noindent\textit{Proof of Claim \ref{cl:O_i large}}.
   Note $O_I\supset O_{i+1}\cap I^{(2)}$.
   Moreover, on the event $\cB_{i+1,L}(I)$, we have
   \[|C_{i+1,L}(I) \cap O_{i+1}| \geq (1-\eps_{i+1}) \theta_{i+1} |C_{i+1,L}(I)| \geq (1 - \nu_{i+1}-\gamma)\theta_{i+1}\binom{k}{2}, \]
    where the second inequality holds by Lemma \ref{lem:C_L-vs-L'}, the definition of $\nu_{i+1}$ and $\eps_{i+1}=o(\gamma)$. \hfill $\blacksquare$

\vspace{1em}

\noindent Using these two claims and writing $K = (1- \nu_{i+1}-\gamma)\theta_{i+1}\binom{k}{2}$ allows us to see that
\[ \PP_{i+1}\big( \cE_{i+1}\big)  \leq \P_{i+1}\big(H_{i+1} \subset G_{i+1}'\, \wedge \, |O_I| \geq K \, \wedge\, O_I \cap G_{I}' = \emptyset \big). \]
Now write $H_{i+1}$ as a disjoint union $H_{i+1} =F_1\cup F_2$, where $F_1 = H_{i+1} \cap (O_i - I^{(2)})$ and $F_2=H_{i+1}\cap I^{(2)}$. Now realizing the measure $\PP_{i+1}$ as $\EE_{G'_{I^c}}\PP_{G'_I}$, we have the above is at most
\[ p_{i+1}^{|F_1|}\max_{O_I : |O_I| \geq K} \P_{G_{I}}\big(F_2 \subset G_{I}'\, \wedge\, O_I \cap G_{I}' = \emptyset \big) \leq  p_{i+1}^{|F_1|}p_{i+1}^{|F_2|}(1-p_{i+1})^K=p_{i+1}^{e(H_{i+1})}(1-p_{i+1})^K\, .  
\]
Using $1-x\leq e^{-x}$ and recalling the definition of $K$, completes the proof of \eqref{eq:lem-prob-ind}.

The proof of \eqref{eq:lem-prob-ind-0} is very similar. As before we may assume $\nu_{0} < 1-\g$ and, in particular, 
\[
|C_{0,L}(I)|\geq |\pi(C_{0,L}(I))|\geq \gamma^2k^2 
\]
(recall that $\ell\geq k/r$ and $\gamma=o(1/r^2)$)
and so $\cD_{0,L}(I)$ fails. Therefore we may assume $\cB^\pi_{0,L}(I)$ holds. The remainder of the proof is the same as for \eqref{eq:lem-prob-ind} only now we apply $\pi$ to all of the sets and graphs in the argument and we use $\cB_0^\pi(I)$ in place of $\cB_{i+1,L}(I)$.
\end{proof}

We now apply Lemma~\ref{lem:prob-ind} to each step to obtain the following bound.

\begin{corollary}\label{cor:prob-ind-bound-1} We have 
\[   \PP( I \in \cI(G_{\leq T}) \wedge\,  \cH \, \wedge \cA' )  \leq  \exp\left[-(1-o(1))\left( p_1\binom{k}{2}+p_0\binom{\ell}{2}\right)\right]\exp\left(  F\right) ,\]
where we define $F = F(H_0,\ldots, H_T) $ to be the quantity
\begin{equation}\label{eq:def-F}  F =  \nu_0p_0\binom{\ell}{2}+e(\pi(H_0))\log p_0+  \frac{\g}{\sqrt{n}} \binom{k}{2} \sum_{i=1}^{T}\nu_i-  \sum_{i=1}^T  e(H_i) \log (\theta_i \sqrt{n})    .\end{equation}
\end{corollary}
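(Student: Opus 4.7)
The plan is to chain the single-step estimate of Lemma~\ref{lem:prob-ind} through the product decomposition of Lemma~\ref{obs:ind-set-product-over-steps}. Starting from
\[ \PP(I\in\cI(G_{\leq T}) \wedge \cH \wedge \cA') \leq \PP(\cE_0)\prod_{i=0}^{T-1}\max\PP_{i+1}(\cE_{i+1}), \]
I would substitute the bounds \eqref{eq:lem-prob-ind-0} and \eqref{eq:lem-prob-ind} to get a product of two types of factors: ``polynomial'' factors $p_0^{e(\pi(H_0))}\prod_{i=1}^T p_i^{e(H_i)}$ and ``exponential'' factors whose exponents are $-(1-\nu_0-\gamma)p_0r^2\theta_0\binom{\ell}{2}$ and $-(1-\nu_{i+1}-\gamma)(\gamma/\sqrt{n})\binom{k}{2}$ for $0\leq i\leq T-1$. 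The remaining work is to reorganize this product into the form of the corollary.

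For the exponential factors, the seed contribution uses $r^2\theta_0=1-o(1)$ on $\cA_0$ from \eqref{eq:theta-0} to rewrite
\[ -(1-\nu_0-\gamma)p_0 r^2\theta_0\tbinom{\ell}{2} = -(1-o(1))p_0\tbinom{\ell}{2} + \nu_0 p_0\tbinom{\ell}{2}, \]
after absorbing the $\gamma p_0\binom{\ell}{2}$ and $o(\nu_0 p_0\binom{\ell}{2})$ corrections into the leading $(1-o(1))$ factor. The step exponentials sum as follows, using $T=\alpha_1\gamma^{-1}\sqrt{\log n}$ so that $\gamma T/\sqrt{n}=p_1$:
\[ \sum_{i=1}^{T}-(1-\nu_i-\gamma)\tfrac{\gamma}{\sqrt{n}}\tbinom{k}{2} = -(1-o(1))p_1\tbinom{k}{2} + \tfrac{\gamma}{\sqrt{n}}\tbinom{k}{2}\sum_{i=1}^{T}\nu_i. \]
Together these two computations produce both the main $-(1-o(1))(p_0\binom{\ell}{2}+p_1\binom{k}{2})$ term and two of the four summands appearing in $F$.

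For the polynomial factors, $p_0^{e(\pi(H_0))}$ directly contributes $e(\pi(H_0))\log p_0$, giving the second summand of $F$. For $\prod_{i=1}^T p_i^{e(H_i)}$, I would use that $p_i=\gamma/(\theta_{i-1}\sqrt{n})$ by \eqref{eq:def-theta_i}, together with $\gamma\leq 1$ and $\theta_i\leq\theta_{i-1}$ (the open graph only loses edges), to deduce $p_i\leq 1/(\theta_i\sqrt{n})$ and hence
\[ \prod_{i=1}^{T}p_i^{e(H_i)} \leq \exp\Bigl(-\sum_{i=1}^{T}e(H_i)\log(\theta_i\sqrt{n})\Bigr), \]
which is the final summand of $F$. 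Multiplying all four contributions together yields the claimed inequality.

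The argument is essentially a mechanical reorganization; the only thing to watch is that every subleading correction (the additive $\gamma$ inside each step exponent, the multiplicative $o(1)$ from $r^2\theta_0$, and the $\gamma$ and $\log\gamma$ losses in the polynomial factors) is of order $o(p_0\binom{\ell}{2}+p_1\binom{k}{2})$ and so can safely be absorbed into the leading $(1-o(1))$ coefficient. No new probabilistic input beyond Lemmas~\ref{obs:ind-set-product-over-steps} and~\ref{lem:prob-ind} is required.
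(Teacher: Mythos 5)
Your proof is correct and follows the same route as the paper: chain Lemma~\ref{lem:prob-ind} through the product in Lemma~\ref{obs:ind-set-product-over-steps}, then reorganize the exponents. You also handled a detail that the paper's one-line proof glosses over: by \eqref{eq:def-theta_i} one actually has $p_i = \gamma/(\theta_{i-1}\sqrt{n})$ (not $\gamma/(\theta_i\sqrt{n})$ as written in the paper's proof sketch), and your use of $\gamma \leq 1$ and $\theta_i \leq \theta_{i-1}$ to get $p_i \leq (\theta_i\sqrt{n})^{-1}$ is exactly the right fix. The bookkeeping of the $\gamma$ and $r^2\theta_0 = 1-o(1)$ corrections into the leading $(1-o(1))$ factor is also sound, since one only needs the upper bound and $(1-o(1))\nu_0 p_0\binom{\ell}{2} \leq \nu_0 p_0\binom{\ell}{2}$.
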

\begin{proof}
Recall that $p_{i} = \g/(\theta_i \sqrt{n}) \leq (\theta_i \sqrt{n})^{-1}$ for $i\geq 1$, $r^2\theta_0=1-o(1)$ by \eqref{eq:theta-0} and that $p_1=\gamma T/\sqrt{n}$. The result follows by applying Lemma~\ref{lem:prob-ind} to each term in the product in Lemma~\ref{obs:ind-set-product-over-steps}.\end{proof}

To complete the proof of Theorem~\ref{thm:prob-ind-final} we need only to control $F = F(H_0,\ldots,H_T)$. This amounts to deriving some basic relationships between the quantities $\nu_i$, $e(H_i)$ and $\theta_i,p_i$. In what follows, we define 
\begin{equation}\label{eq:SH0-def}
S(H_0)=\frac{1}{2}\sum_{x\in L_{\ast}}|N_{H_0}(x)\cap I|^2\,  \qquad \text{ where } \qquad L_{\ast}\subset L' \end{equation} is a set which contains exactly one element of each $V_j$ for which $V_j\cap L'\neq \emptyset$. Recall that $[n] = V_1 \cup \cdots \cup V_{n/r}$ is the partition defined by the blow up in the seed step (see discussion above \eqref{eq:def-pi}).

The following observation is a step towards quantifying the cost of shrinking the core. We control the proportion of pairs removed from $I$ to form the core, that is $\nu_i$ (which we defined at \eqref{eq:def-nui}), in terms of $e(H_i)$.  

\begin{observation}\label{obs:betai-bound}
 For all $i\in[1, T]$, on the event $\cA' \wedge \cH$,
\begin{equation}\label{eq:obs-betai-bound} \nu_i \cdot \binom{k}{2} \leq  (1 + \gamma)\big( i\gamma \sqrt{n}/2 + p_0n \big)\big( e(H_1) + \cdots + e(H_{i}) \big) +  S(H_0)\, . \end{equation}
Moreover,
\begin{equation}\label{eq:obs-betai-bound0}
\nu_0 \cdot \binom{\ell}{2} \leq (1+\gamma)e(\pi(H_0))p_0n/r\,.
\end{equation}
\end{observation}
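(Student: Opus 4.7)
The plan is to bound $\nu_i \binom{k}{2}$ by a straightforward union bound over $x \in L'$, but to carefully isolate the ``blow-up contribution'' from $H_0$ so that it is summed over the reduced set $L_\ast$ rather than all of $L'$. The starting point is that for $x \in L'$, since $H_j = G'_j[L', I]$ and the $H_j$'s are edge-disjoint, we have $N'_{\leq i}(x) \cap I = N_{H_0 \cup \cdots \cup H_i}(x)$ and this set is a disjoint union $N_{H_0}(x) \sqcup N_{H_{\geq 1}}(x)$ of sizes $d_0(x):=|N_{H_0}(x)|$ and $d_{\geq 1}(x):=|N_{H_1\cup\cdots\cup H_i}(x)|$. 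Splitting pairs according to whether both endpoints lie in $N_{H_0}(x)$ gives
\begin{equation*}
\nu_i \binom{k}{2}
\;\leq\; \Big|\bigcup_{x \in L'} \tbinom{N_{H_0}(x)}{2}\Big|
\;+\; \sum_{x \in L'}\Big[\tbinom{d_0(x)+d_{\geq 1}(x)}{2} - \tbinom{d_0(x)}{2}\Big].
\end{equation*}

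For the first term, I would use that $N_{H_0}(x) = N_{G'_0}(x)\cap I$ depends only on $\pi(x)$ because $G'_0$ is a blow-up of $G'_\ast$. Hence the union over $L'$ collapses to a union over $L_\ast$, which by subadditivity is bounded by $\sum_{x\in L_\ast}\binom{d_0(x)}{2}\leq S(H_0)$. For the second term, the elementary identity $\binom{a+b}{2} - \binom{a}{2} = b\cdot\frac{2a+b-1}{2} \leq b(a + b/2)$ reduces the problem to a max-degree bound on $d_0(x) + d_{\geq 1}(x)/2$. On $\cA'$, the bounds \eqref{eq:def-cF_i} and \eqref{eq:def-cF_i2} give $d_0(x) \leq \Delta(G'_0)\leq (1+\eps_0)p_0 n$ and $d_{\geq 1}(x) \leq \sum_{j=1}^i \Delta(G'_j) \leq (1+\gamma)\,i\gamma\sqrt n$ (using $\eps_j=(j+1)\gamma^3=o(\gamma)$ for $j\leq T$). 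Combining these yields $d_0(x)+d_{\geq 1}(x)/2 \leq (1+\gamma)(p_0 n + i\gamma\sqrt n/2)$, and since $\sum_{x\in L'} d_{\geq 1}(x) = e(H_1)+\cdots+e(H_i)$, pulling the max-degree out of the sum and adding the bound on the first term gives \eqref{eq:obs-betai-bound}.

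For \eqref{eq:obs-betai-bound0} the argument is the same but takes place entirely in the projected graph. Because $G'_0$ is a blow-up of $G'_\ast$, a pair $\{a,b\}\in \binom{\pi(I)}{2}$ lies outside $\pi(C_{0,L'}(I))$ if and only if $\{a,b\} \subseteq N_{G'_\ast}(c)$ for some $c \in \pi(L_\ast)$, so that
$\nu_0 \binom{\ell}{2} \leq \sum_{c\in \pi(L_\ast)} \binom{d^\ast(c)}{2}$
where $d^\ast(c) = |N_{\pi(H_0)}(c)|$. Since $\Delta(G'_0) = r\Delta(G'_\ast)$, the bound $\Delta(G'_0)\leq (1+\eps_0)p_0 n$ from $\cA_0$ gives $\Delta(\pi(H_0))\leq (1+\gamma)p_0 n/r$, and $\sum_c d^\ast(c) = e(\pi(H_0))$; thus $\nu_0 \binom{\ell}{2} \leq \tfrac{1}{2}(1+\gamma)(p_0 n/r)\,e(\pi(H_0))$, which implies \eqref{eq:obs-betai-bound0}.

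The whole argument is essentially bookkeeping on top of the degree estimates already built into $\cA$; the only genuine idea is recognising that the $H_0$ contribution must be collapsed to $L_\ast$ before bounding, since otherwise vertices in the same part $V_j$ would multiply-count the same blow-up pairs and blow the bound up by a factor of $r$. Once this is handled by introducing $S(H_0)$, the remaining estimates are just a careful telescoping of the degree bounds from Section~\ref{sec:density-of-process}.
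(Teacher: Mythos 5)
Your approach is essentially the paper's: you split the union $\bigcup_{x\in L'}(N'_{\leq i}(x)\cap I)^{(2)}$ into pairs lying entirely inside $N_{H_0}(x)$ (collapsed to $L_\ast$ via the blow-up to obtain $S(H_0)$) and the rest (bounded by degree estimates from $\cA$), and your identity $\binom{a+b}{2}-\binom{a}{2}\le b(a+b/2)$ recovers the paper's $\tfrac12 b^2 + ab$ bound. The blow-up collapse to $L_\ast$ is the key observation, and you identified it correctly.

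However, there is a small but genuine gap: you assert $\sum_{x\in L'} d_{\geq 1}(x) = e(H_1)+\cdots+e(H_i)$ as an exact equality. Since $H_j = G'_j[L',I]$ and $L'\cap I$ need not be empty (vertices of $I$ can have high $G'_{\leq T}$-degree into $I$), any edge of $H_{[1,i]}$ with both endpoints in $L'$ is counted twice in $\sum_{x\in L'}|N_{H_{[1,i]}}(x)|$. So the correct statement is $\sum_{x\in L'} d_{\geq 1}(x) \leq e(H_{[1,i]}) + |L'|^2$, and one must then argue that the excess $|L'|^2\leq n^{6\beta}$ is negligible relative to $e(H_{[1,i]})$ so that it can be absorbed into the $(1+\gamma)$ prefactor — which is precisely how the paper handles it. The same over-counting affects your treatment of \eqref{eq:obs-betai-bound0}, where you write $\sum_c d^\ast(c)=e(\pi(H_0))$; the paper again carries a $+n^{6\beta}$ correction there. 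As written, your argument gives a bound with the wrong constant in front of $e(H_{[1,i]})$ unless you add this correction step.
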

\begin{proof}
We first prove~\eqref{eq:obs-betai-bound}. Recall that $H_0 \cup \cdots \cup H_{T}$ is a disjoint union and that $H_i \subset G_{i}'$, on the event $\cH$. Let $i\geq 1$ and write $H_{[1,i]} = H_1 \cup \cdots \cup H_i$. Note that
\[ |C_{i,L'}(I)| \geq \binom{k}{2}- \frac{1}{2}\hspace{-0.5mm}\sum_{x \in L'} |N_{H_{[1,i]}}(x)\cap I|^2- \sum_{x \in L'} |N_{H_{[1,i]}}(x)\cap I||N_{H_0}(x)\cap I| -\Big|\hspace{-1mm}  \bigcup_{x\in L'}\hspace{-1mm} N_{H_0}(x)^{(2)}\cap I  \Big|\, . \]
Note that $\pi(x) = \pi(y) $ implies $N_{H_0}(x)=N_{H_0}(y)$ and so $\big |\bigcup_{x\in L'}N_{H_0}(x)^{(2)}\cap I \big|\leq S(H_0)\,$. Recalling that $|C_{i,L'}(I)|=(1-\nu_i)\binom{k}{2}$ by definition, we conclude that
\[ \nu_i \cdot \binom{k}{2} \leq   \frac{1}{2}\sum_{x\in L'} |N_{H_{[1,i]}}(x)\cap I|^2 +  \sum_{x\in L'}|N_{H_{[1,i]}}(x)\cap I||N_{H_0}(x) \cap I| + S(H_0)  .\]
Using the maximum degree bound $\Delta(G_{i}') \leq (1+\eps_i)\gamma \sqrt{n} $, which holds on $\cA$, and the fact that $H_i \subset G_i'$, which holds on $\cH$, we have $\Delta(H_{[1,i]})\leq (1+\eps_i) i\gamma \sqrt{n}$ and thus
\[ \sum_{x\in L'} |N_{H_{[1,i]}}(x)\cap I|^2 \leq  (1+\eps_i)(i\gamma \sqrt{n} )\sum_{x \in L'} |N_{H_{[1,i]}}(x) \cap I|.  \]
This is at most 
\[  \leq (1+\eps_i)(i\gamma \sqrt{n} )( e(H_{[1,i]}) + |L'|^2)  \leq (1+\g)(i\gamma \sqrt{n} )e(H_{[1,i]}), \]
where the first inequality holds since the sum over $L'$ only double counts edges contained in $L'$ and the last equality holds since $|L'|^2 \leq n^{6\beta}$ and $e(H_{[1,i]}) \geq k n^{-\beta}$, when it is non-zero.  Similarly, using that $\Delta(H_0) \leq (1+\eps_0)p_0n$, we have 
\[ \sum_{x\in L'}|N_{H_{[1,i]}}(x)\cap I||N_{H_0}(x)\cap I| \leq (1+\g) p_0n \cdot  e(H_{[1,i]})\, .  \]
Putting these inequalities together proves \eqref{eq:obs-betai-bound}. To prove \eqref{eq:obs-betai-bound0}, we note that
\[
|\pi(C_{0,L'})|\geq \binom{\ell}{2}-\frac{1}{2}\sum_{x\in\pi(L')}|N_{\pi(H_0)}(x)\cap \pi(I)|^2\geq \binom{\ell}{2}-\Delta(\pi(H_0))(e(\pi(H_0)) + n^{6\beta}).
\]
The result follows by noting that $\Delta(\pi(H_0))\leq (1+\eps_0)p_0n/r$.
\end{proof}

We now simply sum over the first inequality from Observation~\ref{obs:betai-bound} to obtain the following. As we are working towards controlling the $\exp(F)$ term in Corollary~\ref{cor:prob-ind-bound-1}, we note the left hand side of the below is a sum in the definition of $F$ in \eqref{eq:def-F}.

\begin{observation}\label{obs:betai-sum-bound}
On the event $\cA' \wedge \cH$ we have 
\[ \frac{\g}{\sqrt{n}} \binom{k}{2} \sum_{i=1}^T \nu_i \leq \frac{\log n }{8}\sum_{i=1}^{T} \big( 1-\big(i/T\big)^2\big)e(H_i) + S(H_0)p_1 \,. \]
\end{observation}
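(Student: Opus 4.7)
The plan is to sum the bound from Observation~\ref{obs:betai-bound} over $i = 1, \ldots, T$ and then rescale by $\gamma/\sqrt{n}$. The constant $S(H_0)$ contribution in~\eqref{eq:obs-betai-bound} sums to $T\cdot S(H_0)$, and multiplication by $\gamma/\sqrt{n}$ converts this to exactly $p_1 S(H_0)$, using $p_1 = \gamma T/\sqrt{n}$ from~\eqref{eq:T_0}. This produces the second term on the right-hand side of the target inequality.

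For the main term, set $E_i = e(H_1) + \cdots + e(H_i)$ and swap the order of summation:
\[
\sum_{i=1}^T \Bigl(\tfrac{i\gamma\sqrt{n}}{2} + p_0 n\Bigr) E_i \;=\; \sum_{j=1}^T e(H_j) \sum_{i=j}^T \Bigl(\tfrac{i\gamma\sqrt{n}}{2} + p_0 n\Bigr).
\]
Using $\sum_{i=j}^T i = (T-j+1)(T+j)/2$ and $\sum_{i=j}^T 1 = T-j+1$, and then substituting the identities $\gamma T = \alpha_1\sqrt{\log n}$ (from~\eqref{eq:T_0}) and $p_0\sqrt{n} = \alpha_0\sqrt{\log n}$ (from~\eqref{eq:def-p0}), the coefficient of $e(H_j)$ after multiplication by $\gamma/\sqrt{n}$ reduces to
\[
\frac{\alpha_1^2 \log n}{4}\bigl(1 - (j/T)^2\bigr) \;+\; \alpha_0\alpha_1 \log n \bigl(1 - (j-1)/T\bigr) \;+\; O\bigl(\gamma\sqrt{\log n}\bigr),
\]
where the error absorbs the linear remainder from $(T-j+1)(T+j) = T^2 - j^2 + T + j$.

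To match the target coefficient $(\log n/8)(1-(j/T)^2)$, I factor $1-(j/T)^2 = (1-j/T)(1+j/T)$ and group the two main contributions jointly as $(1-j/T)\log n\bigl[\alpha_1^2(1+j/T)/4 + \alpha_0\alpha_1\bigr]$. The parameter constraint $\alpha_1^2 + 2\alpha_0\alpha_1 \leq 1/2$ from~\eqref{eq:alpha_0alpha_1-setup}, combined with the extra factor $(1+j/T)$ arising naturally from the factorisation, bounds the bracketed expression in a way that re-packages the $(1-j/T)$ scaling back into the target $(1-(j/T)^2)/8$ envelope, with the $O(\gamma\sqrt{\log n})$ error comfortably absorbed.

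The main technical obstacle is reconciling the $(1-(j/T)^2)$ scaling of the $\alpha_1^2$ piece with the $(1-j/T)$ scaling of the $p_0 n$ cross-term: these two contributions must be handled jointly via the factorisation above rather than estimated in isolation, and it is precisely the tightness of~\eqref{eq:alpha_0alpha_1-setup} that provides exactly the required slack. Any naive majorisation that bounds one term independently of the other yields a strictly weaker coefficient, so careful book-keeping of the $(1+j/T)$ factor is essential.
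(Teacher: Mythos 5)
Your intermediate steps are correct and, in fact, careful enough to expose an issue: after the swap and substitution, the coefficient of $e(H_j)$ really is (up to negligible error)
\[
\frac{\alpha_1^2\log n}{4}\bigl(1-(j/T)^2\bigr)+\alpha_0\alpha_1\log n\bigl(1-(j-1)/T\bigr),
\]
whereas the paper's second display writes $\tfrac{p_0nT}{2}\sum_j(1-j/T)e(H_j)$ where the $\sum_{i=j}^T p_0 n = p_0 n(T-j+1)$ computation produces $p_0 n T\sum_j(1-j/T)e(H_j)$ without the extra factor of $\tfrac12$. So you have the right coefficients; the paper's proof of this observation contains a slip.

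The gap is in your final step. You need the pointwise inequality $\tfrac{\alpha_1^2}{4}(1+x)+\alpha_0\alpha_1\leq\tfrac18(1+x)$ for all $x=j/T\in[0,1]$, and this is simply false for $x<1$ with the paper's choice $\alpha_0=\alpha_1=(1-3\delta)/\sqrt6$: writing $u=1+x\in[1,2]$ it reads $\alpha_0\alpha_1\leq u(\tfrac18-\tfrac{\alpha_1^2}{4})$, i.e.\ $\tfrac16\lesssim u/12$, which forces $u\geq2$. So the constraint $\alpha_1^2+2\alpha_0\alpha_1\leq\tfrac12$ does \emph{not} "re-package" the $(1-j/T)$ scaling into the $(1-(j/T)^2)/8$ envelope; that paragraph hand-waves the one step that actually cannot be carried out. (If one instead substitutes your correct per-$j$ coefficient directly into the proof of Corollary~\ref{cor:prob-ind-bound-2} and expands $g(x)=\tfrac{\alpha_1^2}{4}(1-x^2)+\alpha_0\alpha_1(1-x)-\tfrac12+\alpha_1^2x^2+2\alpha_0\alpha_1x$, one finds $g$ is increasing on $[0,1]$ with $g(1)=\alpha_1^2+2\alpha_0\alpha_1-\tfrac12\leq-2\delta$, so the downstream bound $F_1\leq S(H_0)p_1-\Omega(\delta)\log n\cdot e(H_{[1,T]})$ survives without ever needing the packaged $\tfrac{\log n}{8}$ form; the clean way to repair your argument is to prove the two-term inequality and skip Observation~\ref{obs:betai-sum-bound}'s attempt to collapse both terms under a single $(1-(j/T)^2)$ coefficient.)
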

\begin{proof}
Applying Observation~\ref{obs:betai-bound}, we write
\[ \binom{k}{2} \sum_{i=1}^T \nu_i\\
    \leq (1+o(1)) \sum_{i=1}^T \sum_{j = 1}^i \left(\frac{i \gamma \sqrt{n}}{2} + p_0 n\right) e(H_j)  +  S(H_0)T. \]
This is at most 
\[ (1 + o(1))\left(\frac{p_0n T}{2}\sum_{j=1}^T (1-j /T)e(H_i) + \frac{\g \sqrt{n} T^2}{4} \sum_{j=1}^T \big(1- (j/T)^2 \big)e(H_j)\right) +  S(H_0) T\, . \]

Now recall $T = \alpha_1 \gamma^{-1} \sqrt{\log n}$ and $p_0 = \alpha_0 \sqrt{(\log n)/n}$ from \eqref{eq:def-p0} and \eqref{eq:T_0}. We use that $(1-(j/T)) \leq (1-(j/T)^2)$ to see the above is at most
\[ \leq (1 +o(1)) \frac{\sqrt{n} \log n}{4\g} \big( 2\alpha_0\alpha_1  + \alpha_1^2 \big) \sum_{j=1}^T \big(1- (j/T)^2 \big)e(H_j)  +  S(H_0)T  .\]
Now recall that the choice of $\alpha_0,\alpha_1$ satisfies $2\alpha_0\alpha_1 + \alpha_1^2 \leq 1/2 - 2\delta$ by \eqref{eq:alpha_0alpha_1-setup} to conclude the proof of the observation.\end{proof}

We now apply Observations~\ref{obs:betai-bound} and~\ref{obs:betai-sum-bound} to Corollary~\ref{cor:prob-ind-bound-1} to prove the following Corollary, which is very close in form to Theorem~\ref{thm:prob-ind-final}, our main goal of this section. Recall that $R=e(\pi(H_0))+e(H_{[1,T]})$. 

\begin{corollary}\label{cor:prob-ind-bound-2} The quantity $n^{4\beta R}\cdot\PP( I \in \cI(G_{\leq T}) \wedge\,  \cH \, \wedge \cA' )$ is at most  
\[   \exp\bigg[-(1-o(1))\left( p_1\binom{k}{2}+p_0\binom{\ell}{2}\right)-(1/2-8\beta)e(\pi(H_0))\log n+S(H_0)p_1\, \bigg]\, .\]
\end{corollary}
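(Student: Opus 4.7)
The plan is to start from Corollary~\ref{cor:prob-ind-bound-1} and show that $F(H_0, \ldots, H_T) + 4\beta R \log n \leq -(1/2 - 8\beta) e(\pi(H_0)) \log n + S(H_0) p_1$, where $R = e(\pi(H_0)) + \sum_{i=1}^T e(H_i)$. The natural split is between the $e(\pi(H_0))$ contribution and the $e(H_i)$ contributions (for $i \geq 1$) in $F$, which I would analyze separately.

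For the $\pi(H_0)$ part, the relevant terms are $\nu_0 p_0 \binom{\ell}{2} + e(\pi(H_0)) \log p_0 + 4\beta e(\pi(H_0)) \log n$. I would apply \eqref{eq:obs-betai-bound0} to bound $\nu_0 p_0 \binom{\ell}{2} \leq (1+\gamma) e(\pi(H_0)) p_0^2 n / r = O(e(\pi(H_0))/\log n)$, which is absorbed as an error. Since $\log p_0 = -\frac{1}{2}\log n + O(\log\log n)$, the total $\pi(H_0)$ contribution is at most $-(1/2 - 4\beta - o(1)) e(\pi(H_0)) \log n \leq -(1/2 - 8\beta) e(\pi(H_0)) \log n$, matching the corresponding term in the target.

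For the $e(H_i)$ contributions, I would apply Observation~\ref{obs:betai-sum-bound} to write $\frac{\gamma}{\sqrt{n}}\binom{k}{2} \sum_i \nu_i \leq \frac{\log n}{8} \sum_i (1-(i/T)^2) e(H_i) + S(H_0) p_1$, where the second summand is exactly the $S(H_0) p_1$ appearing in the target. To handle the $-\sum e(H_i) \log(\theta_i \sqrt{n})$ term, combine Observation~\ref{obs:mean-of-theta} with $\theta_0 = n^{-o(1)}$ to get $\log(\theta_i \sqrt{n}) \geq (\tfrac{1}{2} - \alpha_1^2 t^2 - 2\alpha_0\alpha_1 t) \log n - o(\log n)$ where $t = i/T$. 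Together with the $4\beta \sum e(H_i) \log n$ term from $4\beta R\log n$, the per-edge coefficient (divided by $\log n$) is bounded by $g(t) = -3/8 + t^2(\alpha_1^2 - 1/8) + 2\alpha_0 \alpha_1 t + 4\beta + o(1)$.

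The main obstacle is showing $g(t) \leq 0$ throughout $[0,1]$, which requires careful use of \eqref{eq:alpha_0alpha_1-setup}. At the endpoints this is easy: $g(0) = -3/8 + 4\beta < 0$ and $g(1) = -1/2 + \alpha_1^2 + 2\alpha_0 \alpha_1 + 4\beta \leq -2\delta + 4\beta < 0$ since $\beta < \delta/2^7$. When $\alpha_1^2 \geq 1/8$, $g$ is convex and the maximum on $[0,1]$ is attained at an endpoint; when $\alpha_1^2 < 1/8$, $g$ could be concave with a larger interior maximum. To handle all cases uniformly I would use the finer form of Observation~\ref{obs:betai-sum-bound} that keeps the summands $\frac{\alpha_0\alpha_1 \log n}{2}(1-t)$ and $\frac{\alpha_1^2 \log n}{4}(1-t^2)$ separately (i.e.\ without invoking $(1-t) \leq (1-t^2)$ in its proof). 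A direct computation then shows the corresponding per-edge coefficient equals $-\tfrac{1}{2} + \tfrac{\alpha_0\alpha_1}{2} + \tfrac{\alpha_1^2}{4} + \tfrac{3\alpha_1^2}{4}t^2 + \tfrac{3\alpha_0\alpha_1}{2}t + 4\beta + o(1)$, which is monotone increasing in $t$ and attains its maximum $-2\delta + 4\beta + o(1) < 0$ at $t = 1$. This uniformly bounds the $e(H_i)$ contribution by $S(H_0) p_1$ plus negligible errors, completing the proof.
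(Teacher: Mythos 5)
Your proposal is correct and reaches the same bound by the same high-level decomposition ($F = F_0 + F_1$, handle the $e(\pi(H_0))$-terms and $e(H_i)$-terms separately), but the key step differs from the paper's in an interesting way, so it is worth comparing.

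You correctly identify the genuine subtlety: the per-edge coefficient $g(t) = -3/8 + t^2(\alpha_1^2 - 1/8) + 2\alpha_0\alpha_1 t + 4\beta + o(1)$ obtained from the stated form of Observation~\ref{obs:betai-sum-bound} may be concave in $t$ when $\alpha_1^2 < 1/8$, so checking $g(0),g(1) < 0$ is \emph{not} sufficient on its own and one must rule out a positive interior maximum. Your fix is to back up to the finer, unmerged estimate inside the proof of Observation~\ref{obs:betai-sum-bound}, before the step $(1-j/T)\leq (1-(j/T)^2)$, which yields the coefficient $-\tfrac12 + \tfrac{\alpha_0\alpha_1}{2} + \tfrac{\alpha_1^2}{4} + \tfrac{3\alpha_1^2}{4}t^2 + \tfrac{3\alpha_0\alpha_1}{2}t + 4\beta + o(1)$. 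Since all $t$-dependent terms now have nonnegative coefficients, this is manifestly nondecreasing on $[0,1]$ and the bound at $t=1$ uses exactly \eqref{eq:alpha_0alpha_1-setup}. This is clean and correct. The paper instead keeps the merged form and handles the same concavity issue by a slightly different algebraic trick: after substituting $x_i = x\alpha_1$, it bounds the $x^2$-positive terms by a linear term, $8x^2\alpha_1^2 + 16\alpha_1 x\alpha_0 \leq (4-16\delta)x$ on $[0,1]$ (again via \eqref{eq:alpha_0alpha_1-setup}), and the resulting quadratic $-3 - x^2 + (4-16\delta)x$ has vertex at $x = 2-8\delta \geq 1$, hence is increasing on $[0,1]$. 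Both devices serve the same purpose; yours has the minor cost of needing to cite the finer intermediate inequality rather than the observation as stated, but in return it gives a more transparently monotone coefficient and sidesteps the somewhat convoluted substitution in the paper (whose accompanying justification, ``Inequality \eqref{eq:numerical-inequality} now follows \ldots since $\alpha_1^2 < 1/2$,'' is in fact stated backwards, though the numbers still work out after a factor-of-$\alpha_1^2$ bookkeeping). Your handling of the $F_0$ and $4\beta R \log n$ terms matches the paper's.
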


\begin{proof}
We only have to bound $F$ in Observation~\ref{cor:prob-ind-bound-1}. Write $F=F_0+F_1$, where 
\begin{equation}\label{eq:E-in-pf} F_0 =\nu_0p_0\binom{\ell}{2}+e(\pi(H_0))\log p_0 \quad \text{ and } \quad F_1 = \frac{\g}{\sqrt{n}} \binom{k}{2} \sum_{i=1}^{T}   \nu_i-  \sum_{i=1}^T e(H_i) \log (\theta_i \sqrt{n})  . \end{equation}
Note that by Observation~\ref{obs:betai-bound}, we have 
\[
F_0\leq (1+\gamma)e(\pi(H_0))p^2_0n/r+e(\pi(H_0))\log p_0=-(1/2-o(1))e(\pi(H_0))\log n\, .
\]
We now show  
\begin{equation}\label{eq:F_1-bound}
F_1 \leq   S(H_0)p_1-4\beta\log n \cdot 
 e(H_{[1,T]})\, ,
 \end{equation}
from which the result then follows. 
 
So to prove \eqref{eq:F_1-bound}, write $x_i = (i/T)\alpha_1$,  and observe that, by Observation~\ref{obs:betai-sum-bound}, we can bound the first sum in the definition of $F_1$ by
\[ \frac{\g}{\sqrt{n}} \binom{k}{2}\sum_{i=1}^T \nu_i \leq   \frac{\log n }{8\alpha_1^2}\sum_{i=1}^{T} \big( \alpha_1^2 -x_i^2\big)e(H_i)  + S(H_0)p_1 .\]
Now turning to the second sum in the definition of $F_1$, recall that on the event $\cA'$ we have
\[ \theta_i \geq (1 - (i{+1})\eps_i  )  \exp\hspace{-0.5mm}\big(\hspace{-0.5mm} -\hspace{-0.5mm} \g^2 i^2-2\gamma i\alpha_0\sqrt{\log n}\big)\theta_0,  \qquad T =  \alpha_1 \gamma^{-1} \sqrt{\log n}, \]
by Observation \ref{obs:mean-of-theta}
and our choice at \eqref{eq:T_0}. Since $i\eps_i = o(1)$  and $\theta_0 = n^{-o(1)}$ (by \eqref{eq:theta-0}) we have 
\[\log (\theta_i\sqrt{n}) \geq \big( 1/2-(x_i^2 + 2x_i\alpha_0)- o(1)\big)\log n\] and so we have
\begin{equation} \label{eq:F-bound} 
F_1 \leq   S(H_0)p_1  + \frac{\log n }{8\alpha_1^2}\sum_{i=1}^{T} \big(\hspace{-0.5mm}-\hspace{-0.5mm} 3\alpha_1^2-x_i^2  + 8x_i^2\alpha_1^2 + 16\alpha_1^2x_i\alpha_0 + o(1)\big)e(H_i). \end{equation}
We now claim that \begin{equation}\label{eq:numerical-inequality} -3\alpha_1^2 -x_i^2  + 8x_i^2\alpha_1^2 + 16\alpha_1^2x_i\alpha_0 + o(1) \leq -8\delta,\end{equation} which will compete the proof of the \eqref{eq:F_1-bound} (recalling that $\beta<\delta/2^7$) and thus the proof of the lemma. Indeed, if we write $x_i = x\alpha_1$, for $x = i/T \in [0,1]$, we have 
\begin{align} \label{eq:alpha-x-delta}
-3\alpha_1^2 -x_i^2  + 8x_i^2\alpha_1^2 + 16\alpha_1^2x_i\alpha_0 \leq \alpha_1^2(-3 -x^2 + (4 - 16\delta) x) \leq -16\alpha_1^2\delta,
\end{align} for $x \in [0,1]$ and $\delta \in [0,1/8]$. Here we used $\alpha_1^2 + 2\alpha_0 \alpha_1 \leq 1/2 - 2\delta $ for the first inequality. For the second, we used that the quadratic $-3 - x^2 + (4 - 16\delta )x $ in $x$ is increasing for $x \in [0,1]$ and $\delta \in [0,1/8]$.

Inequality \eqref{eq:numerical-inequality} now follows from~\eqref{eq:alpha-x-delta} since $\alpha_1^2<1/2$.  Combining \eqref{eq:numerical-inequality} with \eqref{eq:F-bound} shows
$ F_1 \leq   S(H_0)p_1-\delta\log n \cdot 
 e(H_{[1,T]})$, as desired. This completes the proof.
\end{proof}

To prove Theorem~\ref{thm:prob-ind-final}, all that remains is to determine the maximum of the term
\[ (1/2-8\beta)e(\pi(H_0))\log n+S(H_0)p_1, \]
in the exponent of the bound in Corollary~\ref{cor:prob-ind-bound-2}. We do this with the next two easy lemmas.

 \begin{lemma}\label{lem:lem:Sprime-bd}
Write $\Delta=\max_{x\in L'}|N_{H_0}(x)|$. On the event $\cA_0$ we have
 \[
  \Delta^2/2 \leq S(H_0)\leq (\Delta/2) \big(e(\pi(H_0)) + k - \ell +o(k)\big) \leq (1+o(1))\Delta k/2 \, .
 \]
 \end{lemma}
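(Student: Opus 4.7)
The first inequality is immediate from the blow-up structure: since $G_0'$ is an $r$-blow-up of $G_\ast'$, any two vertices in the same part $V_j$ have identical neighborhoods in $G_0'$ (and hence in $H_0$). Picking $x^\ast \in L'$ attaining $\Delta = |N_{H_0}(x^\ast) \cap I|$ and letting $y$ be the unique member of $L_\ast \cap V_{\pi(x^\ast)}$ yields $|N_{H_0}(y) \cap I| = \Delta$, so the single term indexed by $y$ in $2 S(H_0)$ already contributes $\Delta^2$.

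For the middle inequality, the plan is to apply the pointwise bound $|N_{H_0}(x) \cap I|^2 \leq \Delta \cdot |N_{H_0}(x) \cap I|$ and reduce to showing
\[
\sum_{x \in L_\ast} |N_{H_0}(x) \cap I| \,=\, e(H_0[L_\ast, I]) \,\leq\, e(\pi(H_0)) + (k - \ell) + o(k).
\]
Projecting through $\pi$, each edge $jj' \in \pi(H_0)$ with $j' \in \pi(I)$ lifts to exactly $|L_\ast \cap V_j| \cdot |V_{j'} \cap I| = 1 \cdot n_{j'}$ edges of $H_0[L_\ast, I]$, where $n_{j'} := |V_{j'} \cap I|$, so
\[
e(H_0[L_\ast, I]) = e(\pi(H_0)) + \sum_{j' \in \pi(I)} d_{\pi(H_0)}(j')(n_{j'} - 1).
\]
The real work is to bound this excess sum by $(k-\ell) + o(k)$. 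I would split according to whether $d_{\pi(H_0)}(j') = 1$ or $d_{\pi(H_0)}(j') \geq 2$: the first contribution is at most $\sum_{j'}(n_{j'}-1) = k - \ell$, while the second requires the codegree input from $\cA_0$, which translated to $G_\ast'$ reads $|N_{G_\ast'}(j_1) \cap N_{G_\ast'}(j_2)| \leq (\log n)^2$ for distinct $j_1, j_2$. Combined with $|\pi(L')| \leq |L'| \leq 2 n^{3\beta}$, this yields $\sum_{j'} \binom{d_{\pi(H_0)}(j')}{2} \leq \binom{|\pi(L')|}{2}(\log n)^2 = O(n^{6\beta}(\log n)^2)$. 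Since $d \leq 2\binom{d}{2}$ for $d \geq 2$ and $n_{j'} - 1 \leq r = (\log n)^2$, the heavy contribution is $O(n^{6\beta}(\log n)^4) = o(k)$, using $\beta < \delta/2^7$.

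For the final inequality $S(H_0) \leq (1+o(1))\Delta k/2$, it suffices by the middle bound to show $e(\pi(H_0)) \leq \ell + o(k)$, which I would derive by Cauchy--Schwarz from the same codegree input:
\[
e(\pi(H_0))^2 = \Bigl(\sum_{j' \in \pi(I)} d_{\pi(H_0)}(j')\Bigr)^2 \leq \ell \sum_{j' \in \pi(I)} d_{\pi(H_0)}(j')^2 \leq \ell \bigl(e(\pi(H_0)) + O(n^{6\beta}(\log n)^2)\bigr),
\]
giving $e(\pi(H_0)) \leq \ell + O(\sqrt{k}\,n^{3\beta}\log n) = \ell + o(k)$. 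The main conceptual obstacle is leveraging the quasi-randomness encoded in $\cA_0$ (via the codegree bound on $G_\ast'$) to control the atypical ``heavy'' vertices $j' \in \pi(I)$ with $d_{\pi(H_0)}(j') \geq 2$; once this is in hand, everything reduces to careful accounting through the blow-up.
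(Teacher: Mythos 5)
Your proposal is correct and follows essentially the same line of argument as the paper: the first inequality uses exactly the blow-up observation (neighborhoods in $G_0'$ depend only on $\pi$), the middle bound is the same sandwich $S(H_0)\leq(\Delta/2)\sum_{x\in L_\ast}|N_{H_0}(x)\cap I|$ followed by the $\cA_0$ codegree control, and your degree-split and Cauchy--Schwarz steps are cosmetic variants of the paper's inclusion--exclusion bookkeeping (their split into $A=\sum|N_{H_0}(x)\cap I_\ast|$ and $B=\sum|N_{H_0}(x)\cap(I\setminus I_\ast)|$ is precisely your $e(\pi(H_0))$ plus $\sum d_{\pi(H_0)}(j')(n_{j'}-1)$). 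Both routes rest on the same two inputs — the blow-up structure of $G_0'$ and the codegree bound $|N_{G_\ast'}(j_1)\cap N_{G_\ast'}(j_2)|\leq(\log n)^2$ together with $|\pi(L')|=O(n^{3\beta})$ — so there is no material difference.
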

 \begin{proof}
 The first inequality is trivial so we turn to the second. Let $I_{\ast} \subset I$ be a set which contains exactly one element of each $V_j$ for which $V_j\cap I\neq \emptyset$.
 Observe that $S(H_0)$ is at most
 \begin{equation}\label{eq:lem:Sprime-bd1}
 \leq (\Delta/2)\sum_{x\in L_{\ast} }|N_{H_0}(x) \cap  I|= (\Delta/2)\sum_{x\in L_{\ast}}|N_{H_0}(x) \cap I_{\ast}| + (\Delta/2)\sum_{x\in L_{\ast}} |N_{H_0}(x) \cap  (I \setminus I_{\ast})|,
 \end{equation}
 where we recall the definition of $L_{\ast} \subset L'$ from \eqref{eq:SH0-def}. Call $A$ and $B$ the first and second sum on the right hand side of \eqref{eq:lem:Sprime-bd1}, respectively. To bound $A$, observe that 
 \begin{equation}\label{eq:lem:Sprime-bdA}
 e(\pi(H_0))=\sum_{x\in L_{\ast}}|N_{H_0}(x) \cap I_{\ast}| - O(|L_{\ast} \cap I_{\ast}|^2 ) = \sum_{x\in L_{\ast}}|N_{H_0}(x) \cap I_{\ast}| - o(k) = A - o(k),
 \end{equation}
where the $O(|L_{\ast} \cap I_{\ast}|)$ term takes care of the over-counting of edges within $L_{\ast} \cap I_{\ast}$ and the second equality holds because $|L_{\ast}| \leq |L'| = O(n^{3\beta})$. To bound $B$, we use inclusion-exclusion
 \begin{equation}\label{eq:lem:Sprime-bdB}
 |I \setminus I_{\ast} |\geq \big | \bigcup_{x\in L_{\ast}} N_{H_0}(x) \cap (I\setminus I_{\ast} ) \big|
 \geq 
 \sum_{x\in L_{\ast}}|N_{H_0}(x) \cap  (I \setminus I_{\ast}) | -\sum_{xy\in L_{\ast}^{(2)}}|N_{H_0}(x) \cap N_{H_0}(y)|\, .
 \end{equation}
So, noting that $B$ is the first sum on the right-hand-side of \eqref{eq:lem:Sprime-bdB}, we have 
\begin{align}\label{eq:B-Ubd}
B \leq  |I\setminus I_{\ast}| + \sum_{xy\in L_{\ast}^{(2)}}|N_{H_0}(x) \cap N_{H_0}(y)| \leq (k-\ell) + (\log n)^4 n^{6\beta} = (k-\ell) + o(k), 
\end{align}
where the second to last inequality holds since $|N_{H_0}(x) \cap N_{H_0}(y)| \leq (\log n)^4$ on $\cA_0$ (by \eqref{eq:def-cF_i2}) and $|L_{\ast}| \leq n^{3\beta}$. Putting \eqref{eq:lem:Sprime-bdA} and \eqref{eq:lem:Sprime-bdB} together gives
\[ S(H_0) \leq (\Delta/2)( A + B ) \leq (\Delta/2)\big( e(\pi(H_0)) + k-\ell + o(k) \big),\] as desired.

For the final inequality we note that an identical argument to the one used for~\eqref{eq:B-Ubd} shows that $A\leq \ell+o(k)$. The result then follows from~\eqref{eq:lem:Sprime-bdA}.
\end{proof}

With Corollary~\ref{cor:prob-ind-bound-2} in mind we now prove the following.

\begin{lemma}\label{lem:new-opt}
On the event $\cA_0$, we have 
\[
p_1S(H_0)-\frac{1}{2}e(\pi(H_0))\log n\leq \frac{1}{2}(k-\ell)p_1\min\{k-\ell, \Delta(H_0)\}+ o(k\log n)\, .
\]
\end{lemma}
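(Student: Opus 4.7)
The plan is to start from the second inequality of Lemma~\ref{lem:lem:Sprime-bd}, namely $S(H_0) \leq (\Delta/2)(m + (k-\ell) + o(k))$ where I abbreviate $m = e(\pi(H_0))$ and $\Delta = \Delta(H_0)$. Using that $\Delta \leq (1+o(1))p_0 n$ on $\cA_0$ gives $p_1\Delta \leq (1+o(1))\alpha_0\alpha_1 \log n = O(\log n)$, so
\[
p_1 S(H_0) - \tfrac{1}{2} m \log n \leq \tfrac{1}{2} m(p_1 \Delta - \log n) + \tfrac{1}{2} p_1 \Delta (k-\ell) + o(k\log n).
\]
I would then split into two cases depending on whether $\Delta \leq k-\ell$ or $\Delta > k-\ell$.

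In the easy case $\Delta \leq k-\ell$, we have $\min\{k-\ell,\Delta\}=\Delta$, the first term on the right is non-positive (since $p_1\Delta\leq \log n$), and the second equals $\tfrac{1}{2}p_1(k-\ell)\min\{k-\ell,\Delta\}$ exactly. The substantive case is $\Delta > k-\ell$, where $\min = k-\ell$ and we decompose $\tfrac{1}{2}p_1\Delta(k-\ell) = \tfrac{1}{2}p_1(k-\ell)^2 + \tfrac{1}{2}p_1(k-\ell)(\Delta-(k-\ell))$. The key observation is the bound $\Delta - (k-\ell) \leq m$: if $x^*\in L_*$ realises $|N_{H_0}(x^*)\cap I|=\Delta$, then (as in the proof of Lemma~\ref{lem:lem:Sprime-bd}) $\Delta = d_{x^*} + (k-\ell)_{x^*}$ where $d_{x^*} = |N_{\pi(H_0)}(\pi(x^*))\cap \pi(I)|$ and $(k-\ell)_{x^*}\leq k-\ell$, so $d_{x^*} \geq \Delta - (k-\ell)$; but $d_{x^*}$ is a vertex degree in the simple graph $\pi(H_0)$ and therefore at most $m$. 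Substituting this gives
\[
p_1 S(H_0) - \tfrac{1}{2}m\log n \leq \tfrac{1}{2} p_1(k-\ell)^2 + \tfrac{1}{2} m\bigl[\,p_1(\Delta + (k-\ell)) - \log n\,\bigr] + o(k\log n).
\]

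The main obstacle is then to show the coefficient of $m$ is non-positive, i.e.\ $p_1(\Delta + (k-\ell)) \leq (1-c)\log n$ for some $c>0$. Using $p_1\Delta \leq (1+o(1))\alpha_0\alpha_1\log n$ and $p_1(k-\ell)\leq p_1 k = (1+\delta)\alpha_1\sqrt{3/2}\log n$, this reduces to the numerical inequality
\[
\alpha_0\alpha_1 + (1+\delta)\alpha_1\sqrt{3/2} \leq 1 - \Omega(1).
\]
The constraint \eqref{eq:alpha_0alpha_1-setup} gives $\alpha_0\alpha_1 \leq (1/2 - 2\delta - \alpha_1^2)/2$, so the left-hand side is at most $(1/2 - 2\delta - \alpha_1^2)/2 + (1+\delta)\alpha_1\sqrt{3/2}$. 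The derivative $-\alpha_1 + (1+\delta)\sqrt{3/2}$ is positive throughout the feasible range $\alpha_1\in[0,\sqrt{1/2-2\delta}\,]$, so the maximum is at the right endpoint, giving $(1+\delta)\sqrt{3(1/2-2\delta)/2}$. Squaring and expanding shows $(1+\delta)^2\cdot 3(1/2-2\delta)/2 \leq 3/4$ for all $\delta\geq 0$, so this maximum is bounded by $\sqrt{3}/2 < 1$. Hence the $m$-term is non-positive for large $n$ and the Case~2 bound reduces to $\tfrac{1}{2}p_1(k-\ell)^2 + o(k\log n) = \tfrac{1}{2}p_1(k-\ell)\min\{k-\ell,\Delta\} + o(k\log n)$, completing the proof.
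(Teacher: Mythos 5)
Your proof is correct. It shares the paper's architecture: the same starting inequality from Lemma~\ref{lem:lem:Sprime-bd}, the same observation that $p_1\Delta < \log n$, and the same case split on whether $\Delta$ exceeds $k-\ell$ (with the small-$\Delta$ case being easy in both). Where you genuinely diverge is in the substantive case $\Delta > k-\ell$. The paper substitutes the lower bound $m \geq \Delta - (k-\ell) - o(k)$ into the negatively weighted $m$-term, reducing to the single-variable quantity $p_1\Delta^2 - (\Delta-(k-\ell))\log n$, and then uses that $p_1\Delta^2 - \Delta\log n$ is decreasing on $[0,(1+\gamma)p_0n]$ (which needs only $2\alpha_0\alpha_1 < 1$). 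You instead use the reverse form $\Delta - (k-\ell) \leq m + O(1)$ to absorb the cross term $p_1(k-\ell)(\Delta-(k-\ell))$ into an $m$-multiple, and then argue that the net coefficient of $m$, namely $p_1(\Delta + k - \ell) - \log n$, is negative. This needs the stronger numerical inequality $\alpha_0\alpha_1 + (1+\delta)\alpha_1\sqrt{3/2} < 1$, which you verify carefully over the entire feasible region of \eqref{eq:alpha_0alpha_1-setup}, finding supremum $\sqrt{3}/2$. Both are LP-style eliminations converging on the same corner $m=0$, $\Delta = k-\ell$; the paper's quadratic-monotonicity step is a touch slicker and demands a weaker arithmetic condition, but yours has the virtue of making fully explicit (and checking globally) the constraint that is being paid.

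One small thing to tidy. You set $\Delta = \Delta(H_0)$, but your derivation of $\Delta - (k-\ell) \leq m$ via a vertex $x^*\in L_*$ achieving the maximum implicitly assumes the maximum degree of $H_0$ is attained on the $L'$-side. A vertex of $I$ has at most $|L'| = O(n^{3\beta})$ neighbours in $H_0$, so this is automatic unless $\Delta(H_0) = O(n^{3\beta})$, in which case $p_1 S(H_0) = o(m\log n) + o(k\log n)$ and the lemma is immediate anyway. The cleanest fix, and what the paper does, is to work with $\Delta := \max_{x\in L'}|N_{H_0}(x)|$ throughout and pass to $\min\{k-\ell, \Delta\} \leq \min\{k-\ell, \Delta(H_0)\}$ at the very end.
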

\begin{proof}
Set $s=S(H_0)$, $t=e(\pi(H_0))$ and $\Delta=\max_{x\in L'}|N_{H_0}(x)|$. Recall $\Delta\leq \Delta(G_0')\leq (1+\gamma)p_0n$ on the event $\cA_0$ (by \eqref{eq:def-cF_i2}) and apply Lemma~\ref{lem:lem:Sprime-bd} to see \begin{equation}\label{eq:lem:new-opt} 2p_1s-t\log n
\leq t(p_1\Delta-\log n)+p_1\Delta(k-\ell)+o(k\log n)\,. \end{equation}
Now we use $\Delta\leq (1+\gamma)p_0n$ along with $\alpha_0 \alpha_1 \leq 1/6$ to see $p_1\Delta-\log n<0$.
 Now by combining the first and second inequalities in Lemma~\ref{lem:lem:Sprime-bd}, we have 
 \[ t\geq \max\{ \Delta-(k-\ell)-o(k), 0\} . \]
 This suggests two cases. Suppose first that $\Delta\geq k-\ell$ and substitute for $t$ to see \eqref{eq:lem:new-opt} is 
\begin{equation}\label{eq:lem:new-opt-1} \leq  p_1\Delta^2- (\Delta-(k-\ell))\log n +o(k \log n)\, .
\end{equation}
Now observe that the quadratic, $p_1\Delta^2-\Delta\log n$ is decreasing for $\Delta \in [0,(1+\gamma)p_0n]$ and therefore, using our assumption that $k-\ell\leq \Delta \leq (1+\gamma)p_0n$, we have
\[
2p_1s-t\log n\leq (k-\ell)^2p_1+o(k \log n)\, \leq (k-\ell)p_1\min\{k-\ell, \Delta\}+o(k \log n)\, .
\]
In the case $\Delta<k-\ell$, we substitute $t = 0 $ in \eqref{eq:lem:new-opt} to obtain
\[ 2p_1s-t\log n \leq p_1 \Delta (k-\ell )+o(k \log n) \leq (k-\ell)p_1\min\{k-\ell, \Delta\}+o(k \log n)\, . \] 
The result follows since $\Delta\leq \Delta(H_0)$.
\end{proof}

Theorem~\ref{thm:prob-ind-final} now follows quickly. 

\begin{proof}[Proof of Theorem~\ref{thm:prob-ind-final}]
Combining Lemma~\ref{obs:ind-set-worst-case}, Corollary~\ref{cor:prob-ind-bound-2} and Lemma~\ref{lem:new-opt} 
 and recalling that $\Delta(H_0)\leq (1+\gamma)p_0n$ allows us to see $\PP( I \in \cI(G_{\leq T}) \wedge  \cA' )$ is at most 
\[
  \exp\bigg[-(1-o(1))\left( p_1\binom{k}{2}+p_0\binom{\ell}{2}\right)+\frac{1}{2}(k-\ell)p_1\min\{k-\ell, p_0n\} + 8\beta e(\pi(H_0))\log n\, \bigg]\, .\]
  The result follows by noting that $e(\pi(H_0))\leq (1+o(1))k$
  by Lemma~\ref{lem:lem:Sprime-bd} and that $p_1\binom{k}{2}+p_0\binom{\ell}{2}=O(k\log n)$.
  \end{proof}

\section{Bounding the independence number and the Proof of Theorem~\ref{thm:r3k}}\label{sec:proof-of-main-thm}

Theorem~\ref{thm:r3k} will be an immediate consequence of Theorem~\ref{thm:main-theorem-ind-set} bounding the independence number of $G_{\leq T}$. Theorem~\ref{thm:main-theorem-ind-set} is itself an easy consequence of Theorem~\ref{thm:prob-ind-final}. Recall that $k =(1+\delta)\sqrt{3 n \log n /2}$.

\begin{proof}[Proof of Theorem~\ref{thm:main-theorem-ind-set}]
We first apply Lemmas~\ref{lem:cA}, \ref{lem:moderate-degrees} and \ref{lem:cB-failure}  to see that 
\[ \PP \big( \alpha( G_{\leq T} ) \geq  k \big)  = \PP \big( \alpha( G_{\leq T} ) \geq  k  \, \wedge \, \cA' \big)  + o(1), \]
where we recall that we defined $\cA'$ by $\cA'= \cA \wedge \cC \wedge \cF$ at \eqref{eq:def-A'}, where 
where $\cA, \cC,\cF$ are the events that we showed hold with high probability in Sections~\ref{sec:density-of-process}, \ref{sec:moderate-degrees} and \ref{sec:open-edges-in-I-redux}, respectively. We now union bound over all $I \in [n]^{(k)}$,
\begin{equation} \label{eq:ind-set-bound--union-bound} \PP \big( \alpha( G_{\leq T} ) \geq  k  \, \wedge \, \cA' \big) \leq \sum_{\ell}\sum_{I}  \PP(I\in \cI(G_{\leq T}) \wedge \cA' ). \end{equation}Here we are summing over $k/r \leq \ell\leq k$ and $I \in [n]^{(k)}$ with $|\pi(I)| = \ell$.
Now note that the number of $I \in [n]^{(k)}$ for which $|\pi(I)| = \ell $ is at most
\[ \binom{n}{\ell}\binom{r\ell}{k-\ell} \leq \binom{n}{\ell} \bigg( \frac{e r\ell}{k} \bigg)^k = \binom{n}{\ell} n^{o(k)}.\] Using this and applying Lemma~\ref{lem:prob-ind}, the right hand side of \eqref{eq:ind-set-bound--union-bound} is at most 
\[ n^{o(k)} \cdot \max_{\ell}\binom{n}{\ell} \cdot \PP(I\in \cI(G_{\leq T}) \wedge \cA') \leq \max_{\ell}\, \exp(f(\ell,k,n)+11\beta k\log n),\]
where we define
\[
f(\ell,k,n)=\frac{1}{2}\ell \log n- p_1\binom{k}{2}-p_0\binom{\ell}{2}+\frac{1}{2}(k-\ell)p_1\min\{k-\ell, p_0n\}\, .
\]
To finish, we now show $f(\ell,k,n) \leq - (\delta/4) k \log n$, for $k/r \leq  \ell \leq k$ and recall that $\beta<\delta/2^7$. We consider two cases. If $\ell\geq k/2$, then using $\min\{k-\ell,p_0n\}\leq k-\ell$ and $p_0=p_1$, by our choice at \eqref{eq:alpha_i-k-selection}, we have
\[
f(\ell,k,n)\leq (\ell \log n)/2-k\ell p_1+o(k\log n) = \ell\big( (\log n)/2 - p_1 k\big) + o(k\log n) \leq - (\delta/4)k\log n\, 
\] where we used $k = (1+\delta)\sqrt{(3n/2)\log n}$, $p_1 = (1-3\delta)\sqrt{\log n / 6n }$. 

On the other hand, if $\ell< k/2$ then using $\min\{k-\ell,p_0n\}\leq p_0n$, we have
\begin{align*}
f(\ell,k,n)
&=\ell\left(\frac{1}{2} \log n-k p_1\right)+\frac{p_1}{2}(k-\ell)(p_0n-(k-\ell))+o(k\log n)\\
&\leq 
\frac{p_1}{2}(k-\ell)(p_0n-(k-\ell))+o(k\log n)\\
&\leq \frac{p_1}{2}(k/2)(p_0n-k/2)+o(k\log n) \leq -\frac{k \log n}{100}\, ,
\end{align*}
where for the first inequality we used $(\log n)/2-p_1n<0$ and in the second inequality we used our assumption that $\ell<k/2$.  
  \end{proof}

We can now easily prove Theorem~\ref{thm:r3k}.

\begin{proof}[Proof of Theorem~\ref{thm:r3k}]
By Theorem~\ref{thm:main-theorem-ind-set}, there exists a triangle-free graph on $n$ vertices with no independent set of size $k=(1+\delta)\sqrt{(3n/2) \log n}$. The result follows by noting that $n= \frac{2k^2}{3(1+\delta)^2\log n}=(1+o(1))\frac{k^2}{3(1+\delta)^2\log k}$ and recalling that $\delta$ is arbitrarily small. 
\end{proof}

\section*{Acknowledgements}
The authors thank Rob Morris and Will Perkins for comments on a previous draft. Matthew Jenssen is supported by a UK Research and Innovation Future Leaders Fellowship MR/W007320/2.  Marcus Michelen is supported in part by NSF CAREER grant DMS-2336788 as well as DMS-2246624. Julian Sahasrabudhe is supported by European Research Council (ERC) Starting Grant ``High Dimensional Probability and Combinatorics'', grant No. 101165900. Part of this work was carried out while JS was visiting the Institute for Advanced Study in Princeton, New Jersey, USA. JS is grateful for the stimulating working environment provided there. The authors would also like to thank the village of Stangvik, Norway for their hospitality.

\appendix

\section{Degrees and codegrees for the seed}\label{sec:seed}

Recall that $G_\ast' \sim G(n/r,p_0)$ and define 
\[ O_\ast = \pi(O_0) = \{ e \in K_{n/r} \setminus G_\ast' :  e \text{ does not form a triangle with } G_{\ast}' \}.\]  
Recall that $O'_0$ is the $r$-blowup of $O_\ast$ (with parts $V_1,\ldots, V_{n/r}$) and $O_0$ is defined by taking a single pair from each subgraph $O_0'[V_i,V_j]$, $i< j$, whenever it is non-empty.

In analogy with our definition of $s_{i}$, define $s_\ast = \P(e \in O_\ast)$ and note that $$s_\ast = (1-p_0)(1 - p_0^2)^{n/r-2} = (1 + n^{-1/2 +o(1)}) \exp(- n p_0^2 / r)\,.$$

We first record the analogue of Lemma~\ref{lem:probability-open'} whose proof we omit as the proof of Lemma~\ref{lem:probability-open'} works verbatim for this setting. 

\begin{lemma}\label{lem:prob-open'-seed}
    Let $H, K \subset K_{n/r}$ be such that $\Delta(H), \Delta(K) \leq n^{\delta/4}$.  Then $$\P(H \subset O_\ast \wedge K \subset G_\ast') \leq (1 + n^{-\delta/4})^{e(H)} s_\ast^{e(H)} p_0^{e(K)}\,.$$
\end{lemma}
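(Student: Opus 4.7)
The plan is to follow the proof of Lemma~\ref{lem:probability-open'} essentially verbatim; the seed setup is strictly simpler because there is no regularization factor $q_e$, no dependence on the event $\cA_i$, and we work inside the complete graph $K_{n/r}$ rather than the random open subgraph $O_i$.

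First I would split via Harris's inequality: since $\{H \subset O_\ast\}$ is decreasing in $G_\ast'$ (adding edges only closes more pairs) while $\{K \subset G_\ast'\}$ is increasing, one immediately gets
\[
\P(H \subset O_\ast \wedge K \subset G_\ast') \leq \P(H \subset O_\ast)\, p_0^{e(K)}.
\]

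Then I would bound $\P(H \subset O_\ast)$ via Janson's inequality (Theorem~\ref{thm:Janson}) applied on the vertex set $K_{n/r}^{(2)}$, viewing $G_\ast'$ as a $p_0$-random subset. The relevant hypergraph $\cH$ has hyperedges $\cH_1 = \{\{e\} : e \in H\}$ (forcing $e \notin G_\ast'$) and $\cH_2 = \bigcup_{e \in H}\{\{f,g\} : efg \text{ is a triangle in } K_{n/r}\}$ (forcing $e$ not to close a triangle), so $\{H \subset O_\ast\}$ is exactly $\{G_\ast' \not\supset \sigma \text{ for all } \sigma \in \cH\}$. A pleasant simplification compared to Lemma~\ref{lem:probability-open'} is that for distinct $e,e' \in H$ the sets $\cX(e), \cX(e')$ are disjoint (a triangle is determined by any two of its edges), so $|\cH_2| = e(H)(n/r-2)$ exactly, and hence $\mu = p_0 e(H) + p_0^2(n/r-2)e(H)$.

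Comparing $\mu$ with $-\log s_\ast = -\log(1-p_0) - (n/r-2)\log(1-p_0^2)$ by Taylor expansion, and using $p_0 = O(\sqrt{\log n/n})$, gives $\mu \geq -e(H)\log s_\ast - O(p_0^2) e(H)$, where the error satisfies $O(p_0^2) = O(\log n/n) = o(n^{-\delta/4})$. The bound on $\Delta$ follows from the same counting arguments as in Observation~\ref{lem:combi-open-lemma-2} (which only use the hypothesis $\Delta(H) \leq n^{\delta/4}$): each intersecting pair of hyperedges contributes at most $p_0^3$, and the number of such pairs is at most $O(n^{\delta/4})|\cH_2|$, so $\Delta \leq O(n^{\delta/4}) p_0^3 (n/r) e(H) = o(n^{-\delta/4}) e(H)$. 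Combining yields
\[
\P(H \subset O_\ast) \leq e^{-\mu + \Delta} \leq (1 + n^{-\delta/4})^{e(H)} s_\ast^{e(H)},
\]
which together with the Harris step proves the lemma. There is no genuine obstacle here; the only real work is the routine Taylor expansion for $\mu$ and the combinatorial bookkeeping for $\Delta$, both of which are strictly cleaner than in Lemma~\ref{lem:probability-open'}.
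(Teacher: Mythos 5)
Your proof is correct and follows exactly the route the paper has in mind: the paper omits the proof of this lemma precisely because it is the proof of Lemma~\ref{lem:probability-open'} run verbatim in the simpler seed setting (Harris to split off the increasing event, Janson for the decreasing event, then the combinatorial bookkeeping for $\mu$ and $\Delta$), and you have filled in the relevant simplifications — no regularization factor $q_e$, $\cY_\ast(e) = \{e\}$ so $\cH_1 = \{\{e\}: e\in H\}$, and the $\cX(e)$ disjoint across $e$ — correctly. One small imprecision: in bounding $\Delta$ you fold all intersecting pairs into a $p_0^3$-per-pair count, but the $\cH_1\times\cH_2$ pairs actually contribute $p_0^2$ each; however there are only $O(n^{\delta/4})e(H)$ of them, so their total $O(n^{\delta/4}p_0^2 e(H))$ is dominated by the $\cH_2\times\cH_2$ term $O(n^{\delta/4}p_0^3 (n/r)e(H))$ that you do track, and the conclusion $\Delta = o(n^{-\delta/4})e(H)$ stands.
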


\begin{proof}[Proof of Lemma \ref{lem:cA-for-seed}]

 For a given $x\in [n/r]$ we note that $$\P\big( |N_{G_\ast'}(x)| \geq (1 + \eps_0) p_0 (n/r)\big) \leq \exp\left( - \frac{\eps_0^2 p_0 n}{3r}\right) \leq n^{-2}$$
by the Chernoff bound.  Union bounding over $x$ and noting that $\Delta(G_0') = r \Delta(G_\ast')$ establishes {the first part of \eqref{eq:def-cF_i2}} for $i=0$. An analogous argument for degrees establishes the second part of \eqref{eq:def-cF_i2}. 

Turning to the set $O_0$, we first note that $$\E |O_\ast| = (1 + n^{-1/2 + o(1)}) \frac{n^2}{2 r^2} s_\ast\,, \quad \text{ and } \quad \Var(|O_\ast|) \leq n^3$$
and so $|O_\ast - \E |O_\ast| | \leq n^{-\delta} \E |O_\ast|$ with high probability, thus showing \eqref{eq:theta-conc-Ai}.  In particular, we have  $|O_\ast| \geq (1 - n^{-\delta}) s_\ast \frac{n^2}{2 r^2}$ with high probability.  Since $|O_\ast|=|O_0|=\theta_0\binom{n}{2}$,  this shows \begin{equation}\label{eq:theta_0-lower-bound}
\theta_0 \geq (1 - n^{-\delta}) s_\ast r^{-2}
\end{equation}
with high probability, proving \eqref{eq:theta-0}.  Arguing as in Lemmas \ref{lem:cA-open-deg} and \ref{lem:cA-open-codegs}  using Lemma \ref{lem:prob-open'-seed} in place of Lemma \ref{lem:probability-open'} shows that for each $x \neq y \in [n/r]$ we have \begin{equation}\label{eq:Oast-bounds}
    \P(|N_{O_\ast}(x)| \geq (1 + \eps_0/4) s_\ast (n/r) ) \leq n^{-2}\,, \quad  \P(|N_{O_\ast}(x) \cap N_{O_\ast}(y)| \geq (1 + \eps_0/4) s_\ast^2 (n/r) ) \leq n^{-3}\,.
\end{equation}
Note that by the definition of $O_0'$,  $|N_{O_0'}(u)|=r|N_{O^\ast}(x)|$ for all $u\in V_x$ where $V_x$ is the blowup part corresponding to the vertex $x\in [n/r]$. Moreover, by Chernoff's inequality and a union bound we have that $|N_{O_0}(u)|\leq (1+\eps_0/4)|N_{O'_0}(u)|/r^2$ for all $u\in [n]$ with high probability. These observations, combined with \eqref{eq:theta_0-lower-bound} and \eqref{eq:Oast-bounds} establish \eqref{eq:def-cE_i}.  Arguing as in Lemma \ref{lem:mixed-deg} then shows \eqref{eq:cross-degrees}, completing the proof.
\end{proof}

\bibliography{Bib.bib}
\bibliographystyle{abbrv}

\end{document}